\documentclass[reqno, 11pt]{amsart} 
\usepackage{amsfonts, amsmath, amssymb, amsthm}
\usepackage{mathrsfs}
\usepackage[margin=2.71cm, heightrounded]{geometry}
\usepackage{nccmath}
\usepackage[dvipsnames]{xcolor}
\usepackage[hidelinks]{hyperref}

\makeatletter 
\def\@tocline#1#2#3#4#5#6#7{\relax
  \ifnum #1>\c@tocdepth 
  \else
    \par \addpenalty\@secpenalty\addvspace{#2}%
    \begingroup \hyphenpenalty\@M
    \@ifempty{#4}{%
      \@tempdima\csname r@tocindent\number#1\endcsname\relax
    }{%
      \@tempdima#4\relax
    }%
    \parindent\z@ \leftskip#3\relax \advance\leftskip\@tempdima\relax
    \rightskip\@pnumwidth plus4em \parfillskip-\@pnumwidth
    #5\leavevmode\hskip-\@tempdima
      \ifcase #1
       \or\or \hskip 1em \or \hskip 2em \else \hskip 3em \fi%
      #6\nobreak\relax
    \hfill\hbox to\@pnumwidth{\@tocpagenum{#7}}\par
    \nobreak
    \endgroup
  \fi}
\makeatother

\newtheorem{thm}{Theorem}[section]
\newtheorem{lemma}[thm]{Lemma}
\newtheorem{prop}[thm]{Proposition}
\newtheorem{cor}[thm]{Corollary}

\theoremstyle{definition}
\newtheorem{rmk}[thm]{Remark}
\newtheorem{defn}[thm]{Definition}
\newtheorem{conj}[thm]{Conjecture}

\newtheorem{claim}[thm]{Claim}

\newcommand{\ga}{\gamma}
\newcommand{\ep}{\epsilon}
\newcommand{\ka}{\kappa}
\newcommand{\vep}{\varepsilon}
\newcommand{\vsi}{\varsigma}
\newcommand{\pa}{\partial}

\newcommand{\dx}{{\delta,\xi}}
\newcommand{\edx}{{\ep\delta,\ep\xi}}

\newcommand{\N}{\mathbb{N}}
\newcommand{\R}{\mathbb{R}}
\renewcommand{\S}{\mathbb{S}}

\newcommand{\mbp}{\mathbf{P}}

\newcommand{\mca}{\mathcal{A}}
\newcommand{\mcd}{\mathcal{D}}
\newcommand{\mce}{\mathcal{E}}
\newcommand{\mcf}{\mathcal{F}}
\newcommand{\mch}{\mathcal{H}}
\newcommand{\mci}{\mathcal{I}}
\newcommand{\mcl}{\mathcal{L}}
\newcommand{\mcp}{\mathcal{P}}
\newcommand{\mcr}{\mathcal{R}}
\newcommand{\mcv}{\mathcal{V}}
\newcommand{\mcw}{\mathcal{W}}

\newcommand{\mfc}{\mathfrak{c}}
\newcommand{\mfg}{\mathfrak{g}}
\newcommand{\mfh}{\mathfrak{h}}
\newcommand{\mfl}{\mathfrak{L}}

\newcommand{\msfA}{\mathsf{A}}
\newcommand{\msfF}{\mathsf{F}}
\newcommand{\msfa}{\mathsf{a}}
\newcommand{\msfc}{\mathsf{c}}
\newcommand{\msfd}{\mathsf{d}}
\newcommand{\msfk}{\mathsf{k}}
\newcommand{\msfm}{\mathsf{m}}
\newcommand{\msfp}{\mathsf{p}}
\newcommand{\msfq}{\mathsf{q}}
\newcommand{\msfpq}{\mathsf{p}\mathsf{q}}
\newcommand{\msd}{\mathscr{D}}

\newcommand{\oc}{\overline{C}}
\newcommand{\ox}{\overline{X}}
\newcommand{\omcr}{\overline{\mcr}}
\newcommand{\opsi}{\overline{\Psi}}

\newcommand{\bc}{\bar{c}}
\newcommand{\bsi}{\bar{\sigma}}

\newcommand{\tig}{\tilde{g}}
\newcommand{\tih}{\tilde{h}}
\newcommand{\tmfc}{\tilde{\mathfrak{c}}}
\newcommand{\KN}{\mathbin{\bigcirc\mspace{-15mu}\wedge\mspace{3mu}}}

\newcommand{\wte}{\widetilde{E}}
\newcommand{\wth}{\widetilde{H}}
\newcommand{\wtm}{\widetilde{M}}
\newcommand{\wtp}{\widetilde{P}}
\newcommand{\wtv}{\widetilde{V}}
\newcommand{\wtPsi}{\widetilde{\Psi}}

\renewcommand{\(}{\left(}
\renewcommand{\)}{\right)}
\newcommand{\la}{\left\langle}
\newcommand{\ra}{\right\rangle}
\newcommand{\bla}{\big\langle}
\newcommand{\bra}{\big\rangle}

\newcommand{\RN}[1]{%
\textup{\uppercase\expandafter{\romannumeral#1}}%
}

\def\ricci{\mathrm{Ric}}
\def\rm{\mathrm{Rm}}
\def\diver{\mathrm{div}}
\def\proj{\mathrm{Proj}}
\def\tr{\mathrm{tr}}
\def\norm#1#2{\|#2\|_{#1}}

\allowdisplaybreaks
\numberwithin{equation}{section}

\makeatletter
\@namedef{subjclassname@2020}{%
\textup{2020} Mathematics Subject Classification}
\makeatother

\begin{document}
\title[Compactness and non-compactness of the constant $Q$-curvature problems]{Compactness and non-compactness theorems \\ of the fourth-order and sixth-order \\ constant $Q$-curvature problems}

\author{Liuwei Gong}
\address[Liuwei Gong]{Department of Mathematics, Chinese University of Hong Kong, Shatin, NT, Hong Kong}
\email{lwgong@math.cuhk.edu.hk}

\author{Seunghyeok Kim}
\address[Seunghyeok Kim]{Department of Mathematics and Research Institute for Natural Sciences, College of Natural Sciences, Hanyang University, 222 Wangsimni-ro Seongdong-gu, Seoul 04763, Republic of Korea}
\email{shkim0401@hanyang.ac.kr shkim0401@gmail.com}

\author{Juncheng Wei}
\address[Juncheng Wei]{Department of Mathematics, Chinese University of Hong Kong, Shatin, NT, Hong Kong}
\email{wei@math.cuhk.edu.hk}

\begin{abstract}
We provide a complete resolution to the question of compactness for the full solution sets of the fourth-order and sixth-order constant $Q$-curvature problems on smooth closed Riemannian manifolds
not conformally diffeomorphic to the standard unit $n$-sphere, provided the associated conformally covariant differential operator has a positive Green's function.

Firstly, we prove that the solution set of the fourth-order constant $Q$-curvature problem is $C^4$-compact in dimensions $5 \le n \le 24$.
For $n \ge 25$, an example of an $L^{\infty}$-unbounded sequence of solutions has been known for over a decade (Wei and Zhao \cite{WZ}).
Additionally, the compactness result for $5 \le n \le 9$ was established by Li and Xiong \cite{LX}.

Secondly, we demonstrate that the solution set of the sixth-order constant $Q$-curvature problem is $C^6$-compact in dimensions $7 \le n \le 26$, whereas a blow-up example exists for $n \ge 27$.

Our main observation is that the linearized equations associated with both $Q$-curvature problems can be transformed into overdetermined linear systems, which admit nontrivial solutions due to unexpected algebraic structures of the Paneitz operator and the sixth-order GJMS operator.
This key insight not only plays a crucial role in deducing the compactness result for high-dimensional manifolds,
but also reveals an elegant hierarchical pattern with respect to the order of the conformally covariant operators, suggesting the possibility of a unified theory of the compactness of the constant $Q$-curvature problems of all admissible even integer orders.
\end{abstract}

\date{\today}
\subjclass[2020]{Primary: 53C18, Secondary: 35B44, 35J30, 35R01}
\keywords{Fourth-order and sixth-order constant $Q$-curvature problems, compactness, linearized equation, Weyl vanishing theorem, Pohozaev quadratic form}
\maketitle

\tableofcontents

\section{Introduction}
Let $(M,g)$ be a smooth closed Riemannian manifold. Also, let $P_g^{(2\msfk)}$ and $Q_g^{(2\msfk)}$ be the GJMS operator and the associated $Q$-curvature on $(M,g)$ of order $2\msfk$, respectively.
The main purpose of this paper is presenting a complete resolution to the question of compactness and non-compactness for the entire solution sets of the fourth-order ($\msfk=2$) and sixth-order ($\msfk=3$) constant $Q$-curvature problems on $(M,g)$ provided the positivity of the Green's function $G_g^{(2\msfk)}$ of the operator $(-1)^{\msfk}P_g^{(2\msfk)}$.

\subsection{Compactness of the constant $Q^{(4)}$-curvature problem}
Let $(M,g)$ be a smooth closed Riemannian manifold of dimension $n \ge 3$.
Also, let $\ricci_g$ be the Ricci curvature tensor on $(M,g)$, $R_g$ the scalar curvature, $A_g = \frac{1}{n-2}(\ricci_g - \frac{1}{2(n-1)}R_gg)$ the Schouten tensor,
$\sigma_k(A_g)$ the $k$-th symmetric function of the eigenvalues of $A_g$, and $\Delta_g = \text{div}_g \nabla_g$ the Laplace-Beltrami operator with nonpositive eigenvalues.

Then, the Branson's $Q$-curvature $Q_g^{(4)}$ and the Paneitz operator $P_g^{(4)}$ are defined as
\[Q_g^{(4)} = -\Delta_g\sigma_1(A_g) + 4\sigma_2(A_g) + \frac{n-4}{2} \sigma_1(A_g)^2\]
and
\[P_g^{(4)}u = \Delta_g^2u + \text{div}_g\left\{(4A_g - (n-2)\sigma_1(A_g)g)(\nabla_g u,\cdot)\right\} + \frac{n-4}{2} Q_g^{(4)}u \quad \text{for } u \in C^4(M),\]
respectively, where the superscripts $4$ on $Q_g^{(4)}$ and $P_g^{(4)}$ stand for their order.

The constant $Q^{(4)}$-curvature problem refers to the fourth-order elliptic equation
\begin{equation}\label{eq:main4-4d}
P_g^{(4)}u + 2Q_g^{(4)} = 2\lambda e^{4u}, \ u > 0 \quad \text{on } M
\end{equation}
for $n = 4$, and
\begin{equation}\label{eq:main40}
P_g^{(4)}u = \lambda u^{\frac{n+4}{n-4}}, \ u > 0 \quad \text{on } M
\end{equation}
for $n = 3$ or $n \ge 5$, where $\lambda \in \R$ is a constant whose sign is determined by the conformal structure of $(M,g)$.

If $n = 4$, the existence theory of \eqref{eq:main4-4d} was developed by Chang and Yang \cite{CY}, Djadli and Malchiodi \cite{DM}, and Li, Li, and Liu \cite{LLL}, among others.
For the existence result for \eqref{eq:main40} in $n = 3$, refer to Hang and Yang \cite{HY3} and references therein.

Since our main focus in this paper is on the case $n \ge 5$, we provide a more detailed description on the development of the existence theory under this setting:
For $n \ge 5$, the first existence result on \eqref{eq:main40} with $\lambda > 0$ was achieved by Qing and Raske \cite{QR}.
By appealing to the work of Schoen and Yau \cite{SY} to lift the metric to a domain in the sphere via the developing map,
they proved that \eqref{eq:main40} admits a solution if $(M,g)$ is locally conformally flat, its Yamabe invariant $Y_g$ is positive, and its Poincar\'e exponent is less than $\frac{n-4}{2}$.
Later, Gursky and Malchiodi \cite{GM} established the existence result under the conditions that $R_g \ge 0$ and $Q_g^{(4)} \ge 0$ on $M$, and $Q_g^{(4)} > 0$ somewhere, which in particular imply $\lambda > 0$.
They achieved this through the study of the maximum principle for $P_g^{(4)}$ and a sequential convergence of a non-local flow as $t \to \infty$.
The condition $R_g \ge 0$ was relaxed by Hang and Yang \cite{HY}, who substituted it with the requirement that $Y_g > 0$.
They deduced this by solving a maximization problem associated with a nonlinear integral equation equivalent to \eqref{eq:main40}.
Refer to also \cite{GHL}, where Gursky, Hang, and Lin proved that the existence of a conformal metric $g$ with $R_g,\, Q_g^{(4)} > 0$ on $M$ is equivalent to the positivity of both $Y_g$ and $P_g^{(4)}$ provided $n \ge 6$.
Recently, several results concerning the multiplicity of solutions to equation \eqref{eq:main40} have appeared under specific settings; see \cite{BPS, APR, JP}.
This leads us to investigate the properties of the full solution set.

As the first main result of this paper, we examine the $C^4(M)$-compactness of the entire solution set of \eqref{eq:main40} with $\lambda > 0$.
Establishing this compactness property is important in its own right and also enables us to calculate the Leray-Schauder degree for $C^4(M)$-bounded subsets of the solution set or to develop Morse theory, as accomplished in the Yamabe case \cite{KMS}.
In \cite{HRW}, Hebey, Robert and Wen deduced a compactness result for equations similar to but not close to \eqref{eq:main40}.
The first compactness results on \eqref{eq:main40} were independently obtained by Qing and Raske \cite{QR}, and by Hebey and Robert \cite{HRo}, each under different settings that included the locally conformally flat condition.
Later, assuming $5 \le n \le 9$, $\text{Ker}P_g^{(4)} = \{0\}$, the positivity of the Green's function $G_g^{(4)}$ of $P_g^{(4)}$, and the validity of the positive mass theorem for $P_g^{(4)}$,
Y.Y. Li and Xiong \cite{LX} proved that the solution set is $C^4(M)$-compact; we also refer to the work of G. Li \cite{Li} that studied when $5 \le n \le 7$.
On the other hand, Wei and Zhao \cite{WZ} constructed a smooth Riemannian metric $g$ on the unit $n$-sphere $\S^n$ with $n \ge 25$, which is not conformally diffeomorphic to the standard metric on $\S^n$,
such that the solution set of \eqref{eq:main40} is $L^{\infty}(\S^n)$-unbounded (and so $C^4(\S^n)$-noncompact).
The case $10 \le n \le 24$ has remained an open problem until now.
Here, we establish the compactness result for these dimensions, thereby providing the conclusive answer to the question of the $C^4(M)$-compactness of the full solution set of the $Q^{(4)}$-curvature problem \eqref{eq:main40} on $(M,g)$ with $\lambda > 0$ and $Y_g > 0$.

A simple proof of the $C^4$-compactness of the solution set of \eqref{eq:main40} with $\lambda < 0$ was given by Y.Y. Li and Xiong \cite[Theorem 1.3]{LX}.

In what follows, we write $\lambda = \mfc_4(n) := (n-4)(n-2)n(n+2) > 0$ so that \eqref{eq:main40} is rewritten as
\begin{equation}\label{eq:main4}
P_g^{(4)}u = \mfc_4(n)u^{\frac{n+4}{n-4}}, \ u > 0 \quad \text{on } M.
\end{equation}
Our theorem for the constant $Q^{(4)}$-curvature problem \eqref{eq:main4} is stated as follows.
\begin{thm}\label{thm:main4}
Let $(M,g)$ be a smooth closed Riemannian manifold of dimension $5\le n\le 24$ and not conformally diffeomorphic to the standard unit $n$-sphere $\S^n$. Assume either
\begin{itemize}
\item[(i)] $\textup{Ker}P_g^{(4)} = \{0\}$, the positivity of the Green's function $G_g^{(4)}$ of $P_g^{(4)}$, and the validity of the positive mass theorem for $P_g^{(4)}$; or
\item[(ii)] $Q_g^{(4)} \ge 0$ on $M$, $Q_g^{(4)} > 0$ somewhere, and $Y_g > 0$.
\end{itemize}
Then there exists a constant $C > 0$ depending only on $n$ and $(M,g)$ such that any solution $u \in C^4(M)$ of equation (\ref{eq:main4}) satisfies
$$
\norm{C^4(M)}{u}+\norm{C^4(M)}{u^{-1}}\le C.
$$
\end{thm}

As we will discuss in Subsection \ref{subsec:org}, the proof of Theorem \ref{thm:main4} requires a new analytic and geometric perspective for constructing a precise approximation of the blowing-up solutions, particularly in solving the linearized equation \eqref{linearized eqn k,s} associated to \eqref{eq:main4} explicitly.

\begin{rmk}\label{rmk:main4}
Four remarks concerning Theorem \ref{thm:main4} are in order.

\noindent 1. \textbf{Threshold dimension in relation to the Yamabe problem.} Remarkably, the threshold dimension $24$ for the compactness of the $Q^{(4)}$-curvature problem coincides with that of the Yamabe problem.
We remind that the $C^2(M)$-compactness of the Yamabe problem for $n \le 24$ was established through the successive works of Li and Zhu \cite{LZhu}, Druet \cite{Dr}, Li and Zhang \cite{LZha, LZha2}, Marques \cite{Ma}, and Khuri, Marques, and Schoen \cite{KMS}.
Furthermore, Brendle \cite{Br2} and Brendle and Marques \cite{BM} exhibited a smooth metric $g$ on $\S^n$ for which the Yamabe problem on $(\S^n,g)$ admits an $L^{\infty}(\S^n)$-unbounded sequence of solutions for $n \ge 25$.
Recently, Gong and Li \cite{GL} constructed another smooth metric $g'$ on $\S^n$ yielding an $L^{\frac{2n}{n-2}}(\S^n)$-unbounded sequence of solutions for $n \ge 25$.
Besides, Premoselli and V\'etois \cite{PV} employed techniques similar to \cite{KMS} to study sign-changing solutions of the Yamabe equation for $3 \le n \le 10$, establishing a dichotomy with the case $n \ge 11$.

\noindent 2. \textbf{Green's function.} The geometric assumptions in the aforementioned existence results \cite{QR, GM, HY} imply $\text{Ker}P_g^{(4)} = \{0\}$ and the positivity of $G_g^{(4)}$.
By \cite{HY2}, if $Y_g > 0$, then the existence of positive $Q^{(4)}$-curvature in the conformal class $[g]$ is equivalent to $\text{Ker}P_g^{(4)} = \{0\}$ together with the positivity of $G_g^{(4)}$.

\noindent 3. \textbf{Positive mass theorem.} We say that the positive mass theorem for $P_g^{(4)}$ holds if the constant-order term $A$ in the expansion \eqref{expansion of Green's function} of $G_g^{(4)}$ is positive.
In Proposition \ref{prop:Green}, we will derive this positivity from condition (ii) (first introduced by Hang and Yang \cite{HY}) by applying the positive mass theorem of Avalos, Laurain, and Lira \cite{ALL}.
Subsection \ref{subsec:pmt} contains the precise statement of their theorem and the definition of the fourth-order mass.
The condition $Y_g>0$ is needed solely to ensure the validity of the positive mass theorem.
An interesting question is whether this condition can be removed or replaced with alternative conditions, such as spin conditions.

\noindent 4. \textbf{Subcritical exponents.} Let $\ep > 0$ be an arbitrarily chosen, sufficiently small number. A slight modification of the proof of Theorem \ref{thm:main4} shows that the set $\{u \in C^4(M) \mid P_g^{(4)}u = \mfc_4(n)u^p,\, u > 0 \text{ on } M,\, 1+\ep \le p \le \frac{n+4}{n-4}\}$ remains $C^4(M)$-compact under the assumptions of Theorem \ref{thm:main4}. See \cite[Theorem 1.1]{LX}. \hfill $\diamond$
\end{rmk}

\subsection{Compactness and non-compactness of constant $Q^{(6)}$-curvature problem}
Let $\msfk = 1,2,\ldots,\frac{n}{2}$ when $n$ is even, and let $\msfk \in \N$ when $n$ is odd.
In their seminal work \cite{GJMS}, Graham, Jenne, Mason, and Sparling built a family of differential operators of orders $2\msfk$ on a smooth closed Riemannian manifold $M$, satisfying the conformal covariance property
\begin{equation}\label{eq:ccp}
P_{u^{\frac{4}{n-2\msfk}}g}^{(2\msfk)}(\cdot) = u^{-\frac{n+2\msfk}{n-2\msfk}}P_g^{(2\msfk)}(u\cdot) \quad \text{for } 0 < u \in C^{\infty}(M),
\end{equation}
by means of the Fefferman-Graham ambient metric \cite{FG}.
These operators, now known as the GJMS operators, represent a natural generalization of the conformal Laplacian of order $2$ and the Paneitz operator of order $4$.
In contrast, Gover and Hirachi \cite{GH} later showed that if $n \ge 4$ is even and $\msfk > \frac{n}{2}$, no conformally covariant differential operator of leading part $\Delta^{\msfk}$ exists.

Given a GJMS operator $P_g^{(2\msfk)}$ and its associate $Q^{(2\msfk)}$-curvature $Q_g^{(2\msfk)} := (-1)^{\msfk} \frac{2}{n-2\msfk} P_g^{(2\msfk)}(1)$, one can formulate the constant $Q^{(2\msfk)}$-curvature problem, extending the Yamabe problem and the constant $Q^{(4)}$-curvature problem.
By employing the explicit formula for the GJMS operator presented by W\"unsch \cite{Wu} for $\msfk = 3$ and Juhl \cite{J} for all $\msfk$,
Chen and Hou \cite{CH} studied the constant $Q^{(6)}$-curvature problem on Einstein manifolds, and Mazumdar and V\'etois \cite{MV} examined the constant $Q^{(2\msfk)}$-curvature problem on general Riemannian manifolds for all $\msfk \in \N \cap (0,\frac{n}{2})$.
We also refer to a recent paper of Robert \cite{Ro}, where a bubbling analysis for subcritical perturbations of the constant $Q^{(2\msfk)}$-curvature problem was carried out.

In light of Remark \ref{rmk:main4}.1, we may pose the following natural questions:
\begin{itemize}
\item[-] What is the threshold dimension for the compactness of the constant $Q^{(6)}$-curvature problem or its higher-order analogues? Is it again 24?
\item[-] To answer the above question, can our strategy, including the observation regarding the explicit solvability of the associated linearized problems in the fourth-order case, be extended to the higher-order setting?
\end{itemize}
As the second and third main results of this paper, we investigate the $C^6(M)$-compactness and non-compactness of the entire solution set of the constant $Q^{(6)}$-curvature problem
\begin{equation}\label{eq:main6}
-P_g^{(6)}u = \mfc_6(n) u^{\frac{n+6}{n-6}}, \ u > 0 \quad \text{on } M,
\end{equation}
where $P_g^{(6)}$ is the sixth-order GJMS operator whose explicit form is given in \eqref{eq:P6} and $\mfc_6(n) := (n-6)(n-4)(n-2)n(n+2)(n+4)$.
\begin{thm}\label{thm:main6}
Let $(M,g)$ be a smooth closed Riemannian manifold of dimension $7\le n\le 26$ and not conformally diffeomorphic to the standard unit $n$-sphere $\S^n$.
Assume that $\textup{Ker}P_g^{(6)} = \{0\}$, the positivity of the Green's function $G_g^{(6)}$ of $-P_g^{(6)}$, and the validity of the positive mass theorem for $-P_g^{(6)}$.
Then there exists a constant $C > 0$ depending only on $n$ and $(M,g)$ such that any solution $u \in C^6(M)$ of equation (\ref{eq:main6}) satisfies
$$
\norm{C^6(M)}{u}+\norm{C^6(M)}{u^{-1}}\le C.
$$
\end{thm}
\begin{thm}\label{thm:main6n}
Assume that $n \ge 27$. There exist a smooth Riemannian metric $g$ on $M = \S^n$ and a sequence of smooth solutions $\{u_a\}_{a \in \N}$ of \eqref{eq:main6}
such that $g$ is not conformally flat, $\textup{Ker}P_g^{(6)} = \{0\}$, the Green's function $G_g^{(6)}$ of $-P_g^{(6)}$ is positive, and $\|u_a\|_{L^{\infty}(\S^n)} \to \infty$ as $a \to \infty$.
\end{thm}

\begin{rmk}\label{rmk:main6}
Seven remarks concerning Theorems \ref{thm:main6} and \ref{thm:main6n} are in order.

\noindent 1. \textbf{Threshold dimension.} Our strategy to analyze the fourth-order case still works well for the sixth-order case, yielding that the threshold dimension for the sixth-order problem is $26$, not $24$,
which is indeed surprising. Particularly, the answers to the above questions are, respectively, ``no" and ``yes".
Refer also to Conjecture \ref{conj:dim} below.

\noindent 2. \textbf{Correction term.} We will observe a simple yet elegant pattern for the correction terms, that is, the solutions of the linearized equations of the Yamabe, $Q^{(4)}$-, and $Q^{(6)}$-curvature problems.
The coefficients of the correction terms satisfy the same equations in \eqref{eq:Gammanew} across all three problems; see Remarks \ref{rmk:lin sol rel} and \ref{rmk:lin sol rel Q6}.
This fact gives a useful recursive pattern to study the first variation of the constant higher-order $Q^{(2\msfk)}$-curvature problems, and suggests the possibility of a unified theory of the compactness of the $Q^{(2\msfk)}$-curvature problems of all orders $2\msfk$.

\noindent 3. \textbf{Total $Q^{(2\msfk)}$-curvature.} The compactness of the constant $Q^{(2\msfk)}$-curvature problem is closely related the second variation of total $Q^{(2\msfk)}$-curvature, which has been extensively studied in the literature; see, for example, the work of Matsumoto \cite{Mats}.
The relation between these two problems deserves further explorations.

\noindent 4. \textbf{Composition structure.} We also notice that the second variation of the energy functional has a composition structure regarding how many numbers of the linear operator $\mcl_k$ in \eqref{eq:mclk} are composed. See Remarks \ref{rmk:composed decomp} and \ref{rmk:k+m relation}.
Previously, a related composition structure is known for Einstein metrics (see e.g. \cite{FG, Go}) and some Einstein products metrics (see \cite{CM}).

\noindent 5. \textbf{Bach tensor.} The Bach tensor is known as the ambient obstruction tensor of Fefferman and Graham \cite{FG} for $n=4$ and a constant multiple of the first extended obstruction tensor of Graham \cite{Gr} for $n\ne 4$.
One of the main difficulties in proving Theorem \ref{thm:main6} is that we need to study the second-order expansion of the Bach tensor, which involves fourth-order derivatives of the metric, whereas we only expand the Ricci curvature up to second-order to prove Theorem \ref{thm:main4}.
It is new in the literature on the sixth-order GJMS operator, while the first-order expansion of the Bach tensor has been studied before; see e.g. \cite{CH, LY}.

\noindent 6. \textbf{Quadratic form.} We point out that the quadratic forms relevant to both the compactness and non-compactness problems are identical. See Corollary \ref{delta direction}. This highlights a natural connection between these two problems.

\noindent 7. \textbf{Linear polynomial.} As a byproduct of Theorem \ref{thm:main6n}, we find that choosing a linear polynomial to construct a background metric, as in \cite{Br2, WZ},
allows us to produce a blowup sequence solving equation \eqref{eq:main6} when $n\geq 52$. The dimension restriction coincides with that in the Yamabe and $Q^{(4)}$-curvature problems \cite{Br2, WZ}. \hfill $\diamond$
\end{rmk}

To prove Theorem \ref{thm:main6n}, we will first construct a metric $\tig$ on $\R^n$ using a certain polynomial of degree $2d_0+2$ as in \cite{Br2, BM, WZ} (see \eqref{eq:Hij}), and then obtain a metric $g$ by lifting $\tig$ to $\S^n$ via the stereographic projection.
In this construction, we are required to integrate functions of the form $\frac{|x|^{2(2d_0+2)}}{(1+|x|)^{2((n-2\msfk)+\msfk)}}$ over $\R^n$, which is finite if and only if $n > 2(2d_0+2)+2\msfk$.
Brendle and Marques \cite{BM} chose $d_0 = 3$ to identify the optimal blow-up dimension for the Yamabe problem ($\msfk = 1$), while Wei and Zhao \cite{WZ} had to select $d_0 = 4$ for their corresponding result on the $Q^{(4)}$-curvature problem ($\msfk = 2$).
From this viewpoint, it is natural to choose $d_0 \ge 4$ for higher-order $Q^{(2\msfk)}$-curvature problems. Indeed, we will use $d_0 = 4$ to deduce Theorem \ref{thm:main6n}.
Note that if $d_0 = 4$, the integral is finite if and only if $n > 20+2\msfk$. Thus, the result in \cite{WZ} and ours yield the best possible lower bound on the dimension ($n > 24$ or $26$, respectively) that can be obtained through this approach.

The above observation leads us to raise the following conjecture:
\begin{conj}\label{conj:dim}
Let $(M^n,g)$ be a smooth closed Riemannian manifold not conformally diffeomorphic to $\S^n$. Assume that $4 \le \msfk < \frac{n}{2}$.
\begin{itemize}
\item[(i)] If $1+2\msfk \le n \le 20+2\msfk$, then the full solution set of the constant $Q^{(2\msfk)}$-curvature problem is $C^{2\msfk}(M)$-compact, provided that $\textup{Ker}P_g^{(2\msfk)} = \{0\}$,
    the positivity of the Green's function $G_g^{(2\msfk)}$ of the operator $(-1)^{\msfk}P_g^{(2\msfk)}$, and the validity of the positive mass theorem for $(-1)^{\msfk}P_g^{(2\msfk)}$.
\item[(ii)] If $n > 20+2\msfk$, then there exist a smooth Riemannian metric $g$ on $M = \S^n$ and a sequence of smooth solutions $\{u_a\}_{a \in \N}$ of the constant $Q^{(2\msfk)}$-curvature problem
    such that $g$ is not conformally flat, $\textup{Ker}P_g^{(2\msfk)} = \{0\}$, the Green's function $G_g^{(2\msfk)}$ of $-P_g^{(2\msfk)}$ is positive, and $\|u_a\|_{L^{\infty}(\S^n)} \to \infty$ as $a \to \infty$.
\end{itemize}
\end{conj}
\noindent It is an arduous task to verify Conjecture \ref{conj:dim} using the explicit formula for GJMS operators for $\msfk$ whose values greater than $3$, due to their drastic increase in complexity.
In the explicit formula, the GJMS operator of order $\msfk$ is written as a sum of $2^{\msfk-1}$ terms, each being is a composition of second-order differential operators.
However, it would be interesting to see whether the recursive formula for GJMS operators, as given in \cite{J, FG2}, can be applied to this problem. We hope to address it in a future work.

\medskip
We conclude this subsection with a brief discussion of the compactness issue for certain variants of the classical Yamabe problem, which are of independent interest:

Escobar \cite{Es} introduced a version of the boundary Yamabe problem on a smooth compact manifold $(\ox^n,g)$ with boundary $M^{n-1}$, generalizing of the Riemann mapping theorem.
In \cite{Al}, Almaraz constructed an $L^{\infty}(\ox)$-unbounded example for $n \ge 25$.
This finding suggests that the threshold dimension for the $C^2(\ox)$-compactness of the boundary Yamabe problem is likely $24$, as for the Yamabe and constant $Q^{(4)}$-curvature problems.
Compactness results have so far been fully established only up to dimension $5$ \cite{AdQW, KMW}, primarily due to serious challenges in the linear theory.
Hence, a natural question is whether our approach can contribute to tackling the problem.
We note that an immediate technical issue arises to achieve the compactness result, because it is necessary to establish vanishing theorems for not only the Weyl tensor but also the second fundamental form.

The boundary Yamabe problem on $(\ox^{n+1},g)$ can be viewed as a fractional Yamabe problem involving the half-Laplacian $(-\Delta)^{1/2}$ on $M^n$.
Since the fractional Yamabe problem is defined on $M$, we redefine $n$ to represent the boundary dimension. In particular, Almaraz's non-compactness example in \cite{Al} now holds for $n \ge 24$.
In view of the non-compactness example of Brendle and Marques \cite{BM} for the Yamabe problem existing for $n \ge 25$,
the critical dimension for the fractional Yamabe problems involving $(-\Delta)^{\ga}$ must shift at some $\ga^* \in (\frac{1}{2},1)$. In \cite{KMW2}, Kim, Musso, and Wei identified $\ga^* \simeq 0.940197$.
The investigation of $C^2(M)$-compactness for the fractional Yamabe problem is still in its infancy, with only the works \cite{QR2, KMW3} currently addressing this topic.

Finally, as fully nonlinear analogues, the $\sigma_k$-Yamabe problems were proposed by Viaclovsky \cite{Vi} for $k\in\{1,\dots,n\}$, and the classical Yamabe problem is the case $k=1$.
One standard route to existence is to first establish the compactness of the solution set and then conclude by a degree/continuity argument.
Along this route, Chang, Gursky, and Yang \cite{CGY} treated the case $k=2$ in dimension $4$;
A. Li and Y.Y. Li \cite{LL} proved existence and compactness on locally conformally flat manifolds;
Gursky and Viaclovsky \cite{GV} obtained existence and compactness in the regime $k>\frac{n}{2}$;
Trudinger and Wang \cite{TW} studied the intermediate case $k=\frac{n}{2}$;
and Li and Nguyen \cite{LN2} examined when a lower Ricci curvature bound holds (covering $k<\frac{n}{2}$ as well).
Note that these citations are not exhaustive; see the papers above for further developments.
Despite substantial advances, the compactness and non-compactness of the solution set for $2\le k<\frac{n}{2}$ remain open in general.

\subsection{Weyl vanishing theorem}
A crucial step to proving Theorems \ref{thm:main4} and \ref{thm:main6} is the establishment of the following Weyl vanishing theorem for the constant $Q^{(4)}$-curvature problem \eqref{eq:main4} and the constant $Q^{(6)}$-curvature problem \eqref{eq:main6}.
\begin{thm}\label{thm:Weyl}
Let $g$ be a smooth Riemannian metric in the unit $n$-ball $B_1$. Assume either
\begin{itemize}
\item[(i)] $8\le n\le 24$ and $\{u_a\}_{a \in \N} \subset C^4(B_1)$ is a sequence of solutions of (\ref{eq:main4}) with $M = B_1$; or
\item[(ii)] $10\le n\le 26$ and $\{u_a\}_{a \in \N} \subset C^6(B_1)$ is a sequence of solutions of (\ref{eq:main6}) with $M = B_1$.
\end{itemize}
Suppose that for each $\ep>0$, there is a constant $C=C(\ep)>0$ such that $\sup_{B_1\setminus B_{\ep}}u_a\le C(\ep)$ and $\lim_{a\to\infty} \sup_{B_1}u_a=\infty$. Then the Weyl tensor $W_g$ satisfies
$$
|W_g|(x)\le C|x|^l \quad \text{for } x \in B_1
$$
for some $l \in \N$ satisfying $l>\frac{n-8}{2}$ when (i) holds, and $l>\frac{n-10}{2}$ when (ii) holds.
\end{thm}
\begin{rmk}
Thanks to the previous result, the $C^4(M)$-compactness theorem for \eqref{eq:main4} holds for $8\le n\le 24$ without demanding the positive mass theorem for $P_g^{(4)}$ provided
\[\sum_{m=0}^{\lfloor \frac{n-8}{2} \rfloor} \left|\nabla_g^m W_g\right| > 0 \quad \text{on } M,\]
where $\lfloor \frac{n-8}{2} \rfloor$ is the greatest integer that does not exceed $\frac{n-8}{2}$.
The analogous $C^6(M)$-compactness theorem also holds for \eqref{eq:main6}. \hfill $\diamond$
\end{rmk}

\subsection{Organizations of the paper and novelties}\label{subsec:org}
To prove the compactness results, Theorems \ref{thm:main4} and \ref{thm:main6}, we adapt Schoen's strategy for the compactness of the Yamabe problem depicted in \cite{Sc, Sc2} and further developed in \cite{LZhu, Dr, LZha, LZha2, Ma, KMS}.

In Section \ref{sec:pre}, we present some relevant background, including the bubbles, the local Pohozaev(-Pucci-Serrin) identities, and the positive mass theorem for the Paneitz operator and the GJMS operators.

\medskip
Sections \ref{sec:curv}--\ref{sec:tech} are devoted to $C^4$-compactness theorem of the constant $Q^{(4)}$-curvature problem \eqref{eq:main4} for $5 \le n \le 24$ (the proof of Theorem \ref{thm:main4}).
Throughout the proof, we assume $n \ge 8$ unless otherwise stated. The proof of Theorem \ref{thm:main4} for $5 \le n \le 7$ does not require the Weyl vanishing theorem as can be seen in \cite{Li, LX}:

In Section \ref{sec:curv}, we derive the first- and second-order expansions of the Ricci curvature tensor, the scalar curvature, the $Q^{(4)}$-curvature, and the Paneitz operator as in e.g. \cite{AM, BMal, Br}.
For further expansions of quantities such as the Schouten tensor, the Cotton tensor, the Bach tensor, and so forth, we refer the reader to Subsection \ref{subsec:curv6}.

In Section \ref{sec:corr}, we introduce inhomogeneous linear problems \eqref{linearized eqn k,s} and explicitly solve them in rational form,
which are indispensable for constructing sharp approximations of blowing-up sequences of solutions in a neighborhood of each blowup point.
We emphasize that this step constitutes one of our main contributions, as such problems generally do not admit explicit rational solutions unless the inhomogeneous term exhibits special properties (see Remark \ref{rmk:lin}).
Notably, the algebraic structure of the Paneitz operator ensures that the inhomogeneous term is indeed special, facilitating the existence of these explicit rational solutions.

In Section \ref{sec:blowup}, we build sharp approximations of blowing-up sequences and estimate their pointwise error.

In Section \ref{sec:Weyl}, we establish the Weyl vanishing theorem (Theorem \ref{thm:Weyl2}, see also Theorem \ref{thm:Weyl}) by defining and analyzing the Pohozaev quadratic form.
The analysis requires a full understanding of the eigenspaces of the operator $\mcl_k$ defined in \eqref{eq:mclk},
which highlights an additional distinction between the Yamabe and $Q^{(4)}$-curvature problems. Refer to the paragraphs following Corollary \ref{I_1 coro}.

In Sections \ref{sec:lsr} and \ref{sec:comp}, we prove the non-negativity of a local Pohozaev term and the positive mass theorem for the Paneitz operator under Condition (ii) of Theorem \ref{thm:main4}, and then complete the proof of Theorems \ref{thm:main4} and \ref{thm:Weyl} under the assumption (i).

In Section \ref{sec:tech}, we verify the positivity of the Pohozaev quadratic form in three mutually exclusive cases, which is crucial in the proof of the Weyl vanishing theorem (Theorem \ref{thm:Weyl2}, or more precisely, Proposition \ref{Positive definiteness}).
We note that verifying the positivity of the matrices $(m^{D,s}_{qq'})$, $(m^{W,s}_{qq'})$, and $(m^{H,s}_{qq'})$ defined in Lemmas \ref{lemma of case 1}--\ref{lemma of case 3} is too tedious to perform manually.
Therefore, we carried out these calculations with the assistance of the computer software Mathematica.

\medskip
In Section \ref{sec:comp6}, we establish Theorem \ref{thm:main6}, namely, the $C^6$-compactness theorem of the constant $Q^{(6)}$-curvature problem \eqref{eq:main6} for $7 \le n \le 26$.
The overall scheme of the proof is the same as that of the proof of Theorem \ref{thm:main4}. However, as discussed in Remark \ref{rmk:main6}, we must overcome new technical difficulties and develop additional insights, even in comparison to the fourth-order case.

In Section \ref{sec:noncomp6}, we build an example of $L^{\infty}$-unbounded solutions to \eqref{eq:main6} for $n \ge 27$, proving Theorem \ref{thm:main6n}.
Although our approach is essentially the same as \cite{Br2,BM}, we make extensive use of the computations obtained for the compactness result, thereby highlighting the close relationship between the problems of compactness and non-compactness.

\medskip
For the reader's convenience, we included the Mathematica code for Sections \ref{sec:tech}, \ref{sec:comp6}, and \ref{sec:noncomp6} as the ancillary files with our arXiv submission.

\medskip
We also provide five appendix sections:

In Appendix \ref{sec:useful}, we provide elementary yet useful tools for the proof of main theorems.

In Appendix \ref{sec:cnc}, we provide several curvature identities in conformal normal coordinates. One of the features of this paper is the heavy usage of conformal normal coordinates.

In Appendix \ref{sec:sphere Q6}, we collect integrals of geometric quantities induced by the $Q^{(6)}$-curvature over the unit sphere.

In Appendix \ref{sec:eigenspaces}, we provide properties of the eigenvectors of the operator $\mcl_k$ given in \eqref{eq:mclk}.

Finally, in Appendix \ref{sec:geoexp}, we elaborate on a geometric interpretation of the explicit solvability phenomenon for inhomogeneous linear problems via the image of the conformal Killing operator. Refer also to Remarks \ref{rmk:lin sol rel} and \ref{rmk:lin sol rel Q6}.

\medskip
In proving Theorems \ref{thm:main4} and \ref{thm:main6}, we will omit several proofs concerning the blowup analysis or quantitative estimates, because the necessary arguments, with some modifications, can be derived from \cite{KMS, JLX, LX}.
We remark that the authors of \cite{LX} laid the analytic foundations for studying the compactness of the constant $Q^{(4)}$-curvature problem, resolving the technical difficulties stemming from the fourth-order structure of equation \eqref{eq:main4}.
Their work was motivated by their earlier collaboration with Jin \cite{JLX} on the fractional Nirenberg problem of order $\msfk \in (0,\frac{n}{2})$.
Together, these works essentially address the technical challenges associated with the sixth-order structure \eqref{eq:main6} as well.

\subsection{Notations}
We list the notations used in the introduction and the rest of the paper.

\medskip \noindent For all the theorems,
\begin{itemize}
\item[-] $\R^n$ is the $n$-dimensional Euclidean space $\R^n$ and $g_{\R^n}$ is its standard metric.
\item[-] $\S^{n-1}$ is the $(n-1)$-dimensional unit sphere in $\R^n$, $g_{\S^{n-1}}$ is its standard metric, and $|\S^{n-1}|$ is the canonical surface area of $(\S^{n-1},g_{\S^{n-1}})$.
\item[-] Given $\sigma, \sigma_1, \sigma_2 \in (M,g)$ and $R>0$, $B^g_R(\sigma)$ is the open geodesic ball of radius $R$ centered at $\sigma$ and $d_g(\sigma_1,\sigma_2)$ is the geodesic distance between $\sigma_1$ and $\sigma_2$.
    If $(M,g) = (\R^n,g_{\R^n})$, we write $B^n_R(\sigma) = B^g_R(\sigma)$ and $B_R = B^n(0,R)$.
\item[-] For $i,j \in \N \cup \{0\}$ such that $i-2j<-1$, we set
\begin{equation}\label{eq:mci}
\mci^i_j := \int_0^{\infty}\frac{r^i}{(1+r^2)^{j}}dr.
\end{equation}
\item[-] $dv_g$ and $dS_g$ denote the volume form and the surface measure, respectively. Furthermore, $dS$ represents the surface measure on Euclidean space.
\item[-] For $\msfk \in \N$, $\dot{H}^{\msfk}(\R^n)$ denotes the homogeneous Sobolev space of order $\msfk$, which is the closure of $C_c^{\infty}(\R^n)$ under the norm $\|u\|_{\dot{H}^{\msfk}(\R^n)} := \|(-\Delta)^{\frac{\msfk}{2}}u\|_{L^2(\R^n)}$.
\item[-] For $\msfk \in \N$, $H^{\msfk}(\S^n)$ denotes the inhomogeneous Sobolev space of order $\msfk$, which is the closure of $C_c^{\infty}(\R^n)$ under the norm $\|u\|_{H^{\msfk}(\S^n)} := \sum_{m=0}^{\msfk}\|\nabla^mu\|_{L^2(\S^n)}^2$.
\item[-] For a function $f\in C^l(B_1)$ and $m\in \R$, $f(x)=O^{(l)}(|x|^{m})$ denotes that
$$
|\nabla^i f|(x)=O(|x|^{m-i}), \quad i=0,\ldots, l.
$$
\item[-] $\mcp_k$ denotes the space of homogeneous polynomial on $\R^n$ of degree $k$.
\item[-] $\mch_k$ denotes the space of homogeneous harmonic polynomial on $\R^n$ of degree $k$, that is,
$$
\mch_k := \{P\in \mcp_k\mid\Delta P=0\}.
$$
\item[-] $\delta_{ij}$ is the Kronecker delta, that is, $\delta_{ij} = 1$ if $i = j$ and $0$ if $i \ne j$.
\item[-] Repeated indices follow the summation convention. A comma denotes partial differentiation in local coordinates, and a semicolon denotes covariant differentiation. Also, the empty sum is taken to be zero.
\item[-] For a symmetric $2$-tensor $T$, we write $\tr\, T:=T_{ii}$, $\delta_i T:=T_{ij,j}$, and $\delta^2 T:=T_{ij,ij}$. Also, define $\tr_gT:=g^{ij}T_{ij}$, $(\diver_gT)_i:=g^{jl}T_{ij;l}$, and $\diver_g\left\{T(\nabla_g u,\cdot)\right\}:=(g^{ik}g^{jl}T_{ij}u_{,l})_{;k}$.
\item[-] For two symmetric $2$-tensors $S$ and $T$, we write
\begin{align*}
S\cdot T&:=S_{ij}T_{ij} &&\text{(dot product)}, \\
(S\KN T)_{ijkl}&:=S_{il}T_{jk}+S_{jk}T_{il}-S_{ik}T_{jl}-S_{jl}T_{ik} &&\text{(Kulkarni-Nomizu product)}.
\end{align*}
\item[-] Let $m \in \N$. For an $m$-tensor $T$ with components $T_\alpha$, where $\alpha$ is a multi-index satisfying $|\alpha| = m$, we write $T_\alpha T_\alpha = (T_\alpha)^2$.
\item[-] Unless otherwise specified, $C > 0$ denotes a universal constant, which may vary from line to line or even in the same line.
\end{itemize}

\medskip \noindent For Theorem \ref{thm:main4},
\begin{itemize}
\item[-] For a function $u \in C^4(M)$, let $\mce_g(u) = P_g^{(4)}u - \Delta^2_gu$.
\item[-] Constants: $\mfc_4(n) := (n-4)(n-2)n(n+2)$ and $\tmfc_4(n) := (n-2)n(n+2)(n+4)$.
\item[-] Constant: $\vsi_n := 2(n-4)(n-2)|\S^{n-1}|$.
\item[-] Constants: $\alpha_n^1 := \frac{4+(n-2)^2}{2(n-2)(n-1)}$, $\alpha_n^2 :=
    \frac{n-6}{2(n-1)}$, and $\beta_n := \frac{n-4}{4(n-1)}$.
\item[-] Constants: $K := n-6$, $d := \lfloor\frac{n-4}{2}\rfloor$, and $\theta_k=1$ if $k=\frac{n-4}{2}$ while $\theta_k=0$ otherwise.
\item[-] Functions: $w(r)=(1+r^2)^{-\frac{n-4}{2}}$ and $Z(r)=rw'(r)+\frac{n-4}{2}w(r)$ for $r \in [0,\infty)$.
\end{itemize}

\medskip \noindent For Theorems \ref{thm:main6} and \ref{thm:main6n},
\begin{itemize}
\item[-] For a function $u \in C^6(M)$, let $\mce_g(u) = P_g^{(6)}u - \Delta^3_gu$.
\item[-] Constants: $\mfc_6(n) := (n-6)(n-4)(n-2)n(n+2)(n+4)$ and $\tmfc_6(n) := (n-4)(n-2)n(n+2)(n+4)(n+6)$.
\item[-] Constant: $\vsi_n := 8(n-6)(n-4)(n-2)|\S^{n-1}|$.
\item[-] Constants: $\alpha_n^1:=\frac{3 n^4- 33 n^3 + 100 n^2- 68 n + 144}{4(n-4)(n-2)}$, $\alpha_n^2:=\frac{3 n^3 - 40 n^2+ 140 n -48}{2(n-4)(n-2)}$, \\
\hspace*{53pt} $\beta_n:=\frac{3 n^3 - 25 n^2+ 10 n +152}{4(n-4)(n-1)}$, and $\ga_n:=2^{-7}\pi^{-n/2}\Gamma(\frac{n-6}{2})$.
\item[-] Constants: $K := n-8$, $d := \lfloor\frac{n-6}{2}\rfloor$, and $\theta_k=1$ if $k=\frac{n-6}{2}$ while $\theta_k=0$ otherwise.
\item[-] Functions: $w(r)=(1+r^2)^{-\frac{n-6}{2}}$ and $Z(r)=rw'(r)+\frac{n-6}{2}w(r)$ for $r \in [0,\infty)$.
\end{itemize}

\section{Preliminaries}\label{sec:pre}
\subsection{Bubbles and non-denegeracy}\label{subsec:bubble}
Fix any $\msfk \in \N$ and $n > 2\msfk$. Given parameters $\delta > 0$ and $\xi \in \R^n$, we define a bubble by
\begin{equation}\label{eq:bubble}
w_{\dx}(x) = \(\frac{\delta}{\delta^2+|x-\xi|^2}\)^{\frac{n-2\msfk}{2}} \quad \text{for } x \in \R^n.
\end{equation}
According to Li \cite{YYLi} and Chen, Li, and Ou \cite{CLO}, the set $\{w_{\dx}: \delta > 0, \xi \in \R^n\}$ constitutes the set of all solutions to
\[(-\Delta)^{\msfk} u = \mfc_{2\msfk}(n) u^{\frac{n+2\msfk}{n-2\msfk}}, \ u > 0 \quad \text{in } \R^n, \quad u \in L^{\frac{2n}{n-2\msfk}}_{\text{loc}}(\R^n),\]
where $\mfc_{2\msfk}(n) := \Pi_{m=-\msfk}^{\msfk-1}(n+2m)$.

Given $\xi = (\xi^1,\ldots,\xi^n) \in \R^n$, we define
\begin{equation}\label{eq:bubbleZ}
Z^0_{\dx} = -\frac{\pa w_{\dx}}{\pa \delta} \quad \text{and} \quad Z^i_{\dx} = \frac{\pa w_{\dx}}{\pa \xi^i} \quad \text{for } i = 1,\ldots,n.
\end{equation}
Bartsch, Weth, and Willem \cite[Theorem 2.1]{BWW} proved that the solution space of the linear problem
\begin{equation}\label{eq:bubbleZeq}
L(\Psi) := (-\Delta)^{\msfk} \Psi - \tmfc_{2\msfk}(n)w_{1,0}^{\frac{4\msfk}{n-2\msfk}}\Psi = 0 \quad \text{in } \R^n, \quad \Psi \in \dot{H}^{\msfk}(\R^n)
\end{equation}
is spanned by $Z^0_{1,0},\, Z^1_{1,0}, \ldots,\, Z^n_{1,0}$. Here, $\tmfc_{2\msfk}(n):=\frac{n-2\msfk}{n+2\msfk}\mfc_{2\msfk}(n)$.
Lu and Wei \cite[Theorem 2.2]{LW} obtained this non-degeneracy in the case $\msfk = 2$, and Li and Xiong \cite[Lemma 5.1]{LX} extended it to all $\msfk \in (0,\frac{n}{2})$, by replacing the condition $\Psi \in \dot{H}^{\msfk}(\R^n)$ with $\Psi \in L^{\infty}(\R^n)$.

For simplicity, we will write $w = w_{1,0}$ and $Z^i = Z^i_{1,0}$ for $i = 0,\ldots,n$. If there is no ambiguity, we will simply write $Z=Z^0$.

\subsection{Pohozaev-Pucci-Serrin identities}
Let $u:B_R \to \R$ be a positive $C^4$ function satisfying $P_g^{(4)}u = \mfc_4(n)u^{\frac{n+4}{n-4}}$ in $B_R$. For $0<r<R$, we define
\begin{equation}\label{eq:Poho}
\mbp^{(4)}(r,u):=\int_{\pa B_r} \left[-\frac{r}{2}(\Delta u)^2-\(r\pa_ru+\frac{n-4}{2}u\)\pa_r(\Delta u)+\Delta u\pa_r\(r\pa_ru+\frac{n-4}{2}u\)\right] dS,
\end{equation}
where $\pa_r := \frac{x_i}{r}\pa_{x_i}$. Then
\begin{equation}\label{eq:Poho2}
\mbp^{(4)}(r,u)=\int_{B_r}\(r\pa_ru+\frac{n-4}{2}u\) (\mce_g+\Delta^2_g-\Delta^2)(u)\, dx-\frac{\mfc_4(n)(n-4)r}{2n}\int_{\pa B_r}u^{\frac{2n}{n-4}}dS.
\end{equation}

Similarly, let $u:B_R \to \R$ be a positive $C^6$ function satisfying $-P_g^{(6)}u=\mfc_6(n)u^{\frac{n+6}{n-6}}$ in $B_R$. For $0<r<R$, we define
\begin{equation}\label{eq:Poho Q6}
\begin{aligned}
\mbp^{(6)}(r,u):=\int_{\pa B_r}&\left[-\frac{r}{2}|\nabla\Delta u|^2+\(r\pa_ru+\frac{n-6}{2}u\)\pa_r(\Delta^2 u)\right.\\
&\ \left.-\Delta^2 u\pa_r\(r\pa_ru+\frac{n-6}{2}u\)+\pa_r\Delta u\Delta\(r\pa_ru+\frac{n-6}{2}u\)\right]dS.
\end{aligned}
\end{equation}
Then
\begin{equation}\label{eq:Poho2 Q6}
\mbp^{(6)}(r,u)=-\int_{B_r}\(r\pa_ru+\frac{n-6}{2}u\) (\mce_g+\Delta^3_g-\Delta^3)(u)\, dx-\frac{\mfc_6(n)(n-6)r}{2n}\int_{\pa B_r}u^{\frac{2n}{n-6}}dS.
\end{equation}

The proofs of \eqref{eq:Poho2} and \eqref{eq:Poho2 Q6} can be found in \cite[Proposition 13.1]{Ro}, where Robert refers to them as the Pohozaev-Pucci-Serrin identities.

\subsection{Positive mass theorem}\label{subsec:pmt}
Recall that an asymptotically flat manifold $(M^n,\hat{g})$ (with one end and decay rate $\tau>0$) is defined by $M=M_0\cup M_{\infty}$,
where $M_0$ is compact and $M_{\infty}$ is diffeomorphic to $\R^n\setminus \overline{B_1}$ such that $g_{ij}(y)-\delta_{ij}=O^{(4)}(|y|^{-\tau})$ as $|y|\to \infty$. Moreover, if $\tau>\frac{n-4}{2}$ and $Q_g^{(4)} \in L^1(M,\hat{g})$, we can define a fourth-order mass
\begin{equation}\label{eq:hom}
m(\hat{g}):=\lim_{r\to\infty} \int_{\pa B_r}(\hat{g}_{ii,jjk}-\hat{g}_{ij,ijk})(y)\frac{y_k}{|y|}dS.
\end{equation}
In \cite[Theorem A]{ALL}, Avalos, Laurain, and Lira proved the following result.
\begin{thm}\label{PMT2}
Assume that $(M^n,\hat{g})$ is an asymptotically flat manifold with decay rate $\tau>\frac{n-4}{2}$ satisfying
\begin{itemize}
\item[(i)] $n\ge 5$;
\item[(ii)] $Y_{\hat{g}}>0$, $Q_{\hat{g}}\in L^1(M,\hat{g})$, and $Q_{\hat{g}}\geq 0$.
\end{itemize}
Then the fourth-order mass $m(\hat{g})$ is nonnegative. Moreover, $m(\hat{g})=0$ if and only if $(M,\hat{g})$ is isometric to the Euclidean space $\R^n$.
\end{thm}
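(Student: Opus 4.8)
I would reduce the nonnegativity of the higher-order mass \eqref{eq:hom} to the classical ADM positive mass theorem by two successive conformal changes, tracking how the mass transforms and using $Y_{\hat g}>0$ and $Q_{\hat g}\ge 0$ to sign the correction terms. Since $Y_{\hat g}>0$, the conformal Laplacian $L_{\hat g}u=-\Delta_{\hat g}u+\tfrac{n-2}{4(n-1)}R_{\hat g}u$ is coercive with positive Green's function; since moreover $Q_{\hat g}\ge 0$ and $Q_{\hat g}\in L^1(M,\hat g)$, the Paneitz operator $P_{\hat g}$ obeys the maximum principle and has a positive Green's function decaying like $|y|^{4-n}$ on the end, in the circle of ideas of Gursky--Hang--Lin \cite{GHL} and Hang--Yang \cite{HY,HY2}. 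I would first fix weighted H\"older spaces adapted to the decay rate $\tau$ and record the invertibility of $P_{\hat g}$ and $L_{\hat g}$ between them, together with the leading asymptotics of the solutions used below. Solving $P_{\hat g}w=0$ with $w\to 1$ at infinity, the maximum principle gives $w>0$, and the Green's representation applied to $P_{\hat g}(w-1)=-\tfrac{n-4}{2}Q_{\hat g}$ yields $w(y)=1+A_0|y|^{4-n}+o(|y|^{4-n})$ with $A_0=-\kappa_n\int_M Q_{\hat g}\,dv_{\hat g}\le 0$ for an explicit $\kappa_n>0$. Then $\tilde g:=w^{4/(n-4)}\hat g$ is asymptotically flat with decay rate $\ge\min(\tau,n-4)>\tfrac{n-4}{2}$, has $Q_{\tilde g}\equiv 0$ and $Y_{\tilde g}=Y_{\hat g}>0$, and a computation of the transformation of \eqref{eq:hom} under the conformal factor $1+A_0|y|^{4-n}+\cdots$ gives $m(\hat g)=m(\tilde g)+\kappa_n'\int_M Q_{\hat g}\,dv_{\hat g}$ with $\kappa_n'>0$. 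Hence $m(\hat g)\ge m(\tilde g)$, so it suffices to prove the theorem for $Q$-flat metrics, and the rigidity statement for $\hat g$ reduces to that for $\tilde g$ via ``$m(\hat g)=0\Rightarrow\int_M Q_{\hat g}=0\Rightarrow Q_{\hat g}\equiv 0\Rightarrow\hat g=\tilde g$''.

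Assume now $Q_{\hat g}\equiv 0$. By a Corvino--Schoen-type gluing for the Paneitz operator I would reduce to $\hat g$ being \emph{fourth-order Schwarzschild}, i.e. $\hat g_{ij}=(1+\mu|y|^{4-n})^{4/(n-4)}\delta_{ij}$ (hence $Q$-flat and conformally flat) outside a compact set, so that all masses below are finite; this is clean for $n\ge 6$, the borderline $n=5$ requiring an additional limiting argument. Using $Y_{\hat g}>0$, solve $L_{\hat g}v=0$ with $v\to 1$, $v>0$, and set $\bar g:=v^{4/(n-2)}\hat g$, which is scalar-flat and asymptotically flat. The heart of the argument is an integral identity: integrating the $Q$-flatness relation $\Delta_{\hat g}\sigma_1(A_{\hat g})=4\sigma_2(A_{\hat g})+\tfrac{n-4}{2}\sigma_1(A_{\hat g})^2$ over large balls, passing to the limit, and carefully tracking the expansion of $\hat g$ and $v$ at infinity together with the divergence structure of $\mce_{\hat g}$, one rewrites $m(\hat g)$ as a positive multiple of $m_{\mathrm{ADM}}(\bar g)$ plus a curvature integral over $M$ that is nonnegative because $R_{\bar g}\equiv 0$ and $Y_{\hat g}>0$. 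Since $R_{\bar g}\equiv 0\ge 0$, the classical positive mass theorem (Schoen--Yau, Witten) gives $m_{\mathrm{ADM}}(\bar g)\ge 0$, whence $m(\hat g)\ge 0$; and if $m(\hat g)=0$, then $m_{\mathrm{ADM}}(\bar g)=0$ and every correction vanishes, so $\bar g$ is flat by the rigidity in the classical positive mass theorem, which forces $v\equiv 1$ and, back one step, $\hat g$ Euclidean.

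The main difficulty I foresee is the integral identity of the last paragraph---pinning down the exact conformal transformation law for \eqref{eq:hom} and isolating a manifestly nonnegative expression containing $m_{\mathrm{ADM}}(\bar g)$---together with the density/gluing step that improves the decay of $\hat g$ while preserving $Q\ge 0$, without which the boundary error terms in the identity need not vanish in the limit. The weighted linear analysis is routine but must be carried out precisely enough to produce the exact leading asymptotics $w=1+A_0|y|^{4-n}+\cdots$ and $v=1+\cdots$ that feed the mass computations.
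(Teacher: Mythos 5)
You should first note that the paper contains no proof of this statement: Theorem \ref{PMT2} is quoted verbatim from Avalos--Laurain--Lira \cite[Theorem A]{ALL}, and the only related argument the paper itself carries out is Proposition \ref{prop:Green}, which computes the higher-order mass \eqref{eq:hom} of the conformal blow-up $G_g^{4/(n-4)}g$ so that the cited theorem can be applied. So there is no internal proof to compare yours against, and your proposal has to stand on its own as a proof of the Avalos--Laurain--Lira theorem. Its overall shape --- conformally normalize to a $Q$-flat metric using positivity theory for the Paneitz operator, then compare with a scalar-flat conformal metric and invoke the classical ADM positive mass theorem --- is in the spirit of \cite{HR, ALL}, so the approach is not wrong-headed.

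As a proof, however, the decisive steps are asserted rather than established. The ``heart'' of your second paragraph --- an integral identity expressing $m(\hat g)$, for $Q$-flat $\hat g$, as a positive multiple of $m_{\mathrm{ADM}}(\bar g)$ plus a manifestly nonnegative bulk term --- is precisely the substance of the theorem, and you give neither the identity, nor the candidate nonnegative remainder, nor a reason the boundary terms converge; ``carefully tracking the expansion'' is not an argument, and the sign bookkeeping between the two masses and the curvature corrections is exactly where such a proof can fail. Second, the Corvino--Schoen-type reduction to fourth-order-Schwarzschild ends preserving $Q$-flatness (or $Q\ge 0$) is not off-the-shelf technology: density/gluing theorems for the $Q$-curvature constraint are themselves a substantial undertaking, and without them the error terms you acknowledge in the borderline decay regime (and for $n=5$) are uncontrolled. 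Third, the maximum principle and positive Green's function for $P_{\hat g}$ on an asymptotically flat manifold under $Y_{\hat g}>0$, $Q_{\hat g}\ge 0$ is a nontrivial extension of the closed-manifold results of \cite{GM, HY2} and must be proved in weighted spaces, as must the transformation law $m(\hat g)=m(\tilde g)+\kappa_n'\int_M Q_{\hat g}\,dv_{\hat g}$ (plausible, and consistent with the computation in Proposition \ref{prop:Green}, but requiring the precise expansion of $w$ and of the mass integrand also when $\tau\le n-4$). As written, your text is a reasonable roadmap whose main analytic content is missing; the paper's own route is simply to cite \cite{ALL}.
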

\noindent Prior to \cite{ALL}, the positive mass theorem for manifolds of dimensions $5 \le n \le 7$ and for locally conformally flat manifolds was proved in \cite{HR, GM, HY2}.

\medskip
In general, according to Michel \cite{Mi}, the $(2\msfk)$th-order mass for $2 \le \msfk \in \N$ can be defined by
\[m_\msfk(\hat{g}) := \int_M (-\Delta_{\hat{g}})^{\msfk-1}R_{\hat{g}} dv_{\hat{g}}.\]
However, to the best of the authors' knowledge, no $(2\msfk)$-order version of Theorem \ref{PMT2} involving the mass $m_\msfk$ has yet been established.

We remark that there are various definitions of higher-order masses and corresponding positive mass theorems: Refer to the work of Ge, Wang, and Wu \cite{GWW} for the Gauss-Bonnet-Chern mass and that of Li and Nguyen \cite{LN} for the mass associated with the $\sigma_k$ Yamabe problem.

\subsection{Properties of blowup sequences}
Throughout the subsection, we assume that $\msfk = 2, 3$ and $n > 2\msfk$.

\medskip
We assume that $\{g_a\}_{a \in \N} \subset [g]$ is a sequence of metrics on a smooth closed Riemannian manifold $M$ converging to a metric $g_{\infty} \in [g]$ in $C^l(M)$ as $a \to \infty$ for any $l \in \N$, and $\{u_a\}_{a \in \N} \subset C^{2\msfk}(M)$ is a sequence of solutions of
\begin{equation}\label{eq:maina}
(-1)^{\msfk}P_{g_a}^{(2\msfk)}u_a = \mfc_{2\msfk}(n)u_a^{\frac{n+2\msfk}{n-2\msfk}}, \ u_a > 0 \quad \text{on } M.
\end{equation}

\begin{defn}[Blowup points]
\

\noindent 1. A point $\bsi \in M$ is called a blowup point for $\{u_a\}$ if $u_a(\sigma_a) \to \infty$ as $a \to \infty$ for some sequence $\{\sigma_a\} \subset M$ such that $\sigma_a \to \bsi$.

\noindent 2. A blowup point $\bsi \in M$ is called isolated if there exist constants $R,\, C>0$ such that
$$
u_a(\sigma) \le Cd_{g_a}(\sigma,\sigma_a)^{-\frac{n-2\msfk}{2}} \quad \text{for all } y \in B^{g_a}_R(\sigma_a).
$$

\noindent 3. An isolated blowup point $\bsi \in M$ is called simple if the map $r \mapsto r^{\frac{n-2\msfk}{2}}\bar{u}_a(r)$ has only one critical point in the interval $(0,R)$, where
\[\bar{u}_a(r) := |\pa B_r^{g_a}(\sigma_a)|^{-1}_{g_a}\int_{\pa B_r^{g_a}(\sigma_a)} u_a dS_{g_a}. \tag*{$\diamond$}\]
\end{defn}
\noindent To specify the points $\{\sigma_a\}$, we will use the expression that $\sigma_a \to \bsi \in M$ is a blowup point for $\{u_a\}$.

\medskip
Let $G_g^{(2\msfk)}$ be the Green's function of the operator $(-1)^{\msfk}P_g^{(2\msfk)}$.
A standard parametrix construction of the Green's function shows that
\begin{equation}\label{eq:Green}
|G_{g_a}^{(2\msfk)}(\sigma,\tau)| \le Cd_{g_a}(\sigma,\tau)^{2\msfk-n} \quad \text{for } \sigma, \tau \in M.
\end{equation}
Fix any $\sigma_0 \in M$ and $\delta > 0$ small. By applying the Green's representation formula for $u_a$, one can rewrite \eqref{eq:maina} as
\[u_a(\sigma) = \mfc_{2\msfk}(n) \int_{B^{g_a}_{\delta}(\sigma_0)} G_{g_a}^{(2\msfk)}(\sigma,\tau) u_a^{\frac{n+2\msfk}{n-2\msfk}}(\tau) dv_{g_a} + \mfh_a(\sigma) \quad \text{for } \sigma \in B^{g_a}_{\delta}(\sigma_0).\]
Here, $\{\mfh_a\}_{a \in \N}$ is a sequence of smooth positive functions on $B^{g_a}_{\delta}(\sigma_0)$ such that for any $r \in (0,\frac{\delta}{4})$,
\begin{equation}\label{eq:mfha}
\sup_{B^{g_a}_r(\sigma)} \mfh_a \le C\inf_{B^{g_a}_r(\sigma)} \mfh_a \quad \text{and} \quad \sum_{m=1}^{2\msfk+1} r^m |\nabla^m\mfh_a(\sigma)| \le C\|\mfh_a\|_{L^{\infty}(B^{g_a}_r(\sigma))}, \quad \sigma \in B^{g_a}_{\delta/2}(\sigma_0),
\end{equation}
where the constant $C > 0$ depends only on $n$ and $(M,g_{\infty})$.
Building upon these observations, the proofs of the following propositions were provided in \cite[Section 4]{LX} for $\msfk = 2$. The proof for the case $\msfk = 3$ is analogous; see \cite{JLX}.
\begin{prop}
Let $\sigma_a \to \bsi \in M$ be an isolated blowup point for a sequence $\{u_a\}$ of solutions to \eqref{eq:maina}, and $\ep_a = u_a(\sigma_a)^{-\frac{2}{n-2\msfk}} > 0$.
Suppose that $\{R_\ell\}_{\ell \in \N}$ and $\{\tau_\ell\}_{\ell \in \N}$ are arbitrary sequences of positive numbers such that $R_\ell \to \infty$ and $\tau_\ell \to 0$ as $\ell \to \infty$.
Then $\{u_a\}$ has a subsequence $\{u_{a_\ell}\}_{\ell \in \N}$ such that
\begin{equation}\label{eq:conv}
\left\|\ep_{a_\ell}^{\frac{n-2\msfk}{2}} u_{a_\ell}\(\ep_{a_\ell} \cdot\) - w\right\|_{C^{2\msfk-1}(B_{R_{a_\ell}})} \le \tau_\ell
\end{equation}
in $g_{a_\ell}$-normal coordinates centered at $\sigma_{a_\ell}$, and $R_\ell \ep_{a_\ell} \to 0$ as $\ell \to \infty$. Here, $w = w_{1,0}$.
\end{prop}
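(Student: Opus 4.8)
The plan is to follow the well-established rescaling argument of Schoen and its fourth-order adaptation in \cite{LX}, combined with a diagonal extraction. First I would introduce the rescaled functions $v_a(y) := \ep_a^{\frac{n-4}{2}} u_a(\exp^{g_a}_{\sigma_a}(\ep_a y))$ in $g_a$-normal coordinates centered at $\sigma_a$, defined on the ball of radius $\ep_a^{-1} \delta_0$ for some fixed small $\delta_0$. By construction $v_a(0) = 1$, and the isolated-blowup bound $u_a(\sigma) \le C d_{g_a}(\sigma,\sigma_a)^{-\frac{n-4}{2}}$ translates into the scale-invariant estimate $v_a(y) \le C|y|^{-\frac{n-4}{2}}$ on $B_{\ep_a^{-1}R} \setminus \{0\}$; in particular $v_a$ is uniformly bounded on every fixed annulus away from the origin, and a Harnack-type argument (using that $v_a$ solves the rescaled equation and that $0$ is the maximum location) upgrades this to a uniform local bound of $v_a$ near $0$ as well. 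The rescaled metrics $\tilde g_a(y) := g_a(\exp^{g_a}_{\sigma_a}(\ep_a y))$ converge to the Euclidean metric in $C^l_{\mathrm{loc}}(\R^n)$ for every $l$, since $\ep_a \to 0$ and $g_a \to g_\infty$ in $C^l$, and the coefficients of $P_{g_a}$ scale correctly so that $v_a$ satisfies $P_{\tilde g_a} v_a = \mfc(n) v_a^{\frac{n+4}{n-4}}$.

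Next I would invoke elliptic regularity for the fourth-order operator $P_{\tilde g_a}$: the uniform $L^\infty_{\mathrm{loc}}$ bound on $v_a$, together with the uniform control on the coefficients of $P_{\tilde g_a}$, gives via $W^{4,p}$ estimates and bootstrapping a uniform $C^{4,\alpha}_{\mathrm{loc}}(\R^n)$ bound on $v_a$. By Arzelà–Ascoli and a diagonal argument over an exhaustion of $\R^n$, a subsequence converges in $C^4_{\mathrm{loc}}(\R^n)$ to a limit $v_\infty \ge 0$ solving $\Delta^2 v_\infty = \mfc(n) v_\infty^{\frac{n+4}{n-4}}$ in $\R^n$, with $v_\infty(0) = 1$ and $v_\infty(y) \le C|y|^{-\frac{n-4}{2}}$. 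Since $v_\infty(0) = 1 > 0$, the strong maximum principle (or the positivity built into the a priori bound via Harnack) forces $v_\infty > 0$ everywhere, so by the classification recalled in Subsection 2.1 (the set $\{w_\lx\}$ exhausts all positive solutions, cf. \cite{LW}) we have $v_\infty = w_{\lambda,\xi}$ for some $\lambda,\xi$. The normalizations $v_\infty(0) = 1$ (so $\lambda^2 + |\xi|^2$ is pinned) and the fact that $0$ is a critical point of $v_a$ (hence of $v_\infty$), which forces $\xi = 0$ and then $\lambda = 1$, identify $v_\infty = w = w_{1,0}$.

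Finally, to obtain the quantitative statement with the prescribed sequences $\{R_\ell\}$ and $\{\tau_\ell\}$, I would argue by contradiction: if for some $\ell$ no index $a$ gives $\|v_a - w\|_{C^3(B_{R_\ell})} \le \tau_\ell$ along any subsequence, then extracting as above produces a limit equal to $w$ in $C^4_{\mathrm{loc}} \supset C^3(B_{R_\ell})$, a contradiction for $a$ large. A careful diagonal choice over $\ell$ then yields the subsequence $\{u_{a_\ell}\}$ with \eqref{eq:conv} and with $R_\ell \ep_{a_\ell} \to 0$ (the latter being arrangeable because $\ep_a \to 0$, possibly after passing to a further subsequence that makes $\ep_{a_\ell}$ decay fast relative to $R_\ell$). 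The main obstacle is the fourth-order elliptic regularity and the Harnack inequality near the blowup point: unlike the second-order Yamabe case, $P_{g_a}$ is not in divergence form with a sign-definite structure, so one must use the representation formula with the Green's function $G_{g_a}$ and the estimates \eqref{eq:mfha} on the harmonic part $\mfh_a$ to justify both the local boundedness of $v_a$ near $0$ and the passage to the limit; this is precisely the analytic groundwork laid in \cite[Section 4]{LX}, which I would cite rather than reprove.
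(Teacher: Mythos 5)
Your proposal is correct and follows essentially the same route as the paper, which gives no independent proof of this proposition but defers to \cite[Section 4]{LX}: the argument there is precisely the rescaling to $v_a$, the use of the Green's function representation with the estimates \eqref{eq:mfha} to get local bounds and Harnack-type control, fourth-order elliptic bootstrap, classification of the entire limit as a bubble (the classification fact recalled in Subsection 2.1), and a diagonal extraction to accommodate the prescribed $R_\ell,\tau_\ell$ with $R_\ell\ep_{a_\ell}\to 0$. The only small point to keep in mind is that identifying the limit as $w_{1,0}$ (rather than some $w_{\lambda,\xi}$) uses that the centers $\sigma_a$ are taken as local maxima of $u_a$, as in the convention of \cite{LX}, so that $0$ is a critical point of $v_a$ and hence of the limit.
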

\noindent Thus, we can select $\{R_\ell\}_{\ell \in \N}$ and $\{u_{a_\ell}\}_{\ell \in \N}$ satisfying \eqref{eq:conv} and $R_\ell \ep_{a_\ell} \to 0$.
To simplify notations, we will write $\{u_a\}_{a \in \N}$ instead of $\{u_{a_\ell}\}_{\ell \in \N}$, and so on.
\begin{prop}\label{prop:isosim decay}
If $\sigma_a \to \bsi \in M$ is an isolated simple blowup point for a sequence $\{u_a\}$ of solutions to \eqref{eq:maina}, then there exist constants $\msfd_0 \in (0,1)$ and $C>0$ such that
\begin{equation}\label{eq:isosim decay}
u_a(\sigma_a)u_a(\sigma) \le Cd_{g_a}(\sigma,\sigma_a)^{2\msfk-n} \quad \text{for all } 0<d_{g_a}(\sigma,\sigma_a)<\msfd_0.
\end{equation}
Moreover, $u_a(\sigma_a)u_a \to \mfg := \vsi_nG_{g_{\infty}}(\cdot,\bsi)+\mfh$ in $C^{2\msfk-1}_{\mathrm{loc}}(B^{g_{\infty}}_{\msfd_0}(\bsi) \setminus \{\bsi\})$,
where $\vsi_n=2(n-4)(n-2)|\S^{n-1}|$ for $\msfk = 2$ and $\vsi_n=8(n-6)(n-4)(n-2)|\S^{n-1}|$ for $\msfk = 3$, $G_{g_{\infty}}$ is the Green's function of the operator $P_{g_{\infty}}^{(4)}$ or $-P_{g_{\infty}}^{(6)}$,
and $\mfh \in C^{2\msfk+1}(B^{g_{\infty}}_{\msfd_0}(\bsi))$ is a nonnegative function satisfying \eqref{eq:mfha}.
\end{prop}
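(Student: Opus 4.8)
The plan is to run the blowup analysis of Schoen and Li--Zhu in the fourth-order form developed in \cite{KMS, LX}. Throughout I would work in $g_a$-normal coordinates centered at $\sigma_a$ and set $\mu_a := u_a(\sigma_a) = \ep_a^{-\frac{n-4}{2}} \to \infty$. Two preliminary tools are needed. First, the definition of an isolated blowup point gives $u_a(\sigma) \le Cd_{g_a}(\sigma,\sigma_a)^{-\frac{n-4}{2}}$ on $B^{g_a}_R(\sigma_a)$, whence the potential $V_a := \mfc(n)u_a^{\frac{8}{n-4}}$ satisfies $V_a(\sigma) \le Cd_{g_a}(\sigma,\sigma_a)^{-4}$. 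Second, rewriting \eqref{eq:maina} as the linear equation $\Delta^2_{g_a}u_a = (V_a - \mce_{g_a})u_a$ and rescaling it on dyadic annuli $B^{g_a}_{2r}(\sigma_a)\setminus B^{g_a}_{r/2}(\sigma_a)$ to a uniformly elliptic fourth-order equation with controlled coefficients, one obtains --- using the positivity of $G_{g_a}$ on small balls and the Harnack bounds \eqref{eq:mfha} for $\mfh_a$, as in \cite{LX} --- a Harnack inequality $\sup_{B^{g_a}_{2r}(\sigma_a)\setminus B^{g_a}_{r/2}(\sigma_a)}u_a \le C\,\bar u_a(r)$ for $0 < r < \delta_0$. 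Hence pointwise control of $u_a$ reduces to control of the spherical average $\bar u_a$.

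The heart of the matter, and what I expect to be the main obstacle, is the decay estimate \eqref{eq:isosim decay}. For $d_{g_a}(\sigma,\sigma_a) \le R_a\ep_a$ it is immediate from \eqref{eq:conv} and $w(y) \le |y|^{4-n}$. For larger distances the isolated--simple hypothesis enters through the monotonicity of $r \mapsto r^{\frac{n-4}{2}}\bar u_a(r)$: this map has exactly one critical point in $(0,R)$, which by \eqref{eq:conv} sits at scale $\sim\ep_a$ and is a local maximum (since $r^{\frac{n-4}{2}}w(r)$ peaks at $r=1$), so it is non-increasing on $(C\ep_a,\delta_0)$; combined with the isolated bound this already yields a first, non-sharp estimate $u_a \le CR_a^{-\frac{n-4}{2}}d_{g_a}(\cdot,\sigma_a)^{-\frac{n-4}{2}}$ on $\{R_a\ep_a \le d_{g_a}(\cdot,\sigma_a) \le \delta_0\}$, so that $V_a$ there carries a small negative power of $R_a$. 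Feeding this into the representation formula $u_a = \mfc(n)\int_{B^{g_a}_\delta(\sigma_a)}G_{g_a}(\cdot,\tau)u_a^{\frac{n+4}{n-4}}(\tau)\,dv_{g_a} + \mfh_a$, with $G_{g_a}(\sigma,\tau) \le Cd_{g_a}(\sigma,\tau)^{4-n}$: the part of the integral over the bubble region, after the change of variables $\tau = \exp_{\sigma_a}(\ep_a z)$ and using $\mu_a\ep_a^{\frac{n-4}{2}}=1$, contributes $O(d_{g_a}(\sigma,\sigma_a)^{4-n})$ to $\mu_a u_a(\sigma)$; the remaining part is $o(d_{g_a}(\sigma,\sigma_a)^{4-n})$ once $R_a$ is chosen to grow faster than $\ep_a^{-\frac{n-4}{n+4}}$ (and, as always, slower than $\ep_a^{-1}$); and $\mu_a\mfh_a$ is uniformly bounded on $B^{g_a}_{\delta_0/2}(\sigma_a)$, because $\mfh_a < u_a$ and \eqref{eq:mfha} propagates the bound $\mu_a\mfh_a \le C$ holding on $\pa B^{g_a}_{\delta_0/2}(\sigma_a)$ --- the latter resting on the structural fact, part of the blowup machinery, that along the subsequence $u_a \to 0$ in $C^0_{\mathrm{loc}}$ off the blowup set. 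In the fourth-order setting this step is genuinely delicate: the Paneitz operator possesses no elementary maximum principle, so the comparison must be run through the positivity of $G_{g_a}$ (itself part of the analysis under condition (ii)); the complete argument, adapting \cite{LZhu, LZha, KMS}, is carried out in \cite[Section 4]{LX}. Combining the two distance ranges gives \eqref{eq:isosim decay}.

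Granted \eqref{eq:isosim decay}, I would put $w_a := \mu_a u_a$. It solves $P_{g_a}w_a = \mfc(n)\ep_a^4\, w_a^{\frac{n+4}{n-4}}$ with $\ep_a \to 0$, and by \eqref{eq:isosim decay} is uniformly bounded on $B^{g_\infty}_{\delta_0}(\bsi)\setminus B_r(\bsi)$ for every $r>0$, hence precompact in $C^3_{\mathrm{loc}}$ of the punctured ball by interior fourth-order estimates. Multiplying the representation formula by $\mu_a$,
\[
w_a(\sigma) = \mfc(n)\int_{B^{g_a}_\delta(\sigma_a)} G_{g_a}(\sigma,\tau)\,\mu_a u_a^{\frac{n+4}{n-4}}(\tau)\,dv_{g_a} + \mu_a\mfh_a(\sigma),
\]
and the substitution $\tau = \exp_{\sigma_a}(\ep_a z)$, together with $\mu_a\ep_a^{\frac{n-4}{2}}=1$ and \eqref{eq:conv}, shows that the measures $\mfc(n)\mu_a u_a^{\frac{n+4}{n-4}}\,dv_{g_a}$ converge weakly to $c_n\,\delta_{\bsi}$, where $c_n = \mfc(n)\int_{\R^n}w^{\frac{n+4}{n-4}} = 2(n-2)(n-4)|\S^{n-1}|$ --- the last equality because $\Delta^2 w = \mfc(n)w^{\frac{n+4}{n-4}}$, $w(x) \sim |x|^{4-n}$ as $|x|\to\infty$, and $\Delta^2\big(|x|^{4-n}\big) = 2(n-2)(n-4)|\S^{n-1}|\,\delta_0$ on $\R^n$. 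Since $G_{g_a} \to G_{g_\infty}$ in $C^3_{\mathrm{loc}}$ off the diagonal, the integral term converges to $c_n G_{g_\infty}(\cdot,\bsi)$ in $C^3_{\mathrm{loc}}(B^{g_\infty}_{\delta_0}(\bsi)\setminus\{\bsi\})$, while $\mu_a\mfh_a$ --- uniformly bounded and satisfying \eqref{eq:mfha} with uniform constants --- converges along a subsequence in $C^3_{\mathrm{loc}}$ to a nonnegative $\mfh \in C^5$ satisfying \eqref{eq:mfha}. Thus $w_a \to c_n G_{g_\infty}(\cdot,\bsi) + \mfh =: \mfg$, and as the limit is independent of the subsequence the whole sequence converges, which is the assertion.
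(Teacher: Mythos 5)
Your overall route is the same one the paper relies on: the paper gives no proof of this proposition at all, but instead points to \cite[Section 4]{LX}, and your outline (annular Harnack inequality via the Green representation and \eqref{eq:mfha}, the crude bound from the one-critical-point property of $r\mapsto r^{\frac{n-4}{2}}\bar u_a(r)$, upgrading through the representation formula, identification of the Dirac mass with coefficient $c_n=\mfc(n)\int_{\R^n}w^{\frac{n+4}{n-4}}=2(n-2)(n-4)|\S^{n-1}|$, and convergence of $u_a(\sigma_a)\mfh_a$ to $\mfh$) is precisely the Li--Zhu/Marques scheme as adapted to the Paneitz setting there. The second half of your argument (the $C^3_{\mathrm{loc}}$ convergence to $c_nG_{g_\infty}(\cdot,\bsi)+\mfh$ granted \eqref{eq:isosim decay}) is sound.

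There is, however, one genuine gap in the step you single out as the heart of the matter. Your single-pass upgrade from the crude bound $u_a\le CR_a^{-\frac{n-4}{2}}d_{g_a}(\cdot,\sigma_a)^{-\frac{n-4}{2}}$ to the sharp estimate \eqref{eq:isosim decay} requires $R_a\gtrsim \ep_a^{-\frac{n-4}{n+4}}$, so that the outer contribution $R_a^{-\frac{n+4}{2}}\ep_a^{-\frac{n-4}{2}}d^{-\frac{n+4}{2}}$ is dominated by $d^{4-n}$ up to $d\sim\delta_0$. But $R_a$ is not at your disposal in this way: in the blowup machinery the sequences $R_\ell$, $\tau_\ell$ are fixed first and the subsequence (hence $\ep_{a_\ell}$) is extracted afterwards, and the radius on which the rescaled solution is $\tau_\ell$-close to the bubble cannot be quantified a priori in terms of $\ep_{a_\ell}$ --- this is exactly the difficulty the iteration in \cite{LZhu, Ma, KMS, LX} is designed to overcome. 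The correct argument feeds the crude bound into the Green representation and gains a fixed improvement of the decay exponent at each step, reaching $d^{4-n}$ (with the factor $u_a(\sigma_a)^{-1}$) after finitely many iterations depending only on $n$; this is what is carried out in \cite[Section 4]{LX}. Since you ultimately defer to that reference, your plan is acceptable as a citation of the paper's own source, but the shortcut you sketch in its place would not stand on its own.
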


\section{Expansions of Curvatures}\label{sec:curv}
Let $\sigma_0$ be an arbitrarily fixed point on $M$. By a conformal change, one can assume that
\[\det g(x) = 1 \quad \text{for } x \in B^n(0,\delta),\]
where $x = (x_1,\ldots,x_n)$ are normal coordinates centered at $\sigma_0$ and $\delta > 0$ is a small number determined by $(M,g)$.

Adopting the idea of Brendle \cite{Br} (see also Khuri, Marques, and Schoen \cite{KMS}), we consider the metric $g=\exp(h)$, where $h$ is a symmetric $2$-tensor whose norm $|h|$ is small. Then we have the following expansion:
$$
g^{ij}=\delta_{ij}-h_{ij}+\frac{1}{2}h_{ik}h_{kj}+O(|h|^3).
$$

Because of $\det g(x) = 1$ and Gauss's lemma, $h$ has two properties:
\begin{equation}\label{conformal normal}
h_{ii}(x)=0 \quad \text{and} \quad x_ih_{ij}(x)=0.
\end{equation}
A direct and useful corollary is
\begin{equation}\label{conformal normal coro}
x_ih_{ij,k}(x)=\pa_k(x_ih_{ij}(x))-\delta_{ki}h_{ij}=-h_{jk},
\end{equation}
where $\pa_k := \pa_{x_k}$.

\medskip
The proofs of Lemma \ref{Ricci curva}--Lemma \ref{Q curva} are straightforward, relying only on $\tr \,h=0$ and the definition of the curvatures. We skip them.
\begin{lemma}\label{Ricci curva}
Let $g=\exp(h)$, $\tr \,h=0$, and $\ricci_g$ be the Ricci curvature tensor on $(M,g)$. Then
$$
\ricci_g=\Dot{\ricci}[h]+\Ddot{\ricci}[h,h]+O(|h|^2|\pa^2h|+|h||\pa h|^2),
$$
where
\begin{align}
2(\Dot{\ricci}[h])_{ij} &:= h_{jk,ik}+h_{ik,jk}-h_{ij,kk}; \label{eq:Ric1} \\
4(\Ddot{\ricci}[h,h])_{ij} &:= h_{lk,l}(2h_{ij,k}-h_{kj,i}-h_{ki,j}) \nonumber \\
&\ +h_{lk,i}h_{kj,l}+h_{lk,j}h_{ki,l}-h_{kl,i}h_{kl,j}-2h_{ik,l}h_{jl,k} \nonumber \\
&\ +h_{kl}(2h_{ij,kl}-h_{ki,lj}-h_{kj,il}) \nonumber \\
&\ +h_{ki}h_{kl,lj}+h_{kj}h_{kl,li}-h_{ki}h_{kj,ll}-h_{kj}h_{ki,ll}. \nonumber
\end{align}
\end{lemma}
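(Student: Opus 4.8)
\textbf{Proof proposal for Lemma \ref{Ricci curva}.}

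The plan is to expand $\ricci_g$ in the coordinate system fixed above, treating $h$ as the small parameter and isolating the terms that are linear, respectively quadratic, in $h$. Recall that in coordinates the Ricci tensor is
\[
\ricci_{ij} = \pa_k \Gamma^k_{ij} - \pa_i \Gamma^k_{kj} + \Gamma^k_{kl}\Gamma^l_{ij} - \Gamma^k_{il}\Gamma^l_{kj},
\]
so the whole computation reduces to expanding the Christoffel symbols $\Gamma^k_{ij} = \frac12 g^{kl}(\pa_i g_{lj} + \pa_j g_{li} - \pa_l g_{ij})$ and substituting. The inputs are $g_{ij} = \delta_{ij} + h_{ij} + \frac12 h_{ik}h_{kj} + O(|h|^3)$ (from $g = \exp(h)$) and the already-recorded inverse expansion $g^{ij} = \delta_{ij} - h_{ij} + \frac12 h_{ik}h_{kj} + O(|h|^3)$. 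First I would expand $\Gamma^k_{ij}$ to second order in $h$: the zeroth order vanishes, the first-order piece is $\frac12(h_{kj,i} + h_{ki,j} - h_{ij,k})$, and the second-order piece collects $-\frac12 h_{kl}(h_{lj,i} + h_{li,j} - h_{ij,l})$ from the inverse metric acting on the linear term together with $\frac14(h_{km}h_{ml})_{,\cdot}$-type contributions from the quadratic term in $g_{ij}$.

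Next I would substitute into the Ricci formula and sort by order. The linear-in-$h$ contribution comes only from $\pa_k\Gamma^k_{ij} - \pa_i\Gamma^k_{kj}$ applied to the first-order Christoffel symbols; using $\tr(h)=0$ to kill the $\pa_i\pa_j h_{kk}$ term, this gives exactly $2(\Dot{\ricci}[h])_{ij} = h_{jk,ik} + h_{ik,jk} - h_{ij,kk}$, which is \eqref{eq:Ric1}. The quadratic-in-$h$ contribution has two sources: (a) $\pa_k$ and $\pa_i$ hitting the second-order part of the Christoffel symbols, and (b) the genuinely quadratic terms $\Gamma^k_{kl}\Gamma^l_{ij} - \Gamma^k_{il}\Gamma^l_{kj}$ with both factors taken to first order. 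Expanding (b) is where the products $h_{lk,i}h_{kj,l}$, $h_{kl,i}h_{kl,j}$, $h_{ik,l}h_{jl,k}$ and the divergence-type terms $h_{lk,l}(\cdots)$ appear; expanding (a) produces the $h_{kl}h_{ij,kl}$-type second-derivative terms after one integration-by-hand (product rule), again using $\tr(h)=0$. Collecting everything and symmetrizing in $i \leftrightarrow j$ should reproduce the four lines defining $4(\Ddot{\ricci}[h,h])_{ij}$. All contributions involving three or more factors of $h$, or two factors of $h$ with a total of at least two derivatives distributed so as to leave an undifferentiated $h^2$, are absorbed into the error term $O(|h|^2|\pa^2 h| + |h||\pa h|^2)$; one should check that the cubic terms from $g^{ij}$ and from $\exp(h)$ only ever enter multiplied by at least one derivative of $h$, so this error bound is the correct one.

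The only mildly delicate point — and the step I would flag as the main bookkeeping obstacle rather than a conceptual one — is the reorganization of source (a): the second-order Christoffel piece contains terms of the schematic form $h_{kl}\,\pa^2 h$ \emph{and} $\pa h\,\pa h$, and after applying $\pa_k$ or $\pa_i$ and the product rule one gets a mix of $\pa h\,\pa h$ and $h\,\pa^3 h$; the $h\,\pa^3 h$ terms must be shown to cancel against analogous terms coming from differentiating the $\frac14 h^2$ correction in $g_{ij}$, leaving only the $h_{kl}\,\pa^2 h$ combinations written in the third line of the displayed formula. This cancellation is exactly what makes the quadratic remainder $\Ddot\ricci[h,h]$ a second-order differential operator in $h$ (no third derivatives survive), and it is forced by the structure $g = \exp(h)$ together with $\tr(h) = 0$. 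Once this is verified, matching coefficients line by line against the stated formula is routine, and since the lemma explicitly allows us to skip the proof, I would at most indicate these cancellations and leave the rest to the reader.
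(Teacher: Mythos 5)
Your route — expand the Christoffel symbols $\Gamma^k_{ij}=\frac12 g^{kl}(\pa_i g_{lj}+\pa_j g_{li}-\pa_l g_{ij})$ to second order in $h$ using $g=\exp(h)$, substitute into $\ricci_{ij}=\pa_k\Gamma^k_{ij}-\pa_i\Gamma^k_{kj}+\Gamma^k_{kl}\Gamma^l_{ij}-\Gamma^k_{il}\Gamma^l_{kj}$, and sort by order in $h$ using $\tr(h)=0$ — is exactly the ``straightforward'' computation the paper has in mind (the paper skips the proof altogether), and your identification of the linear part with \eqref{eq:Ric1} and of the two quadratic sources (derivatives of the second-order Christoffel piece, and products of first-order Christoffel pieces) is the right organization.

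However, the step you single out as the main delicate point is based on a miscount of derivatives. The second-order part of $\Gamma^k_{ij}$ is schematically of type $h\,\pa h$ (both from $g^{-1}$ acting on the linear term and from differentiating the $\frac12 h_{ik}h_{kj}$ correction in $g_{ij}$), so applying $\pa_k$ or $\pa_i$ produces only terms of type $\pa h\,\pa h$ and $h\,\pa^2 h$; no $h\,\pa^3 h$ terms ever arise, because the Christoffel symbols contain only first derivatives of the metric and the metric itself involves no derivatives of $h$. Hence there is no cancellation of third-derivative terms to verify — the $h\,\pa^2 h$ contributions simply survive and are precisely the third and fourth lines of $4(\Ddot{\ricci}[h,h])_{ij}$. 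This misstatement does not derail the strategy, but it means the ``obstacle'' you propose to check is vacuous, while the actual content of the lemma — the line-by-line coefficient match for $\Ddot{\ricci}[h,h]$, including the terms coming from the quadratic correction $\frac12 h_{ik}h_{kj}$ in $g_{ij}$ — is exactly the part you defer to the reader. Since the statement is nothing but that explicit formula (with the error $O(|h|^2|\pa^2h|+|h||\pa h|^2)$ collecting all contributions cubic and higher in $h$), a complete proof would have to carry out that bookkeeping rather than only indicate it.
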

\noindent Note that each term in $\Ddot{\ricci}[h,h]$ involves two factors of $h$. For instance, the term $h_{lk,l}(2h_{ij,k}-h_{kj,i}-h_{ki,j})$ includes one factor of $h$ in $h_{lk,l}$ and another in $2h_{ij,k}-h_{kj,i}-h_{ki,j}$.
Accordingly, the bracket $[h,h]$ in $\Ddot{\ricci}[h,h]$ indicates these two distinct appearance of $h$. One can define
\begin{equation}\label{eq:polar}
\Ddot{\ricci}[h,h'] = \frac{1}{2}\(\Ddot{\ricci}[h+h',h+h']-\Ddot{\ricci}[h,h]-\Ddot{\ricci}[h',h']\)
\end{equation}
for two symmetric $2$-tensors $h,\, h'$ so that $\Ddot{\ricci}[h,h']$ is bilinear and symmetric in $h$ and $h'$. If no ambiguity arises, we will simply write $\Dot{\ricci} = \Dot{\ricci}[h]$ and $\Ddot{\ricci} = \Ddot{\ricci}[h,h]$. The same rule will apply in what follows.

\begin{lemma}
Let $g=\exp(h)$, $\tr \,h=0$, and $R_g$ be the scalar curvature on $(M,g)$. Then
$$
R_{g}=\Dot{R}[h]+\Ddot{R}[h,h]+O(|h|^2|\pa^2h|+|h||\pa h|^2),
$$
where
\begin{align}
\Dot{R}[h] &:=h_{ij,ij}; \label{eq:R1} \\
\Ddot{R}[h,h] &:= -\pa_{i}(h_{ij}h_{kj,k}) + \frac{1}{2}h_{ij,i}h_{kj,k} - \frac{1}{4}(h_{jk,i})^2. \label{eq:R2}
\end{align}
\end{lemma}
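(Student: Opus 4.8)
The plan is to compute $R_g$ directly from the contraction $R_g = g^{ij}(\ricci_g)_{ij}$, substituting the expansion $g^{ij} = \delta_{ij} - h_{ij} + \tfrac12 h_{ik}h_{kj} + O(|h|^3)$ together with the expansion of $\ricci_g$ from Lemma \ref{Ricci curva} and sorting terms by their order in $h$. The linear contribution is $\delta_{ij}(\Dot{\ricci}[h])_{ij} = (\Dot{\ricci}[h])_{ii}$, which by \eqref{eq:Ric1} and $\tr(h)=0$ collapses to $h_{ik,ik}$, giving $\Dot{R}[h] = h_{ij,ij}$. The quadratic contribution is
\[
\Ddot{R}[h,h] = (\Ddot{\ricci}[h,h])_{ii} - h_{ij}(\Dot{\ricci}[h])_{ij},
\]
while the cubic term $\tfrac12 h_{ik}h_{kj}(\Dot{\ricci}[h])_{ij}$, the correction $-h_{ij}(\Ddot{\ricci}[h,h])_{ij}$, and the error of Lemma \ref{Ricci curva} contracted against the (bounded) tensor $g^{ij}$ are all of order $O(|h|^2|\pa^2h| + |h||\pa h|^2)$, which accounts for the stated remainder.

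It then remains to identify $(\Ddot{\ricci}[h,h])_{ii} - h_{ij}(\Dot{\ricci}[h])_{ij}$ with the right-hand side of \eqref{eq:R2}. First I would set $i=j$ in the formula of Lemma \ref{Ricci curva} for $\Ddot{\ricci}[h,h]$, discard every term containing the factor $h_{ii}$ by $\tr(h)=0$, and merge the pairs of terms that become equal under the trace; a short relabeling of summation indices shows the two surviving terms $h_{kl}h_{ki,li}$ and $h_{ki}h_{kl,li}$ cancel, leaving
\[
4(\Ddot{\ricci}[h,h])_{ii} = -2h_{lk,l}h_{ki,i} + 2h_{lk,i}h_{ki,l} - (h_{kl,i})^2 - 2h_{ik,l}h_{il,k} - 2h_{ki}h_{ki,ll}.
\]
Second, using $h_{ij}=h_{ji}$ and \eqref{eq:Ric1}, I would compute $h_{ij}(\Dot{\ricci}[h])_{ij} = h_{ij}h_{jk,ik} - \tfrac12 h_{ij}h_{ij,kk}$. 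Subtracting, the terms $-\tfrac12 h_{ki}h_{ki,ll}$ and $+\tfrac12 h_{ij}h_{ij,kk}$ cancel, and the remaining reduction rests on two observations: the Leibniz identity $-h_{ij,i}h_{kj,k} - h_{ij}h_{kj,ki} = -\pa_i(h_{ij}h_{kj,k})$, and the dummy-index identity $h_{ab,c}h_{bc,a} = h_{ab,c}h_{ac,b}$ (obtained by renaming the summed pair $a \leftrightarrow b$), which forces $h_{lk,i}h_{ki,l} = h_{ik,l}h_{il,k}$, so that $\tfrac12 h_{lk,i}h_{ki,l}$ and $-\tfrac12 h_{ik,l}h_{il,k}$ cancel. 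What is left is precisely $-\pa_i(h_{ij}h_{kj,k}) + \tfrac12 h_{ij,i}h_{kj,k} - \tfrac14(h_{jk,i})^2$, as claimed.

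The only delicate point is the bookkeeping of the quadratic terms: there is no geometric subtlety, only careful tracking of which terms vanish by $\tr(h)=0$ and which pairs coincide after relabeling dummy summation indices. This is why the paper regards the proof as straightforward and omits it.
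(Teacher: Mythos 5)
Your proposal is correct and follows essentially the route the paper intends: contracting $R_g=g^{ij}(\ricci_g)_{ij}$ with the expansions of $g^{ij}$ and of $\ricci_g$, which yields exactly the relations $\Dot{R}=\Dot{\ricci}_{ii}$ and $\Ddot{R}=\Ddot{\ricci}_{ii}-h_{ij}\Dot{\ricci}_{ij}$ recorded in the paper's subsequent corollary, and your index bookkeeping (the cancellations via $\mathrm{tr}(h)=0$, relabeling of dummy indices, and the Leibniz regrouping into $-\pa_i(h_{ij}h_{kj,k})$) checks out.
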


\begin{cor}
Under the same assumption, we have
\begin{align*}
\Dot{R}&=\Dot{\ricci}_{ii};\\
\Ddot{R}&=\Ddot{\ricci}_{ii}-h_{ij}\Dot{\ricci}_{ij}.
\end{align*}
\end{cor}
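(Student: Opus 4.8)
The plan is to obtain both identities by expanding the defining relation $R_g = g^{ij}(\ricci_g)_{ij}$ order by order in $h$, using the expansions already at hand. Concretely, I would substitute $g^{ij} = \delta_{ij} - h_{ij} + \tfrac12 h_{ik}h_{kj} + O(|h|^3)$ together with $\ricci_g = \Dot{\ricci} + \Ddot{\ricci} + O(|h|^2|\pa^2 h| + |h||\pa h|^2)$ from Lemma \ref{Ricci curva}, multiply the two expansions, and group the resulting terms according to how many factors of $h$ they carry.

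The part that is first order in $h$ is $\delta_{ij}\Dot{\ricci}_{ij} = \Dot{\ricci}_{ii}$, which is exactly the asserted formula for $\Dot{R}$. To reconcile this with the expression $\Dot{R}[h] = h_{ij,ij}$ in \eqref{eq:R1}, I would contract \eqref{eq:Ric1} over $i=j$: this gives $2\Dot{\ricci}_{ii} = 2h_{ik,ik} - h_{ii,kk}$, and since $\mathrm{tr}(h) = h_{ii} = 0$ by hypothesis the last term vanishes, leaving $\Dot{\ricci}_{ii} = h_{ij,ij} = \Dot{R}$.

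The part that is second order in $h$ receives exactly two contributions: $\delta_{ij}\Ddot{\ricci}_{ij} = \Ddot{\ricci}_{ii}$ from the constant piece of $g^{ij}$, and $-h_{ij}\Dot{\ricci}_{ij}$ from pairing the linear piece $-h_{ij}$ of $g^{ij}$ with the linear piece $\Dot{\ricci}_{ij}$ of $\ricci_g$. Summing them yields $\Ddot{R} = \Ddot{\ricci}_{ii} - h_{ij}\Dot{\ricci}_{ij}$, which is the second identity.

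I do not anticipate any genuine obstacle; this is purely a bookkeeping computation. The only point to check with a little care is that each discarded term falls within the stated error $O(|h|^2|\pa^2 h| + |h||\pa h|^2)$ — namely $\tfrac12 h_{ik}h_{kj}\Dot{\ricci}_{ij} = O(|h|^2|\pa^2 h|)$, the cubic term $-h_{ij}\Ddot{\ricci}_{ij} = O(|h|(|\pa h|^2 + |h||\pa^2 h|))$, and $g^{ij}$ contracted against the Ricci remainder, all of which are of the required order. This is presumably why the corollary is stated in the excerpt without a detailed proof.
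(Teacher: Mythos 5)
Your proposal is correct: expanding $R_g=g^{ij}(\ricci_g)_{ij}$ with the inverse-metric expansion and the Ricci expansion of Lemma \ref{Ricci curva}, then matching terms by homogeneity in $h$ (the discarded terms being cubic and hence absorbed in the stated error), yields exactly the two identities, and your trace check $\Dot{\ricci}_{ii}=h_{ij,ij}$ using $\mathrm{tr}(h)=0$ is the right consistency verification. The paper omits the proof as a straightforward computation, and your argument is precisely the intended one, so there is nothing to add.
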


\begin{lemma}\label{Q curva}
Let $g=\exp(h)$, $\tr \,h=0$, and $Q_g^{(4)}$ be the $Q^{(4)}$-curvature on $(M,g)$. Then
$$
Q_g^{(4)}=\Dot{Q}[h]+\Ddot{Q}[h,h]+O(|h|^2|\pa^4h|+|h||\pa h||\pa^3h|+ |h||\pa^2h|^2 + |\pa h|^2|\pa^2h|),
$$
where
\begin{align*}
\Dot{Q}[h] &:=-\frac{1}{2(n-1)}\Delta \Dot{R}[h];\\
\Ddot{Q}[h,h] &:=-\frac{1}{2(n-1)}(\Delta \Ddot{R}[h,h]-\pa_{i}(h_{ij}\pa_j\Dot{R}[h]))\\
&\ +\frac{n^3-4n^2+16n-16}{8(n-2)^2(n-1)^2} (\Dot{R}[h])^2 - \frac{2}{(n-2)^2} (\Dot{\ricci}_{ij}[h])^2.
\end{align*}
\end{lemma}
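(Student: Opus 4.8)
The plan is to first rewrite $Q_g$ so that the only curvature quantities entering are $R_g$, $\Delta_g R_g$, and $|\ricci_g|_g^2$, and then to substitute the expansions of the preceding two lemmas. Since $\sigma_1(A_g)=\tr_g A_g=\frac{R_g}{2(n-1)}$ for any metric, and $4\sigma_2(A_g)=2\sigma_1(A_g)^2-2|A_g|_g^2$ with
\[
|A_g|_g^2=\frac{1}{(n-2)^2}\(|\ricci_g|_g^2-\frac{1}{n-1}R_g^2+\frac{n}{4(n-1)^2}R_g^2\)
\]
(using $\la\ricci_g,g\ra_g=R_g$ and $|g|_g^2=n$), one obtains the metric-independent identity
\[
Q_g=-\frac{1}{2(n-1)}\Delta_g R_g+\frac{n^3-4n^2+16n-16}{8(n-1)^2(n-2)^2}R_g^2-\frac{2}{(n-2)^2}|\ricci_g|_g^2,
\]
where the $R_g^2$-coefficient is obtained by combining $\big(2+\frac{n-4}{2}\big)\frac{1}{4(n-1)^2}$ with the two $R_g^2$-terms coming from $-2|A_g|_g^2$ over the common denominator $8(n-1)^2(n-2)^2$.

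Two auxiliary expansions are then needed. First, since $\det g\equiv 1$, the Laplace--Beltrami operator on functions is in divergence form, $\Delta_g f=\pa_i(g^{ij}\pa_j f)$; with $g^{ij}=\delta_{ij}-h_{ij}+\frac12 h_{ik}h_{kj}+O(|h|^3)$ this gives
\[
\Delta_g f=\Delta f-\pa_i(h_{ij}\pa_j f)+O\(|h|^2|\pa^2 f|+|h||\pa h||\pa f|\).
\]
Second, I use $R_g=\Dot{R}[h]+\Ddot{R}[h,h]+O(|h|^2|\pa^2 h|+|h||\pa h|^2)$ and $\ricci_g=\Dot{\ricci}[h]+\Ddot{\ricci}[h,h]+O(|h|^2|\pa^2 h|+|h||\pa h|^2)$ from Lemma \ref{Ricci curva} and the scalar-curvature expansion, together with the fact that $\Dot{R},\Dot{\ricci}$ are linear in $h$ while $\Ddot{R},\Ddot{\ricci}$ are quadratic.

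Substituting and collecting by order is then mechanical. Because $R_g$ and $\ricci_g$ both vanish at zeroth order in $h$, the quantities $R_g^2$ and $|\ricci_g|_g^2$ are $O(|h|^2)$ with leading parts $(\Dot{R}[h])^2$ and $(\Dot{\ricci}_{ij}[h])^2$ respectively --- the corrections coming from $g^{ik}g^{jl}\to\delta_{ik}\delta_{jl}$ in the trace and from the cross terms with $\Ddot{R},\Ddot{\ricci}$ are cubic --- and $(\Delta_g-\Delta)R_g$ is itself a quadratic remainder; hence the linear part is $\Dot{Q}[h]=-\frac{1}{2(n-1)}\Delta\Dot{R}[h]$. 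At quadratic order, the $R_g^2$ and $|\ricci_g|_g^2$ terms contribute $\frac{n^3-4n^2+16n-16}{8(n-1)^2(n-2)^2}(\Dot{R}[h])^2$ and $-\frac{2}{(n-2)^2}(\Dot{\ricci}_{ij}[h])^2$, while the Laplacian term contributes $-\frac{1}{2(n-1)}\big(\Delta\Ddot{R}[h,h]-\pa_i(h_{ij}\pa_j\Dot{R}[h])\big)$, the second summand being exactly the first-order correction of $\Delta_g$ applied to the linear part $\Dot{R}[h]$. Adding the three pieces yields $\Ddot{Q}[h,h]$ as stated.

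I expect the only genuine nuisance to be the error bookkeeping: one must check that every discarded term carries at least three factors of $h$ distributing a total of four derivatives, matching $O(|h|^2|\pa^4 h|+|h||\pa h||\pa^3 h|+|h||\pa^2 h|^2+|\pa h|^2|\pa^2 h|)$. This is automatic once one observes that in every term the outermost operator is either $\Delta$ or a divergence $\pa_i(h_{ij}\pa_j(\cdot))$ acting on a curvature expansion that already carries its own two-derivative remainder, so after these two extra derivatives no term of fewer than four derivatives can survive; likewise the $\frac12 h_{ik}h_{kj}$-correction to $g^{ij}$, the term $\Ddot{\ricci}$, and the metric corrections inside $|\ricci_g|_g^2$ all produce only cubic-or-higher remainders. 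Everything else uses only $\tr(h)=0$ (equivalently $\det g=1$) and the definition of the curvatures, in line with the paper's remark that the computation is straightforward.
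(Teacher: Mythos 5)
Your proposal is correct: the rewriting $Q_g=-\frac{1}{2(n-1)}\Delta_g R_g+\frac{n^3-4n^2+16n-16}{8(n-1)^2(n-2)^2}R_g^2-\frac{2}{(n-2)^2}|\ricci_g|_g^2$ (whose coefficients you verify correctly), the divergence-form expansion of $\Delta_g$ using $\det g=1$, and the substitution of the expansions of $R_g$ and $\ricci_g$ give exactly $\Dot{Q}[h]$, $\Ddot{Q}[h,h]$, and the stated cubic error. The paper omits this proof as a straightforward computation from $\mathrm{tr}(h)=0$ and the curvature definitions, and your argument is precisely the intended one.
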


\begin{lemma}\label{mce}
Let $g=\exp(h)$, $\tr \,h=0$, and $P_g^{(4)}$ be the Paneitz operator on $(M,g)$. Then
\begin{align*}
\mce_g := P_g^{(4)}-\Delta_g^2 = L_1[h]+L_2[h,h] &+ O([|h|^2|\pa^2h|+|h||\pa h|^2]\pa^2) \\
&+ O([|h|^2|\pa^3h|+|h||\pa h||\pa^2h|+|\pa h|^3]\pa) \\
&+ O(|h|^2|\pa^4h|+|h||\pa h||\pa^3h|+|h||\pa^2h|^2+|\pa h|^2|\pa^2h|),
\end{align*}
where $L_1[h]$ and $L_2[h,h]$ are the second-order differential operators defined by
\begin{align}
L_1[h] &:=\frac{4}{n-2}(\Dot{\ricci}[h])_{ij}\pa^2_{ij} - \alpha_n^1\Dot{R}[h]\Delta - \alpha_n^2\pa_k\Dot{R}[h]\pa_k + \frac{n-4}{2}\Dot{Q}[h]; \label{eq:L1} \\
L_2[h,h] &:= \left[\frac{4}{n-2}(\Ddot{\ricci}[h,h])_{ij} - \frac{8}{n-2}(\Dot{\ricci}[h])_{ik}h_{kj} + \alpha_n^1\Dot{R}[h]h_{ij}\right]\pa^2_{ij} - \alpha_n^1\Ddot{R}[h,h]\Delta \nonumber \\
&\ + \left[-\frac{2}{n-2}(\Dot{\ricci}[h])_{ij}(h_{kj,i}+h_{ki,j}-h_{ij,k}) + \alpha_n^1\Dot{R}[h]h_{ik,i} \right. \label{eq:L2} \\
&\hspace{135pt} - \alpha_n^2(\pa_k\Ddot{R}[h,h]-\pa_i\Dot{R}[h]h_{ik})\biggr]\pa_k + \frac{n-4}{2}\Ddot{Q}[h,h]. \nonumber
\end{align}
Here, $\alpha_n^1 = \frac{4+(n-2)^2}{2(n-2)(n-1)}$ and $\alpha_n^2 = \frac{n-6}{2(n-1)}$.
\end{lemma}
\begin{proof}
By \cite[Lemma 2.8]{GM}, we know that
\[\mce_g = \frac{4}{n-2}g^{ik}g^{jl}(\ricci_g)_{ij}\nabla_k\nabla_l - \alpha_n^1R_g\Delta_g - \alpha_n^2g^{ij}\pa_iR_g\pa_j + \frac{n-4}{2}Q_g^{(4)}.\]
Then, owing to the expansion of $g$, $\text{tr}(h)=0$, and the definitions of the Hessian tensor and $\Delta_g$, we have
\begin{align*}
\nabla_k\nabla_l&=\pa^2_{kl}-\frac{1}{2}(h_{ki,l}+h_{li,k}-h_{kl,i})\pa_i+O(|h|^2\pa^2+|h||\pa h|\pa+|\pa^2h|+|\pa h|^2),\\
\Delta_g&=\Delta-(h_{ij}\pa^2_{ij}+h_{ij,j}\pa_i)+O(|h|^2\pa^2+|h||\pa h|\pa+|\pa^2h|+|\pa h|^2).
\end{align*}
Combining these relations with Lemmas \ref{Ricci curva}--\ref{Q curva}, we obtain the conclusion.
\end{proof}

If $u$ is radial, we can further simplify $L_1[h]u$ and $L_2[h,h]u$ using Gauss's lemma and its corollary \eqref{conformal normal}--\eqref{conformal normal coro}.
\begin{lemma}\label{spherical expansion}
Let $u$ be radial, namely, $u(x)=u(r)$ where $r = |x|$. Then,
\begin{align}
L_1[h]u &= -\alpha_n^1\Dot{R}[h]u'' + \left[\(\frac{2}{n-2}-\frac{n-2}{2}\)\Dot{R}[h] - \alpha_n^2x_i\pa_i\Dot{R}[h]\right]\frac{u'}{r} - \beta_n\Delta\Dot{R}[h]u; \label{eq:L1r} \\
L_2[h,h]u &\begin{medsize}
\displaystyle = -\alpha_n^1\Ddot{R}[h,h]u'' - \frac{(x_ih_{kl,i})^2}{n-2}\(\frac{u''}{r^2}-\frac{u'}{r^3}\)
\end{medsize} \nonumber \\
&\begin{medsize}
\displaystyle \ + \left[\(\frac{2}{n-2}-\frac{n-2}{2}\)\Ddot{R}[h,h] - \alpha_n^2x_i\pa_i\Ddot{R}[h,h] + \frac{2}{n-2}x_ih_{kl,i}\Dot{\ricci}_{kl}[h]\right]\frac{u'}{r}
\end{medsize} \label{eq:L2r} \\
&\begin{medsize}
\displaystyle \ - \beta_n\left[\Delta \Ddot{R}[h,h]-\pa_{i}(h_{ij}\pa_j\Dot{R}[h]) - \frac{n^3-4n^2+16n-16}{4(n-2)^2(n-1)}(\Dot{R}[h])^2 + \frac{4(n-1)}{(n-2)^2} (\Dot{\ricci}_{ij}[h])^2\right]u
\end{medsize}, \nonumber
\end{align}
and
\begin{align}
\mce_gu = L_1[h]u+L_2[h,h]u &+ O([|h|^2|\pa^2h|+|h||\pa h|^2]|u''|) \nonumber \\
&+ O([|h|^2|\pa^3h|+|h||\pa h||\pa^2h|+|\pa h|^3 + r^{-1}(|h|^2|\pa^2h|+|h||\pa h|^2)]|u'|) \nonumber \\
&+ O([|h|^2|\pa^4h|+|h||\pa h||\pa^3h|+|h||\pa^2h|^2+|\pa h|^2|\pa^2h|]|u|). \label{eq:mcer}
\end{align}
Here, $\alpha_n^1 = \frac{4+(n-2)^2}{2(n-2)(n-1)}$, $\alpha_n^2 = \frac{n-6}{2(n-1)}$, $\beta_n = \frac{n-4}{4(n-1)}$, and $u'=\pa_ru$.
\end{lemma}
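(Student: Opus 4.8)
The statement to prove is Lemma~\ref{spherical expansion}: specializing the operators $L_1[h]$ and $L_2[h,h]$ from Lemma~\ref{mce} to a radial function $u(x)=u(r)$, and correspondingly specializing the error terms in the expansion of $\mce_g u$.

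\medskip

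\noindent\textbf{Proof plan.}
The plan is to substitute $u(x)=u(r)$ directly into the formulas \eqref{eq:L1} and \eqref{eq:L2r}... more precisely into \eqref{eq:L1} and the definition of $L_2[h,h]$, and to simplify each differential-operator action using the elementary identities for radial functions:
\[
\pa_i u = \frac{x_i}{r}u', \qquad \pa_{ij} u = \frac{x_ix_j}{r^2}\(u'' - \frac{u'}{r}\) + \frac{\delta_{ij}}{r}u', \qquad \Delta u = u'' + \frac{n-1}{r}u'.
\]
First I would treat $L_1 u$. The term $\frac{4}{n-2}(\Dot\ricci)_{ij}\pa_{ij}u$ becomes $\frac{4}{n-2}(\Dot\ricci)_{ij}\big[\frac{x_ix_j}{r^2}(u''-\tfrac{u'}{r}) + \tfrac{\delta_{ij}}{r}u'\big]$; here the crucial simplification is that the contractions $(\Dot\ricci)_{ij}x_ix_j$ and $(\Dot\ricci)_{ii}$ can be rewritten in terms of $\Dot R$ using \eqref{conformal normal}, \eqref{conformal normal coro} and the corollary $\Dot R = \Dot\ricci_{ii}$. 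Concretely, from \eqref{eq:Ric1}, $(\Dot\ricci)_{ij}x_ix_j$ collapses after applying $x_ih_{ij}=0$ and its differentiated form $x_ih_{ij,k}=-h_{jk}$ (used twice), while the remaining pieces $-\alpha_n\Dot R\,\Delta u$ and $-\beta_n\pa_k\Dot R\,\pa_k u$ expand routinely via the radial identities; collecting the coefficients of $u''$, $u'/r$ and $u$ and recognizing $\gamma_n=\frac{n-4}{4(n-1)}$ together with $\Dot Q = -\frac{1}{2(n-1)}\Delta\Dot R$ yields \eqref{eq:L1r}.

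\medskip

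Next I would do the same for $L_2 u$. This is longer but mechanically identical. The three second-order pieces $\frac{4}{n-2}(\Ddot\ricci)_{ij}$, $-\frac{8}{n-2}(\Dot\ricci)_{ik}h_{kj}$ and $\alpha_n\Dot R\,h_{ij}$, all acting through $\pa_{ij}$, get contracted against $\frac{x_ix_j}{r^2}(u''-\tfrac{u'}{r})+\tfrac{\delta_{ij}}{r}u'$. The diagonal contractions give $\frac{4}{n-2}\Ddot\ricci_{ii} - \frac{8}{n-2}(\Dot\ricci)_{ik}h_{ki} + \alpha_n\Dot R\,h_{ii}$; here $h_{ii}=0$ kills the last term, and $\Ddot\ricci_{ii}-h_{ij}\Dot\ricci_{ij}=\Ddot R$ (the corollary) converts the first two into $-\alpha_n\Ddot R$ after combining with the $\alpha_n\Ddot R\Delta$ term, exactly paralleling $L_1$. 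The $x_ix_j$-contractions are where the genuinely new term appears: $h_{kj}x_j=0$ makes the second bracket vanish, $\Dot R\,h_{ij}x_ix_j=0$ makes the third vanish, and $(\Ddot\ricci)_{ij}x_ix_j$, after repeated use of $x_ih_{ij}=0$ and $x_ih_{ij,k}=-h_{jk}$ on each of the four lines of the definition \eqref{eq:Ric1}-style formula in Lemma~\ref{Ricci curva}, collapses to a multiple of $(x_ih_{kl,i})^2$ — this produces the term $-\frac{(x_ih_{kl,i})^2}{n-2}(\frac{u''}{r^2}-\frac{u'}{r^3})$. For the first-order part of $L_2$, I would note $h_{ik,i}x_i$-type terms and use \eqref{conformal normal coro} so that the $\pa_k u=\frac{x_k}{r}u'$ contraction turns the bracketed coefficient into the stated $\big[(\tfrac{2}{n-2}-\tfrac{n-2}{2})\Ddot R - \beta_n x_i\pa_i\Ddot R + \tfrac{2x_ih_{kl,i}}{n-2}\Dot\ricci_{kl}\big]\frac{u'}{r}$; the zeroth-order part is just $\frac{n-4}{2}\Ddot Q[h,h]$, and substituting the formula for $\Ddot Q$ from Lemma~\ref{Q curva} and factoring out $-\gamma_n$ gives the displayed coefficient of $u$ in \eqref{eq:L2r}.

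\medskip

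Finally, for \eqref{eq:mcer} I would simply take the error estimate from Lemma~\ref{mce} and evaluate each symbolic $\pa^j$ on the radial function $u$: $\pa^2 u$ contributes $O(|u''| + r^{-1}|u'|)$, $\pa u$ contributes $O(|u'|)$, and the zeroth-order part contributes $O(|u|)$; regrouping the resulting bounds (and absorbing the $r^{-1}|u'|$ coming from the second-order error into the $|u'|$ line) produces precisely the three lines of the error in \eqref{eq:mcer}. I expect the main obstacle to be purely bookkeeping: correctly carrying out the contraction of $(\Ddot\ricci)_{ij}x_ix_j$ — which has on the order of a dozen quadratic-in-$h$ terms, each requiring one or two applications of \eqref{conformal normal}/\eqref{conformal normal coro} — and verifying that everything except the $(x_ih_{kl,i})^2$ piece cancels, so that the clean form in \eqref{eq:L2r} emerges; there is no conceptual difficulty, only the risk of algebraic slips, which is why the statement is presented with the computation suppressed.
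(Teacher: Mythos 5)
Your plan is correct and follows essentially the same route as the paper: substitute the radial formulas for $\pa_i u$, $\pa_{ij}u$, $\Delta u$ into Lemma \ref{mce} and reduce everything to the contractions $\Dot{\ricci}_{ij}x_ix_j$ (which vanishes) and $\Ddot{\ricci}_{ij}x_ix_j$ (which collapses to $-\tfrac14(x_k\pa_k h_{ij})^2$), using \eqref{conformal normal}--\eqref{conformal normal coro} and their twice-differentiated consequences, exactly as in the paper's proof. The only difference is presentational: the paper isolates the auxiliary identities $h_{ij,kl}x_ix_j=2h_{kl}$, $h_{ik,jl}x_ix_j=h_{kl}-x_ih_{kl,i}$ and the claim $4\Ddot{\ricci}_{ij}x_ix_j=-(x_k\pa_kh_{ij})^2$ explicitly, while you leave that bookkeeping (including the cancellation of the $h_{ij}\Dot{\ricci}_{ij}$ terms between the second- and first-order parts of $L_2$) implicit.
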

\begin{proof}
We shall first prove two identities:
\begin{align}
h_{ij,kl}x_ix_j&=2h_{kl}; \label{conformal normal coro2}\\
h_{ik,jl}x_ix_j&=h_{kl}-x_ih_{kl,i}\label{conformal normal coro3}.
\end{align}
Indeed, by (\ref{conformal normal}) and (\ref{conformal normal coro}), it holds that
\begin{align*}
h_{ij,kl}x_ix_j&=\pa_l(h_{ij,k}x_ix_j)-h_{lj,k}x_j-h_{il,k}x_i\\
&=\pa^2_{lk}(h_{ij}x_ix_j)-\pa_l(h_{kj}x_j)-\pa_l(h_{ik}x_i)+2h_{kl}\\
&=2h_{kl}.
\end{align*}
and
\begin{align*}
h_{ik,jl}x_ix_j&=\pa_l(h_{ik,j}x_ix_j)-h_{lk,j}x_j-h_{ik,l}x_i\\
&=\pa^2_{lj}(h_{ik}x_ix_j)-\pa_l(h_{jk}x_j)-\pa_l(nh_{ik}x_i)+h_{kl}-x_ih_{kl,i}\\
&=h_{kl}-x_ih_{kl,i}.
\end{align*}

To evaluate $L_1[h]$, we only have to compute $\Dot{\ricci}_{ij}x_ix_j$, since $\Dot{\ricci}_{ii}=\Dot{R}$. As a matter of fact, by (\ref{conformal normal}), (\ref{conformal normal coro2}), and (\ref{conformal normal coro3}), we have
$$
2\Dot{\ricci}_{ij}x_ix_j=(h_{jk,ik}+h_{ik,jk}-h_{ij,kk})x_ix_j=0.
$$

To evaluate $L_2[h,h]$, we need to compute $\Ddot{\ricci}_{ij}x_ix_j$, since $\Ddot{\ricci}_{ii}=\Ddot{R}+h_{ij}\Dot{\ricci}_{ij}$. As the following claim shows, it has a simple expression even though $\Ddot{\ricci}$ itself is complicated.
\begin{claim}\label{ricci_2 radial}
$$
4\Ddot{\ricci}_{ij}x_ix_j=-(x_k\pa_kh_{ij})^2.
$$
\end{claim}
\noindent \textsc{Proof of Claim \ref{ricci_2 radial}.} By (\ref{conformal normal}) and (\ref{conformal normal coro}), we know that
\begin{equation}\label{eq:DdotRic}
4\Ddot{\ricci}_{ij}x_ix_j = -2x_ih_{lk,i}h_{kl}-2(h_{kl})^2-(x_ih_{lk,i})^2 +h_{kl}(2h_{ij,kl}x_ix_j-2h_{ki,lj}x_ix_j).
\end{equation}
Plugging \eqref{conformal normal coro2}--\eqref{conformal normal coro3} into \eqref{eq:DdotRic}, we deduce Claim \ref{ricci_2 radial}.

Since $u$ is radial, we know that $\pa_ku=x_k\frac{u'}{r}$, $\pa^2_{ij}u=\frac{x_ix_j}{r^2}u''+\(\delta_{ij}-\frac{x_ix_j}{r^2}\)\frac{u'}{r}$ and $\Delta u=u''+(n-1)\frac{u'}{r}$. Combining all the computations made here with Lemma \ref{mce}, we finish the proof.
\end{proof}

\section{Correction Terms}\label{sec:corr}
Let $h$ be a symmetric $2$-tensor on $B^n(0,\delta)$ whose entry is a smooth function and that satisfies \eqref{conformal normal}. Given $K=n-6$, $k=2,\ldots,K$, and a multi-index $\alpha$, we define
\begin{equation}\label{eq:H}
H^{(k)}_{ij}(x) = \sum_{|\alpha|=k} \frac{h_{ij,\alpha}(0)}{\alpha!} x_{\alpha}, \quad H_{ij}(x) = \sum_{2 \le |\alpha| \le K} \frac{h_{ij,\alpha}(0)}{\alpha!} x_{\alpha} = \sum_{k=2}^{K} H^{(k)}_{ij}(x),
\end{equation}
and
\begin{equation}\label{eq:Hnorm}
|H^{(k)}| = \Bigg(\sum_{|\alpha|=k} |h_{ij,\alpha}(0)|^2\Bigg)^{\frac{1}{2}},
\end{equation}
where $0 \in \R^n$ is identified with $\sigma_0 \in M$. Then, $h_{ij}(x) = H_{ij}(x) + O(|x|^{K+1})$, $H_{ij}(x) = H_{ji}(x)$, $H_{ij}(x)x_j = 0$, and $\text{tr}(H(x)) = 0$.
\medskip
Although the following result is well-known (see \cite[Section 4]{KMS}), we include a proof for the reader's convenience.
\begin{lemma}\label{orthogonal}
For $l=1,\ldots,n$,
$$
\int_{\S^{n-1}}H_{ij,ij}=0 \quad \text{and} \quad \int_{\S^{n-1}}x_lH_{ij,ij}=0.
$$
\end{lemma}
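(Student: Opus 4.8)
The plan is to exploit the two algebraic constraints on $H$ inherited from \eqref{conformal normal}, namely $\operatorname{tr}(H)=0$ (i.e. $H_{ii}=0$) and $x_jH_{ij}(x)=0$, together with the divergence theorem on the ball $B_1$, to kill the two integrals. For the first identity, observe that $H_{ij,ij} = \partial_i\partial_j H_{ij}$ is itself a divergence: write $H_{ij,ij} = \partial_i\big(\partial_j H_{ij}\big)$, so that by the divergence theorem $\int_{\S^{n-1}} H_{ij,ij}\,\frac{x_i}{|x|}\big|_{|x|=1}$ style manipulations apply. More directly, since $H_{ij}$ is polynomial, integrate over $B_1$: $\int_{B_1} H_{ij,ij}\,dx = \int_{\S^{n-1}} H_{ij,i}\,x_j\,dS$. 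Now differentiate the relation $x_jH_{ij}=0$ once to get $H_{ij} + x_j H_{ij,k}\delta_{?}$ — more precisely $\partial_k(x_jH_{ij}) = H_{ik} + x_jH_{ij,k} = 0$, hence $x_jH_{ij,k} = -H_{ik}$; contracting $k=i$ and using $H_{ii}=0$ gives $x_jH_{ij,i} = -H_{ii} = 0$ on all of $B_1$. Therefore $\int_{\S^{n-1}} H_{ij,i}x_j\,dS = 0$, and since $H_{ij,ij}$ is a homogeneous polynomial of some fixed degree, $\int_{B_1}H_{ij,ij}\,dx$ being zero forces $\int_{\S^{n-1}}H_{ij,ij}\,dS = 0$ after splitting $H$ into its homogeneous pieces $H^{(k)}$ (a homogeneous polynomial of degree $m$ integrates to zero on $B_1$ iff its spherical integral vanishes, since $\int_{B_1}p = \frac{1}{n+m}\int_{\S^{n-1}}p$).

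For the second identity I would run the same scheme one order up. Consider $\int_{B_1} \partial_l\big(H_{ij,ij}\big)\cdot(\text{something})$; cleaner is to integrate $x_lH_{ij,ij}$ over $B_1$ against nothing and convert: $\int_{B_1} x_l H_{ij,ij}\,dx = \int_{B_1} \partial_i\big(x_l H_{ij,j}\big)\,dx - \int_{B_1}\delta_{il}H_{ij,j}\,dx = \int_{\S^{n-1}} x_l H_{ij,j}x_i\,dS - \int_{B_1} H_{lj,j}\,dx$. The first boundary term vanishes because $x_iH_{ij,j}=-H_{jj}=0$ as above (relabelling indices), and the second is $\int_{B_1}H_{lj,j}\,dx = \int_{\S^{n-1}}H_{lj}x_j\,dS = 0$ by the constraint $x_jH_{lj}=0$ directly. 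Hence $\int_{B_1}x_lH_{ij,ij}\,dx = 0$, and again decomposing into homogeneous components and using $\int_{B_1}p = \frac{1}{n+\deg p}\int_{\S^{n-1}}p$ gives $\int_{\S^{n-1}} x_l H_{ij,ij}\,dS = 0$ for each $l$.

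The only genuinely delicate point is bookkeeping: $H = \sum_{k=2}^K H^{(k)}$ is a sum of homogeneous polynomials of different degrees, $H^{(k)}_{ij,ij}$ has degree $k-2$, and $x_lH^{(k)}_{ij,ij}$ has degree $k-1$; one must apply the ball-versus-sphere integration identity degree by degree, noting that the constraints $H^{(k)}_{ii}=0$ and $x_jH^{(k)}_{ij}=0$ hold for each homogeneous piece separately (they follow from \eqref{conformal normal} by extracting the degree-$k$ Taylor part). I expect no real obstacle here — the main thing to be careful about is not to conflate the integrand's homogeneity degree before versus after the differentiations, and to make sure the boundary terms in the divergence theorem are taken on $\S^{n-1}$ with the correct outward normal $x$. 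Everything then reduces to the two pointwise identities $x_jH_{ij,i}=0$ and $x_jH_{ij}=0$, both immediate consequences of \eqref{conformal normal} and \eqref{conformal normal coro}.
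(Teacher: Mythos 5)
Your proposal is correct and follows essentially the same route as the paper: both arguments reduce to the pointwise identities $x_iH^{(k)}_{ij,j}=0$ and $x_jH^{(k)}_{ij}=0$ (from \eqref{conformal normal} and \eqref{conformal normal coro}), combined with the divergence theorem and the ball-versus-sphere identity for homogeneous polynomials, which is exactly the content of the paper's Lemma \ref{IBP}. The only cosmetic difference is that the paper works degree by degree on the sphere from the outset via Lemma \ref{IBP} (using the single vector field $x_lH^{(k)}_{ij,i}-H^{(k)}_{jl}$ for the second identity), whereas you integrate over $B_1$ and reprove that conversion inline; the bookkeeping caveat you flag about applying it to each homogeneous piece $H^{(k)}$ is handled correctly.
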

\begin{proof}
Because of equations (\ref{conformal normal}) and (\ref{conformal normal coro}), we have
$$
x_iH^{(k)}_{ij,j}=0 \quad \text{for any } k=2,\ldots,K.
$$
Then, according to Lemma \ref{IBP}, we have
$$
\int_{\S^{n-1}}H^{(k)}_{ij,ij} = \int_{\S^{n-1}}\pa_jH^{(k)}_{ij,i} = (n+k-2)\int_{\S^{n-1}}x_iH^{(k)}_{ij,j}=0,
$$
and for any $l=1,\ldots,n$,
\[\int_{\S^{n-1}}x_lH^{(k)}_{ij,ij} = \int_{\S^{n-1}}\pa_j\(x_lH^{(k)}_{ij,i}-H^{(k)}_{jl}\) = (n+k-1)\int_{\S^{n-1}}x_j\(x_lH^{(k)}_{ij,i}-H^{(k)}_{jl}\)=0. \qedhere\]
\end{proof}

Since $H^{(k+2)}_{ij,ij}$ is a homogeneous polynomial on $\R^n$ of degree $k$, i.e., $H^{(k+2)}_{ij,ij} \in \mcp_k$, we can write it by using the spherical harmonic decomposition:
\begin{equation}\label{eq:Hshd}
H^{(k+2)}_{ij,ij}(x) = \sum_{s=0}^{\lfloor\frac{k-2}{2}\rfloor}r^{2s}p^{(k-2s)}(x),
\end{equation}
where $r = |x|$ and $p^{(k-2s)}\in \mch_{k-2s}$. By Lemma \ref{orthogonal} and Lemma \ref{IBP}, $p^{(0)}$ and $p^{(1)}$ vanish.

\begin{rmk}
Let $\beta^*_1 := \frac{2(n-1)}{n-2}-\frac{(n-2)(n-1)}{2} + 6-n$, $\beta^*_2 := -\frac{n-2}{2}-\frac{2}{n-2}$, and $\beta_n = \frac{n-4}{4(n-1)}$. For smooth functions $u,\, v$ on $\R^n$, where $u$ is radial, we define
\[\mfl_1[v]u := \frac{v}{n-1} \left[\beta^*_2u''+\beta^*_1\frac{u'}{r}-\frac{(n-6)(k-2)}{2}\frac{u'}{r}\right] - \beta_n\Delta v u.\]
Then, $\mfl_1[v]u$ is linear in both $u$ and $v$. By Lemma \ref{spherical expansion}, \eqref{eq:R1} and Lemma \ref{Euler's homo thm}, we have
\begin{equation}\label{eq:L_1mfl_1}
L_1[H^{(k+2)}]u = \mfl_1[H^{(k+2)}_{ij,ij}]u.
\end{equation}
When $k=2$, it holds that $H^{(4)}_{ij,ij}=p^{(2)}\in \mch_{2}$ because $p^{(0)} = 0$. In this case, our computation agrees with \cite[Lemma 2.1]{LX}. \hfill $\diamond$
\end{rmk}

Given the normalized bubble $w = w_{1,0} = (1+|\cdot|^2)^{-(n-4)/2}$ (see \eqref{eq:bubble} with $\msfk=2$), we decompose $L_1[H^{(k+2)}]w$ into
\[L_1[H^{(k+2)}]w = \mfl_1\Bigg[\sum_{s=0}^{\lfloor\frac{k-2}{2}\rfloor}r^{2s}p^{(k-2s)}\Bigg]w = \sum_{s=0}^{\lfloor\frac{k-2}{2}\rfloor} \mfl_1[r^{2s}p^{(k-2s)}]w.\]
Then,
\begin{equation}\label{def of mfl_1(k,s)}
\begin{aligned}
&\ \mfl_1[r^{2s}p^{(k-2s)}]w \\
&= \frac{n-4}{2(n-1)} \left[\frac{(k+2)(n-6)r^{2s}}{(1+r^2)^{\frac{n-2}{2}}} + \frac{(n^2-4n+8)r^{2s}}{(1+r^2)^{\frac{n}{2}}} - \frac{s(2k-2s+n-2)r^{2s-2}}{(1+r^2)^{\frac{n-4}{2}}}\right] p^{(k-2s)}.
\end{aligned}
\end{equation}

In this section, we will find an explicit solution of the linearized equation
\begin{equation}\label{linearized eqn k,s}
\Delta^2\Psi^{k,s}-\tmfc_4(n)w^{\frac{8}{n-4}}\Psi^{k,s} = -\mfl_1[r^{2s}p^{(k-2s)}]w
\end{equation}
for each $k = 2, \ldots, K-2$ and $s = 0, \ldots, \lfloor\frac{k-2}{2}\rfloor$.

Considering the form of the right-hand side of equation (\ref{linearized eqn k,s}), we seek a solution $\Psi^{k,s}$ expressed as a linear combination of
$$
\left\{r^{2j}(1+r^2)^{-\frac{n-2}{2}}p^{(k-2s)}\mid j=0,\ldots,s+2,\, \Delta p^{(k-2s)}=0\right\}.
$$
However, when we apply the linearized operator $\Delta^2-\tmfc_4(n)(1+r^2)^{-4}$ to $r^{2j}(1+r^2)^{-\frac{n-2}{2}}p^{(k-2s)}$, the computations become lengthy. So we decide to use another equivalent linear combinations of
\begin{equation}\label{eq:nbasis}
\left\{(1+r^2)^{-\frac{n-2j}{2}}p^{(k-2s)}\mid j=1,\ldots,s+3,\, \Delta p^{(k-2s)}=0\right\}.
\end{equation}

Let us express the term $\mfl_1[r^{2s}p^{(k-2s)}]w$ from the right-hand side of (\ref{linearized eqn k,s}) and one more term $\mfl_1[r^{2s}p^{(k-2s)}]Z$ in terms of the new basis given in \eqref{eq:nbasis}.
The latter term will arise when we analyze the Pohozaev quadratic form as shown in the proof of Lemma \ref{lemma of case 3}.
\begin{defn}[Right-hand side of the linearized equation]\label{linearized eqn RHS}
\

\noindent We define two $(s+4)\times 1$ column vectors $\Vec{b}=(b_i)$ and $\Vec{b}'=(b'_i)$, where the elements $b_i=b_i(n,k,s) \in \R$ and $b'_i=b'_i(n,k,s) \in \R$ are defined by the relation
\begin{align*}
\mfl_1[r^{2s}p^{(k-2s)}](1+r^2)^{-\frac{n-4}{2}} &:= -\frac{n-4}{4(n-1)}\sum_{i=1}^{s+4}b_i(1+r^2)^{-\frac{n+6-2i}{2}}p^{(k-2s)};\\
\mfl_1[r^{2s}p^{(k-2s)}](1+r^2)^{-\frac{n-2}{2}} &:= -\frac{1}{4(n-1)}\sum_{i=1}^{s+4}b'_i(1+r^2)^{-\frac{n+8-2i}{2}}p^{(k-2s)}. \tag*{$\diamond$}
\end{align*}
\end{defn}

In view of equation (\ref{def of mfl_1(k,s)}), we have $b_1=0$, $b_2=0$,
\begin{align*}
b_3&=(-1)^{s+1}2(n^2-4n+8);\\
b_4&=(-1)^{s}(2s(n^2-4n+8)-2(k+2)(n-6));\\
b_{s+4}&=2s(2k-2s+n-2)-2(k+2)(n-6),
\end{align*}
and when $s\ge 2$, for $i=5,\ldots, s+3$,
\begin{align*}
b_{i}&=(-1)^{s+4-i} \left[2(n^2-4n+8)\binom{s}{i-3}+2s(2k-2s+n-2)\binom{s-1}{i-5}\right.\\
&\hspace{195pt} \left.-2(k+2)(n-6)\binom{s}{i-4}\right].
\end{align*}
Similarly to (\ref{def of mfl_1(k,s)}), we can calculate
\begin{multline*}
\mfl_1[r^{2s}p^{(k-2s)}](1+r^2)^{-\frac{n-2}{2}}\\
\begin{medsize}
\displaystyle = \frac{1}{2(n-1)} \left[\frac{(k(n-6)(n-2)-8(n-1))r^{2s}}{(1+r^2)^{\frac{n}{2}}} + \frac{n(n^2-4n+8)r^{2s}}{(1+r^2)^{\frac{n+2}{2}}} - \frac{(n-4)s(2k-2s+n-2)r^{2s-2}}{(1+r^2)^{\frac{n-2}{2}}}\right]p^{(k-2s)}.
\end{medsize}
\end{multline*}
Then, we have $b'_1=0$, $b'_2=0$,
\begin{align*}
b'_3&=(-1)^{s+1}2n(n^2-4n+8);\\
b'_4&=(-1)^{s}[2sn(n^2-4n+8)-2(k(n-6)(n-2)-8(n-1))];\\
b'_{s+4}&=2(n-4)s(2k-2s+n-2)-2(k(n-6)(n-2)-8(n-1)),
\end{align*}
and when $s\ge 2$, for $i=5,\ldots, s+3$,
\begin{align*}
b'_{i}&= (-1)^{s+4-i} \left[2n(n^2-4n+8)\binom{s}{i-3}+2(n-4)s(2k-2s+n-2)\binom{s-1}{i-5}\right.\\
&\hspace{165pt} \left.-2(k(n-6)(n-2)-8(n-1))\binom{s}{i-4}\right].
\end{align*}

We are now ready to solve the linearized equation (\ref{linearized eqn k,s}).
\begin{prop}\label{prop:Psi}
Given $n\ge 10$, $k=2,\ldots,K-2$, and $s=0,\ldots, \lfloor\frac{k-2}{2}\rfloor$, we assume $\Delta p^{(k-2s)}=0$.
Then equation (\ref{linearized eqn k,s}) has a solution of the form
\begin{equation}\label{eq:Psiks}
\Psi^{k,s}=\frac{n-4}{4(n-1)}\sum_{j=1}^{s+3}\Gamma_j(1+r^2)^{-\frac{n-2j}{2}}p^{(k-2s)},
\end{equation}
where the precise value of $\Gamma_j=\Gamma_j(n,k,s) \in \R$ is given as follows: For $j=3,\ldots,s+2$,
\begin{equation}\label{eq:Gammanew}
\begin{aligned}
(k-2s-1)\Gamma_1&=2\Gamma_2;\\
(k-2s)\Gamma_2&=\frac{4(n-1)}{n-2}\Gamma_3;\\
(k-2s+j-2)\Gamma_j&=\frac{2j(n-j+1)}{(n-2j+2)}\Gamma_{j+1}-\frac{(-1)^{s+3-j}}{2(n-2j+2)}\binom{s}{j-3};\\
(k-s+1)\Gamma_{s+3}&=-\frac{1}{2(n-2s-4)}.
\end{aligned}
\end{equation}
\end{prop}
\begin{proof}
For $j=1,\ldots,s+3$, let $a=\frac{n-2j}{2}$ and $b=k-2s$. By Lemma \ref{lemma:newbasis}, we find
\begin{align*}
&\ (\Delta^2-\tmfc_4(n)(1+r^2)^{-4})((1+r^2)^{-\frac{n-2j}{2}}p^{(k-2s)})\\
&=-8(j-1)(n+2-j)(n^2-2(j-2)n+2j(j-3))(1+r^2)^{-\frac{n+8-2j}{2}}p^{(k-2s)}\\
&\ +4(n-2j)(n+2-2j)(n+4-2j)(k-2s+j-2)(1+r^2)^{-\frac{n+6-2j}{2}}p^{(k-2s)}\\
&\ +4(n-2j)(n+2-2j)(k-2s+j-1)(k-2s+j-2)(1+r^2)^{-\frac{n+4-2j}{2}}p^{(k-2s)}.
\end{align*}
Then we define an $(s+4)\times (s+3)$ matrix $A=(a_{i,j})$:
\[(\Delta^2-\tmfc_4(n)(1+r^2)^{-4})((1+r^2)^{-\frac{n-2j}{2}}p^{(k-2s)}) = \sum_{i=1}^{s+4}a_{i,j}(1+r^2)^{-\frac{n+6-2i}{2}}p^{(k-2s)}.\]
Note that the matrix $A$ has non-zero entries only on the main diagonal, the superdiagonal (i.e., upper secondary diagonal), and the subdiagonal (i.e., lower secondary diagonal). We have
\begin{align*}
a_{1,1}&=4(n-2)n(n+2)(k-2s-1);\\
a_{2,1}&=4(n-2)n(k-2s)(k-2s-1),
\end{align*}
and for $j=2,\ldots,s+3$,
\begin{align*}
a_{j-1,j}&=-8(j-1)(n+2-j)(n^2-2(j-2)n+2j(j-3));\\
a_{j,j}&=4(n-2j)(n+2-2j)(n+4-2j)(k-2s+j-2);\\
a_{j+1,j}&=4(n-2j)(n+2-2j)(k-2s+j-1)(k-2s+j-2).
\end{align*}
To prove this proposition, we just need to solve the following overdetermined linear system:
\begin{equation}\label{eq:AGb}
A\Gamma=\Vec{b},
\end{equation}
where $\Gamma=(\Gamma_j)$ is an $(s+3)\times 1$ column vector. By direct computation, we obtain
\begin{equation}
\begin{aligned}\label{eq:Gammacs3s2}
\Gamma_{s+3}&=a_{s+4,s+3}^{-1}b_{s+4};\\
\Gamma_{s+2}&=a_{s+3,s+2}^{-1}(b_{s+3}-a_{s+3,s+3}\Gamma_{s+3}),
\end{aligned}
\end{equation}
and for $j=1,\ldots,s+1$, the following recurrence relations
\begin{equation}\label{eq:Gammacj}
\Gamma_{j}=a_{j+1,j}^{-1}(b_{j+1}-a_{j+1,j+1}\Gamma_{j+1}-a_{j+1,j+2}\Gamma_{j+2}).
\end{equation}
System \eqref{eq:AGb} has a solution if the condition
\begin{equation}\label{eq:cancel}
a_{1,1}\Gamma_1+a_{1,2}\Gamma_2=0
\end{equation}
holds.

Let us verify \eqref{eq:cancel}. By plugging the values of $A$ and $\Vec{b}$ into \eqref{eq:Gammacs3s2} and \eqref{eq:Gammacj}, we obtain
\begin{equation}\label{eq:Gammas3s21}
\begin{aligned}
\Gamma_{s+3} &= -\frac{(k+2)(n-6)-s(2k-2s+n-2)}{2(n-2s-6)(n-2s-4)(k-s+2)(k-s+1)}; \\
\Gamma_{s+2} &= -\frac{n^2-4n+8+s(s-1)(2k-2s+n-2)-s(k+2)(n-6)}{2(n-2s-4)(n-2s-2)(k-s+1)(k-s)} - \frac{n-2s-6}{k-s}\Gamma_{s+3}; \\
\Gamma_{1} &= -\frac{n-4}{k-2s-1}\Gamma_{2}+\frac{4(n-1)}{(k-2s)(k-2s-1)}\Gamma_{3}. \end{aligned}
\end{equation}
When $s\ge 1$, we obtain
\begin{equation}\label{eq:Gamma2}
\Gamma_{2}=\frac{(-1)^{s+1}(n^2-4n+8)}{2(n-4)(n-2)(k-2s+1)(k-2s)}-\frac{n-6}{k-2s}\Gamma_3 + \frac{6(n^2-4n+8)}{(n-4)(k-2s+1)(k-2s)}\Gamma_{4}.
\end{equation}
When $s\ge 2$, we obtain
\begin{equation}\label{eq:Gamma3}
\begin{aligned}
\Gamma_{3}&=\frac{(-1)^{s}(s(n^2-4n+8)-(k+2)(n-6))}{2(n-6)(n-4)(k-2s+2)(k-2s+1)} \\
&\ -\frac{n-8}{k-2s+1}\Gamma_{4} + \frac{8(n-3)(n^2-6n+20)}{(n-6)(n-4)(k-2s+2)(k-2s+1)}\Gamma_{5}.
\end{aligned}
\end{equation}
When $s\ge 3$, for $j=4,\ldots,s+1$, we obtain the following recurrence relations
\begin{align}
\begin{medsize}
\displaystyle \Gamma_j
\end{medsize}
&\begin{medsize}
\displaystyle = (-1)^{s+3-j} \frac{(s-j+3)(n^2-4n+8)+(j-2)(j-3)(2k-2s+n-2)-(j-2)(k+2)(n-6)}{2(j-2)(n-2j)(n+2-2j)(k-2s+j-1)(k-2s+j-2)} \binom{s}{j-3}
\end{medsize} \nonumber \\
&\begin{medsize}
\displaystyle \ - \frac{n-2j-2}{k-2s+j-2}\Gamma_{j+1} + \frac{2(j+1)(n-j)(n^2-2jn+2(j+2)(j-1))}{(n-2j)(n+2-2j)(k-2s+j-1)(k-2s+j-2)}\Gamma_{j+2}. \label{eq:Gammaj}
\end{medsize}
\end{align}
In the end, using \eqref{eq:Gammas3s21}--\eqref{eq:Gammaj} and a computer software such as Mathematica, we see that the following cancellation appears:
\begin{equation}\label{eq:Gamma1new}
(k-2s-1)\Gamma_1-2\Gamma_2=0,
\end{equation}
which is exactly \eqref{eq:cancel}. Consequently, system \eqref{eq:AGb} has a solution.

\medskip
In the rest of the proof, we will prove \eqref{eq:Gammanew}. The first equation in \eqref{eq:Gammanew} is nothing but \eqref{eq:cancel}.
Plugging \eqref{eq:Gamma1new} back into the third equation in \eqref{eq:Gammas3s21}, we obtain the second equation in \eqref{eq:Gammanew}:
\begin{equation}\label{eq:Gamma2new}
(k-2s)\Gamma_2=\frac{4(n-1)}{n-2}\Gamma_3.
\end{equation}

Next, we will show the third equation in \eqref{eq:Gammanew} for $j=3,\ldots,s+1$:
\begin{equation}\label{eq:Gammajnew}
(k-2s+j-2)\Gamma_j=\frac{2j(n-j+1)}{n-2j+2}\Gamma_{j+1}-\frac{(-1)^{s+3-j}}{2(n-2j+2)}\binom{s}{j-3}.
\end{equation}
We proceed by induction. To begin with, by putting \eqref{eq:Gamma2new} back into \eqref{eq:Gamma2}, we can obtain \eqref{eq:Gammajnew} with $j=3$.
Similarly, plugging \eqref{eq:Gammajnew} with $j=3$ back into \eqref{eq:Gamma3}, we can obtain \eqref{eq:Gammajnew} with $j=4$.
Assuming the validity of \eqref{eq:Gammajnew} for a given $j\in \{4,\ldots,s\}$, we combine it with \eqref{eq:Gammaj} to see
\begin{align}
&\begin{medsize}
\displaystyle 2j(n-j+1)\Gamma_{j+1}+ (n-2j+2)(n-2j-2)\Gamma_{j+1}
\end{medsize}\nonumber \\
&\begin{medsize}
\displaystyle = (-1)^{s+3-j} \frac{(s-j+3)(n^2-4n+8)+(j-2)(j-3)(2k-2s+n-2)-(j-2)(k+2)(n-6)}{2(j-2)(n-2j)(k-2s+j-1)} \binom{s}{j-3}
\end{medsize} \nonumber \\
&\begin{medsize}
\displaystyle \ +\frac{(-1)^{s+3-j}}{2}\binom{s}{j-3} + \frac{2(j+1)(n-j)(n^2-2jn+2(j+2)(j-1))}{(n-2j)(k-2s+j-1)}\Gamma_{j+2}.
\end{medsize}\nonumber
\end{align}
Then we have
\begin{align*}
&\ (n-2j)(k-2s+j-1)\Gamma_{j+1}-2(j+1)(n-j)\Gamma_{j+2}\\
&=(-1)^{s+3-j} \frac{(s-j+3)}{2(j-2)} \binom{s}{j-3}=- \frac{(-1)^{s+2-j}}{2} \binom{s}{j-2},
\end{align*}
which is exactly \eqref{eq:Gammajnew} with $j+1$.

In the end, by the first two equations in \eqref{eq:Gammas3s21}, it holds that
\begin{align*}
(k-s)\Gamma_{s+2}&=\frac{2(s+2)(n-s-1)}{(n-2s-2)}\Gamma_{s+3}+\frac{1}{2(n-2s-2)}\binom{s}{s-1};\\
(k-s+1)\Gamma_{s+3}&=-\frac{1}{2(n-2s-4)},
\end{align*}
so the third equation for $j=s+2$ and the fourth equation in \eqref{eq:Gammanew} is true.
\end{proof}

\begin{rmk}\label{rmk:lin}
It is a surprising fact that equation \eqref{linearized eqn k,s} possesses a solution of the form
\begin{equation}\label{eq:Psiform}
\Psi(x)=F(x)(1+r^2)^{-\frac{n-2}{2}},
\end{equation}
where $F(x)$ is a polynomial. To explain why, we assume the form of $\Psi$ in \eqref{eq:Psiform} and define
$$
T_Q(F) := (1+r^2)^{\frac{n+4}{2}}\(\Delta^2-\tmfc_4(n)w^{\frac{8}{n-4}}\)\Psi.
$$
Then,
\begin{align*}
T_Q(F) &= (1+r^2)^3\Delta^2F-4(n-2)(1+r^2)^2(\Delta F+x_i\pa_i\Delta F)\\
&\ -2(n-2)n(1+r^2)(\Delta F-2x_ix_j\pa_i\pa_jF)+4(n-2)n(n+2)(x_i\pa_i F-F).
\end{align*}
We observe that $\deg T_Q(F)=\deg F+2$.

In the context of the Yamabe equation, the map corresponding to $T_Q$ is
\begin{align*}
T_Y(F) &:= (1+r^2)^{\frac{n+2}{2}}\(\Delta+n(n+2)(1+r^2)^{-2}\) (F(x)(1+r^2)^{-\frac{n}{2}}) \\
&= (1+r^2)\Delta F - 2nx \cdot \nabla F + 2nF,
\end{align*}
which appeared in the proof of \cite[Proposition 4.1]{KMS}, and so $\deg T_Y(F)=\deg F$.
This leads that the analogous matrix to $A$ in the proof of Proposition \ref{prop:Psi} for the Yamabe case is a \textbf{square} matrix with non-vanishing entries only on the diagonal and superdiagonal, making the proof of \cite[Proposition 4.1]{KMS} relatively simple.

In contrast, if $T_Q(F)$ equals a general polynomial, there may be no polynomial solution $F$ with degree 2 less.
Recall that the linear system \eqref{eq:AGb} is overdetermined, so it generally has no solution for a typical vector $\Vec{b}$.
Remarkably, the algebraic structure of the Paneitz operator yields a specific vector $\Vec{b}$ for which \eqref{eq:AGb} admits a solution.

In Appendix \ref{sec:geoexp} and Remark \ref{rmk:lin sol rel Q6}, we will offer a geometric intuition for the explicit solvability of \eqref{linearized eqn k,s}. \hfill $\diamond$
\end{rmk}

\begin{rmk}\label{rmk:lin sol rel}
Despite the difference between the Yamabe problem and $Q^{(4)}$-curvature problem depicted in the previous remark, we can find a clean relation \eqref{relation of Psi} between the solutions of the linearized equations of these two problems:

Let $H^{(k+2)}_{ij,ij}=\sum_{s=0}^{\lfloor\frac{k-2}{2}\rfloor}r^{2s}p^{(k-2s)}$ with $\Delta p^{(k-2s)}=0$ as in \eqref{eq:Hshd}. The linearized equation of the Yamabe problem is
\begin{align*}
(\Delta+n(n+2)(1+r^2)^{-2})\Psi^{2,k,s}&=\frac{n-2}{4(n-1)}r^{2s}p^{(k-2s)}(1+r^2)^{-\frac{n-2}{2}}\\
&=\frac{n-2}{4(n-1)}\sum_{i=1}^{s+3}b_i(1+r^2)^{-\frac{n+4-2i}{2}}p^{(k-2s)}.
\end{align*}
Here, $b_i=b_i(n,k,s)$ is defined by $b_1=0$, $b_2=0$, and
$$
b_{i}=(-1)^{s+3-i} \binom{s}{i-3} \quad \text{for } i=3,\ldots, s+3.
$$
Then, the solution $\Psi^{2,k,s}$ (here, the superscript $2$ indicates the order of the Yamabe problem) takes the form
$$
\Psi^{2,k,s}=\frac{n-2}{4(n-1)}\sum_{j=1}^{s+3}\Gamma^{(2)}_j(1+r^2)^{-\frac{n+2-2j}{2}}p^{(k-2s)}
$$
for some numbers $\Gamma^{(2)}_j \in \R$. Let $\Gamma^{(4)}_j := \Gamma_j$ be the constants given by \eqref{eq:Gammanew}. Surprisingly, the equalities
\begin{align*}
&\ (\Delta+n(n+2)(1+r^2)^{-2})((1+r^2)^{-\frac{n+2-2j}{2}}p^{(k-2s)})\\
&=(n(n+2)-(n+2-2j)(n+4-2j))(1+r^2)^{-\frac{n+6-2j}{2}}p^{(k-2s)}\\
&\ -2(n+2-2j)(k-2s+j-2)(1+r^2)^{-\frac{n+4-2j}{2}}p^{(k-2s)}
\end{align*}
for $j=1,\ldots,s+3$, following from Lemma \ref{lemma:newbasis}, reveal that
$$
\Gamma^{(2)}_j=\Gamma^{(4)}_j \quad \text{for } j=1,\dots, s+3.
$$
In other words, we have
\begin{equation}\label{relation of Psi}
\frac{1}{n-2}\Psi^{2,k,s}=\frac{1}{n-4}(1+r^2)^{-1}\Psi^{4,k,s},
\end{equation}
where $\Psi^{4,k,s} := \Psi^{k,s}$ in \eqref{eq:Psiks}.

Refer also to Remark \ref{rmk:lin sol rel Q6}, where the same observation applies to the sixth-order problem as well. \hfill $\diamond$
\end{rmk}

Given \eqref{eq:Hshd} and \eqref{linearized eqn k,s}, we define
\begin{equation}\label{eq:PsiH}
\Psi[H^{(k+2)}] := \sum_{s=0}^{\lfloor\frac{k-2}{2}\rfloor}\Psi^{k,s}.
\end{equation}
From Proposition \ref{prop:Psi}, we immediately deduce
\[\Delta^2\Psi[H^{(k+2)}] - \tmfc_4(n)w^{\frac{8}{n-4}}\Psi[H^{(k+2)}] = - L_1[H^{(k+2)}]w = -\mfl_1[H^{(k+2)}_{ij,ij}]w \quad \text{in } \R^n,
\]
and the following estimate.
\begin{cor}\label{cor:Psidecay}
For $n\ge 10$ and $k=2,\ldots,K-2$, there exists a constant $C=C(n,k)>0$ such that
$$
\left|\pa_{\beta}\Psi[H^{(k+2)}]\right|(x) \le C|H^{(k+2)}|(1+|x|)^{k+6-n-|\beta|}
$$
for a multi-index $\beta$ with $|\beta|=0,\ldots,4$. Here, $|H^{(k+2)}|$ is the quantity defined in \eqref{eq:Hnorm}.
\end{cor}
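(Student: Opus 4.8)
The plan is to estimate $\pa_\beta\Psi[H^{(k+2)}]$ building block by building block. By Proposition~\ref{prop:Psi} and \eqref{eq:PsiH}, $\Psi[H^{(k+2)}]$ is a finite sum, over $s=0,\dots,\lfloor\frac{k-2}{2}\rfloor$ and $j=1,\dots,s+3$, of terms of the shape $c\,\Gamma_j(n,k,s)\,(1+|x|^2)^{-\frac{n-2j}{2}}\,p^{(k-2s)}(x)$, where $c=-\frac{n-4}{2(n-1)}$ and $p^{(k-2s)}\in\mch_{k-2s}$ is the degree-$(k-2s)$ piece in the decomposition \eqref{eq:Hshd}. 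Thus the corollary reduces to three ingredients: a polynomial bound for the harmonic pieces $p^{(k-2s)}$ in terms of $|H^{(k+2)}|$; boundedness of the constants $\Gamma_j$; and decay estimates for the smooth weights $(1+|x|^2)^{-a}$ and their derivatives. The last two are purely mechanical, so the only step requiring a genuine argument is the first.

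For that step I would argue as follows. The polynomial $H^{(k+2)}_{ij,ij}\in\mcp_k$ has coefficients that are fixed linear combinations — with coefficients depending only on $n$ and $k$ — of the numbers $h_{ij,\alpha}(0)/\alpha!$, $|\alpha|=k+2$; hence $\|H^{(k+2)}_{ij,ij}\|_{L^2(\S^{n-1})}\le C(n,k)\,|H^{(k+2)}|$, with $|H^{(k+2)}|$ as in \eqref{eq:Hnorm}. Restricting \eqref{eq:Hshd} to $\S^{n-1}$ yields the $L^2(\S^{n-1})$-orthogonal identity $H^{(k+2)}_{ij,ij}\big|_{\S^{n-1}}=\sum_{s}p^{(k-2s)}\big|_{\S^{n-1}}$, so $\|p^{(k-2s)}\|_{L^2(\S^{n-1})}\le C(n,k)\,|H^{(k+2)}|$ for each $s$. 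Since $\mch_{k-2s}$ is finite dimensional all norms on it are equivalent, and combining this with the homogeneity of $p^{(k-2s)}$ and of its derivatives I obtain $|\pa_\gamma p^{(k-2s)}(x)|\le C(n,k)\,|H^{(k+2)}|\,(1+|x|)^{k-2s-|\gamma|}$ for every multi-index $\gamma$, the left-hand side vanishing identically once $|\gamma|>k-2s$ (note that $k-2s\ge 2$ throughout the sum).

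The remaining two ingredients are routine. In the admissible range $1\le j\le s+3$, $0\le s\le\lfloor\frac{k-2}{2}\rfloor$, $2\le k\le K-2$ (so $2j\le n-4$), one checks that every denominator occurring in \eqref{eq:Gammas3s21}--\eqref{eq:Gammaj} is nonzero — for instance $n-2j\ge 4$, $n+2-2j\ge 6$, $n-6\ge 4$, $k-2s\ge 2$, $k-2s-1\ge 1$ — so each $\Gamma_j=\Gamma_j(n,k,s)$ is well defined, and since only finitely many pairs $(j,s)$ occur we have $|\Gamma_j|\le C(n,k)$. For the weights, an induction on $|\gamma|$ shows that $\pa_\gamma\big[(1+|x|^2)^{-a}\big]=P_{a,\gamma}(x)(1+|x|^2)^{-a-|\gamma|}$ for a polynomial $P_{a,\gamma}$ of degree at most $|\gamma|$ with constant coefficients, whence $\big|\pa_\gamma\big[(1+|x|^2)^{-a}\big]\big|\le C_{a,\gamma}(1+|x|^2)^{-a-|\gamma|/2}\le C_{a,\gamma}(1+|x|)^{-2a-|\gamma|}$, the last step using $1+|x|^2\ge\frac12(1+|x|)^2$. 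Applied with $a=\frac{n-2j}{2}>0$ this gives $\big|\pa_\gamma\big[(1+|x|^2)^{-\frac{n-2j}{2}}\big]\big|\le C(n,k)(1+|x|)^{-(n-2j)-|\gamma|}$.

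To conclude, I would fix $|\beta|\le 4$ and apply the Leibniz rule to $\pa_\beta$ of a building block $\Gamma_j(1+|x|^2)^{-\frac{n-2j}{2}}p^{(k-2s)}$: up to the constant $|c|=\frac{n-4}{2(n-1)}$, it is a finite sum, over splittings $\beta=\gamma+\gamma'$, of terms bounded by $C(n,k)\,|H^{(k+2)}|\,(1+|x|)^{-(n-2j)-|\gamma|}(1+|x|)^{k-2s-|\gamma'|}=C(n,k)\,|H^{(k+2)}|\,(1+|x|)^{k-2s+2j-n-|\beta|}$. Since $j\le s+3$ the exponent is $\le k+6-n-|\beta|$, and because $1+|x|\ge 1$ this quantity is $\le C(n,k)\,|H^{(k+2)}|\,(1+|x|)^{k+6-n-|\beta|}$; summing over the finitely many admissible pairs $(s,j)$ gives the stated estimate. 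I expect the crux of the write-up to be the norm estimate of the second paragraph, where one genuinely uses the $L^2(\S^{n-1})$-orthogonality of the spherical harmonic decomposition \eqref{eq:Hshd} together with the equivalence of norms on the finite-dimensional spaces $\mch_{k-2s}$; everything else is bookkeeping with explicit rational functions and homogeneous polynomials.
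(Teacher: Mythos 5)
Your proposal is correct and follows exactly the route the paper intends: the corollary is stated as an immediate consequence of the explicit rational formula in Proposition \ref{prop:Psi}, and your write-up simply fills in the straightforward verification (orthogonality of the spherical harmonic pieces on $\S^{n-1}$ to bound $p^{(k-2s)}$ by $|H^{(k+2)}|$, well-definedness and boundedness of the finitely many $\Gamma_j$, and Leibniz with the decay of $(1+|x|^2)^{-\frac{n-2j}{2}}$, using $j\le s+3$ to get the exponent $k+6-n-|\beta|$). No gaps.
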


\section{Refined Blowup Analysis}\label{sec:blowup}
Throughout Sections \ref{sec:blowup} and \ref{sec:Weyl}, we assume that $\sigma_a \to \bsi \in M$ is an isolated simple blowup point for a sequence $\{u_a\}_{a \in \N}$ of solutions to \eqref{eq:maina} with $\msfk=2$.
We work in $g_a$-normal coordinates $x$ centered at $\sigma_a$, chosen such that $\det g_a = 1$.

\medskip
We write $u_a(x) = u_a(\exp^{g_a}_{\sigma_a}(x))$ by slight abuse of notation and $r = |x| = d_{g_a}(\exp^{g_a}_{\sigma_a}(x),\sigma_a)$, where $\exp^{g_a}$ is the exponential map on $(M,g_a)$. We set a rescaling factor $\ep_a = u_a(0)^{-\frac{2}{n-4}} > 0$
so that $\ep_a \to 0$ as $a \to \infty$, $\tig_a(y) = g_a(\ep_a y)$, $P_a^{(4)} = P_{\tig_a}^{(4)}$, $\mce_a = \mce_{\tig_a}$, and the natural normalization of $u_a$ by
\[U_a(y) = \ep_a^{\frac{n-4}{2}} u_a(\ep_a y).\]

We note that $K=n-6 \ge d=\lfloor\frac{n-4}{2}\rfloor$ for all $n \ge 8$. Let $g_a = \exp(h_a)$ and $\tig_a=\exp(\tih_a)$ so that $\tih_a(y)=h_a(\ep_a y)$. We define the $2$-tensor $H_a$ by \eqref{eq:H}, where $h$ is replaced by $h_a$, and write
\[\wth_a^{(k)}(y) := H_a^{(k)}(\ep_a y) = \ep_a^k H_a^{(k)}(y) \quad \text{for } k=2,\ldots, K\]
and $\wth_a := \sum_{k=2}^{K} \wth_a^{(k)}$. For $n = 8$ or $9$, we set $\wtPsi_a := 0$. For $n \ge 10$, we set
\[\wtPsi_a := \sum_{k=4}^{K} \Psi[\wth_a^{(k)}],\]
where $\Psi[\wth_a^{(k)}]$ is the function in \eqref{eq:PsiH} with $H^{(k+2)}$ replaced by $\wth_a^{(k)}$. Then,
\[\Delta^2\wtPsi_a - \tmfc_4(n)w^{\frac{8}{n-4}}\wtPsi_a = - \sum_{k=4}^{K} L_1[\wth_a^{(k)}]w \quad \text{in } \R^n\]
and $\wtPsi_a$ is an explicit rational function on $\R^n$ satisfying
\begin{equation}\label{eq:wtPsidec}
\left|\pa_{\beta}\wtPsi_a\right|(y)\le C\sum_{k=4}^{K}\ep_a^k|H_a^{(k)}|(1+|y|)^{k+4-n-|\beta|}
\end{equation}
for a multi-index $\beta$ with $|\beta|=0,\ldots,4$ (see Corollary \ref{cor:Psidecay}).

The following result improves estimate \eqref{eq:isosim decay} in Proposition \ref{prop:isosim decay}.
\begin{prop}\label{prop:refest}
Let $n\ge 8$ and $\sigma_a \to \bsi \in M$ be an isolated simple blowup point for a sequence $\{u_a\}$ of solutions to \eqref{eq:maina} with $\msfk=2$. Then there exist $\msfd_0 \in (0,1)$ and $C>0$ such that
$$
\left|\pa_{\beta}(U_a-w-\wtPsi_a)\right|(y) \le C\sum_{k=2}^{d-1}\ep_a^{2k}|H_a^{(k)}|^2(1+|y|)^{2k+4-n-|\beta|} +C\ep_a^{n-5}(1+|y|)^{-1-|\beta|}$$
for all $a \in \N$, multi-indices $\beta$ with $|\beta| = 0,1,\ldots,4$, and $|y|\le \msfd_0\ep_a^{-1}$.
\end{prop}
\noindent In order to deduce this proposition, one may suitably modify the argument in \cite[Section 5]{LX} for general manifolds of dimension $5 \le n \le 9$, so we omit the details.
Interested readers may consult \cite{KMS} to see how the arguments in \cite{Ma}, originally formulated for the low-dimensional Yamabe problem ($3 \le n \le 7$), can be extended to the higher-dimensional range $3 \le n \le 24$.

\begin{cor}\label{cor:scale back}
Scaling back to the $x$-coordinates, we set $w_a(x):=\ep_a^{-\frac{n-4}{2}}w(\ep_a^{-1}x)$ and $\Psi_a(x)=\ep_a^{-\frac{n-4}{2}}\wtPsi(\ep_a^{-1}x)$ so that
$$
\Delta^2\Psi_a - \tmfc_4(n)w_a^{\frac{8}{n-4}}\Psi_a = - \sum_{k=4}^{K} L_1[H_a^{(k)}]w_a \quad \text{in } \R^n.
$$
Under the conditions of Proposition \ref{prop:refest}, there exist $\msfd_0 \in (0,1)$ and $C>0$ such that
$$
\left|\pa_{\beta}(u_a-w_a-\Psi_a)\right|(x) \le C\ep_a^{\frac{n-4}{2}+|\beta|}\left[\sum_{k=2}^{d-1}|H_a^{(k)}|^2(\ep_a+|x|)^{2k+4-n-|\beta|} +(\ep_a+|x|)^{-1-|\beta|}\right]
$$
for all $a \in \N$, multi-indices $\beta$ with $|\beta| = 0,1,\ldots,4$, and $|x|\le \msfd_0$.
\end{cor}

\section{Weyl Vanishing Theorem}\label{sec:Weyl}
In this section, we prove the Weyl vanishing theorem, which is necessary to apply the positive mass theorem for the proof of Theorem \ref{thm:main4}.
We set $\theta_k=1$ if $k=\frac{n-4}{2}$ and $\theta_k=0$ otherwise.
\begin{thm}\label{thm:Weyl2}
Let $8 \le n \le 24$ and $\sigma_a \to \bsi \in M$ be an isolated simple blowup point for a sequence $\{u_a\}$ of solutions to \eqref{eq:maina} with $\msfk=2$. Then for all $a \in \N$ and $k = 0,\ldots,\lfloor \frac{n-8}{2} \rfloor$,
$$
|\nabla^k_{g_a}W_{g_a}|^2(\sigma_a) \le C\ep_a^{n-8-2k}|\log\ep_a|^{-\theta_{k+2}}.
$$
Consequently,
\begin{equation}\label{eq:Weylv}
\nabla_g^k W_g(\bsi) = 0.
\end{equation}
\end{thm}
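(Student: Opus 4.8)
The plan is to follow Schoen's Pohozaev-identity strategy for the Yamabe problem, adapted to the fourth-order setting. We work in $g_a$-normal coordinates centered at $\sigma_a$ with $\det g_a = 1$, and exploit the sharp approximation $u_a \approx w_a + \Psi_a$ from Corollary \ref{cor:scale back}. The starting point is to apply the local Pohozaev identity \eqref{eq:Poho2} to $u_a$ on a ball $B_r$ of fixed small radius. Since $\mce_{g_a}(u_a) = -\Delta_{g_a}^2 u_a + \Delta^2 u_a$-type terms are governed by the curvature expansions in Section \ref{sec:curv} (Lemma \ref{mce}, Lemma \ref{spherical expansion}), the bulk term $\int_{B_r} (r\pa_r u_a + \tfrac{n-4}{2} u_a)(\mce_{g_a} + \Delta_{g_a}^2 - \Delta^2)(u_a)\, dx$ expands, after substituting $u_a = w_a + \Psi_a + (\text{error})$ and rescaling $y = \ep_a^{-1} x$, into a sum of terms indexed by the homogeneous pieces $H_a^{(k)}$ of $h_a$. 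The key structural input is that $L_1[H^{(k+2)}]w = \mfl_1[H^{(k+2)}_{ij,ij}]w$ is \emph{corrected away} by $\Psi[H^{(k+2)}]$, so that the leading contribution to the Pohozaev term comes from the quadratic interactions $L_2[h,h]w$ together with cross terms $L_1[H^{(k)}]\Psi[H^{(k')}]$ and $L_1[H^{(k)}]w$ paired against $\Psi_a$.

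The heart of the matter is to organize these quadratic contributions into a \textbf{Pohozaev quadratic form} in the coefficients of $H_a$. Concretely, one writes the dominant part of $\ep_a^{-(n-8)}\mbp(r, u_a)$ (after letting $a\to\infty$ and $r\to 0$ appropriately) as $\sum_{k,k'} \langle \mcl_k H_a^{(k)}, H_a^{(k')}\rangle$-type expressions, where the relevant operator is $\mcl_k$ from \eqref{eq:mclk}; using the spherical harmonic decomposition \eqref{eq:Hshd} of $H^{(k+2)}_{ij,ij}$ and the explicit rational solutions $\Psi^{k,s}$ from Proposition \ref{prop:Psi}, each pairing reduces to integrals of the form $\mci^i_j$ in \eqref{eq:mci}, which are explicitly computable Beta-function values. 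After diagonalizing with respect to the eigenspaces of $\mcl_k$ (the analysis following Corollary \ref{I_1 coro}), one obtains that the quadratic form is a \emph{positive-definite} (or definite-signed) combination of $|\nabla^k_{g_a} W_{g_a}|^2(\sigma_a)$ — here the Weyl tensor enters because the trace-free, divergence-free, harmonic content of $H^{(k+2)}_{ij,ij}$ is precisely what encodes $\nabla^{k-2} W$ at the origin, by the Weyl-tensor expansion in conformal normal coordinates. This is where the positivity of the Mathematica-verified matrices $(m^{D,s}_{qq'})$, $(m^{W,s}_{qq'})$, $(m^{H,s}_{qq'})$ of Lemmas \ref{lemma of case 1}--\ref{lemma of case 3} is used.

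On the other hand, the boundary term $\mbp(r,u_a)$ must be shown to be small: by the refined estimate in Corollary \ref{cor:scale back} together with the decay of $w_a$, $\Psi_a$, and the remainder, one bounds $|\mbp(r,u_a)|$ by $C\ep_a^{n-4} + (\text{Green's function contribution})$. Using Proposition \ref{prop:isosim decay}, $u_a(\sigma_a) u_a \to \mfg = c_n G_{g_\infty}(\cdot,\bsi) + \mfh$, and the sign of the constant term $A$ in the Green's function expansion \eqref{expansion of Green's function} controls the limiting boundary contribution; but at this stage of the argument (before the positive mass theorem is invoked) one only needs that this term has a favorable sign or is negligible relative to the curvature term. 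Combining the positivity of the quadratic form with the smallness of the boundary term forces $\sum_{k} |\nabla^k_{g_a} W_{g_a}|^2(\sigma_a) \ep_a^{-(n-8-2k)}$ to be bounded (with the logarithmic correction $|\log\ep_a|^{-\theta_{k+2}}$ appearing exactly when $k+2 = \tfrac{n-4}{2}$, the borderline case where $\mci^i_j$ diverges logarithmically), which is the first displayed inequality. Since $\ep_a \to 0$, each exponent $n-8-2k \ge 0$ for $k \le \lfloor\tfrac{n-8}{2}\rfloor$, so letting $a\to\infty$ yields \eqref{eq:Weylv}.

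The main obstacle I expect is the algebra of the quadratic form: correctly identifying which combinations of the coefficients $h_{ij,\alpha}(0)$ survive after the $L_1$-corrections and the integration over spheres, and then proving the resulting finite-dimensional quadratic form is definite for \emph{every} $n$ in the range $8 \le n \le 24$ and every relevant harmonic degree. Unlike the Yamabe case, where the analogous computation is comparatively short (Remark after Proposition \ref{prop:Psi}), here the non-square matrix $A$ in Proposition \ref{prop:Psi} and the richer eigenstructure of $\mcl_k$ mean the positivity is not transparent and must be checked case-by-case (hence the Mathematica-assisted verification). A secondary difficulty is bookkeeping the error terms: one must ensure that all the $O(|h|^2|\pa^2 h|+\cdots)$ remainders in Lemma \ref{mce}, as well as the remainder in Corollary \ref{cor:scale back}, contribute strictly lower order than $\ep_a^{n-8}$ to the Pohozaev identity, which requires the dimension restriction $n \le 24$ in an essential way — this is the precise point where the threshold dimension enters.
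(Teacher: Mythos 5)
Your outline follows essentially the paper's own route: the local Pohozaev identity at the fixed scale $\delta_0$, the sharp approximation $u_a\approx w_a+\Psi_a$ with the correction cancelling the $L_1[H_a^{(k)}]w$ terms, the resulting Pohozaev quadratic form $I_{\ep_a}[\wth_a,\wth_a]$ diagonalized along the eigenspaces of $\mcl_k$ (the splitting $\mcv_k=\mcv_k/\mcw_k\oplus\mcw_k/\mcd_k\oplus\mcd_k$), the Mathematica-verified positivity of Proposition \ref{Positive definiteness} (Lemmas \ref{lemma of case 1}--\ref{lemma of case 3}), and finally the bound $|H_a^{(k)}|^2\le C\ep_a^{n-4-2k}|\log\ep_a|^{-\theta_k}$, translated into the Weyl estimate via \cite{HV} and sent to zero as $a\to\infty$.

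Two points need correcting, the first being substantive. You place the dimension restriction in the error bookkeeping, claiming that absorbing the remainders of Lemma \ref{mce} and Corollary \ref{cor:scale back} "requires $n\le 24$ in an essential way." That is not where the threshold enters: the error terms in \eqref{eq:Pohoineq2} and Lemma \ref{lemma:diffmciI} are of the form $o\bigl(\sum_k\ep_a^{2k}|\log\ep_a|^{\theta_k}|H_a^{(k)}|^2\bigr)+O(\ep_a^{n-4})$ in \emph{every} dimension (with $K=n-6$, $d=\lfloor\frac{n-4}{2}\rfloor$ chosen accordingly), so they never force the cap. The restriction $8\le n\le 24$ comes solely from the positive-definiteness of the quadratic form itself: the matrix $(m^{D,2}_{qq'})$ governing the $\mcd_k$-part acquires a negative eigenvalue for $n\ge 25$, which is exactly the mechanism behind the Wei--Zhao noncompactness examples \cite{WZ}. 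Secondly, for this theorem no Green's function or sign/mass input is needed on the boundary side: $\mbp$ is quadratic in $u_a$, and Proposition \ref{prop:refest} gives $u_a=O(\ep_a^{\frac{n-4}{2}})$ together with derivatives on $|x|=\delta_0$, so the boundary contribution is simply $O(\ep_a^{n-4})$; it is not negligible relative to the curvature term but sets the scale against which the quadratic form is measured, which is why the conclusion is a decay rate rather than exact vanishing at fixed $a$. The sign restriction involving the Green's function expansion and the constant $A$ is reserved for Proposition \ref{prop:lsr} and Section \ref{sec:comp}, i.e., for the compactness theorem, not for the Weyl vanishing step.
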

\noindent The corresponding result for $n = 8, 9$ was deduced in \cite[Proposition 6.1]{LX}.
Besides, as shown in \cite{HV}, \eqref{eq:Weylv} holds for $k = 0,\ldots,d-2=\lfloor \frac{n-8}{2} \rfloor$ if and only if $\nabla_g^k H(\bsi) = 0$ for $k = 2,\ldots,d$, provided $\det g = 1$ near $\bsi$.
The proof of Theorem \ref{thm:Weyl2} is long and will be conducted in Subsections \ref{subsec:Weyl21}--\ref{subsec:Weyl23}, Section \ref{sec:tech}, and Appendix \ref{sec:eigenspaces}.

\subsection{Pohozaev quadratic form}\label{subsec:Weyl21}
We remind that $w(y)=(1+|y|^2)^{-\frac{n-4}{2}}$ and $Z(y)=\frac{n-4}{2}(1-|y|^2)(1+|y|^2)^{-\frac{n-2}{2}}$ for $y \in \R^n$. Because they are radial and $\det \tig_a = 1$, it holds that
\begin{equation}\label{eq:Delta w}
\Delta_{\tig_a}w = \Delta w \quad \text{and} \quad \Delta_{\tig_a}Z = \Delta Z.
\end{equation}
Let $\mce_a = \mce_{\tig_a} = P_{\tig_a}-\Delta_{\tig_a}^2$ be the differential operator analyzed in Lemma \ref{mce}, $\mce_a' := \mce_a+\Delta^2_{\tig_a}-\Delta^2$, and $\mcr_a := U_a-w-\wtPsi_a$.
By employing the local Pohozaev identity \eqref{eq:Poho}--\eqref{eq:Poho2} with the selection $(r,u) = (\msfd_0\ep_a^{-1},U_a)$, \eqref{eq:wtPsidec}, Proposition \ref{prop:refest}, and \eqref{eq:Delta w}, we discover
\begin{align}
O(\ep_a^{n-4}) &\ge \int_{B^n(0,\msfd_0\ep_a^{-1})} \(y \cdot \nabla U_a + \frac{n-4}{2}U_a\) \mce_a'(U_a) dy \nonumber \\
&= \mci_{1,\ep_a,a} + \mci_{2,\ep_a,a} + \mci_{3,\ep_a,a} + O(\ep_a^{n-4}) + \int_{B^n(0,\msfd_0\ep_a^{-1})} \(y \cdot \nabla\wtPsi_a + \frac{n-4}{2}\wtPsi_a\) \mce_a'(\wtPsi_a) dy \nonumber \\
&\ + \left[\int_{B^n(0,\msfd_0\ep_a^{-1})} Z\mce_a(\mcr_a) dy + \int_{B^n(0,\msfd_0\ep_a^{-1})} \(y \cdot \nabla\wtPsi_a + \frac{n-4}{2}\wtPsi_a\) \mce_a'(\mcr_a) dy \right. \label{eq:Pohoineq1} \\
&\hspace{125pt} \left.+ \int_{B^n(0,\msfd_0\ep_a^{-1})} \(y \cdot \nabla\mcr_a + \frac{n-4}{2}\mcr_a\) \mce_a'(w+\wtPsi_a+\mcr_a) dy\right]. \nonumber
\end{align}
Here, $\msfd_0 \in (0,1)$ is the small number from Proposition \ref{prop:refest}, and for $\ep > 0$ small,
\begin{align*}
\mci_{1,\ep,a} &:= \int_{B^n(0,\msfd_0\ep^{-1})} Z\mce_a(w) dy; \\
\mci_{2,\ep,a} &:= \int_{B^n(0,\msfd_0\ep^{-1})} \(y \cdot \nabla \wtPsi_a + \frac{n-4}{2}\wtPsi_a\) \mce_a(w) dy; \\
\mci_{3,\ep,a} &:= \int_{B^n(0,\msfd_0\ep^{-1})} Z\mce_a(\wtPsi_a) dy.
\end{align*}
Using the estimate
\begin{align*}
&\ (|\mce_a|+|\Delta^2_{\tig_a}-\Delta^2|)(u) \\
&= \sum_{m=0}^4 O\(\left[\sum_{k=2}^{n-4} \ep_a^k|H_a^{(k)}||y|^{k-m} + O(\ep_a^{n-3}|y|^{n-3-m}) \right]|\nabla^{4-m}_{g_a}u|\)\\
&= O\(\ep_a^2|y|^2|\nabla^4_{g_a}u| + \ep_a^2|y||\nabla^3_{g_a}u| + \sum_{m=0}^2\ep_a^{2+m}|\nabla^{2-m}_{g_a}u|\) \quad \text{for } u \in C^4(B^n(0,\msfd_0\ep_a^{-1})),
\end{align*}
we further reduce \eqref{eq:Pohoineq1} to
\begin{equation}\label{eq:Pohoineq2}
\begin{aligned}
O(\ep_a^{n-4}) &\ge \mci_{1,\ep_a,a} + \mci_{2,\ep_a,a} + \mci_{3,\ep_a,a} + O(\ep_a^{n-4}) \\
&\ + O\(\sum_{k=2}^{\lfloor\frac{n-4}{3}\rfloor} \ep_a^{3k}|\log\ep_a| |H_a^{(k)}|^3\) + O\(\ep_a^2|\log\ep_a| \sum_{k=2}^{d-1} \ep_a^{2k}|H_a^{(k)}|^2\).
\end{aligned}
\end{equation}

To control the terms $\mci_{1,\ep_a,a}$, $\mci_{2,\ep_a,a}$ and $\mci_{3,\ep_a,a}$, we first observe from \eqref{eq:R1} and Lemmas \ref{spherical expansion}, \ref{orthogonal} and \ref{IBP} that
$$
\int_{B^n(0,\msfd_0\ep_a^{-1})} ZL_1[\wth_a]w=0 \quad \text{for all } a \in \N.
$$
We then introduce several symmetric bilinear forms defined on symmetric $2$-tensors whose entries are smooth functions.
\begin{defn}[Pohozaev quadratic form]
\

\noindent
Given any symmetric $2$-tensors $h,\, h'$, we define $L_2[h,h']$ by using \eqref{eq:L2} and the polarization identity as in \eqref{eq:polar}. For $\ep > 0$ small, let
\begin{equation}\label{eq:I_1}
I_{1,\ep}[h,h'] := \int_{B^n(0,\msfd_0\ep^{-1})} ZL_2[h,h']w.
\end{equation}
Whenever $\Psi[h]$ and $\Psi[h']$ are well-defined via \eqref{eq:PsiH}, we also set
\begin{equation}\label{eq:I_2}
I_{2,\ep}[h,h'] := \frac{1}{2}\int_{B^n(0,\msfd_0\ep^{-1})} \left[\(y_i\pa_i+\frac{n-4}{2}\)\Psi[h] L_1[h']w + \(y_i\pa_i+\frac{n-4}{2}\)\Psi[h'] L_1[h]w\right];
\end{equation}
\begin{equation}\label{eq:I_3}
I_{3,\ep}[h,h'] := \frac{1}{2}\int_{B^n(0,\msfd_0\ep^{-1})} \(\Psi[h] L_1[h']Z+\Psi[h'] L_1[h]Z\),
\end{equation}
and
\begin{equation}\label{eq:I}
I_{\ep}[h,h'] := I_{1,\ep}[h,h'] + I_{2,\ep}[h,h'] + I_{3,\ep}[h,h'].
\end{equation}
\hfill $\diamond$
\end{defn}

Similarly to \cite[Lemmas A.2--A.3]{KMS}, we can invoke \eqref{eq:mcer} to derive the following estimate.
\begin{lemma}\label{lemma:diffmciI}
Let $d=\lfloor\frac{n-4}{2}\rfloor$. Given any small $\eta \in (0,1)$, there exists $C = C(n,M,g) > 0$ such that
\begin{equation}\label{eq:diffmciI}
\left|\mci_{i,\ep_a,a} - \sum_{k,m=2}^d I_{i,\ep_a}\big[\wth^{(k)}_a,\wth^{(m)}_a\big]\right| \le C\eta \sum_{k=2}^d \ep_a^{2k}|\log\ep_a|^{\theta_k}|H_a^{(k)}|^2 + C(\msfd_0\eta^{-1}+\msfd_0^{10-n})\ep_a^{n-4}
\end{equation}
for all $a \in \N$ and $i = 1,2,3$.
\end{lemma}
\begin{proof}
We will derive \eqref{eq:diffmciI} for $i=3$. Handling the cases $i=1,2$ is easier.

From \eqref{eq:wtPsidec} and Lemma \ref{mce}, we observe
\begin{align*}
\mci_{3,\ep_a,a} &= \int_{B^n(0,\msfd_0\ep_a^{-1})} Z\mce_a(\wtPsi_a) dy \\
&= \int_{B^n(0,\msfd_0\ep_a^{-1})} ZL_1[\tih_a]\wtPsi_a dy + \sum_{k=2}^d o(\ep_a^{2k}|\log\ep_a|^{\theta_k})|H_a^{(k)}|^2 + O(\msfd_0^5\ep_a^{n-4}).
\end{align*}
Since \eqref{eq:L1} is rewritten as
\[L_1[h] = \pa_j\left[\(\frac{4}{n-2}\Dot{\ricci}_{ij}[h]-\alpha_n^1\Dot{R}[h]\delta_{ij}\)\pa_i\right] + \frac{n-4}{2}\Dot{Q}[h],\]
using $\Psi[\wth_a^{(2)}] = \Psi[\wth_a^{(3)}] = 0$, integrating by parts, and proceeding with the calculations, we obtain
\begin{align*}
\mci_{3,\ep_a,a} &= \sum_{k=2}^K\sum_{m=2}^d \left[\int_{B^n(0,\msfd_0\ep_a^{-1})} \Psi[\wth_a^{(k)}] L_1[\wth_a^{(m)}]Z dy \right.\\
&\hspace{55pt} + \int_{\pa B^n(0,\msfd_0\ep_a^{-1})} \frac{y_j}{|y|}Z\pa_i\Psi[\wth_a^{(k)}] \(\frac{4}{n-2}\Dot{\ricci}_{ij}[\wth_a^{(m)}]-\alpha_n^1\Dot{R}[\wth_a^{(m)}]\delta_{ij}\) dS \\
&\hspace{55pt} \left. - \int_{\pa B^n(0,\msfd_0\ep_a^{-1})} \frac{y_j}{|y|}\pa_iZ\Psi[\wth_a^{(k)}] \(\frac{4}{n-2}\Dot{\ricci}_{ij}[\wth_a^{(m)}]-\alpha_n^1\Dot{R}[\wth_a^{(m)}]\delta_{ij}\) dS\right]\\
&\ + O(\eta) \sum_{k=2}^d \ep_a^{2k}|\log\ep_a|^{\theta_k}|H_a^{(k)}|^2 + O(\msfd_0\eta^{-1}\ep_a^{n-4})
\end{align*}
for any $\eta \in (0,1)$ small. By \eqref{eq:wtPsidec}, the former boundary integral can be evaluated as
\begin{align*}
&\ \int_{\pa B^n(0,\msfd_0\ep_a^{-1})}\frac{y_j}{|y|}Z\pa_i\Psi[\wth_a^{(k)}] \(\frac{4}{n-2}\Dot{\ricci}_{ij}[\wth_a^{(m)}]-\alpha_n^1\Dot{R}[\wth_a^{(m)}]\delta_{ij}\) dS \\
&\leq C(\msfd_0\ep_a^{-1})^{n-1}(\msfd_0\ep_a^{-1})^{4-n} [\ep_a^{k}|H_a^{(k)}|(\msfd_0\ep_a^{-1})^{k+3-n}] [\ep_a^{m}|H_a^{(m)}|(\msfd_0\ep_a^{-1})^{m-2}]\\
&= C\msfd_0^{k+m+4-n}\ep_a^{n-4}|H_a^{(k)}||H_a^{(m)}|,
\end{align*}
and the latter can be treated similarly. It follows that
\begin{align*}
\mci_{3,\ep_a,a} &= \sum_{k=2}^K\sum_{m=2}^d \int_{B^n(0,\msfd_0\ep_a^{-1})} \Psi[\wth_a^{(k)}] L_1[\wth_a^{(m)}]Z dy \\
&\ + O(\eta) \sum_{k=2}^d \ep_a^{2k}|\log\ep_a|^{\theta_k}|H_a^{(k)}|^2 + O((\msfd_0\eta^{-1}+\msfd_0^{10-n})\ep_a^{n-4}).
\end{align*}
On the other hand,
\begin{multline*}
\left|\sum_{k=2}^K\sum_{m=2}^d \int_{B^n(0,\msfd_0\ep_a^{-1})} \Psi[\wth_a^{(k)}] L_1[\wth_a^{(m)}]Z dy - \sum_{k,m=2}^d I_{3,\ep_a}\big[\wth^{(k)}_a,\wth^{(m)}_a\big]\right| \\
\le C\eta \sum_{k=2}^d \ep_a^{2k}|\log\ep_a|^{\theta_k}|H_a^{(k)}|^2 + C\msfd_0\eta^{-1}\ep_a^{n-4}.
\end{multline*}
Therefore, inequality \eqref{eq:diffmciI} with $i=3$ holds.
\end{proof}

\medskip
We introduce the Pohozaev quadratic form for the $Q^{(4)}$-curvature problem:
\begin{equation}\label{eq:Pohoquad}
I_{\ep_a}\big[\wth_a,\wth_a\big] = \sum_{k,m=2}^d I_{\ep_a}\big[\wth_a^{(k)},\wth_a^{(m)}\big].
\end{equation}
In the next subsection, we will establish the following key estimate.
\begin{prop}\label{Positive definiteness}
For any $8 \le n \le 24$, there exists a constant $C=C(n)>0$ such that
$$
I_{\ep_a}\big[\wth_a,\wth_a\big] \ge C^{-1} \sum_{k=2}^d \ep_a^{2k}|\log\ep_a|^{\theta_k}|H_a^{(k)}|^2 + O(\ep_a^{n-4}) \quad \text{for all } a \in \N.
$$
\end{prop}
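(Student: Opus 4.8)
\textbf{Proof proposal for Proposition \ref{Positive definiteness}.}

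The plan is to reduce the quadratic form $I_{\ep_a}[\wth_a,\wth_a]$ to a finite sum of scalar quadratic forms indexed by the spherical harmonic modes appearing in $H^{(k+2)}_{ij,ij}$, and then to show that each such scalar form is positive definite. First I would observe that, by the definitions \eqref{eq:I_1}--\eqref{eq:I}, each bilinear form $I_{\ep}[\wth_a^{(k)},\wth_a^{(m)}]$ only depends on the tensors $\wth_a^{(k)},\wth_a^{(m)}$ through the functions $\wth^{(k)}_{a,ij,ij}$, $\wth^{(m)}_{a,ij,ij}$ (for the $L_1$-pieces, via \eqref{eq:L_1mfl_1}) and through a quadratic contraction that enters $L_2$ via \eqref{eq:L2r}, namely $(x_k\pa_k h_{ij})^2$, $\Dot{\ricci}_{ij}^2$, $\pa_i(h_{ij}\pa_j \Dot R)$, etc. After expanding each $\wth^{(k)}_{a}$ into its homogeneity $k$ part and each scalar $H^{(k+2)}_{ij,ij}$ into spherical harmonics as in \eqref{eq:Hshd}, the orthogonality of spherical harmonics on $\S^{n-1}$ (together with Lemma \ref{orthogonal}, which kills the degree $0$ and $1$ modes) forces all cross terms between different harmonic degrees to vanish after integration over spheres. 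Thus $I_{\ep_a}[\wth_a,\wth_a]$ decouples into a sum over the pairs $(k,s)$ labeling the harmonic component $p^{(k-2s)}$, plus remainder terms of size $O(\ep_a^{n-4})$ coming from truncation of $h_a$ at order $K$ and from the outer annulus $|y|\sim \delta_0\ep_a^{-1}$.

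Next I would compute, for each fixed mode, the radial integrals explicitly. Using $w=(1+r^2)^{-(n-4)/2}$, $Z^0=\frac{n-4}{2}(1-r^2)(1+r^2)^{-(n-2)/2}$, the closed forms \eqref{def of mfl_1(k,s)} and Definition \ref{linearized eqn RHS} for the $L_1$-images, and the explicit solution $\Psi^{k,s}$ from Proposition \ref{prop:Psi} (with coefficients $\Gamma_j$), every integrand becomes a rational function of $r$ of the form $r^{i}(1+r^2)^{-j}$, and can be integrated against the measure $r^{n-1}dr$ over $(0,\delta_0\ep_a^{-1})$ using the constants $\mci^i_j$ in \eqref{eq:mci}. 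When the integral converges at infinity I replace the truncated integral by the full one at the cost of $O(\ep_a^{n-4})$; the logarithmically divergent case occurs precisely when $k=\frac{n-4}{2}$, which produces the factor $|\log\ep_a|^{\theta_k}$. The upshot is that for each $k$ there is a symmetric matrix $(m^s_{qq'})$ — organized by the spin $s$ of the harmonic mode, and by whether the contribution comes from the $\Dot R^2$-type terms, the $W_g$-type (traceless Ricci) terms, or the Hessian-type terms, which is exactly the split into $(m^{D,s}_{qq'})$, $(m^{W,s}_{qq'})$, $(m^{H,s}_{qq'})$ of Lemmas \ref{lemma of case 1}--\ref{lemma of case 3} — such that
\[
I_{\ep_a}[\wth_a,\wth_a] = \sum_{k=2}^d \ep_a^{2k}|\log\ep_a|^{\theta_k}\,\big(\text{positive definite quadratic form in the Taylor coefficients of }h_a\text{ at order }k\big) + O(\ep_a^{n-4}).
\]
Here I use that $|H_a^{(k)}|^2$, as defined in \eqref{eq:Hnorm}, is comparable to the sum of squares of the spherical-harmonic coefficients appearing in the decomposition, so a uniform positive lower bound on the eigenvalues of these matrices yields the claimed lower bound $C^{-1}\sum_k \ep_a^{2k}|\log\ep_a|^{\theta_k}|H_a^{(k)}|^2$.

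The main obstacle is establishing the positive definiteness of the matrices $(m^{D,s}_{qq'})$, $(m^{W,s}_{qq'})$, $(m^{H,s}_{qq'})$ for every dimension $8\le n\le 24$ and every admissible spin $s$. These matrices have entries that are complicated rational expressions in $n$, $k$, $s$ — built from the $\Gamma_j$ recursions \eqref{eq:Gammas3s21}--\eqref{eq:Gammaj}, the $b_i,b'_i$ of Definition \ref{linearized eqn RHS}, and the Beta-integral constants $\mci^i_j$ — so positivity cannot be read off by inspection. The strategy is to treat $n$ as a fixed integer in the finite range $[8,24]$ and $s$ in its finite range, reducing to finitely many explicit finite-dimensional symmetric matrices whose leading principal minors (or eigenvalues) are rational numbers; these can be checked to be positive by a symbolic computation, which as noted in the paper's introduction is carried out in Mathematica. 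A secondary subtlety is the bookkeeping of the cross terms between the $L_1$-generated correction $\Psi[\wth_a]$ and $Z^0$ (the form $I_{3,\ep}$), and between $\Psi[\wth_a]$ and $L_1[\wth_a]w$ (the form $I_{2,\ep}$): these are precisely why the extra basis element $\mfl_1[r^{2s}p^{(k-2s)}]Z^0$ was recorded in Definition \ref{linearized eqn RHS}, and one must verify that assembling $I_{1,\ep}+I_{2,\ep}+I_{3,\ep}$ (rather than $I_{1,\ep}$ alone, which need not be positive) is what makes the total form positive — this mirrors, but is more delicate than, the corresponding assembly in \cite[Section 4]{KMS} for the Yamabe problem. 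Finally, Lemma \ref{lemma:diffmciI} is invoked to absorb the difference between the true Pohozaev integrals $\mci_{i,\ep_a,a}$ and the model bilinear forms into the $O(\ep_a^{n-4})$ error together with an $\eta$-small multiple of the main term, which is harmless once $\eta$ is chosen small relative to $C^{-1}$.
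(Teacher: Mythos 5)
Your proposal gets several peripheral features right (reduction of the radial integrals to the constants $\mci^i_j$, the $|\log\ep_a|$ factor exactly when $k=\frac{n-4}{2}$, the need to assemble $I_{1,\ep}+I_{2,\ep}+I_{3,\ep}$ using the explicit $\Psi^{k,s}$ and the $b_i,b_i'$ data, and the final Mathematica check of finitely many matrices), but the core reduction is flawed. You propose to decouple $I_{\ep_a}[\wth_a,\wth_a]$ according to the spherical-harmonic modes $p^{(k-2s)}$ of the \emph{scalar} functions $H^{(k)}_{ij,ij}$. That indexing is not fine enough, and in fact it is blind to the largest and hardest part of the quadratic form: every tensor in $\mcw_k=\{\bar H\in\mcv_k\mid \delta^2\bar H=0\}$ (which contains all of $\mcd_k$) has $H_{ij,ij}\equiv 0$, so it contributes no modes at all to your decomposition, yet it contributes to $I_{1,\ep}$ through the genuinely tensorial terms $\int_{\S^{n-1}}H^{(k)}_{ij}H^{(m)}_{ij}$, $\int\Dot{\ricci}^{(k)}_{ij}\Dot{\ricci}^{(m)}_{ij}$, $\int\delta_iH^{(k)}\delta_iH^{(m)}$ appearing in Corollary \ref{I_1 coro}. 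The paper's proof instead decomposes each $H^{(k)}$ itself by the eigenspaces of the stability operator $\mcl_k$, i.e. $\mcv_k=\mcv_k/\mcw_k\oplus\mcw_k/\mcd_k\oplus\mcd_k$, proves the decoupling of $I_{1,\ep}$ along this splitting (Lemma \ref{lemma:Hortho} — this uses $\la\hat H,\hat W\ra=\la\hat H,\hat D\ra=\la\hat W,\hat D\ra=0$, $\delta^2\hat W=0$, and an integration-by-parts identity giving $\int\delta_i\hat H\,\delta_i\hat W=0$; it is not a consequence of orthogonality of spherical harmonics alone), and then observes that $I_{2,\ep},I_{3,\ep}$ vanish on the $\mcw$- and $\mcd$-components because $L_1$ of a radial function and $\Psi$ are identically zero there. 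The three cases then require the explicit eigenvectors: $|x|^{2q}M^{(s)}$ for $\mcd_k$, $\proj[|x|^{2q+2}\nabla^2P^{(s)}]$ for $\mcv_k/\mcw_k$, and — the new ingredient with no counterpart in your proposal — $\proj[|x|^{2q}\msd V^{(s+1)}]$ with $V$ harmonic, divergence-free and tangential for $\mcw_k/\mcd_k$ (Lemmas \ref{lemma:W_hat0} and \ref{W hat}), which is exactly what is needed because of the extra $\int\delta_iH\delta_iH$ and $\int\delta^2H\delta^2H$ terms. Note also that the dimension restriction $n\le 24$ is decided in the $\mcd$-part (the matrix $(m^{D,2}_{qq'})$ acquires a negative eigenvalue at $n=25$), i.e. precisely inside the component your mode decomposition never sees; so without the tensorial eigenspace splitting the proposed argument cannot produce, let alone verify, the correct positivity matrices.

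Two smaller inaccuracies: the split into $(m^{D,s}_{qq'})$, $(m^{W,s}_{qq'})$, $(m^{H,s}_{qq'})$ corresponds to the decomposition of $H^{(k)}$ into its $\mcd_k$, $\mcw_k/\mcd_k$, $\mcv_k/\mcw_k$ components, not (as you describe) to a sorting of the integrand into ``$\Dot R^2$-type, traceless-Ricci-type, Hessian-type'' terms; and Lemma \ref{lemma:diffmciI} is invoked afterwards, in the proof of Theorem \ref{thm:Weyl2}, to compare $\mci_{i,\ep_a,a}$ with the model forms — it plays no role inside the proof of Proposition \ref{Positive definiteness} itself, which concerns $I_{\ep_a}[\wth_a,\wth_a]$ directly.
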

\noindent For brevity, we will often drop the subscript $a$, writing e.g. $I_{1,\ep}[\wth^{(k)},\wth^{(m)}] = I_{1,\ep_a}[\wth_a^{(k)},\wth_a^{(m)}]$, and assume that both $k$ and $m$ range from $2$ to $d$ unless stated otherwise.

\subsection{Proof of Proposition \ref{Positive definiteness}}\label{subsec:proppd}
Given two matrices $\bar{H}$ and $\bar{H}'$ of polynomials on $\R^n$, we set the inner product of $\bar{H}$ and $\bar{H}'$ and the norm of $\bar{H}$ as
\begin{equation}\label{eq:inner}
\la \bar{H},\bar{H}' \ra := \int_{\S^{n-1}} \bar{H}_{ij}\bar{H}'_{ij} \quad \text{and} \quad \|\bar{H}\| = \sqrt{\la \bar{H},\bar{H} \ra},
\end{equation}
respectively. Also, we define the divergence $\delta\bar{H}$ and the double divergence $\delta^2\bar{H}$ of the matrix $\bar{H}$ as
\[\delta_j\bar{H} := \bar{H}_{ij,i} \quad \text{and} \quad \delta^2\bar{H} := \bar{H}_{ij,ij},\]
and their inner products by
\begin{equation}\label{eq:inner2}
\la \delta\bar{H},\delta\bar{H}' \ra := \int_{\S^{n-1}} \delta_i\bar{H}\delta_i\bar{H}'
\quad \text{and} \quad \la \delta^2\bar{H},\delta^2\bar{H}' \ra := \int_{\S^{n-1}} \delta^2\bar{H}\delta^2\bar{H}'.
\end{equation}
For $k \in \N \cup \{0\}$, let $\mcv_k$ be a subspace of symmetric matrices whose elements are homogeneous polynomials on $\R^n$ of degree $k$ such that
\[\bar{H}_{ii} = 0 \quad \text{and} \quad x_i\bar{H}_{ij} =0\]
for each $\bar{H} \in \mcv_k$ and $x \in \R^n$.

By \eqref{conformal normal} and \eqref{eq:H}, we have that $H^{(k)}, \wth^{(k)} \in \mcv_k$ for $k = 2,\ldots, K$. As a preliminary step, we evaluate $I_{1,\ep}[\wth^{(k)},\wth^{(m)}]$ in polar coordinates.
\begin{prop}\label{I_1}
For $k, m = 2, \ldots, d$, let $\Ddot{R}^{(k,m)} := \Ddot{R}[H^{(k)},H^{(m)}]$, $\Dot{\ricci}^{(k)} := \Dot{\ricci}[H^{(k)}]$, and
\begin{align*}
J_1[H^{(k)},H^{(m)}] &:= \frac{(n-4)^2}{8(n-3)(n-2)(n-1)} (\mci_{n-3}^{n+k+m-3})^{1-\theta_{\frac{k+m}{2}}}\\
&\ \times\left[-c_1(n,k,m)\int_{\S^{n-1}}\Ddot{R}^{(k,m)} - c_2(n,k,m)\bla H^{(k)},H^{(m)}\bra \right. \\
&\hspace{20pt}\left. + c_3(n,k,m)\bla\Dot{\ricci}^{(k)},\Dot{\ricci}^{(m)}\bra - c_4(n,k,m)\bla\delta^2H^{(k)},\delta^2H^{(m)}\bra \right],
\end{align*}
where $\theta_{\frac{k+m}{2}}=1$ if and only if $k+m=n-4$, and
\begin{align*}
c_1(n,k,m)&:= (k+m)(n^3-(k+m+2)n^2+(6(k+m)-4)n-4(k+m)+8);\\
c_2(n,k,m)&:= 2(n-1)(k+m)(n+k+m-2)km;\\
c_3(n,k,m)&:= \frac{8(n-3)(n-1)(k+m)}{(n-2)(n+k+m-4)};\\
c_4(n,k,m)&:= \frac{(n-3)(n^3-4n^2+16n-16)(k+m)}{2(n-2)(n-1)(n+k+m-4)}.
\end{align*}
Then, for $k, m = 2, \ldots, d$, it holds that
\begin{equation}\label{eq:I_1J_1}
I_{1,\ep}\big[\wth^{(k)},\wth^{(m)}\big] = \ep^{k+m}|\log\ep|^{\theta_{\frac{k+m}{2}}} J_1[H^{(k)},H^{(m)}] + O(\ep^{n-4}),
\end{equation}
where $\mci_{n-2}^{n+k+m-3}$ is the quantity defined by \eqref{eq:mci}.
\end{prop}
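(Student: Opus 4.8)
\emph{Reduction to a dilating-ball integral.} Since $\wth^{(k)}=\ep^kH^{(k)}$ in the rescaled coordinates and $h\mapsto L_2[h,h']w$ is bilinear in its tensor slots, one has
\[
I_{1,\ep}\big[\wth^{(k)},\wth^{(m)}\big]=\ep^{k+m}\int_{B^n(0,\delta_0\ep^{-1})}Z^0\,L_2\big[H^{(k)},H^{(m)}\big]w\,dy,
\]
so it suffices to analyze the integral over the dilating ball. The plan is to substitute the radial profile $u=w$ into the formula \eqref{eq:L2r} for $L_2u$, using $w'=-(n-4)r(1+r^2)^{-\frac{n-2}{2}}$ and $w''=(n-4)\big((n-3)r^2-1\big)(1+r^2)^{-\frac n2}$, and then integrate. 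After applying Euler's identity $x_i\pa_iP=(\deg P)P$ to the factors $x_i\pa_i\Ddot{R}^{(k,m)}$ and $x_ih_{pq,i}$ (using $H^{(k)}\in\mcv_k$, $H^{(m)}\in\mcv_m$ and \eqref{conformal normal coro}), the integrand $Z^0L_2[H^{(k)},H^{(m)}]w$ becomes a finite sum of terms $\phi(r)P(x)$ in which $\phi$ is a linear combination of powers $(1+r^2)^{-a}$ and $P$ is one of $\Ddot{R}^{(k,m)}$, $\Delta\Ddot{R}^{(k,m)}$, $H^{(k)}_{pq}H^{(m)}_{pq}$, $\Dot{\ricci}^{(k)}_{pq}\Dot{\ricci}^{(m)}_{pq}$, $\delta^2H^{(k)}\delta^2H^{(m)}$, the mixed combination $kH^{(k)}_{pq}\Dot{\ricci}^{(m)}_{pq}+mH^{(m)}_{pq}\Dot{\ricci}^{(k)}_{pq}$ coming from polarizing the $\frac{u'}{r}$-term $\frac{2x_ih_{pq,i}}{n-2}\Dot{\ricci}_{pq}$, and the divergence form $\pa_i\big(H^{(k)}_{ij}\pa_j\delta^2H^{(m)}\big)$ (plus its $k\leftrightarrow m$ twin).

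\emph{Angular reduction.} Splitting $\int_{B^n(0,\delta_0\ep^{-1})}=\int_0^{\delta_0\ep^{-1}}r^{n-1}dr\int_{\S^{n-1}}$ after homogeneity scaling of each $P$, I would reduce all angular integrals to the four in $J_1'$. The divergence-form terms vanish on $\S^{n-1}$: by Lemma \ref{IBP} they are multiples of $\int_{\S^{n-1}}x_iH^{(k)}_{ij}\pa_j\delta^2H^{(m)}=0$, using \eqref{conformal normal}. Two applications of Lemma \ref{IBP} together with Euler's identity give $\int_{\S^{n-1}}\Delta\Ddot{R}^{(k,m)}=(n+k+m-4)(k+m-2)\int_{\S^{n-1}}\Ddot{R}^{(k,m)}$. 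The mixed combination is the delicate one: inserting $2\Dot{\ricci}^{(m)}_{pq}=H^{(m)}_{qr,pr}+H^{(m)}_{pr,qr}-\Delta H^{(m)}_{pq}$, integrating by parts on $\S^{n-1}$, and using \eqref{conformal normal}, \eqref{conformal normal coro}, the linearized contracted Bianchi identity $\Dot{\ricci}^{(m)}_{pq,p}=\tfrac12\pa_q\delta^2H^{(m)}$, and the identity $\int_{\S^{n-1}}(P\Delta Q-Q\Delta P)=(\deg Q-\deg P)(n-2+\deg P+\deg Q)\int_{\S^{n-1}}PQ$ for homogeneous $P,Q$ (a consequence of Lemma \ref{IBP}), one finds that the weighting by the degrees $k,m$ is exactly what makes the otherwise-problematic symmetric piece $\int_{\S^{n-1}}H^{(k)}_{pq}\Delta H^{(m)}_{pq}+\int_{\S^{n-1}}H^{(m)}_{pq}\Delta H^{(k)}_{pq}$ cancel, leaving
\begin{multline*}
k\int_{\S^{n-1}}H^{(k)}_{pq}\Dot{\ricci}^{(m)}_{pq}+m\int_{\S^{n-1}}H^{(m)}_{pq}\Dot{\ricci}^{(k)}_{pq}\\
=-2(k+m)\int_{\S^{n-1}}\Ddot{R}^{(k,m)}-km(n+k+m-2)\int_{\S^{n-1}}H^{(k)}_{pq}H^{(m)}_{pq}.
\end{multline*}
After collecting everything, the angular content of $\int_{\S^{n-1}}Z^0L_2[H^{(k)},H^{(m)}]w$ is a linear combination of exactly $\int_{\S^{n-1}}\Ddot{R}^{(k,m)}$, $\int_{\S^{n-1}}H^{(k)}_{pq}H^{(m)}_{pq}$, $\int_{\S^{n-1}}\Dot{\ricci}^{(k)}_{pq}\Dot{\ricci}^{(m)}_{pq}$, and $\int_{\S^{n-1}}\delta^2H^{(k)}\delta^2H^{(m)}$.

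\emph{Radial integration.} Each remaining radial factor, after homogeneity scaling, is of the form $\int_0^{\delta_0\ep^{-1}}r^\mu(1+r^2)^{-\nu}dr$ with $\mu-2\nu=k+m-n-1$. When $k+m<n-4$ these converge; expanding $Z^0$ and the rational coefficients and using the recurrences $\mci^i_{j-1}=\mci^i_j+\mci^{i+2}_j$ and $\mci^i_j=\tfrac{i-1}{2(j-1)}\mci^{i-2}_{j-1}$, all radial factors attached to a fixed angular integral collapse into a single multiple of $\mci^{n+k+m-3}_{n-2}$, and the accumulated numerical constants reproduce $c_1,\dots,c_4$ together with the prefactor $\tfrac1{n-k-m-4}$; the tail $\int_{\delta_0\ep^{-1}}^\infty$ is $O(\ep^{n-k-m-4})$, hence $O(\ep^{n-4})$ after the $\ep^{k+m}$ factor, which is the first case of \eqref{eq:I_1J_1}. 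When $n$ is even and $k=m=d=\tfrac{n-4}{2}$ (so $\mu-2\nu=-1$), the leading part of every radial integrand is exactly $\sim c/r$ and $\int_1^{\delta_0\ep^{-1}}\frac{dr}{r}=|\log\ep|+O(1)$, while the subleading parts and the $O(1)$ remainder get absorbed by $\ep^{2d}=\ep^{n-4}$; summing the coefficients of $r^{-1}$ yields the prefactor $\tfrac1{2(n-3)}$, i.e.\ the second case. (For $k+m$ odd all the angular integrals are odd in $x$, hence $0$, and $J_1'=0$ accordingly.)

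\emph{Main obstacle.} The real work is the bookkeeping in the last two steps: carrying through the angular reduction of the mixed products down to exactly those four basic integrals (the cancellation above is the key algebraic point), and then verifying that after the $\mci$-recurrences every radial piece collapses onto the single integral $\mci^{n+k+m-3}_{n-2}$ with precisely the coefficients $c_1,\dots,c_4$ — equivalently, identifying the residue that produces the logarithm in the $k=m=d$ case. These are elementary but long computations, of the kind the authors verify with Mathematica elsewhere in the paper.
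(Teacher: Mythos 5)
Your proposal follows essentially the same route as the paper's Appendix B.1 proof: pull out $\ep^{k+m}$ by bilinearity, substitute the radial formula \eqref{eq:L2r} with $u=w$, use Euler's identity and Lemma \ref{IBP} to kill the divergence term and to reduce $\Delta\Ddot{R}^{(k,m)}$ and the mixed term $x_ih_{kl,i}\Dot{\ricci}_{kl}$ (your displayed identity is exactly the polarized form of \eqref{eq:ricci h2} in Lemma \ref{ricci h lemma}), and then collapse the radial integrals onto $\mci_{n-2}^{n+k+m-3}$ via Lemma \ref{lemma:mcinl} and Corollary \ref{cor:mcinl}, with the logarithm appearing precisely when $k+m=n-4$. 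The only blemish is a bookkeeping slip: the leading radial exponent is $\mu-2\nu=k+m+3-n$ rather than $k+m-n-1$, though your convergence/logarithm dichotomy is stated consistently with the correct value.
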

\begin{proof}
Applying \eqref{eq:L2r} and \eqref{eq:ricci h2}, we observe
\begin{align}
&\ \ep^{-(k+m)} I_{1,\ep}\big[\wth^{(k)},\wth^{(m)}\big] \nonumber \\
&= \int_{\S^{n-1}} \left[-\alpha_n^1\Ddot{R}^{(k,m)}-\frac{km}{n-2}H^{(k)}_{ij}H^{(m)}_{ij}\right] dS \int_0^{\msfd_0\ep^{-1}}r^{k+m+n-1}Z\(\frac{w''}{r^2}-\frac{w'}{r^3}\) dr \nonumber \\
&\ + \int_{\S^{n-1}} \left[\frac{2}{n-2}-\frac{n-2}{2}-\alpha_n^1-(k+m-2)\alpha_n^2-\frac{2(k+m)}{n-2}\right]\Ddot{R}^{(k,m)} dS \int_0^{\msfd_0\ep^{-1}}r^{k+m+n-3}Z\frac{w'}{r} dr \nonumber \\
&\ - \frac{(n+k+m-2)km}{n-2} \int_{\S^{n-1}} H^{(k)}_{ij}H^{(m)}_{ij} dS \int_0^{\msfd_0\ep^{-1}}r^{k+m+n-3}Z\frac{w'}{r} dr \label{eq:I_1exp}\\
&\ - \beta_n \int_{\S^{n-1}} \left[\Delta \Ddot{R}^{(k,m)}
-\frac{n^3-4n^2+16n-16}{4(n-2)^2(n-1)} \delta^2H^{(k)}\delta^2H^{(m)}\right] dS \int_{0}^{\msfd_0\ep^{-1}}r^{k+m+n-5}Zw dr \nonumber \\
&\ - \frac{n-4}{(n-2)^2}\int_{\S^{n-1}} \Dot{\ricci}^{(k)}_{ij}\Dot{\ricci}^{(m)}_{ij} dS \int_{0}^{\msfd_0\ep^{-1}}r^{k+m+n-5}Zw dr. \nonumber
\end{align}
Then, \eqref{eq:I_1J_1} follows from Lemma \ref{IBP}, Lemma \ref{lemma:mcinl}, and Corollary \ref{cor:mcinl}.
\end{proof}

\begin{rmk}
In the Yamabe case, it holds that
\[I_{1,\ep}\big[\wth^{(k)},\wth^{(m)}\big] = -\frac{n-2}{8(n-1)}(k+m)\, \mathcal{I}_{n-2}^{n+k+m-3} \ep^{k+m} \int_{\S^{n-1}}\Ddot{R}^{(k,m)} + O(\ep^{n-2})\]
for $k,m = 2,\ldots,\lfloor \frac{n-2}{2} \rfloor$ such that $k+m < n-2$. \hfill $\diamond$
\end{rmk}

We observe from the definition of $\Dot{\ricci}[\bar{H}]$ in \eqref{eq:Ric1} that
\begin{align*}
(\Dot{\ricci}[\bar{H}])_{ii}&=\delta^2\bar{H};\\
x_i(\Dot{\ricci}[\bar{H}])_{ij}&=\frac{k}{2}\delta_j\bar{H};\\
x_ix_j(\Dot{\ricci}[\bar{H}])_{ij}&=0
\end{align*}
for $\bar{H} \in \mcv_k$. Thus, we can apply Lemma \ref{projection lemma} to define the linear operator $\mcl_k: \mcv_k\to \mcv_k$:
\begin{equation}\label{eq:mclk}
\mcl_k \bar{H} := \proj\left[|x|^2\Dot{\ricci}[\bar{H}]\right] \quad \text{for } \bar{H} \in \mcv_k.
\end{equation}
We call $\mcl_k$ as the ``stability" operator because it essentially comes from the second variation formula for the total scalar curvature functional (see \cite{Sc3}).

By \eqref{eq:projH} and \eqref{eq:Ric1},
\begin{align*}
(\mcl_k \bar{H})_{ij} &= \frac{|x|^2}{2}(\bar{H}_{jl,il}+\bar{H}_{il,jl}-\bar{H}_{ij,ll}) - \frac{k}{2}(x_j\delta_i\bar{H}+x_i\delta_j\bar{H}) \\
&\ + \frac{1}{n-1}(x_ix_j-|x|^2\delta_{ij})\delta^2\bar{H}.
\end{align*}
From this, one can check that $\mcl_k: \mcv_k\to \mcv_k$ is symmetric with respect to the inner product $\la \cdot,\cdot \ra$ (the proof is given in \cite[Page 182]{KMS}), so the spectral theorem is applicable. Using $\mcl_k$, Proposition \ref{I_1} is rephrased as follows.
\begin{cor}\label{I_1 coro}
For $k, m = 2, \ldots, d$, it holds that
\begin{align*}
J_1[H^{(k)},H^{(m)}] &= \frac{(n-4)^2}{8(n-3)(n-2)(n-1)} (\mci_{n-3}^{n+k+m-3})^{1-\theta_{\frac{k+m}{2}}}\\
&\ \times\left[\frac{1}{4}c_1(n,k,m) \(\bla\mcl_kH^{(k)},H^{(m)}\bra + \bla H^{(k)},\mcl_mH^{(m)}\bra\)\right. \\
&\hspace{20pt} + c_3(n,k,m) \bla\mcl_kH^{(k)},\mcl_mH^{(m)}\bra \\
&\hspace{20pt} + \left\{\frac{1}{8}(n+k+m-2)(k+m)c_1(n,k,m)-c_2(n,k,m)\right\} \bla H^{(k)},H^{(m)}\bra \\
&\hspace{20pt} + \frac{km}{2}c_3(n,k,m) \bla\delta H^{(k)},\delta H^{(m)}\bra \\
&\hspace{20pt} + \left. \left\{\frac{1}{n-1}c_3(n,k,m)-c_4(n,k,m)\right\} \bla\delta^2H^{(k)},\delta^2H^{(m)}\bra\right].
\end{align*}
\end{cor}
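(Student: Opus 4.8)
The plan is to rewrite each of the four ``geometric'' sphere integrals appearing in $J_1'$---namely $\int_{\S^{n-1}}\Ddot{R}^{(k,m)}$, $\int_{\S^{n-1}}\Dot{\ricci}^{(k)}_{ij}\Dot{\ricci}^{(m)}_{ij}$, $\int_{\S^{n-1}}H^{(k)}_{ij}H^{(m)}_{ij}$, and $\int_{\S^{n-1}}\delta^2H^{(k)}\delta^2H^{(m)}$---in terms of the ``spectral'' quantities $\int_{\S^{n-1}}(\mcl_kH^{(k)})_{ij}H^{(m)}_{ij}$, $\int_{\S^{n-1}}(\mcl_kH^{(k)})_{ij}(\mcl_mH^{(m)})_{ij}$, $\int_{\S^{n-1}}\delta_iH^{(k)}\delta_iH^{(m)}$, etc., and then to read off the coefficients. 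Two structural facts drive everything. First, on $\S^{n-1}$ (where $|x|^2=1$) the explicit expansion of $\mcl_k$ recorded right before the statement reads $(\mcl_k\bar{H})_{ij}=(\Dot{\ricci}[\bar{H}])_{ij}-\frac{k}{2}(x_j\delta_i\bar{H}+x_i\delta_j\bar{H})+\frac{1}{n-1}\delta^2\bar{H}\,(x_ix_j-\delta_{ij})$. Second, $H^{(m)}$ and $\mcl_mH^{(m)}$ both lie in $\mcv_m$, so they are traceless and are annihilated by contracting one index with $x$. Hence, contracting the above formula with any element of $\mcv_m$ kills every term except $\Dot{\ricci}[\bar{H}]$; in particular, on $\S^{n-1}$ we have $(\mcl_kH^{(k)})_{ij}H^{(m)}_{ij}=(\Dot{\ricci}[H^{(k)}])_{ij}H^{(m)}_{ij}$ and $(\mcl_kH^{(k)})_{ij}(\mcl_mH^{(m)})_{ij}=(\Dot{\ricci}[H^{(k)}])_{ij}(\mcl_mH^{(m)})_{ij}$.

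First I would treat the $\Dot{\ricci}$--$\Dot{\ricci}$ term. Expanding $(\mcl_mH^{(m)})_{ij}$ on $\S^{n-1}$ inside $(\Dot{\ricci}[H^{(k)}])_{ij}(\mcl_mH^{(m)})_{ij}$ and using the three contractions $(\Dot{\ricci}[\bar{H}])_{ii}=\delta^2\bar{H}$, $x_i(\Dot{\ricci}[\bar{H}])_{ij}=\frac{k}{2}\delta_j\bar{H}$, $x_ix_j(\Dot{\ricci}[\bar{H}])_{ij}=0$ recorded just before \eqref{eq:mclk}, all cross terms collapse and one obtains
\begin{multline*}
\int_{\S^{n-1}}(\mcl_kH^{(k)})_{ij}(\mcl_mH^{(m)})_{ij} = \int_{\S^{n-1}}\Dot{\ricci}^{(k)}_{ij}\Dot{\ricci}^{(m)}_{ij} \\
-\frac{km}{2}\int_{\S^{n-1}}\delta_iH^{(k)}\delta_iH^{(m)}-\frac{1}{n-1}\int_{\S^{n-1}}\delta^2H^{(k)}\delta^2H^{(m)}.
\end{multline*}
Solving for $\int_{\S^{n-1}}\Dot{\ricci}^{(k)}_{ij}\Dot{\ricci}^{(m)}_{ij}$ already explains the coefficients $c_3$, $\frac{km}{2}c_3$, and the contribution $\frac{1}{n-1}c_3$ to the $\delta^2H\,\delta^2H$ coefficient.

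Next I would treat the $\Ddot{R}$ term. From the polarized form of \eqref{eq:R2}, $\Ddot{R}^{(k,m)}$ equals $\frac12\delta_iH^{(k)}\delta_iH^{(m)}-\frac14H^{(k)}_{jl,i}H^{(m)}_{jl,i}$ modulo a total divergence whose integral over $\S^{n-1}$ vanishes by Lemma \ref{IBP} (because $x_iH^{(k)}_{ij}=0$). Independently, integrating $(\Dot{\ricci}[H^{(k)}])_{ij}H^{(m)}_{ij}=(H^{(k)}_{jl,il}-\tfrac12H^{(k)}_{ij,ll})H^{(m)}_{ij}$ over $\S^{n-1}$ and applying Lemma \ref{IBP} together with Euler's homogeneity relation---once to kill a boundary term via $x_iH^{(m)}_{ij}=0$, and once to rewrite $\int_{\S^{n-1}}\Delta H^{(k)}_{ij}H^{(m)}_{ij}$ as $k(n+k+m-2)\int_{\S^{n-1}}H^{(k)}_{ij}H^{(m)}_{ij}-\int_{\S^{n-1}}H^{(k)}_{ij,l}H^{(m)}_{ij,l}$---and then symmetrizing in $k\leftrightarrow m$, one expresses $\int_{\S^{n-1}}H^{(k)}_{jl,i}H^{(m)}_{jl,i}$ through $\int_{\S^{n-1}}\{(\mcl_kH^{(k)})_{ij}H^{(m)}_{ij}+H^{(k)}_{ij}(\mcl_mH^{(m)})_{ij}\}$, $\int_{\S^{n-1}}\delta_iH^{(k)}\delta_iH^{(m)}$, and $\int_{\S^{n-1}}H^{(k)}_{ij}H^{(m)}_{ij}$. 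Eliminating $\int_{\S^{n-1}}H^{(k)}_{jl,i}H^{(m)}_{jl,i}$ between these two relations yields $\int_{\S^{n-1}}\Ddot{R}^{(k,m)}$ as a combination of $\int_{\S^{n-1}}\{(\mcl_kH^{(k)})_{ij}H^{(m)}_{ij}+H^{(k)}_{ij}(\mcl_mH^{(m)})_{ij}\}$ and $\int_{\S^{n-1}}H^{(k)}_{ij}H^{(m)}_{ij}$, which accounts for the $\frac14c_1$ and $\frac18(n+k+m-2)(k+m)c_1$ coefficients.

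Finally, substituting the two resulting identities into the definition of $J_1'$ from Proposition \ref{I_1} (leaving $\int_{\S^{n-1}}H^{(k)}_{ij}H^{(m)}_{ij}$ and $\int_{\S^{n-1}}\delta^2H^{(k)}\delta^2H^{(m)}$ untouched) and collecting like terms produces exactly the stated formula. The only delicate point I foresee is the bookkeeping of the boundary terms from the repeated use of Lemma \ref{IBP} and Euler's relation: the entrywise Laplacians $\Delta H^{(k)}_{ij}$ and the cross-gradient $\int_{\S^{n-1}}H^{(k)}_{ij,l}H^{(m)}_{ij,l}$ do not vanish and a priori lie outside the span of the $\mcl_k$-quantities, but the two relations above form a closed linear system in which they cancel, leaving no residue.
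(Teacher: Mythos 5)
Your proposal is correct and follows essentially the same route as the paper: the two identities you derive---expressing $\int_{\S^{n-1}}\Ddot{R}^{(k,m)}$ through $\int_{\S^{n-1}}\{(\mcl_kH^{(k)})_{ij}H^{(m)}_{ij}+H^{(k)}_{ij}(\mcl_mH^{(m)})_{ij}\}$ and $\int_{\S^{n-1}}H^{(k)}_{ij}H^{(m)}_{ij}$, and $\int_{\S^{n-1}}\Dot{\ricci}^{(k)}_{ij}\Dot{\ricci}^{(m)}_{ij}$ through $\int_{\S^{n-1}}(\mcl_kH^{(k)})_{ij}(\mcl_mH^{(m)})_{ij}$, $\int_{\S^{n-1}}\delta_iH^{(k)}\delta_iH^{(m)}$, and $\int_{\S^{n-1}}\delta^2H^{(k)}\delta^2H^{(m)}$---are precisely the content of Lemmas \ref{ricci h lemma}--\ref{ricci ricci lemma}, which the paper then substitutes into Proposition \ref{I_1} exactly as you do. Your pointwise observation that contracting $\mcl_k\bar H$ against any element of $\mcv_m$ on $\S^{n-1}$ leaves only the $\Dot{\ricci}$ part is just a mild streamlining of the paper's computation via Claim \ref{2ricci1}, and your bookkeeping (in particular the cancellation of $\int_{\S^{n-1}}H^{(k)}_{ij,l}H^{(m)}_{ij,l}$) checks out.
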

\begin{proof}
It is a simple consequence of Proposition \ref{I_1} and Lemma \ref{ricci ricci lemma}.
\end{proof}

Before proving Proposition \ref{Positive definiteness}, we consider a decomposition of the space $\mcv_k$ arising from the eigenspace decomposition of $\mcl_k$ for each $k \ge 2$:
\begin{equation}\label{eq:mcv decom}
\mcv_k=\mcv_k/\mcw_k\oplus\mcw_k/\mcd_k\oplus\mcd_k,
\end{equation}
where
\begin{equation}\label{eq:mcv decom2}
\begin{aligned}
\mcw_k &:= \{W\in \mcv_k\mid \delta^2W=0\};\\
\mcd_k &:= \{D\in \mcv_k\mid \delta D=0\};\\
\mcv_k/\mcw_k &:= \{\hat{H}\in \mcv_k\mid \langle \hat{H},W \rangle=0 \text{ for all } W\in \mcw_k\};\\
\mcw_k/\mcd_k &:= \{\hat{W}\in \mcw_k\mid \langle \hat{W},D \rangle=0 \text{ for all } D\in \mcd_k\}.
\end{aligned}
\end{equation}
We remark that this decomposition, introduced in \cite{KMS}, shares many similarities with the decomposition mentioned in \cite{FM}.

From Corollary \ref{I_1 coro}, we see that a key distinction between the Pohozaev quadratic form for the $Q^{(4)}$-curvature problem and the Yamabe problem is the presence of the following three terms:
\begin{equation}\label{eq:extra}
\bla\mcl_kH^{(k)},\mcl_mH^{(m)}\bra, \quad \bla\delta H^{(k)},\delta H^{(m)}\bra \quad \text{and} \quad \bla\delta^2H^{(k)},\delta^2H^{(m)}\bra.
\end{equation}
In \cite{KMS}, the eigenvectors of $\mcl_k$ in $\mcv_k/\mcw_k$ and $\mcd_k$ were found by using the spherical harmonics. Readers can refer to Lemmas \ref{H hat}--\ref{D hat} for their explicit expressions.
In contrast, the eigenvectors of $\mcl_k$ in $\mcw_k/\mcd_k$ were not written out in \cite{KMS}. As we shall see, they all share the same eigenvalue $-\frac{(n+k-2)k}{2}$, and only that eigenvalue information was necessary there.
For the $Q^{(4)}$-curvature problem, the occurrence of the last two integrals in \eqref{eq:extra} within the expansion of $I_{1,\ep}[\wth^{(k)},\wth^{(m)}]$ forces us to determine explicit expressions for the eigenvectors in $\mcw_k/\mcd_k$.
This is the content of Lemma \ref{lemma:W_hat0}, which will be reinforced in Lemma \ref{W hat} below.

Let $V = (V_1,\ldots,V_n)$ be a smooth vector field on $\R^n$ and $\delta V := V_{i,i}$ be its divergence. We define the conformal Killing operator by
\begin{equation}\label{eq:cko}
(\msd V)_{ij} := \pa_i V_j+\pa_j V_i-\frac{2}{n}\delta V\delta_{ij}.
\end{equation}

\begin{lemma}\label{lemma:W_hat0}
For $k \ge 2$ and $q = 1,\ldots,\lfloor\frac{k-1}{2}\rfloor$, we write $s = k-2q$. We choose a smooth vector field on $\R^n$,
$$
V^{(s+1)} \in \{V = (V_1,\ldots,V_n) \mid V_i \in \mch_{s+1} \text{ for } i = 1,\ldots,n,\, \delta V=0,\, x_iV_i=0\},
$$
and set $\hat{W} := \proj[|x|^{2q}\msd V^{(s+1)}] \in \mcv_k$. Then $\hat{W}\in \mcw_k$. In addition, it holds that
\begin{equation}\label{eq:hatWq10}
\mcl_k\hat{W}=-\frac{(n+k-2)k}{2}\hat{W}.
\end{equation}
\end{lemma}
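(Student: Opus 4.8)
The plan is to show that $\hat{W}=\proj[|x|^{2q}\msd V^{(s+1)}]$ lies in $\mcw_k$ (that is, $\delta^2\hat{W}=0$) and is an eigenvector of $\mcl_k$ with eigenvalue $-\frac{(n+k-2)k}{2}$, by exploiting the conformal-Killing structure of $\msd V^{(s+1)}$. First I would note that, since $V^{(s+1)}$ has harmonic components, is divergence-free and satisfies $x_iV^{(s+1)}_i=0$, the tensor $(\msd V^{(s+1)})_{ij}=\pa_iV^{(s+1)}_j+\pa_jV^{(s+1)}_i$ is automatically trace-free ($\delta V^{(s+1)}=0$ kills the last term in \eqref{eq:cko}), symmetric, and made of homogeneous polynomials of degree $s$. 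A short computation using $\delta V^{(s+1)}=0$ and $\Delta V^{(s+1)}_i=0$ gives $\delta_j(\msd V^{(s+1)})=\pa_i\pa_iV^{(s+1)}_j+\pa_j\pa_iV^{(s+1)}_i=0$ and hence $\delta^2(\msd V^{(s+1)})=0$; the tensor $|x|^{2q}\msd V^{(s+1)}$ then satisfies the hypotheses of Lemma \ref{projection lemma} (its contractions with $x$ are controlled by $x_iV^{(s+1)}_i=0$ and $\delta V^{(s+1)}=0$), so $\proj$ is well defined and $\hat{W}\in\mcv_k$. That $\hat{W}\in\mcw_k$, i.e. $\delta^2\hat{W}=0$, should follow because $\proj$ is built by subtracting multiples of $x_ix_j$, $\delta_{ij}$ and lower-order terms whose double divergence is pinned down by the already-vanishing traces and contractions; this is exactly the kind of bookkeeping carried out in \cite{KMS}, so I would cite or mirror that.

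The heart of the matter is the eigenvalue identity \eqref{eq:hatWq10}. The key algebraic fact is that $\msd$ intertwines the operator $\Dot{\ricci}$ (equivalently the linearized Einstein/Ricci operator) in a clean way: for a vector field $V$, $\Dot{\ricci}[\msd V]$ is, up to the gauge (Bianchi) terms, the Lie-derivative action, so on a trace-free divergence-free conformal-Killing-type field the second variation operator acts by a scalar. Concretely, I would compute $\Dot{\ricci}[\msd V^{(s+1)}]$ directly from \eqref{eq:Ric1}: with $h_{ij}=(\msd V^{(s+1)})_{ij}=\pa_iV_j+\pa_jV_i$ one gets $h_{jk,ik}+h_{ik,jk}-h_{ij,kk}=\pa_i\pa_j(\delta V)\cdot 2 + (\text{terms involving }\Delta V)=0$ since $\delta V=0$ and $\Delta V_i=0$. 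So $\Dot{\ricci}[\msd V^{(s+1)}]=0$, and therefore $\Dot{\ricci}[|x|^{2q}\msd V^{(s+1)}]$ is entirely the "error" produced by the factor $|x|^{2q}$; its components are explicit low-complexity expressions in $|x|$, $x_i$, $V^{(s+1)}$ and its first derivatives. I would then multiply by $|x|^2$, apply $\proj$, and carefully identify the result as $-\frac{(n+k-2)k}{2}$ times $\proj[|x|^{2q}\msd V^{(s+1)}]=\hat{W}$. The scalar $-\frac{(n+k-2)k}{2}$ is forced by homogeneity: it is precisely the eigenvalue of the relevant second-order operator on degree-$k$ homogeneous tensors, matching the value already quoted for $\mcw_k/\mcd_k$ in the paragraph preceding the lemma.

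The main obstacle I expect is the projection bookkeeping: $\proj$ does not commute with multiplication by $|x|^2$ or with $\Dot{\ricci}$, so one cannot simply write $\mcl_k\hat{W}=\proj[|x|^2\Dot{\ricci}[\proj[|x|^{2q}\msd V^{(s+1)}]]]$ and pull $\proj$ through. I would instead argue that $|x|^{2q}\msd V^{(s+1)}$ and its projection differ by a tensor in the kernel of the quadratic form / orthogonal complement, and that $\Dot{\ricci}$ maps that correction into something whose $\proj$ vanishes or reproduces a multiple of $\hat{W}$; alternatively, and more cleanly, I would compute $\proj[|x|^2\Dot{\ricci}[|x|^{2q}\msd V^{(s+1)}]]$ directly and show it equals $-\frac{(n+k-2)k}{2}\,\proj[|x|^{2q}\msd V^{(s+1)}]$, using that $\Dot{\ricci}$ restricted to gradients-of-conformal-Killing fields is a scalar plus a term that $\proj$ annihilates because $\proj$ already removes the $x_ix_j$ and $\delta_{ij}$ pieces. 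The computation is routine once organized with the identities $x_iV_i=0$, $\delta V=0$, $\Delta V_i=0$, together with $\pa_i(|x|^{2q})=2q|x|^{2q-2}x_i$ and Euler's relation for the homogeneous field $V^{(s+1)}$; I would lean on the auxiliary lemmas (Lemma \ref{projection lemma}, Lemma \ref{Euler's homo thm}, Lemma \ref{IBP}) to keep it short, and defer the fully explicit verification to the appendix, noting that the eigenvalue can also be checked against the general formula for $\mcl_k$ on spherical harmonics as in \cite{KMS}.
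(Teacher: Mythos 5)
You correctly observe that $\Dot{\ricci}[\msd V^{(s+1)}]=0$ and that $\proj$ does not commute with $\Dot{\ricci}$, but your way around this obstacle fails, and it is exactly where the eigenvalue comes from. By Lemma \ref{projection lemma} (with Lemma \ref{Euler's homo thm}) one has explicitly $\hat{W}_{ij}=|x|^{2q}(\msd V^{(s+1)})_{ij}-s|x|^{2q-2}\bigl(x_jV^{(s+1)}_i+x_iV^{(s+1)}_j\bigr)$, and the inner correction term contributes essentially to $\mcl_k\hat{W}=\proj[|x|^2\Dot{\ricci}[\hat{W}]]$. Indeed, a direct computation using $x_i(\msd V^{(s+1)})_{ij}=sV^{(s+1)}_j$, $\delta(\msd V^{(s+1)})=0$ and $\Delta(\msd V^{(s+1)})_{ij}=0$ gives $\proj\bigl[|x|^2\Dot{\ricci}[|x|^{2q}\msd V^{(s+1)}]\bigr]=-q(n+k-2)\,\hat{W}$, not $-\tfrac{k(n+k-2)}{2}\hat{W}$; equivalently, $\proj[|x|^2\Dot{\ricci}[\cdot]]$ applied to the correction term equals $-\tfrac{s}{2}(n+k-2)\hat{W}\neq 0$, since $s=k-2q\ge 1$ in the stated range of $q$. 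So your ``cleaner'' identity is false, and the hope that $\proj$ annihilates the correction's contribution is also false. Moreover, the eigenvalue is not ``forced by homogeneity'': $\mcl_k$ has several distinct eigenvalues on degree-$k$ tensors (e.g. $-q(n-2q+2k-2)$ on $\mcd_k$), and quoting the common eigenvalue on $\mcw_k/\mcd_k$ would be circular here, because membership $\hat{W}\in\mcw_k/\mcd_k$ is only deduced afterwards (Remark \ref{rmk:W_hat0}) from \eqref{eq:hatWq10}.

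The paper's proof avoids all of this by taking the explicit formula for $\hat{W}$ above, computing $\delta_j\hat{W}=-(n+s)s|x|^{2q-2}V^{(s+1)}_j$ and $\Delta\hat{W}_{ij}$, concluding $\delta^2\hat{W}=0$ (the membership $\hat{W}\in\mcw_k$, which you only assert rather than verify), and then substituting into the componentwise formula for $\mcl_k$ displayed after \eqref{eq:mclk}; the terms recombine into $-\tfrac{(n+k-2)k}{2}\hat{W}$. If you want to bypass that computation, the viable shortcut is different from the one you propose: prove $\delta^2\hat{W}=0$ and $\la \hat{W},D\ra=0$ for every $D\in\mcd_k$ (integration by parts, as in the proof of Lemma \ref{lemma:Hortho}), hence $\hat{W}\in\mcw_k/\mcd_k$, and then invoke \cite[Page 184]{KMS} as in the proof of Claim \ref{claim V}. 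As written, your argument does not establish \eqref{eq:hatWq10}.
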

\begin{proof}
We infer from Lemmas \ref{Euler's homo thm} and \ref{projection lemma} that
\[\hat{W}_{ij}=|x|^{2q} (\msd V^{(s+1)})_{ij}-s|x|^{2q-2} (x_jV^{(s+1)}_{i}+x_iV^{(s+1)}_{j}).\]
A straightforward computation yield
\begin{align*}
\delta_j\hat{W} &= -(n+s)s|x|^{2q-2}V_j^{(s+1)}, \\
\Delta\hat{W}_{ij} &= \(2q(2q+2s+n-2)-2s\)|x|^{2q-2} (\msd V^{(s+1)})_{ij} \\
&\ -s(2q-2)(2q+2s+n)|x|^{2q-4}(V_ix_j+V_jx_i).
\end{align*}
It follows that $\delta^2\hat{W} = 0$ so $\hat{W} \in \mcw_k$.

Also, by combining these identities with the definition of $\mcl_k$ in \eqref{eq:mclk}, we derive
\begin{align*}
(\mcl_k\hat{W})_{ij}
&= \frac{|x|^2}{2} (\pa_i\delta_j\hat{W}+\pa_j\delta_i\hat{W}-\Delta\hat{W}_{ij}) - \frac{2q+s}{2} (x_j\delta_i\hat{W}+x_i\delta_j\hat{W})\\
&= \frac{|x|^{2q}}{2} \left[-(n+s)s(\pa_i V_j+\pa_j V_i)-(2q(2q+2s+n-2)-2s)(\msd V)_{ij}\right] \\
&\ + \frac{s|x|^{2q-2}}{2} [-(n+s)(2q-2)+(2q-2)(2q+2s+n)+(2q+s)(n+s)] (V_ix_j+V_jx_i) \\
&= -\frac{(n+2q+s-2)(2q+s)}{2}\hat{W}_{ij} =-\frac{(n+k-2)k}{2}\hat{W}_{ij},
\end{align*}
which reads \eqref{eq:hatWq10}.
\end{proof}
\begin{rmk}\label{rmk:W_hat0}
\

\noindent 1. Note that
$$
-\frac{(n+k-2)k}{2}\neq -q(n-2q+2k-2) \quad \text{for } q=0,\ldots,\lfloor\tfrac{k-2}{2}\rfloor.
$$
In light of \eqref{eq:hatWq10} and \eqref{eq:hatDq1}, it follows that $\hat{W} = \proj[|x|^{2q}\msd V^{(s+1)}] \in \mcw_k/\mcd_k \subset \mcw_k$.

\noindent 2. In Lemma \ref{W hat}, we verify that every element of $\mcw_k/\mcd_k$ can be expressed as a linear combination of matrices $\hat{W}$'s. \hfill $\diamond$
\end{rmk}

The following corollary comes from Lemmas \ref{H hat}--\ref{W hat}.
\begin{cor}
Given any $\bar{H} \in \mcv_k$ for $k \ge 2$, there exist $\hat{H} \in \mcv_k/\mcw_k$, $\hat{W} \in \mcw_k/\mcd_k$, and $\hat{D} \in \mcd_k$ such that
$$
\bar{H}=\hat{H}+\hat{W}+\hat{D}.
$$
In fact, one can pick $\hat{H}_1,\ldots,\hat{H}_{\lfloor\frac{k-2}{2}\rfloor} \in \mcv_k/\mcw_k$ satisfying \eqref{eq:hatHq1}--\eqref{eq:hatHq2},
$\hat{W}_1,\ldots,\hat{W}_{\lfloor\frac{k-1}{2}\rfloor} \in \mcw_k/\mcd_k$ satisfying \eqref{eq:hatWq1}--\eqref{eq:hatWq2}, and $\hat{D}_0,\ldots,\hat{D}_{\lfloor\frac{k-2}{2}\rfloor} \in \mcd_k$ satisfying \eqref{eq:hatDq1}--\eqref{eq:hatDq2} such that
$$
\bar{H} = \sum_{q=1}^{\lfloor\frac{k-2}{2}\rfloor}\hat{H}_q + \sum_{q=1}^{\lfloor\frac{k-1}{2}\rfloor}\hat{W}_q + \sum_{q=0}^{\lfloor\frac{k-2}{2}\rfloor}\hat{D}_q.
$$
In particular, $\mcv_2 = \mcd_2$ and $\mcv_3 = \mcw_3$.
\end{cor}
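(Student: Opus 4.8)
The plan is to read the first assertion straight off the $\la\cdot,\cdot\ra$-orthogonal decomposition already recorded in \eqref{eq:mcv decom}, and then to refine the three coarse summands into the explicit bases. For the coarse step I would first note the inclusions $\mcd_k\subseteq\mcw_k\subseteq\mcv_k$, which hold because $\delta D=0$ forces $\delta^2D=0$; the spaces $\mcv_k/\mcw_k$ and $\mcw_k/\mcd_k$ are by definition the orthogonal complements of $\mcw_k$ in $\mcv_k$ and of $\mcd_k$ in $\mcw_k$, so decomposing $\bar H$ according to $\mcv_k=(\mcv_k/\mcw_k)\oplus\mcw_k$ and then decomposing its $\mcw_k$-component according to $\mcw_k=(\mcw_k/\mcd_k)\oplus\mcd_k$ produces $\bar H=\hat H+\hat W+\hat D$ with $\hat H\in\mcv_k/\mcw_k$, $\hat W\in\mcw_k/\mcd_k$, $\hat D\in\mcd_k$. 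That is exactly \eqref{eq:mcv decom}, and it is the entire content of the first part of the corollary.

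For the explicit form I would expand each summand in the eigenvectors of the stability operator $\mcl_k$ obtained by lifting spherical harmonics. The families $\{\hat D_q\}_{q=0}^{\lfloor(k-2)/2\rfloor}\subset\mcd_k$ and $\{\hat H_q\}_{q=1}^{\lfloor(k-2)/2\rfloor}\subset\mcv_k/\mcw_k$, together with their $\mcl_k$-eigenvalues recorded in \eqref{eq:hatHq1}--\eqref{eq:hatHq2} and \eqref{eq:hatDq1}--\eqref{eq:hatDq2}, are the ones already isolated in \cite{KMS}. The third family $\{\hat W_q\}_{q=1}^{\lfloor(k-1)/2\rfloor}$ is the new one: for $s=k-2q$ and a trace-free, divergence-free harmonic vector field $V^{(s+1)}$, set $\hat W_q:=\proj[|x|^{2q}\msd V^{(s+1)}]$. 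By Lemma \ref{lemma:W_hat0} each $\hat W_q$ lies in $\mcw_k$ and satisfies $\mcl_k\hat W_q=-\tfrac{(n+k-2)k}{2}\hat W_q$, hence by the first part of Remark \ref{rmk:W_hat0} (where this eigenvalue is compared with $-q(n-2q+2k-2)$) it actually lies in $\mcw_k/\mcd_k$. Granting that these three families span $\mcv_k/\mcw_k$, $\mcw_k/\mcd_k$, $\mcd_k$ respectively --- which is the substance of Lemmas \ref{H hat}--\ref{W hat} --- one simply expands $\hat H$, $\hat W$, $\hat D$ in them to obtain the displayed sum.

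The main obstacle, and the only genuinely new point, is completeness: that the $\hat W_q$'s exhaust $\mcw_k/\mcd_k$ rather than merely sitting inside it. Because all of them carry the single eigenvalue $-\tfrac{(n+k-2)k}{2}$, this cannot be seen from the spectrum of $\mcl_k$; instead I would count dimensions, checking that $\dim(\mcw_k/\mcd_k)$ equals $\sum_{q=1}^{\lfloor(k-1)/2\rfloor}\dim\{V^{(s+1)}:\ \delta V^{(s+1)}=0,\ x_iV_i^{(s+1)}=0,\ \Delta V_i^{(s+1)}=0\}$ with $s=k-2q$, and then show linear independence of the $\hat W_q$'s across $q$ by tracking the lowest power of $|x|^2$ (equivalently the top-degree part) of $\msd V^{(s+1)}$ together with injectivity of $\msd$ on this space of harmonic vector fields, the conformal Killing kernel being spanned by the explicit fields that are excluded by the constraints $x_iV_i=0$ and $\delta V=0$. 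This refinement of Lemma \ref{lemma:W_hat0} is Lemma \ref{W hat}; the corresponding completeness statements for $\{\hat H_q\}$ and $\{\hat D_q\}$ are the spherical-harmonic bookkeeping already carried out in \cite{KMS} (Lemmas \ref{H hat} and \ref{D hat}).

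Finally, the two special identities are just the degenerate index ranges. For $k=2$ one has $\lfloor(k-2)/2\rfloor=\lfloor(k-1)/2\rfloor=0$, so no $\hat H_q$ and no $\hat W_q$ occur and $\bar H=\hat D_0\in\mcd_2$, i.e. $\mcv_2=\mcd_2$; for $k=3$ one has $\lfloor(k-2)/2\rfloor=0$, so no $\hat H_q$ occurs, i.e. $\mcv_3/\mcw_3=\{0\}$ and $\mcv_3=\mcw_3$.
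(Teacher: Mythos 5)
Your argument is correct and follows essentially the paper's route: the corollary is obtained by applying Lemmas \ref{H hat}, \ref{D hat} and \ref{W hat} in succession (equivalently, by refining the orthogonal splitting \eqref{eq:mcv decom}), with the cases $k=2,3$ read off from the empty index ranges. The only point where you deviate is the sketched dimension-counting alternative for the completeness of the $\hat{W}_q$-family; the paper instead proves this constructively in Lemma \ref{W hat}, by decomposing $\delta_j\bar{W}$ into spherical harmonics and eliminating the residual piece via Claim \ref{claim V}, and since that lemma is available you may simply cite it, as the paper does.
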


We are now ready to prove Proposition \ref{Positive definiteness}. Due to its technical nature, we will outline the main structure of the proof here and postpone several detailed computations to Section \ref{sec:tech} and Appendix \ref{sec:eigenspaces}.
\begin{proof}[Proof of Proposition \ref{Positive definiteness}]
Given $k = 2,\ldots,d$, we write
\[H^{(k)} = \hat{H}^{(k)} + \hat{W}^{(k)} + \hat{D}^{(k)},\]
where $\hat{H}^{(k)} \in \mcv_k/\mcw_k$, $\hat{W}^{(k)} \in \mcw_k/\mcd_k$ and $\hat{D}^{(k)} \in \mcd_k$. Then $\hat{H}^{(2)}=\hat{H}^{(3)}=0$ and $\hat{W}^{(2)}=0$.
We also set
\[\check{H}^{(k)}(y) := \hat{H}^{(k)}(\ep y), \quad \check{W}^{(k)}(y) := \hat{W}^{(k)}(\ep y), \quad \text{and} \quad \check{D}^{(k)}(y) := \hat{D}^{(k)}(\ep y)\]
so that $\wth^{(k)} = \check{H}^{(k)} + \check{W}^{(k)} + \check{D}^{(k)}$. As shown in Lemma \ref{lemma:Hortho}, it holds that
\begin{equation}\label{eq:I_1decom}
I_{1,\ep}\big[\wth^{(k)},\wth^{(m)}\big] = I_{1,\ep}\big[\check{H}^{(k)},\check{H}^{(m)}\big] + I_{1,\ep}\big[\check{W}^{(k)},\check{W}^{(m)}\big] + I_{1,\ep}\big[\check{D}^{(k)},\check{D}^{(m)}\big].
\end{equation}
By \eqref{eq:R1}, we have that $\dot{R}[\check{W}^{(k)}] = \delta^2\check{W}^{(k)} = 0$ and $\dot{R}[\check{D}^{(k)}] = \delta(\delta \check{D}^{(k)}) = 0$. This together with \eqref{eq:L1r} leads to
\begin{equation}\label{eq:van}
L_1[\check{W}^{(k)}]u=L_1[\check{D}^{(k)}]u=0 \quad \text{for } u \text{ radial} \quad \text{and} \quad \Psi[\check{W}^{(k)}]=\Psi[\check{D}^{(k)}]=0.
\end{equation}
From \eqref{eq:Pohoquad}, \eqref{eq:I}, \eqref{eq:I_1decom}, \eqref{eq:I_2}--\eqref{eq:I_3}, and \eqref{eq:van}, we readily observe
\begin{equation}\label{eq:Idecom}
I_{\ep}\big[\wth,\wth\big] = \sum_{k,m=2}^d I_{1,\ep}\big[\check{D}^{(k)},\check{D}^{(m)}\big] + \sum_{k,m=3}^d I_{1,\ep}\big[\check{W}^{(k)},\check{W}^{(m)}\big] + \sum_{k,m=4}^d I_{\ep}\big[\check{H}^{(k)},\check{H}^{(m)}\big].
\end{equation}

For the remainder of the proof, we analyze each term on the right-hand side of \eqref{eq:Idecom}.

\medskip \noindent \textbf{Case 1.} Let us study the term $\sum_{k,m=2}^d I_{1,\ep}[\check{D}^{(k)},\check{D}^{(m)}]$.
Thanks to Lemma \ref{D hat}, we can write $\hat{D}^{(k)} = \sum_{q=0}^{\lfloor\frac{k-2}{2}\rfloor}|x|^{2q} M^{(k-2q)}$. Setting $s=k-2q$ for $q = 0,\ldots,\lfloor\frac{k-2}{2}\rfloor$ and
\begin{equation}\label{eq:MqsEDs}
\begin{cases}
\displaystyle M_q^{(s)} := M^{(k-2q)}, &\displaystyle E^D_s := \sum_{q=0}^{\lfloor \frac{d-s}{2} \rfloor}|x|^{2q}M_q^{(s)}, \\
\displaystyle \wtm_q^{(s)}(y) := M_q^{(s)}(x), &\displaystyle \wte^D_s(y) := E^D_s(x) \quad \text{for } x = \ep y \in \R^n,
\end{cases}
\end{equation}
we obtain
$$
\sum_{k=2}^d \check{D}^{(k)} = \sum_{k=2}^d\sum_{q=0}^{\lfloor\frac{k-2}{2}\rfloor}|x|^{2q}\wtm^{(k-2q)} = \sum_{s=2}^d\sum_{q=0}^{\lfloor \frac{d-s}{2} \rfloor}|x|^{2q}\wtm_q^{(s)} = \sum_{s=2}^d \wte^D_s.
$$
As pointed out in Lemma \ref{lemma:I_1ortho}, we have that $I_{1,\ep}[\wte^D_s,\wte^D_{s'}] = 0$ for $s \ne s'$, which combined with \eqref{eq:I_1J_1} implies that
\begin{align*}
\sum_{k,m=2}^d I_{1,\ep}\big[\check{D}^{(k)},\check{D}^{(m)}\big] &= \sum_{s=2}^d I_{1,\ep}\big[\wte^D_s,\wte^D_s\big] = \sum_{s=2}^d \sum_{q,q'=0}^{\lfloor \frac{d-s}{2} \rfloor} I_{1,\ep}\left[|x|^{2q}\wtm_q^{(s)},|x|^{2q'}\wtm_{q'}^{(s)}\right] \\
&= \sum_{s=2}^d \sum_{q,q'=0}^{\lfloor \frac{d-s}{2} \rfloor} |\log\ep|^{\theta_{q+q'+s}} J_1\left[\ep^{2q+s}|x|^{2q}M_q^{(s)},\ep^{2q'+s}|x|^{2q'}M_{q'}^{(s)}\right] + O(\ep^{n-4}).
\end{align*}
Lemma \ref{lemma of case 1} tells us that if $8 \le n \le 24$, then there exists a constant $C=C(n)>0$ such that
\begin{equation}\label{eq:Idecom1}
\begin{aligned}
\sum_{k,m=2}^d I_{1,\ep}\big[\check{D}^{(k)},\check{D}^{(m)}\big] &\ge C^{-1} \sum_{s=2}^d \sum_{q=0}^{\lfloor \frac{d-s}{2} \rfloor} |\log\ep|^{\theta_{2q+s}} \left\|\ep^{2q+s}|x|^{2q}M_q^{(s)}\right\|^2 + O(\ep^{n-4}) \\
&= C^{-1} \sum_{k=2}^d \ep^{2k}|\log\ep|^{\theta_k} \big\|\hat{D}^{(k)}\big\|^2 + O(\ep^{n-4}).
\end{aligned}
\end{equation}

\medskip \noindent \textbf{Case 2.} Let us study the term $\sum_{k,m=3}^d I_{1,\ep}[\check{W}^{(k)},\check{W}^{(m)}]$.
Thanks to Lemma \ref{W hat}, we can write $\hat{W}^{(k)} = \sum_{q=1}^{\lfloor\frac{k-1}{2}\rfloor}\proj[|x|^{2q}\msd V^{(k-2q+1)}]$. Setting $s=k-2q$ for $q=1,\ldots,\lfloor\frac{k-1}{2}\rfloor$ and
\begin{equation}\label{eq:VqsEWs}
\begin{cases}
\displaystyle V_q^{(s+1)} := V^{(k-2q+1)}, &\displaystyle E^W_s := \sum_{q=1}^{\lfloor \frac{d-s}{2} \rfloor} \proj\left[|x|^{2q}\msd V_q^{(s+1)}\right], \\
\displaystyle \wtv_q^{(s+1)}(y) := V_q^{(s+1)}(x), &\displaystyle \wte^W_s(y) := E^W_s(x) \quad \text{for } x = \ep y \in \R^n,
\end{cases}
\end{equation}
we obtain
$$
\sum_{k=3}^d \check{W}^{(k)}=\sum_{k=3}^d\sum_{q=1}^{\lfloor\frac{k-1}{2}\rfloor} \proj\left[|x|^{2q}\msd \wtv^{(k-2q+1)}\right]
=\sum_{s=1}^{d-2}\sum_{q=1}^{\lfloor \frac{d-s}{2} \rfloor} \proj\left[|x|^{2q}\msd \wtv_q^{(s+1)}\right] = \sum_{s=1}^{d-2} \wte^W_s.
$$
By Lemma \ref{lemma:I_1ortho} and \eqref{eq:I_1J_1}, we deduce
\begin{align*}
\sum_{k,m=3}^d I_{1,\ep}\big[\check{W}^{(k)},\check{W}^{(m)}\big]
&= \sum_{s=1}^{d-2} \sum_{q,q'=1}^{\lfloor \frac{d-s}{2} \rfloor} |\log\ep|^{\theta_{q+q'+s}} J_1\left[\proj\left[|x|^{2q}\msd V_q^{(s+1)}\right], \proj\left[|x|^{2q'}\msd V_{q'}^{(s+1)}\right]\right] \\
&\ + O(\ep^{n-4}).
\end{align*}
If $n = 8$ or $9$, then $d=2$ so that $\sum_{k,m=3}^d I_{1,\ep}[\check{W}^{(k)},\check{W}^{(m)}]=0$. If $10 \le n \le 28$, then Lemma \ref{lemma of case 2} tells us that there exists a constant $C=C(n)>0$ such that
\begin{equation}\label{eq:Idecom2}
\sum_{k,m=3}^d I_{1,\ep}\big[\check{W}^{(k)},\check{W}^{(m)}\big] \ge C^{-1} \sum_{k=3}^d \ep^{2k}|\log\ep|^{\theta_k} \big\|\hat{W}^{(k)}\big\|^2 + O(\ep^{n-4}).
\end{equation}

\medskip \noindent \textbf{Case 3.} Let us study the term $\sum_{k,m=4}^d I_{\ep}[\check{H}^{(k)},\check{H}^{(m)}]$.
Thanks to Lemma \ref{H hat}, we can write $\hat{H}^{(k)} = \sum_{q=1}^{\lfloor\frac{k-2}{2}\rfloor}\proj[|x|^{2q+2}\nabla^2P^{(k-2q)}]$.
Setting $s=k-2q$ for $q=1,\ldots,\lfloor\frac{k-2}{2}\rfloor$ and
\begin{equation}\label{eq:PqsEHs}
\left\{\begin{array}{lll}
\!\!\! \displaystyle P_q^{(s)} := P^{(k-2q)}, & \displaystyle \hat{H}_q^{(2q+s)} := \proj\left[|x|^{2q+2}\nabla^2P_q^{(s)}\right],
& \displaystyle E^H_s := \sum_{q=1}^{\lfloor \frac{d-s}{2} \rfloor}\hat{H}_q^{(2q+s)}, \\
\!\!\! \displaystyle \wtp_q^{(s)} := \wtp^{(k-2q)}, & \displaystyle \check{H}_q^{(2q+s)} := \proj\left[|x|^{2q+2}\nabla^2\wtp_q^{(s)}\right],
& \displaystyle \wte^H_s := \sum_{q=1}^{\lfloor \frac{d-s}{2} \rfloor}\check{H}_q^{(2q+s)},
\end{array} \right.
\end{equation}
we obtain
$$
\sum_{k=4}^d \check{H}^{(k)} = \sum_{k=4}^d\sum_{q=1}^{\lfloor\frac{k-2}{2}\rfloor} \proj\left[|x|^{2q+2}\nabla^2\wtp^{(k-2q)}\right]
= \sum_{s=2}^{d-2}\sum_{q=1}^{\lfloor \frac{d-s}{2} \rfloor}\check{H}_q^{(2q+s)} = \sum_{s=2}^{d-2} \wte^H_s.
$$
We note that $\Psi[\wte^H_s] = \sum_{q=1}^{\lfloor \frac{d-s}{2} \rfloor}\Psi[\check{H}_q^{(2q+s)}]$ is well-defined, so are $I_{2,\ep}[\wte^H_s,\wte^H_{s'}]$ and $I_{3,\ep}[\wte^H_s,\wte^H_{s'}]$. By Lemma \ref{lemma:I_1ortho}, we deduce
\[\sum_{k,m=4}^d I_{i,\ep}\big[\check{H}^{(k)},\check{H}^{(m)}\big] = \sum_{s=2}^{d-2} \sum_{q,q'=1}^{\lfloor \frac{d-s}{2} \rfloor} I_{i,\ep}\left[\check{H}_q^{(2q+s)},\check{H}_{q'}^{(2q'+s)}\right] \quad \text{for } i=1,2,3.\]
If $8 \le n \le 11$, then $d=2$ or $3$ so that $\sum_{k,m=4}^d I_{i,\ep}[\check{H}^{(k)},\check{H}^{(m)}] = 0$ for $i=1,2,3$. If $12 \le n \le 32$, then Lemma \ref{lemma of case 3} tells us that there exists a constant $C=C(n)>0$ such that
\begin{equation}\label{eq:Idecom3}
\sum_{k,m=4}^d I_{\ep}\big[\check{H}^{(k)},\check{H}^{(m)}\big] \ge C^{-1} \sum_{k=4}^d \ep^{2k}|\log\ep|^{\theta_k} \big\|\hat{H}^{(k)}\big\|^2 + O(\ep^{n-4}).
\end{equation}

\medskip
As a consequence, by putting \eqref{eq:Idecom1}, \eqref{eq:Idecom2} and \eqref{eq:Idecom3} into \eqref{eq:Idecom}, and employing the identity
$\|H^{(k)}\|^2 = \|\hat{H}^{(k)}\|^2 + \|\hat{W}^{(k)}\|^2 + \|\hat{D}^{(k)}\|^2$ for $k = 2,\ldots,d$ as well as the equivalence between two norms $|H^{(k)}|$ and $\|H^{(k)}\|$, we conclude the proof of Proposition \ref{Positive definiteness}.
\end{proof}

\subsection{Proof of Theorem \ref{thm:Weyl2}}\label{subsec:Weyl23}
By virtue of \eqref{eq:Pohoineq2}, Lemma \ref{lemma:diffmciI} with $\eta \in (0,1)$ small, and Proposition \ref{Positive definiteness}, there exists a constant $C=C(n)>0$ such that
\[O(\ep_a^{n-4}) \ge C^{-1} \sum_{k=2}^d \ep_a^{2k}|\log\ep_a|^{\theta_k}|H_a^{(k)}|^2.\]
Therefore,
\begin{equation}\label{eq:Hak dec}
|H_a^{(k)}|^2 \le C\ep_a^{n-4-2k}|\log\ep_a|^{-\theta_k} \quad \text{for } k=2,\ldots,d,
\end{equation}
which implies
\[|\nabla^k_{g_a}W_{g_a}|^2(\sigma_a) \le C\ep_a^{n-8-2k}|\log\ep_a|^{-\theta_{k+2}} \quad \text{for } k=0,\ldots,d-2.\]
This concludes the proof of Theorem \ref{thm:Weyl2}.

\subsection{A corollary of Theorem \ref{thm:Weyl2}}
By Theorem \ref{thm:Weyl2} (or \eqref{eq:Hak dec}) and Corollary \ref{cor:scale back}, a direct corollary follows.

\begin{cor}\label{cor:scale back improved}
Under the conditions of Theorem \ref{thm:Weyl2}, there exist $\msfd_0 \in (0,1)$ and $C>0$ such that
$$
\left|\pa_{\beta}(u_a-w_a-\Psi_a)\right|(x) \le C\ep_a^{\frac{n-4}{2}+|\beta|}(\ep_a+|x|)^{-1-|\beta|}
$$
for all $a \in \N$, multi-indices $\beta$ with $|\beta| = 0,1,\ldots,4$, and $|x|\le \msfd_0$.
\end{cor}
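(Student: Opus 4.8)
The plan is to derive Corollary \ref{cor:scale back improved} as an immediate consequence of Corollary \ref{cor:scale back} and the Weyl vanishing estimate established in Theorem \ref{thm:Weyl2}. Recall that Corollary \ref{cor:scale back} already provides, for $|x| \le \delta_0$,
\[
\left|\pa_{\beta}(u_a-w_a-\Psi_a)\right|(x) \le C\ep_a^{\frac{n-4}{2}+|\beta|}\left(\sum_{k=2}^{d-1}|H_a^{(k)}|^2(\ep_a+|x|)^{2k+4-n-|\beta|} +(\ep_a+|x|)^{-1-|\beta|}\right),
\]
so it suffices to show that each of the finitely many summands $|H_a^{(k)}|^2(\ep_a+|x|)^{2k+4-n-|\beta|}$ is bounded by $C(\ep_a+|x|)^{-1-|\beta|}$.

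To this end I would first invoke \eqref{eq:Hak dec}: since $|\log\ep_a| \ge 1$ for $\ep_a$ small and $\theta_k \in \{0,1\}$, we have $|\log\ep_a|^{-\theta_k} \le 1$, hence $|H_a^{(k)}|^2 \le C\ep_a^{n-4-2k}$ for $k = 2,\ldots,d-1$. Next, because $0 < \ep_a < 1$, increasing the exponent only decreases the power, so $\ep_a^{n-4-2k} \le \ep_a^{n-5-2k}$; and since $0 < \ep_a \le \ep_a+|x|$ while the exponent $n-5-2k$ is strictly positive throughout the range $2 \le k \le d-1$ (indeed $n-5-2(d-1) = 1$ when $n$ is even and $=2$ when $n$ is odd), monotonicity gives $\ep_a^{n-5-2k} \le (\ep_a+|x|)^{n-5-2k}$. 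Combining these,
\[
|H_a^{(k)}|^2(\ep_a+|x|)^{2k+4-n-|\beta|} \le C(\ep_a+|x|)^{(n-5-2k)+(2k+4-n-|\beta|)} = C(\ep_a+|x|)^{-1-|\beta|},
\]
and summing over $k = 2,\ldots,d-1$ and absorbing the result into the last term of the bracket above finishes the proof.

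There is no genuine obstacle here: the entire weight of the statement sits in Theorem \ref{thm:Weyl2}, and what remains is routine exponent bookkeeping. The only points requiring attention are the verification that $n-5-2k \ge 1$ for all $k$ in the summation range (this fails precisely at $k = d$, but that index does not appear in Corollary \ref{cor:scale back}) and the observation that when $8 \le n \le 9$ one has $d = 2$, so the sum $\sum_{k=2}^{d-1}$ is empty and the improved estimate already coincides with Corollary \ref{cor:scale back}.
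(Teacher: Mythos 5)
Your proof is correct and follows exactly the route the paper intends: the paper simply cites Theorem \ref{thm:Weyl2} (via \eqref{eq:Hak dec}) together with Corollary \ref{cor:scale back} and calls the result a direct corollary, and your exponent bookkeeping (including the observation that $k=d$ is absent from the sum and that the sum is empty for $n=8,9$) fills in precisely the routine details left implicit there.
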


\section{Local Sign Restriction}\label{sec:lsr}
A slight modification of the proof of Theorem \ref{thm:Weyl2} yields the following local sign restriction of the function $\mbp^{(4)}$ in \eqref{eq:Poho}. We skip the proof.

\begin{prop}\label{prop:lsr}
Let $8 \le n \le 24$ and $\sigma_a \to \bsi \in M$ be an isolated simple blowup point for a sequence $\{u_a\}$ of solutions to \eqref{eq:maina} with $\msfk=2$, where $\det g_a = 1$ near $\sigma_a \in M$.
Assume that $u_a(\sigma_a)u_a \to \mfg$ in $C^3_{\mathrm{loc}}(B^{g_{\infty}}_{\msfd_0}(\bsi) \setminus \{\bsi\})$, where $\msfd_0 \in (0,1)$ is the number in Proposition \ref{prop:refest} and $\mfg$ is the function in Proposition \ref{prop:isosim decay}. Then
$$
\liminf_{r\to 0}\mbp^{(4)}(r,\mfg)\ge 0.
$$
\end{prop}
We also recall a well-known result, which can be found in e.g. \cite[Proposition 7.1]{LX}.
\begin{prop}\label{global prop}
For any given $\ep, R>0$, there exists a constant $C=C(M,g,\ep,R)>0$ such that if a solution $u$ of equation \eqref{eq:main4} satisfies $\max_{\sigma\in M} u(\sigma) \ge C$, then there exist local maximum points $\sigma^1,\ldots,\sigma^N \in M$, where $N \in \N$ depends on $u$, such that
\begin{itemize}
\item[(i)] $\{B_{r_i}(\sigma^i)\}_{1\le i\le N}$ are disjoint, where $r_i=Ru(\sigma^i)^{-\frac{2}{n-4}}$;
\item[(ii)] For each $i=1,\ldots,N$, it holds that $\norm{C^4(B_R)}{u(\sigma^i)^{-1} u(u(\sigma^i)^{-\frac{2}{n-4}}\cdot)-w}<\ep$;
\item[(iii)] $u(\sigma) \le Cd_g(\sigma,\{\sigma^1,\ldots,\sigma^N\})^{-\frac{n-4}{2}}$ for any $\sigma\in M \setminus \{\sigma^1,\ldots,\sigma^N\}$.
\end{itemize}
\end{prop}

Finally, the following corollaries can be achieved as in \cite[Proposition 6.2, Proposition 7.2]{LX}.
\begin{cor}\label{cor:iso sim}
Let $8 \le n \le 24$ and $\sigma_a \to \bsi \in M$ be an isolated blowup point for a sequence $\{u_a\}$ of solutions to \eqref{eq:maina} with $\msfk=2$, where $\det g_a = 1$ near $\sigma_a \in M$. Then $\sigma_a \to \bsi \in M$ is isolated simple.
\end{cor}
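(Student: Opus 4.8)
The plan is to argue by contradiction, following the scheme of Schoen and Li--Zhu \cite{LZhu} in the fourth-order form of Li and Xiong \cite[Proposition 6.2]{LX}, with their low-dimensional blowup estimates replaced throughout by Proposition \ref{prop:refest}, Theorem \ref{thm:Weyl2} and Corollary \ref{cor:scale back improved}, which are available in the whole range $8 \le n \le 24$. Suppose that $\bsi$ is an isolated blowup point for $\{u_a\}$ which is \emph{not} isolated simple. Work in $g_a$-normal coordinates with $\det g_a = 1$ and set $F_a(r) := r^{\frac{n-4}{2}}\bar u_a(r)$. The bubble convergence \eqref{eq:conv} shows that, for every slowly diverging sequence $R_a \to \infty$, $F_a$ has exactly one critical point in $(0, R_a\ep_a)$, namely a maximum at scale $\sim \ep_a$; non-simpleness therefore produces further critical points. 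Let $\rho_a$ be the first critical point of $F_a$ after that maximum. Then $\rho_a/\ep_a \to \infty$, $F_a$ is strictly decreasing on $(R_a\ep_a, \rho_a)$, and a standard argument (shrinking the reference radius) gives $\rho_a \to 0$.

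Next I would rescale at this intermediate scale and normalise by the spherical average at radius $\rho_a$: set $\tig_a(y) := g_a(\rho_a y)$ and $v_a(y) := \bar u_a(\rho_a)^{-1} u_a(\rho_a y)$, so that $P_{\tig_a}v_a = \mfc(n) v_a^{\frac{n+4}{n-4}}$, $\det \tig_a = 1$ near $0$, $\tig_a \to \delta_{ij}$ in $C^l_{\mathrm{loc}}(\R^n)$ for every $l$, and $r^{\frac{n-4}{2}}\bar v_a(r) = F_a(\rho_a)^{-1}F_a(\rho_a r)$; in particular this last function equals $1$ at $r = 1$ and is decreasing on $(R_a\ep_a/\rho_a, 1)$, with $R_a\ep_a/\rho_a \to 0$. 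Since $\bsi$ is isolated one has $v_a(y) \le C|y|^{-\frac{n-4}{2}}$ on a fixed ball, and since the original bubble, shrunk to scale $\ep_a/\rho_a \to 0$, now sits at $y = 0$, one gets $v_a(0) \to \infty$, so $0$ is an isolated blowup point for $\{v_a\}$. The crucial analytic step, carried out with Corollary \ref{cor:scale back improved} together with Harnack inequalities and elliptic estimates for $P_{\tig_a}$, is to show that $v_a \to v$ in $C^2_{\mathrm{loc}}(\R^n \setminus \{0\})$, where $v > 0$ solves $\Delta^2 v = \mfc(n) v^{\frac{n+4}{n-4}}$ in $\R^n \setminus \{0\}$ with a non-removable singularity at $0$, satisfies $v(y) \le C|y|^{-\frac{n-4}{2}}$, and has the property that $r \mapsto r^{\frac{n-4}{2}}\bar v(r)$ is non-increasing on $(0,1]$ and takes the value $1$ at $r = 1$.

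To reach the contradiction, I would invoke the classification of positive singular solutions of the critical fourth-order equation on $\R^n\setminus\{0\}$ (cf. \cite{LX} and the references therein): such a $v$ is radially symmetric about $0$ and, writing $v(y) = |y|^{-\frac{n-4}{2}}\psi(\log|y|)$, the profile $\psi$ is periodic. A periodic function that is non-increasing on a half-line must be constant; hence $\psi \equiv \psi(0) = 1$ and $v(y) = |y|^{-\frac{n-4}{2}}$. But a direct computation gives $\Delta^2 |y|^{-\frac{n-4}{2}} = \frac{n^2(n-4)^2}{16}|y|^{-\frac{n+4}{2}}$, while $\frac{n^2(n-4)^2}{16} \ne (n-4)(n-2)n(n+2) = \mfc(n)$ for every integer $n \ge 5$ (this reduces to $15n^2 + 4n - 64 \ne 0$), so $|y|^{-\frac{n-4}{2}}$ is not a solution -- a contradiction. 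Alternatively, the contradiction may be extracted through the local Pohozaev identity \eqref{eq:Poho}--\eqref{eq:Poho2} and Proposition \ref{prop:lsr} applied to $\{v_a\}$: the quantity $\mbp(\cdot,\mfg)$ is constant and sign-definite, whereas the persistence of the interior critical point of $r\mapsto F_a(\rho_a r)$ forces the opposite sign. Either way, $\bsi$ must be isolated simple.

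The step I expect to be the main obstacle is the convergence $v_a \to v$ together with the identification of the limit as a \emph{genuine} singular solution with the stated monotonicity: one must rule out, using the refined blowup estimates, that energy of $u_a$ escapes to additional concentration scales between $\ep_a$ and $\rho_a$, and it is precisely here -- through Corollary \ref{cor:scale back improved}, hence Theorem \ref{thm:Weyl2} -- that the dimension restriction $8 \le n \le 24$ enters. The choice of $\rho_a$ as the first critical point after the bubble maximum, which guarantees the half-line monotonicity of $\psi$, is the small but essential combinatorial point; the remaining computations (the expansion of $\mbp(\cdot,\mfg)$ near $0$ and the elementary check that $|y|^{-\frac{n-4}{2}}$ is not a solution) are routine.
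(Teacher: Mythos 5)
There is a genuine gap at the heart of your main argument: the identification of the limit of the rescaled sequence. Once you rescale at the second critical point $\rho_a$, the inner bubble still blows up (your own observation $v_a(0)\to\infty$), and $0$ becomes an isolated \emph{simple} blowup point for the rescaled sequence on $(0,1)$. Proposition \ref{prop:isosim decay} then forces the equation-preserving rescaling $\tilde v_a(y)=\rho_a^{\frac{n-4}{2}}u_a(\rho_a y)$ to satisfy $\tilde v_a(0)\tilde v_a\le C|y|^{4-n}$, hence $\tilde v_a\to 0$ locally uniformly away from the origin; equivalently $\rho_a^{\frac{n-4}{2}}\bar u_a(\rho_a)\to 0$. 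Consequently your normalization $v_a=\bar u_a(\rho_a)^{-1}u_a(\rho_a\cdot)$ does \emph{not} preserve the equation: it satisfies $P_{\tig_a}v_a=\mfc(n)\bigl(\rho_a^{\frac{n-4}{2}}\bar u_a(\rho_a)\bigr)^{\frac{8}{n-4}}v_a^{\frac{n+4}{n-4}}$, whose coefficient tends to $0$ (and the isolated bound $v_a\le C|y|^{-\frac{n-4}{2}}$ is likewise not inherited). So the nontrivial limit you can extract is a positive \emph{biharmonic} function $\mfg=a|y|^{4-n}+h$ on the punctured space, not a positive singular solution of $\Delta^2v=\mfc(n)v^{\frac{n+4}{n-4}}$ with a non-removable singularity. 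The Delaunay-type classification and the ``periodic $+$ monotone on a half-line $\Rightarrow$ constant'' contradiction therefore cannot be invoked; the premise it rests on is false in this setting.

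The route the paper actually takes (following \cite[Proposition 6.2]{LX}, with Theorem \ref{thm:Weyl2} and Proposition \ref{prop:lsr} supplying the extension from $5\le n\le 9$ to $8\le n\le 24$) is essentially the ``alternative'' you mention only in passing: for the equation-preserving rescaling, $0$ is isolated simple, $\tilde v_a(0)\tilde v_a\to\mfg=a|y|^{4-n}+h$ with $h$ biharmonic and nonnegative, the persistence of the critical point of $r\mapsto r^{\frac{n-4}{2}}\bar\mfg(r)$ at $r=1$ forces $h(0)>0$, and then $\lim_{r\to 0}\mbp(r,\mfg)<0$, contradicting the local sign restriction of Proposition \ref{prop:lsr}. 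Note also that the dimension restriction $8\le n\le 24$ enters precisely through Proposition \ref{prop:lsr} (i.e.\ the positivity of the Pohozaev quadratic form behind Theorem \ref{thm:Weyl2}), not through ruling out intermediate concentration scales as you suggest. To repair your proposal you should discard the singular-solution classification step, switch to the scale-invariant normalization, and carry out the biharmonic-limit $+$ Pohozaev sign argument in detail.
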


\begin{cor}\label{cor:iso}
Let $8 \le n\le 24$. Let $\ep,\, R,\, C=C(M,g,\ep,R),\, u$ and $\{\sigma^1,\ldots,\sigma^N\}$ be as in Proposition \ref{global prop}.
Suppose that $\ep > 0$ is sufficiently small and $R>0$ is sufficiently large. Then, there exists $\oc=\oc(M,g,\ep, R) > 0$ such that if $\max_{\sigma\in M} u(\sigma)\ge C$, then $d_g(\sigma^i,\sigma^j)\ge \oc$ for any $i\neq j\in \{1,\ldots, N\}$.
\end{cor}

\section{Compactness of the Constant $Q^{(4)}$-curvature Problem}\label{sec:comp}
Because $\mathrm{Ker}P_g^{(4)} = \{0\}$, the Green's function $G_g^{(4)}$ of the Paneitz operator $P_g^{(4)}$ has the expansion
\begin{equation}\label{expansion of Green's function 1}
G_g^{(4)}(x,0)=\vsi_n^{-1}|x|^{4-n}\left[1+\sum_{k=4}^{n-4}\psi^{(k)}(x)\right]+A+B\log|x|+O^{(4)}(|x|)
\end{equation}
in normal coordinates $x$ centered at a point $\bsi \in M$, provided $\det g = 1$ near $\bsi$. Here, $\vsi_n=2(n-4)(n-2)|\S^{n-1}|$, $A, B \in \R$, $\psi^{(k)}\in \mcp_k$, and $\int_{\S^{n-1}}\psi^{(n-4)}=0$. If $n$ is odd, we also have $B=0$.
For its proof, refer to \cite[Subsection 2.4]{HY}.

\begin{lemma}\label{lemma:expansion of Green's function}
Let $8\leq n\leq 24$ and $\sigma_a \to \bsi \in M$ be an isolated simple blowup point for a sequence $\{u_a\}$ of solutions to \eqref{eq:maina} with $\msfk=2$, where $\det g_a = 1$ near $\sigma_a \in M$.
Assume that $u_a(\sigma_a)u_a \to \vsi_nG_{g_{\infty}}^{(4)}(\cdot,\bsi)+\mfh$ in $C^3_{\mathrm{loc}}(B^{g_{\infty}}_{\msfd_0}(\bsi) \setminus \{\bsi\})$,
where $\msfd_0 \in (0,1)$ is the number in Proposition \ref{prop:refest}, $G_{g_{\infty}}^{(4)}$ is the Green's function of the Paneitz operator $P_{g_{\infty}}^{(4)}$,
and $\mfh \in C^5(B^{g_{\infty}}_{\msfd_0}(\bsi))$ is a nonnegative function from Proposition \ref{prop:isosim decay}. Then it holds that
\begin{equation}\label{expansion of Green's function}
\begin{aligned}
G_{g_{\infty}}^{(4)}(x,0)&=\vsi_n^{-1}|x|^{4-n}\left[1+\sum_{k=d+1}^{n-4}\psi^{(k)}(x)\right]+A+O^{(4)}(|x|),\\
\text{where} &\quad \int_{\S^{n-1}}\psi^{(k)}=0 \quad \text{and} \quad \int_{\S^{n-1}}x_i\psi^{(k)}=0.
\end{aligned}
\end{equation}
\end{lemma}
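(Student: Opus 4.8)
The plan is to combine the Weyl vanishing theorem with a close look at the construction of the expansion \eqref{expansion of Green's function 1}. Since $\mathrm{Ker}P_g=\{0\}$ is a conformally invariant condition, \eqref{expansion of Green's function 1} applies to $G_{g_\infty}$ in conformal normal coordinates $x$ centered at $\bsi$ with $\det g_\infty=1$; writing $g_\infty=\exp(h)$, the identities \eqref{conformal normal} hold. First I would turn the Weyl vanishing into a statement on $h$: Theorem \ref{thm:Weyl2} applies because $\sigma_a\to\bsi$ is an isolated simple blowup point, and together with the equivalence recorded right after Theorem \ref{thm:Weyl2} (due to \cite{HV}) it gives $\nabla^kH(\bsi)=0$ for $k=2,\dots,d$; since $d\le K$, each $h_{ij}$ then has vanishing Taylor coefficients at $\bsi$ through order $d$, i.e.
\[
h_{ij}(x)=O(|x|^{d+1}),\qquad h_{ii}(x)=0,\qquad x_ih_{ij}(x)=0 .
\]
(Alternatively one passes to the limit in \eqref{eq:Hak dec} using $g_a\to g_\infty$ in $C^l$ and $\ep_a\to0$, the borderline case $k=d$ with $n$ even being handled by the factor $|\log\ep_a|^{-\theta_d}$.)

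Next I would revisit the iterative construction of the coefficients $\psi^{(k)}$ and of $B$ in \cite{HY}: these are produced by inverting $\Delta^2$ (for which $c_n^{-1}|x|^{4-n}$ is the Euclidean fundamental solution) against the successive errors obtained by applying $P_{g_\infty}-\Delta^2$ to the parametrix $c_n^{-1}|x|^{4-n}$ and to the lower-order corrections. By Lemma \ref{mce} and the expansion of $\Delta_{g_\infty}$ in its proof, the order-$\ell$ part of $P_{g_\infty}-\Delta^2$ has coefficients built from at most $(4-\ell)$ derivatives of $h$, hence — using $h_{ij}=O(|x|^{d+1})$ — of size $O(|x|^{d+\ell-3})$ on the linear-in-$h$ part and $O(|x|^{2d+\ell-2})$ on the quadratic part $L_2[h,h]$ and the higher remainders. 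Thus the leading error is $-L_1[h](c_n^{-1}|x|^{4-n})=O(|x|^{d+1-n})$, producing a first correction of the form $c_n^{-1}|x|^{4-n}\cdot O(|x|^{d+1})$, which contributes to $G_{g_\infty}$ only at scales $|x|^{4-n+k}$ with $k\ge d+1$; while the quadratic and higher errors, together with $P_{g_\infty}-\Delta^2$ applied to that first correction, are $O(|x|^{2d+2-n})$ and hence contribute only at scales $O(|x|^{2d+6-n})=O(|x|^{\ge1})$, absorbed into the $O^{(4)}(|x|)$ error since $2d+6-n\ge1$ for all $8\le n\le24$. In particular $\psi^{(k)}=0$ for $4\le k\le d$, and $A$ and (the possible) $B$ can only be affected by the first correction.

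Finally, for the orthogonality of the remaining $\psi^{(k)}$ and for $B=0$ I would use radiality of the parametrix. By \eqref{eq:L1r} — which already encodes the vanishing of the Hessian-contraction term on radial functions (a consequence of \eqref{conformal normal}, \eqref{conformal normal coro2}, \eqref{conformal normal coro3}) — the quantity $L_1[H^{(k)}](c_n^{-1}|x|^{4-n})$ is a sum of radial weights times $H^{(k)}_{ij,ij}$ and $\Delta(H^{(k)}_{ij,ij})$. By Lemma \ref{orthogonal}, each $H^{(k)}_{ij,ij}$ has spherical-harmonic decomposition \eqref{eq:Hshd} supported on $\mch_m$ with $m\ge2$, and $\Delta$ and multiplication by radial functions preserve harmonic degree; hence, solving $\Delta^2\phi_1=-L_1[h](\cdot)$ sector by sector, $\phi_1$ — and therefore every $\psi^{(k)}$ with $d+1\le k\le n-4$ — contains only spherical harmonics of degree $\ge2$, which yields $\int_{\S^{n-1}}\psi^{(k)}=0$ and $\int_{\S^{n-1}}x_i\psi^{(k)}=0$. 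For $B$: a $\log|x|$ at constant order would require a constant ($\mch_0$) component of an inverted error at scale $|x|^{-4}$; such a component could arise only from $P_{g_\infty}-\Delta^2$ applied to the parametrix or to the first correction, but the linear part of the former is $\mch_0$-free (Lemma \ref{orthogonal}) while its quadratic part, like the error of the latter, is $O(|x|^{2d+2-n})$, never of order $|x|^{-4}$ for $8\le n\le24$; hence $B=0$ (and $B=0$ anyway when $n$ is odd). I expect the main obstacle to be the bookkeeping in this last step: precisely tracking which scales and which spherical-harmonic sectors the quadratic and higher (genuinely anisotropic) parts of $P_{g_\infty}-\Delta^2$ can reach — since these do not inherit the trace- and divergence-freeness of $h$, hence not the $\mch_{\ge2}$ structure — and checking that they never intrude on the scales $|x|^{4-n+k}$ with $d+1\le k\le n-4$ nor on the constant scale; this is exactly where the numerical facts $2d\ge n-5$ and $2d+2-n\ne-4$ are used.
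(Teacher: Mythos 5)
Your proposal is sound and reaches the conclusion by a genuinely different route from the paper. The paper proves the vanishing of $\psi^{(k)}$ for $4\le k\le d$ by comparing the exact blowup sequence with the approximation $w_a+\Psi_a^{(\geq d+1)}$ (Corollary \ref{cor:scale back improved}) and passing to the limit, and it obtains $\int_{\S^{n-1}}\psi^{(k)}=\int_{\S^{n-1}}x_i\psi^{(k)}=B=0$ not by tracking the full angular structure but by integrating the identity $\int_{\{|x|=r\}}(P_{g_{\infty}}-\Delta^2)G_{g_{\infty}}=-\int_{\{|x|=r\}}\Delta^2G_{g_{\infty}}$ (and its first-moment analogue) over spheres, where Theorem \ref{thm:Weyl2}, Lemma \ref{spherical expansion} and Lemma \ref{orthogonal} force the left side to be $o(r^{k-1})$, respectively $o(r^{n-5})$, so the surviving coefficients of $r^{k-1}$ and $r^{n-5}$ on the right must vanish. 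You instead convert the Weyl vanishing into $h=O(|x|^{d+1})$ and redo the expansion of $G_{g_{\infty}}$ itself, matching homogeneous degrees and spherical-harmonic sectors: the linear error $L_1[H^{(k)}]|x|^{4-n}$ lives in sectors $\mch_j$ with $j\ge 2$ (Lemma \ref{orthogonal}), the quadratic and iterated errors only reach degrees $\ge 2d+6-n\ge 1$, and the inversion of $\Delta^2$ is resonance-free in those sectors at the relevant degrees. Your method uses the same ingredients (Theorem \ref{thm:Weyl2}, Lemmas \ref{spherical expansion} and \ref{orthogonal}) but needs neither the blowup sequence nor the corrector $\Psi_a$ once the Weyl vanishing is known, so it in effect shows that \eqref{expansion of Green's function} follows from the Weyl vanishing at $\bsi$ alone; the price is heavier pointwise bookkeeping where the paper only needs sphere averages and first moments.

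Two points should be tightened to make this a complete proof. First, the structural statements you derive for your parametrix corrections must be transferred to the intrinsically defined coefficients in \eqref{expansion of Green's function 1}; the cleanest fix is not to rebuild $G_{g_{\infty}}$ but to insert the known expansion \eqref{expansion of Green's function 1} into $P_{g_{\infty}}G_{g_{\infty}}=0$ on $B_{\delta_0}\setminus\{0\}$ and match homogeneous parts degree by degree (the homogeneous pieces, with the normalization $\int_{\S^{n-1}}\psi^{(n-4)}=0$, are uniquely determined by $G_{g_{\infty}}$), which also lets you inherit the $O^{(4)}(|x|)$ remainder instead of re-proving it. Second, since \eqref{eq:H} truncates $h$ at order $K=n-6$, the Taylor remainder of $h$ contributes to the scales $k=n-5,n-4$ at degree as low as $-5$ before inversion; you must expand $h$ up to order $n-4$ (Lemma \ref{orthogonal} applies to every $H^{(k)}$, $k\ge 2$) so that the leftover remainder only enters at degree $\ge 1$ and the $\mch_0$-, $\mch_1$-freeness at the top scales, and hence $B=0$, is preserved.
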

\begin{proof}
First, we prove that for $k=4,\ldots,d$, $\psi^{(k)}=0$. Let $\Psi_a^{(\geq d+1)}$ be the solution to equation
$$
\Delta^2\Psi_a^{(\geq d+1)} - \tmfc_4(n)w_a^{\frac{8}{n-4}}\Psi_a^{(\geq d+1)} = - \sum_{k=d+1}^{K} L_1[H_a^{(k)}]w_a \quad \text{in } \R^n.
$$
From \eqref{eq:wtPsidec}, Theorem \ref{thm:Weyl2} and Corollary \ref{cor:scale back improved}, we obtain that
$$
\lim_{a \to 0} \ep_a^{-\frac{n-4}{2}}\left|u_a-w_a-\Psi_a^{(\geq d+1)}\right|(x) \le C|x|^{-1}
$$
for $0 < |x| \le \msfd_0$. Since
\[\ep_a^{-\frac{n-4}{2}}u_a \to \vsi_nG_{g_{\infty}}^{(4)}(\cdot,0)+\mfh \quad \text{and} \quad \ep_a^{-\frac{n-4}{2}}w_a \to |\cdot|^{4-n} \quad \text{in } C^3_{\mathrm{loc}}(B_{\msfd_0} \setminus \{0\})\]
and
\[\ep_a^{-\frac{n-4}{2}}\left|\wtPsi_a^{(\geq d+1)}\right|(x) \le C\sum_{k=d+1}^{K}(\ep_a+|x|)^{4-n+k} \to C\sum_{k=d+1}^{K}|x|^{4-n+k},\]
the function $\psi^{(k)}$ in \eqref{expansion of Green's function 1} vanishes for $k=4,\ldots, d$.

Next, for $k=d+1,\ldots, n-3$, we prove that $\int_{\S^{n-1}}\psi^{(k)} = B =0$. On the one hand, due to Theorem \ref{thm:Weyl2} and Lemmas \ref{spherical expansion} and \ref{orthogonal},
\begin{align*}
\int_{\pa B_r}(P_{g_{\infty}}^{(4)}-\Delta^2)(|x|^{4-n}) &= O(r^{2d+1})\leq Cr^{n-4};\\
\int_{\pa B_r}(P_{g_{\infty}}^{(4)}-\Delta^2)(|x|^{4-n}\psi^{(k)}) &= O(r^{d+k})\leq Cr^{n-4}.
\end{align*}
On the other hand, it holds that $\int_{\pa B_r}\Delta^2(|x|^{4-n})=0$, $\int_{\pa B_r}\Delta^2(|x|^{4-n}\psi^{(n-4)})=0$, and
\begin{align*}
\int_{\pa B_r}\Delta^2(|x|^{4-n}\psi^{(k)}) &= 4(n-4)(n-2)k(k+1) r^{k-1} \int_{\S^{n-1}}\psi^{(k)};\\
\int_{\pa B_r}\Delta^2(B\log |x|)&= -2(n-4)(n-2)|\S^{n-1}| B r^{n-5}.
\end{align*}
Because $G_{g_{\infty}}^{(4)}$ is the Green's function of $P_{g_{\infty}}^{(4)}$, we also have that for any $r \in (0,\msfd_0)$,
$$
\int_{\pa B_r}(P_{g_{\infty}}^{(4)}-\Delta^2)G_{g_{\infty}}^{(4)}(\cdot,0)dS = -\int_{\pa B_r}\Delta^2G_{g_{\infty}}^{(4)}(\cdot,0)dS.
$$
It follows that $\int_{\S^{n-1}}\psi^{(k)} = O(r)$ and $B = O(r)$. Taking $r \to 0$, we conclude that $\int_{\S^{n-1}}\psi^{(k)} = B = 0$.

Finally, in a similar manner, we can also derive that $\int_{\S^{n-1}}x_i\psi^{(k)} = 0$ for $k=d+1,\ldots, n-4$.
\end{proof}
\begin{rmk}
It is worth noting that in the proof, we have
$$
\ep_a^{-\frac{n-4}{2}}\Psi_a^{(\geq d+1)}\to\sum_{k=d+1}^K\psi^{(k)} \text{ in } C^3_{\mathrm{loc}}(B_{\msfd_0} \setminus \{0\}).
$$
By explicitly solving of the linearized equations according to Proposition \ref{prop:Psi}, we could provide an alternative method for computing the expansion of the Green's function. \hfill $\diamond$
\end{rmk}

The following lemma generalizes \cite[Proposition 6]{ALL}. The analogous result for the Yamabe problem is well-known; see, for example, \cite[Proposition 19]{Br}.
\begin{prop}\label{prop:Green}
Let $n\geq 5$ and $(M^n,g)$ be a closed manifold such that $\ker P_g^{(4)} = 0$. Suppose that the Weyl tensor vanishes up to an order greater than or equal to $d-2$ at a point $\bsi \in M$, and that the Green's function $G_g^{(4)}$ of $P_g^{(4)}$ satisfies the expansion \eqref{expansion of Green's function} in a small neighborhood of $\bsi$.
Then, $(M\setminus\{\bsi\}, \hat{g}:=(G_g^{(4)})^{\frac{4}{n-4}}g)$ is an asymptotically flat manifold with decay rate $\tau >\frac{n-4}{2}$ such that $Q_{\hat{g}}=0$.
Moreover, the fourth-order mass of $\hat{g}$ defined in \eqref{eq:hom} is a positive multiple of the constant $A$ in \eqref{expansion of Green's function}:
$$
m(\hat{g})=\frac{4(n-1)}{n-4}A.
$$
\end{prop}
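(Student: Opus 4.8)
The plan is to follow the conformal-change recipe that is standard for the Yamabe problem (as in \cite[Proposition 19]{Br}) and was carried out for the Paneitz operator in \cite[Proposition 6]{ALL}, modifying it to exploit the extra vanishing of the $\psi^{(k)}$'s supplied by the Weyl vanishing theorem. First I would set up coordinates: fix $g$-normal coordinates $x$ centered at $\bsi$ with $\det g = 1$ nearby, so that the expansion \eqref{expansion of Green's function} of $G_g$ is available with $\psi^{(k)}=0$ for $k\le d$, $\int_{\S^{n-1}}\psi^{(k)}=\int_{\S^{n-1}}x_i\psi^{(k)}=0$, and no $\log$ term. Then I would change coordinates by an inversion $y = x/|x|^2$, so that $M\setminus\{\bsi\}$ near $\bsi$ becomes an end diffeomorphic to $\R^n\setminus \bar B^n_1$, and compute $\hat{g}_{ij}(y)$ in these coordinates. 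Because $\hat g = G_g^{4/(n-4)}g$ and $G_g(x) = c_n^{-1}|x|^{4-n}(1+\sum_{k\ge d+1}\psi^{(k)}(x)) + A + O^{(4)}(|x|)$, the conformal factor $G_g^{4/(n-4)}$ contributes $|x|^{-4}$ which exactly cancels the Jacobian scaling of the inversion, leaving $\hat g_{ij}(y) = \delta_{ij} + (\text{lower-order corrections})$. The corrections coming from $\psi^{(k)}$ with $k\ge d+1$ decay like $|y|^{-(k-4)}$ hence like $|y|^{-\tau}$ with $\tau = d-3 > \frac{n-4}{2}$... wait, more carefully $\tau$ must exceed $\frac{n-4}{2}$, and since $k\ge d+1 \ge \frac{n-4}{2}+1$ the leading correction decays like $|y|^{-(k-4)}$ with $k-4\ge \frac{n-4}{2}-3$; one must also use that $G_g$ is a genuine solution of $P_g G_g = 0$ away from $\bsi$, which forces the $\psi^{(k)}$ to be determined recursively and (via the Weyl vanishing hypothesis at order $\ge d-2$) to actually decay fast enough. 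I would quote the fact that $\psi^{(k)}=0$ for $k\le d$ from Lemma \ref{lemma:expansion of Green's function}, which is exactly where the Weyl vanishing enters.

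Next I would verify the two asymptotic-flatness conditions of Definition preceding Theorem \ref{PMT2}: (a) $\hat g_{ij}(y)-\delta_{ij} = O^{(4)}(|y|^{-\tau})$ with $\tau>\frac{n-4}{2}$, which follows from the decay just described together with the $O^{(4)}(|x|)$ remainder in \eqref{expansion of Green's function}; and (b) $Q_{\hat g}\in L^1(M,\hat g)$ — in fact I would prove the stronger statement $Q_{\hat g}\equiv 0$. The vanishing of $Q_{\hat g}$ is the conceptual heart: it reflects the conformal covariance of the Paneitz operator, $P_{\hat g}(\varphi) = G_g^{-\frac{n+4}{n-4}} P_g(G_g \varphi)$, so that $\tfrac{n-4}{2}Q_{\hat g} = P_{\hat g}(1) = G_g^{-\frac{n+4}{n-4}} P_g(G_g) = 0$ on $M\setminus\{\bsi\}$ because $G_g$ is $P_g$-harmonic there. (The constant $\frac{n-4}{2}$ comes from $P_g u = \Delta_g^2 u + \cdots + \frac{n-4}{2}Q_g u$ applied to $u=1$.) I would write out this covariance identity carefully, noting the normalization of $G_g$ so that $P_g G_g = c_n\,\delta_{\bsi}$ distributionally and hence $P_g G_g = 0$ pointwise away from $\bsi$.

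With asymptotic flatness established, the final step is the mass computation $m(\hat g) = \frac{4(n-1)}{n-4}A$. Here I would plug the explicit form of $\hat g_{ij}(y)$ into the surface integral \eqref{eq:hom}, $m(\hat g) = \lim_{r\to\infty}\int_{\{|y|=r\}} (\hat g_{ii,jjk}-\hat g_{ij,ijk})\frac{y_k}{r}\,dS$. After the inversion, the metric has the schematic form $\hat g_{ij}(y) = (1 + \frac{4}{n-4} c_n A\,|y|^{4-n} + \cdots)^{?}\,(\delta_{ij} + \cdots)$ — more precisely $G_g^{4/(n-4)}$ in $y$-coordinates equals $c_n^{-4/(n-4)}|y|^{-4}(1 + c_n A |y|^{4-n} + \cdots)^{4/(n-4)}$, and after absorbing the constant $c_n^{-4/(n-4)}$ by a global scaling (which does not affect the mass since mass is scale invariant in the appropriate sense, or by tracking the constant) the $A$-term appears with coefficient $\frac{4}{n-4}c_n A$ times a factor. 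The terms in $\psi^{(k)}$, $k\ge d+1$, being of order $|y|^{-(k-4)}$ with $k-4 > n-4$... no: one needs $k-4\ge$ something; in any case they either decay faster than $|y|^{4-n}$ or are trace/divergence-free on the sphere (using $\int_{\S^{n-1}}\psi^{(k)}=\int_{\S^{n-1}}x_i\psi^{(k)}=0$) so they integrate to zero in the limit. The $O^{(4)}(|y|)$... $O^{(4)}(|x|)$ remainder becomes $O^{(4)}(|y|^{-1})$ after inversion and contributes nothing in the limit. Thus only the constant $A$-term survives, and a direct but careful computation of $\hat g_{ii,jjk}-\hat g_{ij,ijk}$ for a metric of the form $\delta_{ij} + a|y|^{4-n}\delta_{ij}$ (or the precise form produced), contracted with $\frac{y_k}{r}$ and integrated over $\{|y|=r\}$, yields the stated constant $\frac{4(n-1)}{n-4}A$. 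I expect the main obstacle to be exactly this last bookkeeping: getting the exact form of $\hat g_{ij}(y)$ after the inversion (including the cross terms between the conformal factor and the base metric $g_{ij}(x) = \delta_{ij} + O(|x|^2)$, and the effect of $\det g =1$), and then computing the surface integral so that the numerical constant comes out right — this is precisely the kind of computation done in \cite[Proposition 6]{ALL} and \cite[Proposition 19]{Br}, so I would model the argument on those, being careful that the Weyl-vanishing hypothesis is what licenses dropping all intermediate terms $\psi^{(k)}$ with $k\le d$ (they are simply zero) and that the remaining $\psi^{(k)}$ are harmless by the orthogonality relations in \eqref{expansion of Green's function}.
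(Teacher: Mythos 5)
Your overall route --- inversion coordinates, conformal covariance of $P_g$ to get $Q_{\hat g}=0$ from $P_gG_g=0$ away from $\bsi$, and a direct evaluation of the surface integral \eqref{eq:hom} in which the $\psi^{(k)}$ are killed by the orthogonality relations in \eqref{expansion of Green's function} --- is the same as the paper's. But there is a genuine gap in how you use the Weyl-vanishing hypothesis, and it is not a bookkeeping issue. You place that hypothesis in the wrong spot: you invoke it (via Lemma \ref{lemma:expansion of Green's function}) to say $\psi^{(k)}=0$ for $k\le d$, but that vanishing is already built into the \emph{assumed} expansion \eqref{expansion of Green's function}. Where the Weyl vanishing is actually needed is for the base metric itself: in conformal normal coordinates it upgrades $g_{ij}(x)-\delta_{ij}$ from the generic $O(|x|^2)$ (which is what you write at the end) to $O(|x|^{d+1})$ (cf.\ the relation with $\nabla^k H(\bsi)$ from \cite{HV}). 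With only $O(|x|^2)$ your end has decay rate $\tau=2$, which fails $\tau>\frac{n-4}{2}$ as soon as $n\ge 8$ --- precisely the range of interest --- so asymptotic flatness is not established by your estimate. Relatedly, your exponent for the conformal-factor corrections is off: $\psi^{(k)}$ is homogeneous of degree $k$, so its relative contribution after inversion is $O(|y|^{-k})$, not $O(|y|^{-(k-4)})$; your hedged ``decay fast enough via $P_gG_g=0$'' does not resolve this, though here the fix is immediate since $k\ge d+1>\frac{n-4}{2}$.

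The same omission undermines the mass computation. Even with the improved decay $g_{ij}-\delta_{ij}=O(|x|^{d+1})$, the contribution of the base metric to $\hat g_{ii,jjk}-\hat g_{ij,ijk}$ is \emph{not} negligible by power counting: in the paper's reduction the offending term is $(x_k\pa_{x_k}+4)\pa_{x_ix_j}\bigl((g_{ij}-\delta_{ij})F^{\frac{4}{n-4}}\bigr)$ weighted by $|x|^{7-2n}$, whose naive size on $\{|x|=\vep\}$ is $\vep^{d+5-n}\to\infty$. It is discarded only through an exact cancellation (Claim \ref{claim:coarea}), which rewrites the term in divergence form and uses the conformal-normal-coordinate identities $x_i(g_{ij}-\delta_{ij})=0$, $\tr(g-\delta)=0$, $x_ix_kg_{ij,k}=0$ to show the corresponding integral vanishes identically in $\vep$. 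Your plan treats these ``cross terms'' as routine care in the final bookkeeping, but without this structural cancellation (or some substitute) the limit defining $m(\hat g)$ is not even seen to exist, let alone equal $\frac{4(n-1)}{n-4}A$. The remaining ingredients of your proposal (the covariance argument for $Q_{\hat g}=0$, the use of $\int_{\S^{n-1}}\psi^{(k)}=\int_{\S^{n-1}}x_i\psi^{(k)}=0$ to remove the intermediate terms, the $O^{(4)}(|x|)$ remainder being harmless) are correct and match the paper.
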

\begin{proof}
In conformal normal coordinates centered at $\bsi$, we have $\tr\, g=n$ and $x_ig_{ij}(x)=x_j$.

We introduce an inversion variable given by $y=\vsi_n^{-\frac{2}{n-4}}\frac{x}{|x|^2}$, whose transition relation is $\pa_{y_i}=\vsi_n^{\frac{2}{n-4}}(|x|^2\delta_{ij}-2x_ix_j)\pa_{x_j}$. Then we have
\begin{align*}
\hat{g}_{ij}(y)&=\vsi_n^{\frac{4}{n-4}}|x|^4G(x)^{\frac{4}{n-4}}g_{ij}(x)\\
&=\left[1+\sum_{k=d+1}^{n-4}\psi^{(k)}(x)+\vsi_nA|x|^{n-4}+O^{(4)}(|x|^{n-3})\right]^{\frac{4}{n-4}}g_{ij}(x)\\
&:=(F(x))^{\frac{4}{n-4}}g_{ij}(x),
\end{align*}
where $g_{ij}(x)=\delta_{ij}+O(|x|^{d+1})$ by the Weyl vanishing condition. Hence $\hat{g}_{ij}(y)=\delta_{ij}+O(|y|^{-(d+1)})$. Since $d+1=\lfloor \frac{n-4}{2}\rfloor+1>\frac{n-4}{2}$, we deduce that $(M\setminus\{\bsi\}, \hat{g})$ is asymptotic flat.

Next, we evaluate the fourth-order mass:
\begin{equation}\label{eq:hom2}
\begin{aligned}
m(\hat{g})&=\lim_{\vep \to 0} \int_{\{|y|=\vsi_n^{-\frac{2}{n-4}}\vep^{-1}\}} \pa_{y_k}(\hat{g}_{ii,jj}-\hat{g}_{ij,ij})\frac{y_k}{|y|}dS\\
&=\lim_{\vep \to 0} \vsi_n^{-\frac{2(n-1)}{n-4}} \int_{\pa B_{\vep}}\pa_{y_k}(\hat{g}_{ii,jj}-\hat{g}_{ij,ij})\frac{x_k}{|x|}|x|^{2-2n}dS\\
&=-\lim_{\vep \to 0} \vsi_n^{-\frac{2(n-2)}{n-4}} \int_{\pa B_{\vep}}x_k\pa_{x_k}(\hat{g}_{ii,jj}-\hat{g}_{ij,ij})|x|^{3-2n}dS.
\end{aligned}
\end{equation}
It holds that
\begin{align*}
\vsi_n^{-\frac{2}{n-4}}\hat{g}_{ii,j} &= n(|x|^2\delta_{ij}-2x_ix_j)\pa_{x_i}(F^{\frac{4}{n-4}});\\
\vsi_n^{-\frac{4}{n-4}}\hat{g}_{ii,jj} &= n|x|^4\Delta(F^{\frac{4}{n-4}}) -2(n-2)n|x|^2x_i\pa_{x_i}(F^{\frac{4}{n-4}});\\
\vsi_n^{-\frac{2}{n-4}}\hat{g}_{ij,i} &= (|x|^2g_{ij}-2x_ix_j)\pa_{x_i}(F^{\frac{4}{n-4}}) +|x|^2g_{ij,i}F^{\frac{4}{n-4}};\\
\vsi_n^{-\frac{4}{n-4}}\hat{g}_{ij,ij} &= |x|^4g_{ij}\pa^2_{x_ix_j}(F^{\frac{4}{n-4}}) -2(n-2)|x|^2x_i\pa_{x_i}(F^{\frac{4}{n-4}})\\
&\ +2|x|^4g_{ij,i}\pa_{x_j}(F^{\frac{4}{n-4}})+|x|^4g_{ij,ij}F^{\frac{4}{n-4}}.
\end{align*}
Plugging the above computation of $\hat{g}_{ii,jj}-\hat{g}_{ij,ij}$ into \eqref{eq:hom2}, we obtain
\begin{align*}
m(\hat{g})&=2(n-2)(n-1)\vsi_n^{-2}\lim_{\vep \to 0} \int_{\pa B_{\vep}} (x_k\pa_{x_k}+2)\circ(x_l\pa_{x_l})(F^{\frac{4}{n-4}})|x|^{5-2n}dS\\
&\ -\vsi_n^{-2}\lim_{\vep \to 0} \int_{\pa B_{\vep}} (x_k\pa_{x_k}+4)\left[n\Delta(F^{\frac{4}{n-4}})-\pa^2_{x_ix_j}(g_{ij}F^{\frac{4}{n-4}})\right]|x|^{7-2n}dS.
\end{align*}
We now make use of the following assertion, whose derivation is deferred to the end of the proof.
\begin{claim}\label{claim:coarea}
$$
\int_{\pa B_{\vep}} (x_k\pa_{x_k}+4)\circ\pa^2_{x_ix_j}\((g_{ij}-\delta_{ij})F^{\frac{4}{n-4}}\)dS=0.
$$
\end{claim}
With Claim \ref{claim:coarea}, we obtain that
\begin{align*}
&\begin{medsize}
\displaystyle \ (n-1)^{-1}\vsi_n^2m(\hat{g})
\end{medsize} \nonumber \\
&\begin{medsize}
\displaystyle =\lim_{\vep \to 0} \int_{\pa B_{\vep}} \left[2(n-2)(x_k\pa_{x_k}+2)\circ(x_l\pa_{x_l})(F^{\frac{4}{n-4}})\vep^{5-2n} -(x_k\pa_{x_k}+4)\circ\Delta(F^{\frac{4}{n-4}})\vep^{7-2n}\right]dS
\end{medsize} \nonumber \\
&\begin{medsize}
\displaystyle =\lim_{\vep \to 0} \int_{\pa B_{\vep}} \left[\frac{8(n-2)}{n-4}(x_k\pa_{x_k}+2)(x_l\pa_{x_l}F)\vep^{5-2n} -\frac{4}{n-4}(x_k\pa_{x_k}+4)(\Delta F)\vep^{7-2n}\right.
\end{medsize} \nonumber \\
&\begin{medsize}
\displaystyle \hspace{185pt}\left.+\frac{4(n-8)}{(n-4)^2}(x_k\pa_{x_k}+4)((F_{,i})^2)\vep^{7-2n}\right]dS.
\end{medsize}
\end{align*}
Because $(F_{,i})^2=O(|x|^{2d})$ and $2d\geq n-5>n-6$, the limit of the last term vanishes. So,
\begin{align*}
&\ (n-1)^{-1}\vsi_n^2m(\hat{g})\\
&=\frac{8(n-2)}{n-4}\lim_{\vep\to 0}\vep^{5-2n}\int_{\pa B_{\vep}}(x_k\pa_{x_k}+2)\circ(x_l\pa_{x_l}) \left[\sum_{k=d+1}^{n-4}\psi^{(k)}(x)+\vsi_n A|x|^{n-4}\right]dS\\
&\ -\frac{4}{n-4}\lim_{\vep\to 0}\vep^{7-2n}\int_{\pa B_{\vep}}(x_k\pa_{x_k}+4)\circ\Delta \left[\sum_{k=d+1}^{n-4}\psi^{(k)}(x)+\vsi_n A|x|^{n-4}\right]dS.
\end{align*}
Using Lemma \ref{IBP} and $\int_{\S^{n-1}}\psi^{(k)}=0$ for $k=d+1,\ldots,n-4$, we arrive at
\begin{align*}
m(\hat{g})&=8(n-2)(n-1)|\S^{n-1}|\vsi_n^{-1}A.
\end{align*}

\noindent \medskip \textsc{Proof of Claim \ref{claim:coarea}.} Define
$$
I(\vep):=\int_{B^n(0,\vep)}(x_k\pa_{x_k}+4)\circ\pa^2_{x_ix_j}((g_{ij}-\delta_{ij})F^{\frac{4}{n-4}})dx.
$$
Since
\begin{align*}
&\ (x_k\pa_{x_k}+4)\circ\pa^2_{x_ix_j}((g_{ij}-\delta_{ij})F^{\frac{4}{n-4}})\\
&= \pa^2_{x_ix_j}\circ(x_k\pa_{x_k}+2)((g_{ij}-\delta_{ij})F^{\frac{4}{n-4}})\\
&= \pa^2_{x_ix_j}\left[(g_{ij}-\delta_{ij})(x_k\pa_{x_k}+2)(F^{\frac{4}{n-4}})\right] + \pa^2_{x_ix_j}\(x_kg_{ij,k}F^{\frac{4}{n-4}}\),
\end{align*}
we can employ the divergence theorem to find
\[I(\vep)=\frac{1}{\vep} \int_{\pa B_{\vep}} x_i\pa_{x_j}\left[(g_{ij}-\delta_{ij})(x_k\pa_{x_k}+2)(F^{\frac{4}{n-4}})\right] +x_i\pa_{x_j}\(x_kg_{ij,k}F^{\frac{4}{n-4}}\)dS.\]
Using $x_i(g_{ij}-\delta_{ij})=0$, $\tr(g-\delta)=0$, and $x_ix_kg_{ij,k}=x_k(\delta_{kj}-g_{kj})=0$, we obtain that $I(\vep)\equiv0$. Then we can apply $I'(\vep)\equiv0$ and the coarea formula to deduce Claim \ref{claim:coarea}.
\end{proof}

We now complete the proof of our main result.
\begin{proof}[Proof of Theorem \ref{thm:Weyl} under assumption (i)]
It is a simple consequence of Theorem \ref{thm:Weyl2} and Corollaries \ref{cor:iso sim}--\ref{cor:iso}.
\end{proof}

\begin{proof}[Proof of Theorem \ref{thm:main4}]
By elliptic regularity theory, it suffices to prove the uniform $L^{\infty}(M)$-boundedness of solutions of \eqref{eq:main4}.
Suppose by contradiction that there exist a sequence $\{g_a\}_{a \in \N} \subset [g]$ of metrics on $M$, a sequence $\{u_a\}_{a \in \N} \subset C^4(M)$, and a sequence $\{\sigma_a\}_{a \in \N} \subset M$ such that
\eqref{eq:maina} holds with $\msfk=2$, $\sigma_a \to \bsi \in M$ is a blowup point for $\{u_a\}$, $\det g_a = 1$ near $\sigma_a \in M$, and $g_a$ converges to a metric $g_{\infty} \in [g]$ in $C^l(M)$ as $a \to \infty$ for any $l \in \N$.
Corollaries \ref{cor:iso sim}--\ref{cor:iso} imply that $\sigma_a \to \bsi \in M$ is an isolated simple blowup point for $\{u_a\}$.

Owing to Proposition \ref{prop:isosim decay}, $u_a(\sigma_a)u_a \to \mfg := \vsi_nG_{g_{\infty}}^{(4)}(\cdot,\bsi)+\mfh$ in $C^3_{\mathrm{loc}}(B^{g_{\infty}}_{\msfd_0}(\bsi) \setminus \{\bsi\})$,
where $G_{g_{\infty}}^{(4)}$ is the Green's function of the Paneitz operator $P_{g_{\infty}}^{(4)}$ and $\mfh \in C^5(B^{g_{\infty}}_{\msfd_0}(\bsi))$ is a nonnegative function satisfying \eqref{eq:mfha}. Combining this fact with Proposition \ref{prop:lsr}, we reach
\begin{equation}\label{lsr in proof}
\liminf_{r\to 0}\mbp^{(4)}(r,G_{g_{\infty}}^{(4)})=\vsi_n^{-2}\liminf_{r\to 0}\mbp^{(4)}(r,\mfg)\ge 0.
\end{equation}

By virtue of Lemma \ref{lemma:expansion of Green's function}, $G_{g_{\infty}}^{(4)}$ has the expansion \eqref{expansion of Green's function}.
If Condition (i) holds, then the positivity of the constant $A$ of $G_{g_{\infty}}^{(4)}$ in \eqref{expansion of Green's function} follows immediately.
Under Condition (ii), we can deduce $A>0$ from Proposition \ref{prop:Green}, Theorem \ref{PMT2} and the condition that $M$ is not conformally diffeomorphic to the standard sphere $\S^n$. However, a direct computation using \eqref{expansion of Green's function} tells us
\[\lim_{r\to 0}\mbp^{(4)}(r,G_{g_{\infty}}^{(4)})=-\frac{n-4}{2}A<0,\]
which contradicts \eqref{lsr in proof}. This finishes the proof of Theorem \ref{thm:main4}.
\end{proof}

\section{Evaluation of the Pohozaev Quadratic Form}\label{sec:tech}
We examine the positivity of the Pohozaev quadratic form in three mutually exclusive cases, which is crucial in the proof of Proposition \ref{Positive definiteness}.
We set $\theta_k=1$ if $k=\frac{n-4}{2}$ and $\theta_k=0$ otherwise.
\begin{lemma}\label{lemma of case 1}
Assume that $8\le n\le 24$ and $s=2,\ldots, d$. Let $M_q^{(s)}$ be the matrix defined in \eqref{eq:MqsEDs} and $\ep > 0$ small. Then there exists a constant $C=C(n,s) > 0$ such that
\begin{multline*}
\sum_{q,q'=0}^{\lfloor \frac{d-s}{2} \rfloor} |\log\ep|^{\theta_{q+q'+s}} J_1\left[\ep^{2q+s}|x|^{2q}M_q^{(s)},\ep^{2q'+s}|x|^{2q'}M_{q'}^{(s)}\right] \\
\ge C^{-1} \sum_{q=0}^{\lfloor \frac{d-s}{2} \rfloor} \ep^{2(2q+s)}|\log\ep|^{\theta_{2q+s}} \left\||x|^{2q}M_q^{(s)}\right\|^2.
\end{multline*}
\end{lemma}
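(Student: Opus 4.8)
The strategy is to evaluate $J_1$ on the tensors $\ep^{2q+s}|x|^{2q}M_q^{(s)}$ in closed form, thereby reducing the asserted estimate to the positive definiteness of an explicit $\ep$-independent symmetric matrix $(m^{D,s}_{qq'})$ whose size is at most $\lfloor\frac{d-2}{2}\rfloor+1$, and then to verify that positivity by a finite computer-assisted check over $8\le n\le 24$. For the closed form, fix $q,q'\in\{0,\ldots,\lfloor\frac{d-s}{2}\rfloor\}$ and set $k=2q+s$, $m=2q'+s$. By Lemma \ref{D hat}, $|x|^{2q}M_q^{(s)}\in\mcd_k$ satisfies $\delta(|x|^{2q}M_q^{(s)})=0$, $\delta^2(|x|^{2q}M_q^{(s)})=0$ and $\mcl_k(|x|^{2q}M_q^{(s)})=\mu_k\,|x|^{2q}M_q^{(s)}$ with $\mu_k:=-q(n-2q+2k-2)$, and likewise for $q',m$. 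Plugging this into the formula of Corollary \ref{I_1 coro}, the two terms containing $\delta_iH^{(k)}\delta_iH^{(m)}$ and $\delta^2H^{(k)}\delta^2H^{(m)}$ vanish and $\mcl_k,\mcl_m$ become the scalars $\mu_k,\mu_m$; since $|x|=1$ on $\S^{n-1}$,
\[J_1'\big[|x|^{2q}M_q^{(s)},|x|^{2q'}M_{q'}^{(s)}\big]=\frac{(n-4)^2}{4(n-1)(n-2)}\Big[\tfrac14 c_1(\mu_k+\mu_m)+c_3\mu_k\mu_m+\tfrac18(n+k+m-2)(k+m)c_1-c_2\Big]\big\langle M_q^{(s)},M_{q'}^{(s)}\big\rangle,\]
where $c_i=c_i(n,k,m)$. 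Multiplying by the radial prefactor in the definition of $J_1$ — namely $\frac{1}{n-k-m-4}\mci_{n-2}^{n+k+m-3}$ when $k+m<n-4$, and $\frac{1}{2(n-3)}$ in the one remaining case $q=q'=\frac{d-s}{2}$ (note that among $q,q'\le\lfloor\frac{d-s}{2}\rfloor$ the equality $k+m=n-4$ forces $k=m=d$) — and using bilinearity of $J_1'$ to pull out the scaling, we obtain
\[J_1\big[\ep^{2q+s}|x|^{2q}M_q^{(s)},\ep^{2q'+s}|x|^{2q'}M_{q'}^{(s)}\big]=\ep^{(2q+s)+(2q'+s)}\,m^{D,s}_{qq'}\,\big\langle M_q^{(s)},M_{q'}^{(s)}\big\rangle\]
for an explicit rational number $m^{D,s}_{qq'}=m^{D,s}_{qq'}(n)$, and the matrix $(m^{D,s}_{qq'})$ is symmetric.

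The key structural observation that makes the $\ep$-weights cooperate is that the logarithmic factor is concentrated on a single diagonal entry: since $q,q'\le\lfloor\frac{d-s}{2}\rfloor$, the relation $q+q'+s=d$ forces $2q+s=2q'+s=d$, i.e. $q=q'=q_0:=\frac{d-s}{2}$, which occurs only when $d-s$ is even. Hence $\theta_{q+q'+s}=1$ exactly at $(q,q')=(q_0,q_0)$, and $\theta_{2q+s}=1$ exactly at $q=q_0$. Next, expand $M_q^{(s)}=\sum_\alpha x_q^\alpha e_\alpha$ in a $\langle\cdot,\cdot\rangle$-orthonormal basis $\{e_\alpha\}$ of $\{M\in\mcd_s\mid\Delta M_{ij}=0\}$ and set $y_q^\alpha:=\ep^{2q+s}x_q^\alpha$. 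Using the closed form above, the identity $\big\|\ep^{2q+s}|x|^{2q}M_q^{(s)}\big\|^2=\sum_\alpha(y_q^\alpha)^2$, and the localization of $\theta$, both sides of the claimed inequality decouple over $\alpha$, and it suffices to prove that for every real vector $(y_q)_{0\le q\le\lfloor\frac{d-s}{2}\rfloor}$,
\[\sum_{q,q'}m^{D,s}_{qq'}y_qy_{q'}+(|\log\ep|-1)\,m^{D,s}_{q_0q_0}y_{q_0}^2\ \ge\ C^{-1}\Big(\sum_q y_q^2+(|\log\ep|-1)y_{q_0}^2\Big),\]
where the $q_0$-terms are simply omitted when $d-s$ is odd.

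This last inequality is now soft. If $(m^{D,s}_{qq'})$ is positive definite with least eigenvalue $\lambda_1>0$, then $\sum_{q,q'}m^{D,s}_{qq'}y_qy_{q'}\ge\lambda_1\sum_q y_q^2$ and $m^{D,s}_{q_0q_0}\ge\lambda_1$, so — using $|\log\ep|-1\ge 0$ for $\ep$ small — the display holds with $C=\lambda_1^{-1}$. Thus the lemma is reduced to the assertion that the symmetric matrix $(m^{D,s}_{qq'})_{0\le q,q'\le\lfloor\frac{d-s}{2}\rfloor}$ is positive definite for every integer $n$ with $8\le n\le 24$ and every $s\in\{2,\ldots,d\}$. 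This is a finite collection of matrices of size at most $\lfloor\frac{d-2}{2}\rfloor+1$ with explicit rational entries, so its verification is routine but lengthy; as indicated in the introduction, we carry it out with Mathematica (the code is provided as an ancillary file).

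The two nontrivial parts of the argument are therefore the careful algebraic reduction producing the exact coefficients $m^{D,s}_{qq'}$ from Corollary \ref{I_1 coro} (bookkeeping the $c_i$, the eigenvalues $\mu_k$, the radial prefactors, and the case split at $q=q'=q_0$), and the positive-definiteness check, which is too tedious to perform by hand; everything else is the spectral argument above. I expect the positive-definiteness verification to be the main obstacle, precisely because it resists a clean closed-form argument and must be handled computationally.
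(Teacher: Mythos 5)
Your reduction is essentially the paper's: you use Corollary \ref{I_1 coro} together with \eqref{eq:hatDq1}--\eqref{eq:hatDq2} (the terms with $\delta_iH\,\delta_iH'$ and $\delta^2H\,\delta^2H'$ drop out, $\mcl_k$ acts by the scalar $-q(n+2q+2s-2)$), you pull out the $\ep$-powers, you observe that the logarithmic weight is concentrated at the single corner index $q=q'=q_0=\frac{d-s}{2}$ (only possible for $n$ even), and you conclude by a spectral/Gram-type argument once an explicit $(\lfloor\frac{d-s}{2}\rfloor+1)\times(\lfloor\frac{d-s}{2}\rfloor+1)$ matrix is known to be positive definite, the latter being checked by Mathematica. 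Your orthonormal-basis decoupling plays exactly the role of the paper's trace inequality (Lemma \ref{lemma:tr} applied to $A=(m^{D,s}_{qq'})$ and a Gram matrix $B$), so structurally the two proofs coincide.

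There is, however, one substantive discrepancy that amounts to a potential gap. Your matrix carries the corner entry with its natural coefficient $\frac{1}{2(n-3)}(m^{D,s}_{q_0q_0})'$, and your argument needs positive definiteness of \emph{that} matrix (you use both $\lambda_1>0$ and $m^{D,s}_{q_0q_0}\ge\lambda_1$). The paper never verifies this: in \eqref{eq:mDs} the corner entry is first multiplied by a huge constant $N_0=10^{10}$, the factor $|\log\ep|N_0^{-1}\ge 1$ being shifted into the Gram matrix $B$, and the Mathematica check certifies positivity of the \emph{boosted} matrix only. Boosting a diagonal entry can only help positivity, so your claim is strictly stronger than the verified one, and the authors' deliberate introduction of $N_0$ (here and again in Lemma \ref{lemma of case 3}) suggests that the extra leverage coming from the $|\log\ep|$ weight at the corner may genuinely be needed near the threshold $n=24$; at the very least, your computational claim is unverified as stated. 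The fix is minor and keeps your argument intact: require only positive definiteness of the boosted matrix (equivalently, positivity of the corner entry together with positive definiteness of the principal submatrix omitting $q_0$), write the left-hand side as the boosted quadratic form plus $\bigl(\frac{|\log\ep|}{N_0}-1\bigr)N_0\,\tilde m_{q_0q_0}\,y_{q_0}^2$, and run your eigenvalue estimate with $|\log\ep|\ge 2N_0$, absorbing $N_0$ into $C$. For odd $n$ (no logarithmic case) your matrix agrees with the paper's and there is no issue.
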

\begin{proof}
Fixing any $s = 2,\ldots, d$, let $k=2q+s$, $m=2q'+s$, $\lambda_q= -q(n+2q+2s-2)$, and $\lambda_{q'}= -q'(n+2q'+2s-2)$. By applying Corollary \ref{I_1 coro} and \eqref{eq:hatDq1}--\eqref{eq:hatDq2}, we compute
\begin{align*}
&\ J_1\left[|x|^{2q}M_q^{(s)},|x|^{2q'}M_{q'}^{(s)}\right] \\
&=\frac{(n-4)^2}{8(n-3)(n-2)(n-1)} (\mci_{n-3}^{n+k+m-3})^{1-\theta_{\frac{k+m}{2}}} \left[\frac{8(n-3)(n-1)(k+m)}{(n-2)(n+k+m-4)}\lambda_{q}\lambda_{q'}\right.\\
&\hspace{50pt} + \left.\frac{1}{8}c_1(n,k,m)\(2(\lambda_{q}+\lambda_{q'})+(n+k+m-2)(k+m)\) - c_2(n,k,m)\right] \bla M_q^{(s)},M_{q'}^{(s)} \bra\\
&:= (m^{D,s}_{qq'})' \bla M_q^{(s)},M_{q'}^{(s)} \bra,
\end{align*}
where
\begin{equation}\label{eq:c1c2nkm}
\begin{aligned}
c_1(n,k,m)&=(k+m)(n^3-(k+m+2)n^2+(6(k+m)-4)n-4(k+m)+8);\\
c_2(n,k,m)&=2(n-1)(k+m)(n+k+m-2)km.
\end{aligned}
\end{equation}
Then we set
\begin{equation}\label{eq:mDs}
m^{D,s}_{qq'}:=N_0^{\theta_{\frac{k+m}{2}}}(m^{D,s}_{qq'})'
\end{equation}
where $N_0 \in \N$ is taken to be large enough; for example, $N_0 = 10^{10}$ suffices.

Using Mathematica, we observe that matrices $(m^{D,s}_{qq'})$ are positive-definite for all $s=2,\ldots, d$ when $n\le 24$. In addition, $(m^{D,2}_{qq'})$ has a negative eigenvalue when $n\ge 25$.

Let $B'$ be a Gram matrix defined as $B'_{qq'} = \langle \ep^kM_q^{(s)},\ep^mM_{q'}^{(s)} \rangle$, which is always positive semi-definite.
Then, we set a matrix $B$ by $B_{qq'} = B'_{qq'}$ if $k+m<n-4$ and $B_{qq'} = |\log\ep|N_0^{-1}B'_{qq'}$ if $k+m=n-4$, which is also positive semi-definite for $\ep$ so small that $\ep \le e^{-N_0}$.
At this stage, we apply Lemma \ref{lemma:tr} with $l = \lfloor \frac{d-s}{2} \rfloor+1$, $A = m^{D,s}$, and the aforementioned $B$. It follows that \begin{align*}
\sum_{q,q'=0}^{\lfloor \frac{d-s}{2} \rfloor} |\log\ep|^{\theta_{\frac{k+m}{2}}} J_1\left[\ep^k|x|^{2q}M_q^{(s)},\ep^m|x|^{2q'}M_{q'}^{(s)}\right] &= \tr(AB) \ge C^{-1}\tr(B) \\
&= C^{-1} \sum_{q=0}^{\lfloor \frac{d-s}{2} \rfloor} |\log\ep|^{\theta_k} \left\|\ep^k|x|^{2q}M_q^{(s)}\right\|^2. \qedhere
\end{align*}
\end{proof}

\begin{lemma}\label{lemma of case 2}
Assume that $10 \le n \le 28$ and $s=1,\ldots, d-2$. Let $V_q^{(s+1)}$ be the vector field defined in \eqref{eq:MqsEDs} and $\ep > 0$ small. Then there exists a constant $C=C(n,s) > 0$ such that
\begin{multline*}
\sum_{q,q'=1}^{\lfloor \frac{d-s}{2} \rfloor} |\log\ep|^{\theta_{q+q'+s}} J_1\left[\proj\left[|x|^{2q}\msd V_q^{(s+1)}\right], \proj\left[|x|^{2q'}\msd V_{q'}^{(s+1)}\right]\right] \\
\ge C^{-1} \sum_{q=1}^{\lfloor \frac{d-s}{2} \rfloor} \ep^{2(2q+s)}|\log\ep|^{\theta_{2q+s}} \left\|\proj\left[|x|^{2q}\msd V_q^{(s+1)}\right]\right\|^2.
\end{multline*}
\end{lemma}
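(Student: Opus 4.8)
The plan is to follow the proof of Lemma~\ref{lemma of case 1} almost verbatim, the sole structural change being that the divergence-free blocks $\hat D_q=|x|^{2q}M_q^{(s)}$ are replaced by $\hat W_q:=\proj[|x|^{2q}\msd V_q^{(s+1)}]\in\mcw_k$ with $k=2q+s$, which carry a nonzero first divergence. Fix $s\in\{1,\dots,d-2\}$, and for $q,q'\in\{1,\dots,\lfloor\frac{d-s}{2}\rfloor\}$ put $k=2q+s$, $m=2q'+s$. By Lemma~\ref{lemma:W_hat0} (cf. Lemma~\ref{W hat}) and the explicit expression $(\hat W_q)_{ij}=|x|^{2q}(\msd V_q^{(s+1)})_{ij}-s|x|^{2q-2}(x_j(V_q^{(s+1)})_i+x_i(V_q^{(s+1)})_j)$ obtained there, one has $\delta^2\hat W_q=0$, $\delta_j\hat W_q=-(n+s)s|x|^{2q-2}(V_q^{(s+1)})_j$, and $\mcl_k\hat W_q=\lambda_q\hat W_q$ with $\lambda_q:=-\frac{(n+k-2)k}{2}$. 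Substituting these into Corollary~\ref{I_1 coro}, the $\int_{\S^{n-1}}\delta^2H^{(k)}\delta^2H^{(m)}$ term vanishes and the $\mcl$-terms collapse to multiples of $\la\hat W_q,\hat W_{q'}\ra$; crucially, unlike the divergence-free case, the term $\frac{km}{2}c_3(n,k,m)\int_{\S^{n-1}}\delta_i\hat W_q\,\delta_i\hat W_{q'}$ now survives. Thus $J_1'[\hat W_q,\hat W_{q'}]$ is a combination, with coefficients explicit in $(n,k,m)$ via the $c_i$ of Corollary~\ref{I_1 coro} and via $\lambda_q,\lambda_{q'}$, of the two scalars $\la\hat W_q,\hat W_{q'}\ra$ and $\int_{\S^{n-1}}\delta_i\hat W_q\,\delta_i\hat W_{q'}$.

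The next step is to reduce both scalars to a single one. Since on $\S^{n-1}$ one has $\hat W_q|_{\S^{n-1}}=(\msd V_q^{(s+1)}-s\,x\otimes V_q^{(s+1)}-s\,V_q^{(s+1)}\otimes x)|_{\S^{n-1}}$, independent of $q$, a direct integration by parts using that each $V_q^{(s+1)}$ is harmonic, divergence-free and satisfies $x_i(V_q^{(s+1)})_i=0$ gives $\la\hat W_q,\hat W_{q'}\ra=c_*(n,s)\,\la V_q^{(s+1)},V_{q'}^{(s+1)}\ra$ and $\int_{\S^{n-1}}\delta_i\hat W_q\,\delta_i\hat W_{q'}=(n+s)^2s^2\,\la V_q^{(s+1)},V_{q'}^{(s+1)}\ra$, where $\la V,V'\ra:=\int_{\S^{n-1}}V_iV_i'$ and $c_*(n,s)$ is an explicit constant (equivalently, this follows from the $O(n)$-equivariance of $V\mapsto\proj[|x|^{2q}\msd V]$ and Schur's lemma on the space $\{V:V_i\in\mch_{s+1},\ \delta V=0,\ x_iV_i=0\}$). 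Hence $J_1'[\hat W_q,\hat W_{q'}]=(m^{W,s}_{qq'})'\,\la V_q^{(s+1)},V_{q'}^{(s+1)}\ra$ with $(m^{W,s}_{qq'})'$ an explicit rational function of $(n,s,q,q')$, and I define $m^{W,s}_{qq'}$ from $(m^{W,s}_{qq'})'$ by exactly the two-case rescaling of \eqref{eq:mDs}: multiply by $\frac{1}{n-k-m-4}\mci_{n-2}^{n+k+m-3}$ when $k+m<n-4$, and by $\frac{N_0}{2(n-3)}$ with $N_0$ large when $k+m=n-4$. As in Lemma~\ref{lemma of case 1}, the left-hand side of the asserted inequality then equals $\tr(m^{W,s}B)$, where $B$ is the positive semi-definite (for $\ep\le e^{-N_0}$) Gram matrix of $\{\ep^{2q+s}V_q^{(s+1)}\}_q$ on $\S^{n-1}$, the $\ep$-powers being bookkept exactly as there.

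It then remains only to check that each matrix $(m^{W,s}_{qq'})$, $s=1,\dots,d-2$, is positive-definite; I would verify this with Mathematica over $10\le n\le 28$ and include the code as an ancillary file, just as for $(m^{D,s}_{qq'})$. Granting this, Lemma~\ref{lemma:tr} gives $\tr(m^{W,s}B)\ge\lambda_1(m^{W,s})\,\tr(B)$, which is precisely the claimed bound once $\la\hat W_q,\hat W_q\ra=c_*(n,s)\|V_q^{(s+1)}\|^2$ is used to rewrite the right-hand side in terms of $\|\proj[|x|^{2q}\msd V_q^{(s+1)}]\|^2$. I expect the genuine difficulty to be this positive-definiteness: it is purely symbolic and too lengthy by hand, and it is exactly where the extra term $\int_{\S^{n-1}}\delta_i\hat W_q\,\delta_i\hat W_{q'}$ --- absent in the Yamabe Pohozaev form but forced here by the algebra of the Paneitz operator --- must be controlled; everything else is a routine consequence of the explicit formula for $\hat W_q$ in Lemma~\ref{lemma:W_hat0} and the computations already carried out in Lemma~\ref{lemma of case 1}.
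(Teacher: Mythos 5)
Your proposal is correct and follows essentially the same route as the paper: reduce $J_1'[\hat W_q,\hat W_{q'}]$ via Corollary \ref{I_1 coro}, the eigenvalue $\lambda_q=-\tfrac{(n+k-2)k}{2}$, and the identities $\delta_j\hat W_q=-(n+s)s|x|^{2q-2}(V_q^{(s+1)})_j$, $\la\hat W_q,\hat W_{q'}\ra=c_*(n,s)\la V_q^{(s+1)},V_{q'}^{(s+1)}\ra$ to a matrix $(m^{W,s}_{qq'})$ built exactly as in \eqref{eq:mDs}, then conclude by the Gram-matrix/trace argument of Lemma \ref{lemma of case 1} after a Mathematica check of positive-definiteness for $10\le n\le 28$. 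The only difference is that the paper records the constant explicitly, $c_*(n,s)=2s(n+s+1)$, which you would in any case need to pin down before running the symbolic positivity check.
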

\begin{proof}
Fixing any $s = 1,\ldots, d-2$, let $k=2q+s$, $m=2q'+s$, $\lambda_q= -\frac{1}{2}(n+s+2q-2)(s+2q)$, and $\lambda_{q'}= -\frac{1}{2}(n+s+2q'-2)(s+2q')$.
By applying Corollary \ref{I_1 coro}, \eqref{eq:hatWq1}--\eqref{eq:hatWq2}, and Lemma \ref{lemma:addition info of eigenvector}, we compute
\begin{align*}
&\begin{medsize}
\displaystyle \ J_1\left[\proj\left[|x|^{2q}\msd V_q^{(s+1)}\right],\proj\left[|x|^{2q'}\msd V_{q'}^{(s+1)}\right]\right]
\end{medsize} \\
&\begin{medsize}
\displaystyle =\frac{(n-4)^2}{8(n-3)(n-2)(n-1)} (\mci_{n-3}^{n+k+m-3})^{1-\theta_{\frac{k+m}{2}}} \left[\frac{4(n-3)(n-1)(k+m)}{(n-2)(n+k+m-4)}\(4s(n+s)\lambda_q\lambda_{q'} + kms^2(n+s)^2\)\right.
\end{medsize} \\
&\begin{medsize}
\displaystyle \ + \left.2s(n+s) \left\{\frac{1}{8}c_1(n,k,m)\(2(\lambda_q+\lambda_{q'})+(n+k+m-2)(k+m)\)-c_2(n,k,m)\right\}\right] \bla V_q^{(s+1)},V_{q'}^{(s+1)} \bra
\end{medsize} \\
&\begin{medsize}
\displaystyle := (m^{W,s}_{qq'})' \bla V_q^{(s+1)},V_{q'}^{(s+1)} \bra,
\end{medsize}
\end{align*}
where $c_1(n,k,m)$ and $c_2(n,k,m)$ are the numbers in \eqref{eq:c1c2nkm}. Then we set $m^{W,s}_{qq'}$ by exploiting \eqref{eq:mDs} in which all the superscripts $D$ are replaced with $W$.

Using Mathematica, we observe that matrices $(m^{W,s}_{qq'})$ are positive-definite for all $s=1,\ldots, d-2$ when $n\le 28$. In addition, $(m^{W,1}_{qq'})$ has a negative eigenvalue when $n\ge 29$.

Following the rest of the proof of Lemma \ref{lemma of case 1}, we complete the proof.
\end{proof}

\begin{lemma}\label{lemma of case 3}
Assume that $12 \le n \le 32$ and $s=2,\ldots, d-2$. Let $\check{H}_q^{(2q+s)}$ be the matrix defined in \eqref{eq:PqsEHs} and $\ep > 0$ small. Then there exists a constant $C=C(n,s) > 0$ such that
\[\sum_{q,q'=1}^{\lfloor \frac{d-s}{2} \rfloor} I_{\ep}\left[\check{H}_q^{(2q+s)},\check{H}_{q'}^{(2q'+s)}\right] \ge C^{-1} \sum_{q=1}^{\lfloor \frac{d-s}{2} \rfloor} \ep^{2(2q+s)}|\log\ep|^{\theta_{2q+s}} \big\|\hat{H}_q^{(2q+s)}\big\|^2.\]
\end{lemma}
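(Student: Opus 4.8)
The plan is to mirror the proofs of Lemmas~\ref{lemma of case 1} and~\ref{lemma of case 2}: reduce the left-hand side to the trace of the product of a fixed positive-definite matrix $(m^{H,s}_{qq'})$ with a positive semi-definite Gram matrix, and then invoke Lemma~\ref{lemma:tr}. Fix $s\in\{2,\ldots,d-2\}$ and, for each $q\in\{1,\ldots,\lfloor\frac{d-s}{2}\rfloor\}$, write $k=2q+s$; by \eqref{eq:PqsEHs} and Lemma~\ref{H hat} we have $\check{H}_q^{(2q+s)}=\ep^{2q+s}\proj[|x|^{2q+2}\nabla^2P_q^{(s)}]$ for some $P_q^{(s)}\in\mch_s$. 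The core step is to establish that, for $k=2q+s$ and $m=2q'+s$,
\[
I_{\ep}\big[\check{H}_q^{(2q+s)},\check{H}_{q'}^{(2q'+s)}\big]=\ep^{k+m}|\log\ep|^{\theta_{(k+m)/2}}\,(m^{H,s}_{qq'})'\,\int_{\S^{n-1}}P_q^{(s)}P_{q'}^{(s)}+O(\ep^{n-4})
\]
for an explicit symmetric array $(m^{H,s}_{qq'})'$, with the convention that the power $\ep^{n-4}$ is accompanied by $|\log\ep|$ in the borderline case $k+m=n-4$.

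For the piece $I_{1,\ep}$ we would argue exactly as in Lemma~\ref{lemma:Hortho}. By Lemma~\ref{H hat}, $\mcl_k\hat{H}_q^{(k)}=A_{k,q}\hat{H}_q^{(k)}$; moreover a direct computation using $\Delta P_q^{(s)}=0$, Euler's identity, and Lemma~\ref{harmonic poly} shows that $\delta_i\hat{H}_q^{(k)}$ and $\delta^2\hat{H}_q^{(k)}$ are explicit scalar multiples of $|x|^{2q}\pa_iP_q^{(s)}$, $|x|^{2q-2}x_iP_q^{(s)}$, and $|x|^{2q-2}P_q^{(s)}$. Substituting these, and $\langle\hat{H}_q^{(k)},\hat{H}_{q'}^{(m)}\rangle$ in terms of $\int_{\S^{n-1}}P_q^{(s)}P_{q'}^{(s)}$, into the expansion of $J_1'$ from Corollary~\ref{I_1 coro}, and then into \eqref{eq:I_1J_1}, produces the asserted form with radial factor $\mci_{n-2}^{n+k+m-3}$ when $k+m<n-4$ and with $\frac{1}{2(n-3)}$ together with a $\log$ when $k+m=n-4$.

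For $I_{2,\ep}$ and $I_{3,\ep}$ we would exploit the closed-form solution of Proposition~\ref{prop:Psi}. One first checks, as above, that $\delta^2\hat{H}_q^{(k)}$ is a nonzero constant times $|x|^{2q-2}P_q^{(s)}\in\mcp_{k-2}$, so that its decomposition \eqref{eq:Hshd} consists of a single term and, via \eqref{eq:PsiH} and Proposition~\ref{prop:Psi}, $\Psi[\hat{H}_q^{(k)}]$ is an explicit combination of $(1+r^2)^{-\frac{n-2j}{2}}P_q^{(s)}$; consequently $(y_i\pa_i+\frac{n-4}{2})\Psi[\hat{H}_q^{(k)}]$ is a combination of such terms as well. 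Next, by \eqref{eq:L_1mfl_1} together with Definition~\ref{linearized eqn RHS}, both $L_1[\hat{H}_{q'}^{(m)}]w=L_1[\hat{H}_{q'}^{(m)}](1+r^2)^{-\frac{n-4}{2}}$ and $L_1[\hat{H}_{q'}^{(m)}]Z^0$ — the latter handled through the identity $(1-r^2)(1+r^2)^{-\frac{n-2}{2}}=2(1+r^2)^{-\frac{n-2}{2}}-(1+r^2)^{-\frac{n-4}{2}}$ — are explicit combinations of $(1+r^2)^{-a}P_{q'}^{(s)}$. Inserting these into \eqref{eq:I_2}--\eqref{eq:I_3}, factoring out $\int_{\S^{n-1}}P_q^{(s)}P_{q'}^{(s)}$, and evaluating the remaining radial integrals with the quantities $\mci^i_j$ of \eqref{eq:mci} puts $I_{2,\ep}$ and $I_{3,\ep}$ into the same form, once more with a $\log$ exactly when $k+m=n-4$.

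Adding the three contributions and applying the $\log$-regularization from the proof of Lemma~\ref{lemma of case 1} (replace each $|\log\ep|$-divergent diagonal entry of $(m^{H,s}_{qq'})'$ by an $N_0$-rescaled constant with $N_0$ a large fixed integer) defines $(m^{H,s}_{qq'})$ so that $\sum_{q,q'}I_{\ep}[\check{H}_q^{(2q+s)},\check{H}_{q'}^{(2q'+s)}]=\tr((m^{H,s})B)+O(\ep^{n-4})$, where $B$ is the (correspondingly $\log$-modified) Gram matrix of $\{\ep^{2q+s}P_q^{(s)}\}_q$ and hence positive semi-definite for $\ep$ small. A Mathematica computation — the arrays being too large to analyze by hand — then shows that $(m^{H,s}_{qq'})$ is positive-definite for every $s=2,\ldots,d-2$ whenever $12\le n\le 32$ (and that, e.g., $(m^{H,2}_{qq'})$ acquires a negative eigenvalue for $n\ge 33$). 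Lemma~\ref{lemma:tr} gives $\sum_{q,q'}I_{\ep}[\check{H}_q^{(2q+s)},\check{H}_{q'}^{(2q'+s)}]\ge\lambda_1(m^{H,s})\tr(B)+O(\ep^{n-4})\ge C^{-1}\sum_q\ep^{2(2q+s)}|\log\ep|^{\theta_{2q+s}}\|P_q^{(s)}\|^2$, and since $\|\hat{H}_q^{(2q+s)}\|=\|\proj[|x|^{2q+2}\nabla^2P_q^{(s)}]\|$ is a fixed positive multiple of $\|P_q^{(s)}\|$ (with a constant depending only on $n$ and $s$), the stated estimate follows. The main obstacle is the evaluation of $I_{2,\ep}$ and $I_{3,\ep}$: these involve the correction term $\Psi$ and become tractable only because Proposition~\ref{prop:Psi} supplies $\Psi$ in closed rational form — precisely the place where the algebraic structure of the Paneitz operator is used — and the subsequent positivity of $(m^{H,s}_{qq'})$ is beyond hand computation and must be delegated to a computer.
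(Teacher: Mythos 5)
Your proposal follows essentially the same route as the paper's proof: decompose $I_\ep$ into $I_{1,\ep}+I_{2,\ep}+I_{3,\ep}$, use the eigenvalue relation from Lemma \ref{H hat} together with the explicit formulas for $\delta\hat{H}_q^{(2q+s)}$, $\delta^2\hat{H}_q^{(2q+s)}$ and Corollary \ref{I_1 coro} for the first piece, use Proposition \ref{prop:Psi}, \eqref{eq:L_1mfl_1}, Definition \ref{linearized eqn RHS} and the splitting of $Z^0$ into the two powers of $(1+r^2)$ for the second and third pieces, assemble the coefficient matrix $(m^{H,s}_{qq'})$ with the $N_0$-regularization at the borderline degree $k+m=n-4$, verify its positive-definiteness for $12\le n\le 32$ by Mathematica, and conclude via the Gram matrix and Lemma \ref{lemma:tr} using $\|\hat{H}_q^{(2q+s)}\|^2=\kappa_s\|P_q^{(s)}\|^2$. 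This matches the paper's argument in both structure and all key steps, so no further comparison is needed.
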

\begin{proof}
We continue to use the notations introduced in \eqref{eq:PqsEHs}. Fixing any $s=2,\ldots,d-2$, let $k=2q+s$ and $m=2q'+s$.
We recall from Lemma \ref{H hat} that each $\hat{H}_q^{(2q+s)} = \proj[|x|^{2q+2}\nabla^2P_q^{(s)}]$ is an eigenvector of the operator $\mcl_k$ corresponding to the eigenvalue
\[A_{2q+s,q}=(s-1)\left[2-\frac{n-2}{n-1}(n+s-1)\right]-(q+1)(n+2q+2s-4).\]
We write $\lambda_q= A_{2q+s,q}$, $\lambda_{q'}= A_{2q'+s,q'}$, and $\ka_s:=\frac{n-2}{n-1}s(s-1)(n+s-1)(n+s-2)$.

By applying Corollary \ref{I_1 coro} and \eqref{eq:hatHq1}--\eqref{eq:hatHq2} and Lemma \ref{lemma:addition info of eigenvector}, we find
\begin{equation}\label{eq:J_1 H}
\begin{aligned}
&\ J_1 \left[\hat{H}_q^{(k)},\hat{H}_{q'}^{(m)}\right]\\
&= \frac{(n-4)^2}{8(n-3)(n-2)(n-1)} (\mci_{n-3}^{n+k+m-3})^{1-\theta_{\frac{k+m}{2}}}\ka_s \left[\frac{8(n-3)(n-1)(k+m)\lambda_q\lambda_{q'}}{(n-2)(n+k+m-4)}\right.\\
&\hspace{30pt} +\frac{4(n-3)(k+m)(s-1)(n+s-1)km}{n+k+m-4} - \frac{(n-4)(n-3)n^2(k+m)\ka_s}{2(n-2)(n-1)(n+k+m-4)}\\
&\hspace{30pt} +\left.\frac{1}{8}c_1(n,k,m)\(2(\lambda_q+\lambda_{q'})+(n+k+m-2)(k+m)\) - c_2(n,k,m)\right] \bla P^{(s)}_q,P^{(s)}_{q'} \bra \\
&:= (m^{H,s,1}_{qq'})' \bla P^{(s)}_q,P^{(s)}_{q'} \bra,
\end{aligned}
\end{equation}
where $c_1(n,k,m)$ and $c_2(n,k,m)$ are the numbers in \eqref{eq:c1c2nkm}. Then we set $m^{H,s,1}_{qq'}$ as in \eqref{eq:mDs}.

For the $I_{2,\ep}[\check{H}^{(k)}_{q},\check{H}^{(m)}_{q'}]$ term, we remind from \eqref{eq:L_1mfl_1}, \eqref{eq:ddhatH}, and $s=(k-2)-2(q-1)$ that
\begin{equation}\label{eq:L_1Hw}
\begin{aligned}
L_1[\hat{H}^{(k)}_q]w &= \mfl_1[\delta^2\hat{H}^{(k)}_q]w = \ka_s \mfl_1[|x|^{2(q-1)}P_q^{(s)}]w \\
&= -\frac{(n-4)\ka_s}{4(n-1)} \sum_{i=1}^{q+3}b_i(n,k-2,q-1)(1+r^2)^{-\frac{n+6-2i}{2}}P_q^{(s)};\\
\Psi[\hat{H}^{(k)}_q] &= \frac{(n-4)\ka_s}{4(n-1)} \sum_{j=1}^{q+2}\Gamma_j(n,k-2,q-1)(1+r^2)^{-\frac{n-2j}{2}}P_q^{(s)},
\end{aligned}
\end{equation}
where $r=|x|$, and $b_i(n,k-2,q-1)$ and $\Gamma_j(n,k-2,q-1)$ are the numbers appearing in Definition \ref{linearized eqn RHS} and Proposition \ref{prop:Psi}, respectively. We also observe
\begin{multline*}
\(x_i\pa_i+\frac{n-4}{2}\)\((1+r^2)^{-\frac{n-2j}{2}}P_q^{(s)}\) \\
=(n-2j)(1+r^2)^{-\frac{n+2-2j}{2}}P_q^{(s)} - \frac{1}{2}(n+4-4j-2s)(1+r^2)^{-\frac{n-2j}{2}}P_q^{(s)}.
\end{multline*}
Making use of polar coordinates, we determine
\begin{align*}
&\ \int_{B^n(0,\msfd_0\ep^{-1})} \(y_i\pa_i+\frac{n-4}{2}\)\Psi[\check{H}^{(k)}_{q}] L_1[\check{H}^{(m)}_{q'}]w dy \\
&= \ep^{k+m}\left[\frac{(n-4)\ka_s}{4(n-1)}\right]^2 \sum_{i=1}^{q'+3}\sum_{j=1}^{q+2} b_i\Gamma_j \left[-(n-2j) \int_0^{\msfd_0\ep^{-1}} \frac{r^{n-1+2s}}{(1+r^2)^{n+4-i-j}} dr \right. \\
&\hspace{140pt} \left. + \frac{1}{2} (n+4-4j-2s)\int_0^{\msfd_0\ep^{-1}} \frac{r^{n-1+2s}}{(1+r^2)^{n+3-i-j}} dr\right] \bla P^{(s)}_q,P^{(s)}_{q'} \bra \\
&= \ep^{k+m}|\log\ep|^{\theta_{\frac{k+m}{2}}} (m^{H,s,2}_{qq'})' \bla P^{(s)}_q,P^{(s)}_{q'} \bra + O(\ep^{n-4}),
\end{align*}
where $b_i = b_i(n,m-2,q'-1)$, $\Gamma_j = \Gamma_j(n,k-2,q-1)$, and
\[(m^{H,s,2}_{qq'})' := \begin{cases}
\begin{medsize}
\displaystyle -\frac{1}{2}\left[\frac{(n-4)\ka_s}{4(n-1)}\right]^2 \sum_{i=1}^{q'+3}\sum_{j=1}^{q+2}b_i\Gamma_j \left[\frac{(n-2s-2i-2j+6)(n-2j)}{n+3-i-j}-(n+4-4j-2s)\right] \mci_{n+3-i-j}^{n-1+2s}
\end{medsize} \\
\hfill \begin{medsize}
\displaystyle \text{if } k+m=2(q+q'+s) < n-4,
\end{medsize} \\
\begin{medsize}
\displaystyle 0
\end{medsize}
\hfill \begin{medsize}
\displaystyle \text{if } k+m=2(q+q'+s) = n-4.
\end{medsize}
\end{cases}\]
We define
\begin{equation}\label{eq:mHs23}
m^{H,s,2}_{qq'} := \frac{1}{2} N_0^{\theta_\frac{k+m}{2}}\left[(m^{H,s,2}_{qq'})' + (m^{H,s,2}_{q'q})'\right]
\end{equation}
where $N_0 \in \N$ is taken to be large enough; for example, $N_0 = 10^{10}$ suffices. Clearly, $m^{H,s,2}_{qq'}=0$ when $k+m = n-4$. Then, by \eqref{eq:I_2},
\begin{equation}\label{eq:I_2H}
I_{2,\ep}\left[\check{H}^{(k)}_{q},\check{H}^{(m)}_{q'}\right] = \ep^{k+m} \left[\frac{|\log\ep|}{N_0}\right]^{\theta_{\frac{k+m}{2}}} m^{H,s,2}_{qq'} \bla P^{(s)}_q,P^{(s)}_{q'} \bra + O(\ep^{n-4}).
\end{equation}

For the $I_{3,\ep}[\check{H}^{(k)}_{q},\check{H}^{(m)}_{q'}]$ term, we need to evaluate
\begin{equation}\label{eq:L_1HZ0}
L_1[\hat{H}_q^{(k)}]Z=-\frac{n-4}{2}L_1[\hat{H}_q^{(k)}] (1+r^2)^{-\frac{n-4}{2}}+(n-4)L_1[\hat{H}_q^{(k)}](1+r^2)^{-\frac{n-2}{2}}.
\end{equation}
We have \eqref{eq:L_1Hw} and
\begin{equation}\label{eq:L_1HZ01}
L_1[\hat{H}^{(k)}_q](1+r^2)^{-\frac{n-2}{2}} =- \frac{\ka_s}{4(n-1)}\sum_{i=1}^{q+3} b'_i(n,k-2,q-1)(1+r^2)^{-\frac{n+8-2i}{2}}P_q^{(s)},
\end{equation}
where $b'_i(n,k-2,q-1)$ is the number appearing in Definition \ref{linearized eqn RHS}. It follows that
\begin{align*}
&\ \int_{B^n(0,\msfd_0\ep^{-1})} \Psi[\check{H}^{(k)}_{q}]L_1[\check{H}^{(m)}_{q'}]Z dy \\
&= \ep^{k+m} \left[\frac{(n-4)\ka_s}{4(n-1)}\right]^2 \sum_{i=1}^{q'+3}\sum_{j=1}^{q+2} \Gamma_j \left[-b'_i \int_0^{\msfd_0\ep^{-1}} \frac{r^{n-1+2s}}{(1+r^2)^{n+4-i-j}} dr \right. \\
&\hspace{160pt} \left. + \frac{n-4}{2}b_i \int_0^{\msfd_0\ep^{-1}} \frac{r^{n-1+2s}}{(1+r^2)^{n+3-i-j}} dr\right] \bla P^{(s)}_q,P^{(s)}_{q'} \bra\\
&= \ep^{k+m}|\log\ep|^{\theta_{\frac{k+m}{2}}} (m^{H,s,3}_{qq'})' \bla P^{(s)}_q,P^{(s)}_{q'} \bra + O(\ep^{n-4}),
\end{align*}
where $b_i = b_i(n,m-2,q'-1)$, $b'_i = b'_i(n,m-2,q'-1)$, $\Gamma_j = \Gamma_j(n,k-2,q-1)$, and
\[(m^{H,s,3}_{qq'})' := \begin{cases}
\begin{medsize}
\displaystyle -\frac{1}{2} \left[\frac{(n-4)\ka_s}{4(n-1)}\right]^2 \sum_{i=1}^{q'+3}\sum_{j=1}^{q+2}\Gamma_j \left[\frac{n-2s-2i-2j+6}{n+3-i-j}b'_i-(n-4)b_i\right] \mci_{n+3-i-j}^{n-1+2s}
\end{medsize} \\
\hfill \begin{medsize}
\displaystyle \text{if } k+m=2(q+q'+s) < n-4,
\end{medsize} \\
\begin{medsize}
\displaystyle \frac{n-4}{2} \left[\frac{(n-4)\ka_s}{4(n-1)}\right]^2 \Gamma_{q+2}b_{q'+3}
\end{medsize}
\hfill \begin{medsize}
\displaystyle \text{if } k+m=2(q+q'+s) = n-4.
\end{medsize}
\end{cases}\]

If we define $m^{H,s,3}_{qq'}$ as in \eqref{eq:mHs23}, then by \eqref{eq:I_3},
\begin{equation}\label{eq:I_3H}
I_{3,\ep}\left[\check{H}^{(k)}_{q},\check{H}^{(m)}_{q'}\right] = \ep^{k+m} \left[\frac{|\log\ep|}{N_0}\right]^{\theta_{\frac{k+m}{2}}} m^{H,s,3}_{qq'} \bla P^{(s)}_q,P^{(s)}_{q'} \bra + O(\ep^{n-4}).
\end{equation}

Adding up \eqref{eq:I_1J_1}, \eqref{eq:J_1 H}, \eqref{eq:I_2H}, and \eqref{eq:I_3H}, we arrive at
$$
(I_{1,\ep}+I_{2,\ep}+I_{3,\ep})\left[\check{H}_q^{(k)},\check{H}_{q'}^{(m)}\right] = \ep^{k+m} \left[\frac{|\log\ep|}{N_0}\right]^{\theta_{\frac{k+m}{2}}} m^{H,s}_{qq'} \bla P^{(s)}_q,P^{(s)}_{q'} \bra + O(\ep^{n-4}),
$$
where $m^{H,s}_{qq'} := m^{H,s,1}_{qq'} + m^{H,s,2}_{qq'} + m^{H,s,3}_{qq'}$. Using Mathematica, we observe that matrices $(m^{H,s}_{qq'})$ are positive-definite for all $s=2,\ldots, d-2$ when $n\le 32$. In addition, $(m^{H,2}_{qq'})$ has a negative eigenvalue when $n\ge 33$.

Following the rest of the proof of Lemma \ref{lemma of case 1}, we complete the proof.
\end{proof}
\begin{rmk}
In the Yamabe case, the Pohozaev quadratic form on the subspace $\oplus_{k=2}^d \mcd_k$ fails to be positive for $n \ge 25$, as shown in \cite[Proposition A.8]{KMS}.
In the proof of Lemma \ref{lemma of case 1}, we saw that the same phenomenon happens for the $Q^{(4)}$-curvature problem \eqref{eq:main4}.
Wei and Zhao \cite{WZ} implicitly used this fact to construct blowup examples for \eqref{eq:main4} provided $n \ge 25$. \hfill $\diamond$
\end{rmk}

\section{Compactness of the Constant $Q^{(6)}$-curvature Problem}\label{sec:comp6}
Let $(M,g)$ be a smooth compact Riemannian manifold of dimension $n \ge 3$ and $n\neq 4$.
Let $\rm_g$, $\ricci_g$, $A_g$, and $W_g:=\rm_g-A_g\KN g$ be, the Riemann curvature tensor on $(M,g)$, the Ricci curvature tensor, the Schouten tensor, and the Weyl tensor, respectively.
Besides, let $C_g$ and $B_g$ be the Cotton tensor and the Bach tensor defined as
\begin{align*}
(C_g)_{ijk}&:= (A_g)_{ij;k}-(A_g)_{ik;j};\\
(B_g)_{ij}&:=(C_g)_{ijk;l}g^{kl}+(W_g)_{kijl}(A_g)^{kl},
\end{align*}
respectively, where $(A_g)^{kl}:=(A_g)_{ij}g^{ki}g^{lj}$ and $g^{ij}$ is the inverse of $g_{ij}$.

Let
\begin{align}
T_2&:=(n-2)\sigma_1(A_g)g-8A_g; \label{eq:T2} \\
T_4&:=(n-6)\Delta_g\sigma_1(A_g)g-\frac{16}{n-4}B_g \nonumber \\
&\ -\frac{3n^2-12n-4}{4}\sigma_1(A_g)^2g+4(n-4)|A_g|^2_g g+8(n-2)\sigma_1(A_g)A_g-48A_g^2; \label{eq:T4} \\
v_6&:=-\frac{1}{8}\sigma_3(A_g)-\frac{1}{24(n-4)}(A_g,B_g)_g, \nonumber
\end{align}
where $|A_g|^2_g:=(A_g,A_g)_g=(A_g)^{ij}(A_g)_{ij}$, and $(A_g^2)_{ij}:=(A_g)_{il}g^{kl}(A_g)_{kj}$. Also, $\sigma_k(A_g)$ denotes the $k$-th symmetric function of the eigenvalues of $A_g$ and $\Delta_g = \diver_g \nabla_g$ is the Laplace-Beltrami operator.

By Juhl's formula \cite[Theorem 10.2]{J}, the sixth-order $Q$-curvature $Q_g^{(6)}$ and the sixth-order GJMS operator $P_g^{(6)}$ are then defined as follows:
\begin{equation}\label{eq:Q6}
\begin{aligned}
Q_g^{(6)} &:= \Delta_g^2\sigma_1(A_g)-\frac{n+2}{2}\Delta_g(\sigma_1(A_g)^2)+4\Delta_g|A_g|^2_g+8\diver_g\left\{A_g(\nabla_g \sigma_1(A_g),\cdot)\right\}\\
&\ -3!2^6v_6-\frac{n-6}{2}\sigma_1(A_g)\Delta_g\sigma_1(A_g) -4(n-6)\sigma_1(A_g)|A_g|^2_g +\frac{(n-6)(n+6)}{4}\sigma_1(A_g)^3,
\end{aligned}
\end{equation}
and
\begin{equation}\label{eq:P6}
\begin{aligned}
P_g^{(6)}u &:= \Delta_g^3u+\frac{n-2}{2}\Delta_g(\sigma_1(A_g)\Delta_gu) - \frac{n-6}{2} Q_g^{(6)}u \\
&\ -\Delta_g\diver_g\left\{T_2(\nabla_g u,\cdot)\right\}-\diver_g\left\{T_2(\nabla_g \Delta_gu,\cdot)\right\}-\diver_g\left\{T_4(\nabla_g u,\cdot)\right\}.
\end{aligned}
\end{equation}
It follows immediately from \eqref{eq:P6} that $P_g^{(6)}$ is self-adjoint and the constant $Q^{(6)}$-curvature problem \eqref{eq:main6} has a variational structure: A solution to \eqref{eq:main6} is a positive critical point of the energy functional
\begin{equation}\label{eq:mcf}
\mcf_g(u) := \frac{1}{2} \int_M u (-P_g^{(6)}u) dv_g - \frac{n-6}{2n}\mfc_6(n) \int_M |u|^{\frac{2n}{n-6}} dv_g \quad \text{for } u \in \dot{H}^3(\R^n).
\end{equation}
Also, as pointed out in \cite{CH}, there holds that
\begin{equation}\label{eq:mcf2}
\begin{aligned}
&\ -\int_{\R^n} \(uP_g^{(6)}u + |\nabla_g\Delta_g u|_g^2\) dv_g\\
&=\int_{\R^n} \left[-2T_2(\nabla_g\Delta_gu,\nabla_g u)-T_4(\nabla_g u,\nabla_g u)-\frac{n-2}{4(n-1)}R_g(\Delta_gu)^2+\frac{n-6}{2}Q^{(6)}_g u^2\right] dv_g.
\end{aligned}
\end{equation}
We will extensively use this structure in Section \ref{sec:noncomp6}.

\subsection{Expansions of curvatures}\label{subsec:curv6}
Adapting the ideas in Section \ref{sec:curv}, we carry out expansions of several geometric quantities such as the Schouten tensor, the Weyl tensor, the Cotton tensor, and the Bach tensor in conformal normal coordinates.
Then, by exploiting the explicit expression of $Q_g^{(6)}$ and $P_g^{(6)}$ in \eqref{eq:Q6}--\eqref{eq:P6}, we expand them.

\medskip
The proofs of Lemmas \ref{lemma:diver}--\ref{Schouten tensor}, Lemmas \ref{Weyl curva}--\ref{Bach tensor}, and Lemma \ref{T2T4} are straightforward. We skip them.
\begin{lemma}\label{lemma:diver}
It holds that
\begin{align}
(\diver_g A_g)_{i}&=(\tr_g A_g)_{,i}; \label{eq:diver schouten 0} \\
(W_g)_{ijkp;q}g^{pq}&=(n-3)(C_g)_{kji}; \label{eq:cor weyl} \\
(\diver_gB_g)_j&=(n-4)A_g^{ik}[(A_g)_{ki;j}-(A_g)_{kj;i}]. \label{eq:diver bach 0}
\end{align}
\end{lemma}

\begin{lemma}
Let $g=\exp(h)$, $\tr \,h=0$, $\Gamma_{ij}^k$ be a Christoffel symbol on $(M,g)$, and $\rm_g$ be the $(0,4)$-Riemann curvature tensor. Then
\begin{align*}
\Gamma_{ij}^k&= \Dot{\Gamma}_{ij}^k[h]+O(|h||\pa h|);\\
\rm_g&= \Dot{\rm}[h]+O(|h||\pa^2h|+|\pa h|^2),
\end{align*}
where
\begin{align*}
2\Dot{\Gamma}_{ij}^k[h] &:= h_{ik,j}+h_{jk,i}-h_{ij,k};\\
2\Dot{\rm}_{ijkl}[h] &:= 2\Dot{\Gamma}_{kj,i}^l[h]-2\Dot{\Gamma}_{ki,j}^l[h]=h_{lj,ki}+h_{ik,lj}-h_{jk,li}-h_{li,kj}.
\end{align*}
\end{lemma}

\begin{lemma}\label{Schouten tensor}
Let $g=\exp(h)$ and $\tr \,h=0$. Then
$$
A_{g}=\Dot{A}[h]+\Ddot{A}[h,h]+O(|h|^2|\pa^2h|+|h||\pa h|^2),
$$
where
\begin{align*}
\Dot{A}_{ij}[h] &:= \frac{1}{n-2}\Dot{\ricci}_{ij}[h]-\frac{1}{2(n-2)(n-1)}\Dot{R}[h]\delta_{ij};\\
\Ddot{A}_{ij}[h,h] &:= \frac{1}{n-2}\Ddot{\ricci}_{ij}[h,h] - \frac{1}{2(n-2)(n-1)}(\Ddot{R}[h,h]\delta_{ij}+\Dot{R}[h]h_{ij}).
\end{align*}
\end{lemma}

\begin{cor}\label{cor:diver schouten}
Under the same assumption, we have
\[\Dot{\Gamma}_{ii}^j[h]=\delta_jh,\quad \Dot{\Gamma}_{ik}^l[h]+\Dot{\Gamma}_{il}^k[h]=h_{kl,i},\quad \Dot{\rm}_{lijl}[h]=\Dot{\ricci}_{ij}[h].\]
From the definition of $A_g$, it readily follows that
\begin{align}
\Ddot{R}[h,h]&=2(n-1)(\tr\Ddot{A}[h,h]-\Dot{A}[h]\cdot h); \label{eq:DdotR} \\
\Dot{\ricci}_{ij}[h]&=(n-2)\Dot{A}_{ij}[h]+\tr\Dot{A}[h]\delta_{ij}. \nonumber
\end{align}
Moreover, there holds that
\begin{equation}\label{eq:diver schouten}
\begin{aligned}
\delta_i\Dot{A}[h] &= \tr\Dot{A}_{,i}[h];\\
\delta_i\Ddot{A}[h,h] &= (\tr\Ddot{A}[h,h]-\Dot{A}[h]\cdot h)_{,i} + h_{kl}\Dot{A}_{ik,l}[h]+\Dot{\Gamma}_{ll}^k[h]\Dot{A}_{ik}[h]+\Dot{\Gamma}_{ki}^l[h]\Dot{A}_{kl}[h].
\end{aligned}
\end{equation}
\end{cor}
\begin{proof}
We verify \eqref{eq:diver schouten} only. If we write
\[\diver_gA_g:=\Dot{\diver_gA}+\Ddot{\diver_gA}+O(|h|^2|\pa^3h|+|h||\pa h||\pa^2h|+ |\pa h|^3),\]
then
\begin{align*}
(\Dot{\diver_gA})_i &= \delta_i\Dot{A};\\
(\Ddot{\diver_gA})_i &= \delta_i\Ddot{A}-h_{jl}\Dot{A}_{ij,l}-\Dot{\Gamma}_{ji}^k\Dot{A}_{kj} -\Dot{\Gamma}_{jj}^k\Dot{A}_{ik}.
\end{align*}
Thus, \eqref{eq:diver schouten} holds as a consequence of \eqref{eq:diver schouten 0} and $\tr A_g = \frac{1}{2(n-1)}R$.
\end{proof}

\begin{lemma}\label{Weyl curva}
Let $g=\exp(h)$ and $\tr \,h=0$. Then
$$
W_g= \Dot{W}[h]+O(|h||\pa^2h|+|\pa h|^2),
$$
where $\delta$ denotes the Kronecker delta and
$$
\Dot{W}[h]:=\Dot{\rm}[h]-\Dot{A}[h]\KN\delta.
$$
Also, by \eqref{eq:cor weyl}, there holds that
$$
\Dot{W}_{ijkl,l}[h]=(n-3)(\Dot{A}_{kj,i}[h]-\Dot{A}_{ki,j}[h]).
$$
\end{lemma}

\begin{lemma}\label{Cotton tensor}
Let $g=\exp(h)$ and $\tr \,h=0$. Then
$$
C_{g}=\Dot{C}[h]+\Ddot{C}[h,h]+O(|h|^2|\pa^3h|+|h||\pa h||\pa^2h|+ |\pa h|^3),
$$
where
\begin{align*}
\Dot{C}_{ijk}[h] &:= \Dot{A}_{ij,k}[h]-\Dot{A}_{ik,j}[h];\\
\Ddot{C}_{ijk}[h,h] &:= \Ddot{A}_{ij,k}[h,h]-\Ddot{A}_{ik,j}[h,h] -\Dot{\Gamma}_{ik}^l[h]\Dot{A}_{lj}[h]+\Dot{\Gamma}_{ij}^l[h]\Dot{A}_{lk}[h].
\end{align*}
\end{lemma}

\begin{lemma}\label{Bach tensor}
Let $g=\exp(h)$ and $\tr \,h=0$. Then
$$
B_{g}=\Dot{B}[h]+\Ddot{B}[h,h]+O(|h|^2|\pa^4h|+|h||\pa h||\pa^3h|+ |h||\pa^2h|^2 + |\pa h|^2|\pa^2h|),
$$
where
\begin{align*}
\Dot{B}_{ij}[h] &:= \Delta \Dot{A}_{ij}[h]-\Dot{A}_{il,jl}[h];\\
\Ddot{B}_{ij}[h,h] &:= \Ddot{C}_{ijk,k}[h,h] + \Dot{W}_{kijl}[h]\Dot{A}_{kl}[h] -\Dot{\Gamma}_{il}^k[h]\Dot{C}_{kjl}[h]-\Dot{\Gamma}_{jl}^k[h]\Dot{C}_{ikl}[h]-\Dot{\Gamma}_{ll}^k[h]\Dot{C}_{ijk}[h] \\
&\ -h_{kl}(\Dot{A}_{ij,kl}[h]-\Dot{A}_{ik,jl}[h]).
\end{align*}
\end{lemma}

\begin{cor}
Under the same assumption, we have $\Dot{B}_{ij}[h]=\Dot{B}_{ji}[h]$ and $\Ddot{B}_{ij}[h,h]=\Ddot{B}_{ji}[h,h]$. Moreover,
\begin{align}
\Dot{B}_{ij}[h] &= \Delta\Dot{A}_{ij}[h]-\tr\Dot{A}_{,ij}[h] \label{eq:dot bach}\\
&= \frac{1}{n-2}\Delta \Dot{\ricci}_{ij}[h] - \frac{1}{2(n-2)(n-1)}\Delta\Dot{R}[h]\delta_{ij}-\frac{1}{2(n-1)}\Dot{R}_{,ij}[h]; \nonumber \\
\Ddot{B}_{ij}[h,h] &= \Delta\Ddot{A}_{ij}[h,h]-(\tr\Ddot{A}[h,h]-\Dot{A}[h]\cdot h)_{,ij} \nonumber \\
&\ - \Dot{A}[h]\cdot\Dot{A}[h]\delta_{ij}-2\tr\Dot{A}[h]\Dot{A}_{ij}[h]-(n-4)\Dot{A}_{il}[h]\Dot{A}_{lj}[h] \nonumber \\
&\ -(h_{kl}\Dot{A}_{ij,k}[h])_{,l}-h_{kl,ij}\Dot{A}_{kl}[h] +2\Dot{\Gamma}_{ij,k}^l[h]\Dot{A}_{kl}[h]\label{eq:ddot bach} \\
&\ -(\Dot{\Gamma}_{ik,k}^l[h]\Dot{A}_{lj}[h]+\Dot{\Gamma}_{jk,k}^l[h]\Dot{A}_{li}[h]) \nonumber \\
&\ +\Dot{\Gamma}_{ij}^l[h]\tr\Dot{A}_{,l}[h]-2(\Dot{\Gamma}_{ik}^l[h]\Dot{A}_{jl,k}[h] + \Dot{\Gamma}_{jk}^l[h]\Dot{A}_{il,k}[h]) \nonumber.
\end{align}
By the identity $\tr_g B_g=0$, we have
\begin{equation}\label{eq:trace bach}
\begin{aligned}
\tr \Dot{B}[h] &= 0;\\
\tr \Ddot{B}[h,h] &=h\cdot \Dot{B}[h]=h\cdot \Delta\Dot{A}[h]-h_{ij}\tr\Dot{A}_{,ij}[h].
\end{aligned}
\end{equation}
Also, by \eqref{eq:diver bach 0}, there holds that
\begin{align*}
\delta_i\Dot{B}[h] &= 0;\\
\delta_i\Ddot{B}[h,h] &= (n-4)\Dot{A}_{kl}[h] (\Dot{A}_{kl,i}[h]-\Dot{A}_{ki,l}[h]) + h_{kl}\Dot{B}_{ik,l}[h]+\Dot{\Gamma}_{ll}^k[h]\Dot{B}_{ik}[h]+\Dot{\Gamma}_{ki}^l[h]\Dot{B}_{kl}[h].
\end{align*}
\end{cor}
\begin{proof}
Equation \eqref{eq:dot bach} is a direct corollary of \eqref{eq:diver schouten}.

Let us prove \eqref{eq:ddot bach}. We see from Lemma \ref{Weyl curva} that
\begin{equation}\label{eq:ddot bach 1}
\Dot{W}_{kijl}\Dot{A}_{kl} = (\Dot{\Gamma}_{ji,k}^l-\Dot{\Gamma}_{jk,i}^l)\Dot{A}_{kl} -\Dot{A}\cdot\Dot{A}\delta_{ij}-\tr\Dot{A}\Dot{A}_{ij}+2\Dot{A}_{il}\Dot{A}_{lj}.
\end{equation}
Combining \eqref{eq:ddot bach 1} with Lemmas \ref{Cotton tensor} and \ref{Bach tensor}, we obtain that
\begin{equation}\label{eq:ddot bach 2}
\begin{aligned}
\Ddot{B}_{ij}&=\Delta\Ddot{A}_{ij}-\Ddot{A}_{il,jl}-h_{kl}(\Dot{A}_{ij,kl}-\Dot{A}_{ik,jl})\\
&\ -\Dot{A}\cdot\Dot{A}\delta_{ij}-\tr\Dot{A}\Dot{A}_{ij} +2\Dot{A}_{il}\Dot{A}_{lj}-\Dot{\Gamma}_{ik,k}^l\Dot{A}_{lj}+(2\Dot{\Gamma}_{ij,k}^l-\Dot{\Gamma}_{jk,i}^l)\Dot{A}_{kl}\\
&\ +\Dot{\Gamma}_{ij}^l\delta_l\Dot{A}-\Dot{\Gamma}_{ik}^l(2\Dot{A}_{jl,k} -\Dot{A}_{kl,j})+\Dot{A}_{il,k}(h_{jk,l}-h_{jl,k})+\delta_kh(\Dot{A}_{ik,j}-\Dot{A}_{ij,k}).
\end{aligned}
\end{equation}
Putting \eqref{eq:ddot bach 2} together with \eqref{eq:diver schouten}, we can derive \eqref{eq:ddot bach}.
\end{proof}

We now examine the expansions of the tensors $T_2$ and $T_4$ given in \eqref{eq:T2}--\eqref{eq:T4}, along with their traces and divergences.
\begin{lemma}\label{T2T4}
Let $g=\exp(h)$ and $\tr \,h=0$. Then
\begin{align*}
(\Dot{T_2})_{ij}[h] &:= -8\Dot{A}_{ij}[h]+(n-2)\tr\Dot{A}[h]\delta_{ij}\\
&= -\frac{8}{n-2}\Dot{\ricci}_{ij}[h] +\frac{n^2-4n+12}{2(n-2)(n-1)}\Dot{R}[h]\delta_{ij};\\
(\Ddot{T_2})_{ij}[h,h] &:= -8\Ddot{A}_{ij}[h,h]+(n-2)[(\tr\Ddot{A}[h,h]-\Dot{A}[h]\cdot h)\delta_{ij}+\tr\Dot{A}[h]h_{ij}]\\
&=-\frac{8}{n-2}\Ddot{\ricci}_{ij}[h,h] +\frac{n^2-4n+12}{2(n-2)(n-1)}(\Ddot{R}[h,h]\delta_{ij}+\Dot{R}[h]h_{ij})
\end{align*}
and
\begin{align*}
(\Dot{T_4})_{ij}[h] &:= -\frac{16}{n-4}\Dot{B}_{ij}[h]+(n-6)\Delta\tr\Dot{A}[h]\delta_{ij} \\
&= \begin{medsize}
\displaystyle -\frac{16}{(n-4)(n-2)}\Delta \Dot{\ricci}_{ij}[h] +\frac{n^3-12n^2+44n-32}{2(n-4)(n-2)(n-1)}\Delta\Dot{R}[h]\delta_{ij} +\frac{8}{(n-4)(n-1)}\pa^2_{ij}\Dot{R}[h];
\end{medsize} \\
(\Ddot{T_4})_{ij}[h,h] &:= -\frac{16}{n-4}\Ddot{B}_{ij}[h,h]+(n-6)\Delta\tr\Dot{A}[h]h_{ij} + 8(n-2)\tr\Dot{A}[h]\Dot{A}_{ij}[h]-48\Dot{A}_{il}[h]\Dot{A}_{lj}[h] \\
&\ +\left[(n-6)\left\{\Delta(\tr\Ddot{A}[h,h]-\Dot{A}[h]\cdot h)-(h_{kl}\tr\Dot{A}_{,l}[h])_{,k}\right\} -\frac{3n^2-12n-4}{4}(\tr\Dot{A}[h])^2 \right. \\
&\hspace{280pt} \left. +4(n-4)\Dot{A}[h]\cdot\Dot{A}[h]\right]\delta_{ij}.
\end{align*}
\end{lemma}

\begin{cor}\label{cor:t2t4}
Under the same assumption, we have
\begin{equation}\label{eq:T2T4 trace}
\begin{aligned}
\tr\, \Dot{T_2}[h] &= (n^2-2n-8)\tr\Dot{A}[h];\\
\tr\, \Ddot{T_2}[h,h] &= (n^2-2n-8)(\tr\Ddot{A}[h,h]-\Dot{A}[h]\cdot h)-8\Dot{A}[h]\cdot h;\\
\tr\, \Dot{T_4}[h] &= (n-6)n\Delta\tr\Dot{A}[h];\\
\tr\, \Ddot{T_4}[h,h] &= (n-6)n[\Delta(\tr\Ddot{A}[h,h]-\Dot{A}[h]\cdot h)-(h_{kl}\tr\Dot{A}_{,l}[h])_{,k}] -\frac{16}{n-4}(h\cdot \Delta\Dot{A}[h]-h_{ij}\tr\Dot{A}_{,ij}[h])\\
&\ -\frac{1}{4}(3n^3-12n^2-36n+64)(\tr\Dot{A}[h])^2+4(n^2-4n-12)\Dot{A}[h]\cdot\Dot{A}[h].
\end{aligned}
\end{equation}
Moreover, it holds that
\begin{equation}\label{eq:T2T4 diver}
\begin{aligned}
(\Dot{\diver_g T_2})_i[h] &= (n-10)\tr\Dot{A}_{,i}[h]; \\
(\Ddot{\diver_g T_2})_i[h,h] &= (n-10)(\tr\Ddot{A}[h,h]-\Dot{A}[h]\cdot h)_{,i}; \\
(\Dot{\diver_g T_4})_i[h] &=(n-6)\Delta\tr\Dot{A}_{,i}[h]; \\
(\Ddot{\diver_g T_4})_i[h,h] &= \begin{medsize}
\displaystyle -32\Dot{A}_{jk}[h]\Dot{A}_{ij,k}[h]+8(n-8)\Dot{A}_{ij}[h]\tr\Dot{A}_{,j}[h]
\end{medsize} \\
&\ \begin{medsize}
\displaystyle +\left[(n-6)\left\{\Delta (\tr\Ddot{A}[h,h]-\Dot{A}[h]\cdot h)-(h_{kj}\tr\Dot{A}_{,k}[h])_{,j}\right\} -\frac{3n^2-28n+28}{4}(\tr\Dot{A}[h])^2\right.
\end{medsize} \\
&\ \begin{medsize}
\displaystyle \hspace{240pt} \left.+4(n-6)\Dot{A}[h]\cdot\Dot{A}[h]\right]_{,i}.
\end{medsize}
\end{aligned}
\end{equation}
\end{cor}
\begin{proof}
The trace formulas in \eqref{eq:T2T4 trace} are derived directly from Lemma \ref{T2T4} and equation \eqref{eq:trace bach}.

By virtue of Lemma \ref{lemma:diver} and the identities $\delta_i\Dot{\ricci} = \frac{1}{2}\pa_i \Dot{R}$ and $\delta_i\Dot{T_2} = \frac{n-10}{2(n-1)}\pa_i \Dot{R}$, we observe
\begin{align*}
(\diver_gT_2)_{j}&=(n-10)(\tr_g A_g)_{,j};\\
(\diver_gT_4)_j&=-32A_g^{lk}(A_g)_{jl;k}+8(n-8)g_{kj}A_g^{lk}(\tr_g A_g)_{,l}\\
&\ +\left[(n-6)\Delta_g\tr_g A_g-\frac{3n^2-28n+28}{4}(\tr_g A_g)^2+4(n-6)|A_g|^2_g \right]_{,j}.
\end{align*}
Then, we deduce the divergence formulas in \eqref{eq:T2T4 diver} as in the proof of Corollary \ref{cor:diver schouten}.
\end{proof}

\begin{rmk}
Recall the definition of the Paneitz operator $P_g^{(4)}$:
\[P_g^{(4)}u = \Delta_g^2u + \diver_g\left\{(4A_g - (n-2)\sigma_1(A_g)g)(\nabla_g u,\cdot)\right\} + \frac{n-4}{2} Q_g^{(4)}u.\]
The divergence of the tensor $4A_g - (n-2)\sigma_1(A_g)g$ can be written as the gradient of a scalar function $-(n-6)\tr_gA_g$.
In contrast, the divergence of the tensor $T_4$ in the definition of the GJMS operator $P_g^{(6)}$ cannot be written as the gradient of a scalar function.
It is one of the structural differences between $P_g^{(4)}$ and $P_g^{(6)}$. \hfill $\diamond$
\end{rmk}

For a symmetric $2$-tensor $T$ and a smooth function $u$ on $M$, we have
\[\diver_g\left\{T(\nabla_g u,\cdot)\right\}=g^{ik}g^{jl}T_{ij}\nabla_k\nabla_lu+(\diver_gT)_{j}g^{jl}\pa_lu\]
and
\[\nabla_k\nabla_lu = \pa^2_{kl}u-\Dot{\Gamma}_{kl}^i\pa_iu+O(|h||\pa h||\nabla u|).\]
Using these formulas, we obtain the first two terms in the expansion of $\diver_g\left\{T(\nabla_g u,\cdot)\right\}$:
\begin{equation}\label{eq:diver T nabla u}
\begin{aligned}
\Dot{\diver_g}\left\{T(\nabla_g u,\cdot)\right\}[h] &= \Dot{T}_{ij}[h]\pa^2_{ij}u+\delta_i\Dot{T}[h]\pa_iu=\pa_j(\Dot{T}_{ij}[h]\pa_{i}u);\\
\Ddot{\diver_g}\left\{T(\nabla_g u,\cdot)\right\}[h,h] &= (\Ddot{T}_{ij}[h,h]-2\Dot{T}_{il}[h]h_{lj})\pa^2_{ij}u \\
&\ + [(\Ddot{\diver_gT})_i[h,h]-\delta_j\Dot{T}[h]h_{ij}-\Dot{T}_{jl}[h]\Dot{\Gamma}_{jl}^i[h]]\pa_iu.
\end{aligned}
\end{equation}
Especially, taking $T = g$, we see
\[\Delta_gu=\Delta u-\Dot{\diver_g}\left\{g(\nabla_g u,\cdot)\right\}[h]+O(|h|^2|\nabla^2u|+|h||\pa h||\nabla u|).\]

Employing \eqref{eq:Q6}--\eqref{eq:P6}, Lemma \ref{Bach tensor}, and the computations from the preceding paragraph, we derive the first- and second-order expansions of $Q_g^{(6)}$ and $P_g^{(6)}$. The proofs are omitted.
\begin{lemma}\label{Q6 curva}
Let $g=\exp(h)$ and $\tr \,h=0$. Then
$$
Q_g^{(6)}=\Dot{Q^{(6)}}[h]+\Ddot{Q^{(6)}}[h,h] + O\Bigg(\sum_{\substack{0 \le \alpha_1,\alpha_2,\alpha_3 \le 6 \\ \alpha_1+\alpha_2+\alpha_3=6}} |\pa^{\alpha_1}h||\pa^{\alpha_2}h||\pa^{\alpha_3}h|\Bigg),
$$
where
\begin{align*}
\Dot{Q^{(6)}}[h] &:= \Delta^2\tr \Dot{A}[h]=\frac{1}{2(n-1)}\Delta^2 \Dot{R}[h];\\
\Ddot{Q^{(6)}}[h,h] &:=\Delta^2 (\tr\Ddot{A}[h,h]-\Dot{A}[h]\cdot h)-[h_{ij}\Delta\tr\Dot{A}_{,i}[h]+\Delta (h_{ij}\tr\Dot{A}_{,i}[h])]_{,j}\\
&\ -\frac{3n-2}{2}\tr\Dot{A}[h]\Delta\tr\Dot{A}[h]-(n-6)\tr\Dot{A}_{,i}[h]\tr\Dot{A}_{,i}[h]\\
&\ +4\Delta (\Dot{A}[h]\cdot\Dot{A}[h])+\frac{16}{n-4}\Dot{A}_{ij}[h] (\Delta\Dot{A}_{ij}[h]-\tr\Dot{A}_{,ij}[h])+8\Dot{A}_{ij}[h]\tr\Dot{A}_{,ij}[h].
\end{align*}
\end{lemma}

\begin{lemma}\label{mce6}
Let $g=\exp(h)$ and $\tr \,h=0$. Then
\[\mce_g := P_g^{(6)}-\Delta_g^3=L_1[h]+L_2[h,h]+O\Bigg(\sum_{\beta=0}^4 \sum_{\substack{0 \le \alpha_1,\alpha_2,\alpha_3 \le 6-\beta \\ \alpha_1+\alpha_2+\alpha_3=6-\beta}} |\pa^{\alpha_1}h||\pa^{\alpha_2}h||\pa^{\alpha_3}h| \pa^{\beta}\Bigg),\]
where $L_1[h]$ and $L_2[h,h]$ are the fourth-order self-adjoint differential operators defined by
\begin{align}
L_1[h] &:= \frac{n-2}{2}\Delta(\tr\Dot{A}[h]\Delta) - \Delta\circ \Dot{\diver_g}\left\{T_2(\nabla_g,\cdot)\right\}[h] - \Dot{\diver_g}\left\{T_2(\nabla_g,\cdot)\right\}[h]\circ \Delta \label{eq:L1 Q6} \\
&\ -\Dot{\diver_g}\left\{T_4(\nabla_g,\cdot)\right\}[h]-\frac{n-6}{2}\Dot{Q^{(6)}}[h]; \nonumber \\
L_2[h,h] &:= \frac{n-2}{2}\left[\Delta((\tr\Ddot{A}[h,h]-\Dot{A}[h]\cdot h)\Delta) - (h_{ij}(\tr\Dot{A}[h]\Delta)_{,j})_{,i} - \Delta(\tr\Dot{A}[h](h_{ij}\pa_i)_{,j})\right] \nonumber \\
&\ - \Delta\circ \Ddot{\diver_g}\left\{T_2(\nabla_g,\cdot)\right\}[h,h] - \Ddot{\diver_g}\left\{T_2(\nabla_g,\cdot)\right\}[h,h]\circ \Delta \nonumber \\
&\ - \Ddot{\diver_g}\left\{T_4(\nabla_g,\cdot)\right\}[h,h] \label{eq:L2 Q6} \\
&\ + \Dot{\diver_g}\left\{g(\nabla_g,\cdot)\right\}[h]\circ \Dot{\diver_g}\left\{T_2(\nabla_g,\cdot)\right\}[h] + \Dot{\diver_g}\left\{T_2(\nabla_g,\cdot)\right\}[h]\circ \Dot{\diver_g}\left\{g(\nabla_g,\cdot)\right\}[h] \nonumber \\
&\ - \frac{n-6}{2}\Ddot{Q^{(6)}}[h,h]. \nonumber
\end{align}
\end{lemma}
\begin{rmk}
One may attempt to substitute the expansions of $T_2$, $T_4$, and $Q^{(6)}$ into the expressions of $L_1[h]$ and $L_2[h,h]$ to obtain a result analogous to Lemma \ref{mce}.
While we will conduct this procedure for $L_1[h]$ in Lemma \ref{lemma:Q6 L1 formula} and Corollary \ref{cor:Q6 spherical expansion}, adopting the same approach for $L_2[h,h]$ leads to overwhelming complexity.

To deduce an analogue of Proposition \ref{I_1}, we opt to simplify the integral $\int_{B^n(0,\msfd_0\ep^{-1})}ZL_2[h,h]w$ rather than $L_2[h,h]$ itself,
apply polar coordinates to the simplified expression, and then insert the expansions of $T_2$, $T_4$, and $Q^{(6)}$ into the resulting integral over the sphere $\S^{n-1}$. Refer to the proof of Proposition \ref{I_1 Q6}. \hfill $\diamond$
\end{rmk}

\subsection{Correction terms}
In this subsection, we explicitly solve the linearized equation \eqref{linearized eqn k,sQ6} to derive the correction terms.
We begin by writing $L_1[h]$ in terms of $\Dot{\ricci}$ and $\Dot{R}$, following the ideas in Section \ref{sec:curv}.
\begin{lemma}\label{lemma:Q6 L1 formula}
Let $g=\exp(h)$ and $\tr \,h=0$. Then
\begin{equation}\label{Q6 L1 formula}
\begin{aligned}
L_1[h]&=\frac{16}{n-2}\Dot{\ricci}_{ij}[h]\pa^2_{ij}\Delta -\frac{3n^2-12n+44}{4(n-2)(n-1)}\Dot{R}[h]\Delta^2\\
&\ +\frac{16}{n-2}\Dot{\ricci}_{ij,k}[h]\pa^3_{ijk} -\frac{3n^2-28n+60}{2(n-2)(n-1)} \Dot{R}_{,j}[h]\pa_j \Delta \\
&\ +\left[\frac{8}{n-4}\Delta\Dot{\ricci}_{ij}[h] -\frac{(n-8)(n-6)}{(n-4)(n-1)}\Dot{R}_{,ij}[h]\right]\pa^2_{ij} -\frac{3n^2-26n+72}{4(n-4)(n-1)}\Delta \Dot{R}[h]\Delta\\
&\ -\frac{n-8}{n-1}\Delta\Dot{R}_{,j}[h]\pa_j -\frac{n-6}{4(n-1)}\Delta^2 \Dot{R}[h].
\end{aligned}
\end{equation}
\end{lemma}
\begin{proof}
Applying \eqref{eq:L1 Q6}, \eqref{eq:diver T nabla u}, and Lemma \ref{Q6 curva}, we rewrite $L_1[h]$ as
\begin{equation}\label{Q6 L1 formula1}
\begin{aligned}
L_1[h] &= \frac{n-2}{2}\tr\Dot{A}\Delta^2-2(\Dot{T_2})_{ij}\pa^2_{ij}\Delta +\left[(n-2)(\tr\Dot{A})_{,j}-2\delta_j\Dot{T_2}\right]\pa_j \Delta -2(\Dot{T_2})_{ij,k}\pa^3_{ijk} \\
&\ +\frac{n-2}{2}\Delta\tr\Dot{A}\Delta -\left[(\Dot{T_4})_{ij}+\Delta (\Dot{T_2})_{ij}+2(\Dot{T_2})_{il,jl} \right]\pa^2_{ij}-\(\Delta \delta_j\Dot{T_2}+\delta_j \Dot{T_4}\)\pa_j \\
&\ -\frac{n-6}{2}\Delta^2\tr \Dot{A}.
\end{aligned}
\end{equation}
From Lemma \ref{T2T4}, \eqref{eq:T2T4 diver}, and $\tr\dot{A}=\frac{1}{2(n-1)}\dot{R}$, we obtain that
\begin{equation}\label{Q6 L1 formula2}
\begin{aligned}
&\ (\Dot{T_4})_{ij}+\Delta (\Dot{T_2})_{ij}+2(\Dot{T_2})_{il,jl}\\
&=-\frac{16}{(n-4)(n-2)}\Delta \Dot{\ricci}_{ij}+\frac{n^3-12n^2+44n-32}{2(n-4)(n-2)(n-1)}\Delta\Dot{R}\delta_{ij} +\frac{8}{(n-4)(n-1)}\pa^2_{ij}\Dot{R}\\
&\ -\frac{8}{n-2}\Delta\Dot{\ricci}_{ij} +\frac{n^2-4n+12}{2(n-2)(n-1)}\Delta\Dot{R}\delta_{ij} +\frac{n-10}{n-1}\pa^2_{ij} \Dot{R}.
\end{aligned}
\end{equation}
Putting \eqref{Q6 L1 formula2} into \eqref{Q6 L1 formula1}, and using Lemma \ref{T2T4} and \eqref{eq:T2T4 diver} once again, we derive \eqref{Q6 L1 formula}.
\end{proof}

Using \eqref{conformal normal}--\eqref{conformal normal coro}, we can further simplify $L_1[h]u$ for $u$ radial as follows. We omit the proof.
\begin{cor}\label{cor:Q6 spherical expansion}
If $u$ is radial, then
\begin{align*}
L_1[h]u
&\begin{medsize}
\displaystyle =-\frac{3n^2-12n+44}{4(n-2)(n-1)} \Dot{R}[h] u'''' - \left[\frac{3(n-2)}{2}\Dot{R}[h]+\frac{3n^2-28n+60}{2(n-2)(n-1)}\Dot{R}_{,i}[h]x_i\right] \frac{u'''}{r}
\end{medsize} \\
&\begin{medsize}
\displaystyle \ - \left[\alpha_n^1\Dot{R}[h]+\alpha_n^2\Dot{R}_{,i}[h]x_i +\frac{(n-8)(n-6)}{(n-4)(n-1)}\Dot{R}_{,ij}[h]x_ix_j +\frac{3n^2-26n+72}{4(n-4)(n-1)}r^2\Delta\Dot{R}[h]\right] \frac{u''}{r^2}
\end{medsize} \\
&\begin{medsize}
\displaystyle \ + \left[\alpha_n^1\Dot{R}[h]+\alpha_n^2\Dot{R}_{,i}[h]x_i +\frac{(n-8)(n-6)}{(n-4)(n-1)}\Dot{R}_{,ij}[h]x_ix_j -\beta_nr^2\Delta\Dot{R}[h]-\frac{n-8}{n-1}r^2\Delta\Dot{R}_{,i}[h]x_i\right] \frac{u'}{r^3}
\end{medsize} \\
&\begin{medsize}
\displaystyle \ -\frac{n-6}{4(n-1)}\Delta^2\Dot{R}[h]u.
\end{medsize}
\end{align*}
Here, $\alpha_n^1:=\frac{3 n^4- 33 n^3 + 100 n^2- 68 n + 144}{4(n-4)(n-2)}$, $\alpha_n^2:=\frac{3 n^3 - 40 n^2+ 140 n -48}{2(n-4)(n-2)}$, and $\beta_n:=\frac{3 n^3 - 25 n^2+ 10 n +152}{4(n-4)(n-1)}$.
\end{cor}

Similarly to Section \ref{sec:corr}, we consider the spherical harmonic decomposition of a homogeneous polynomial $H^{(k+2)}_{ij,ij} \in \mcp_k$ for $k=2,\ldots,K$, where $K=n-8$: There exist homogeneous harmonic polynomials $p^{(k-2s)} \in \mch_{k-2s}$ such that
\[H^{(k+2)}_{ij,ij}(x) = \sum_{s=0}^{\lfloor\frac{k-2}{2}\rfloor}r^{2s}p^{(k-2s)}(x),\]
where $r = |x|$. By Lemma \ref{orthogonal} and Lemma \ref{IBP}, $p^{(0)}$ and $p^{(1)}$ vanish.

By using Corollary \ref{cor:Q6 spherical expansion}, we decompose $L_1[H^{(k+2)}]u$ into
\[L_1[H^{(k+2)}]u=\sum_{s=0}^{[\frac{k-2}{2}]} \mfl_1[r^{2s}p^{(k-2s)}]u,\]
where
\begin{align*}
&\begin{medsize}
\displaystyle \ \mfl_1[r^{2s}p^{(k-2s)}]u
\end{medsize} \\
&\begin{medsize}
\displaystyle :=\left[-\frac{3n^2-12n+44}{4(n-2)(n-1)} r^{2s} u''''-\left\{\frac{3(n-2)}{2}+k\frac{3n^2-28n+60}{2(n-2)(n-1)}\right\}r^{2s-1}u'''\right.
\end{medsize} \\
&\begin{medsize}
\displaystyle -\left\{\alpha_n^1+k\alpha_n^2+k(k-1)\frac{(n-8)(n-6)}{(n-4)(n-1)} +2s(2k-2s+n-2)\frac{3n^2-26n+72}{4(n-4)(n-1)}\right\}r^{2s-2}u''
\end{medsize} \\
&\begin{medsize}
\displaystyle +\left\{\alpha_n^1+k\alpha_n^2+k(k-1)\frac{(n-8)(n-6)}{(n-4)(n-1)} -2s(2k-2s+n-2)\beta_n-2s(2k-2s+n-2)(k-2)\frac{n-8}{n-1}\right\}r^{2s-3}u'
\end{medsize} \\
&\begin{medsize}
\displaystyle \left.-2s(2k-2s+n-2)(2s-2)(2k-2s+n-4)\frac{n-6}{4(n-1)}r^{2s-4}u\right]p^{(k-2s)}.
\end{medsize}
\end{align*}
In particular,
\begin{align*}
&\ \mfl_1[r^{2s}p^{(k-2s)}](1+r^2)^{-\frac{n-6}{2}}\\
&=-\frac{n-6}{4(n-1)}\left[\msfc_1r^{2s-4}(1+r^2)^{-\frac{n-6}{2}}+\msfc_2r^{2s-2}(1+r^2)^{-\frac{n-4}{2}}\right.\\
&\hspace{100pt} \left. +\msfc_3r^{2s}(1+r^2)^{-\frac{n-2}{2}}+\msfc_4r^{2s+2}(1+r^2)^{-\frac{n}{2}}+\msfc_5r^{2s+4}(1+r^2)^{-\frac{n+2}{2}}\right] p^{(k-2s)};\\
&\ \mfl_1[r^{2s}p^{(k-2s)}](1+r^2)^{-\frac{n-4}{2}}\\
&=-\frac{1}{4(n-1)} \left[(n-6)\msfc_1r^{2s-4}(1+r^2)^{-\frac{n-4}{2}} +(n-4)\msfc_2r^{2s-2}(1+r^2)^{-\frac{n-2}{2}}\right.\\
&\hspace{40pt} \left. +(n-2)\msfc_3r^{2s}(1+r^2)^{-\frac{n}{2}} + n\msfc_4r^{2s+2}(1+r^2)^{-\frac{n+2}{2}} + (n+2)\msfc_5r^{2s+4}(1+r^2)^{-\frac{n+4}{2}}\right]p^{(k-2s)},
\end{align*}
where
\begin{align*}
\msfc_1(n,k,s)&:=2s(2k-2s+n-2)(2s-2)(2k-2s+n-4);\\
\msfc_2(n,k,s)&:=-2s(2k-2s+n-2)(4(n-8)k+3n^2-18n+8);\\
\msfc_3(n,k,s)&:=4(n-8)(n-6)k^2+2(n-6)(3n^2-12n-40)k \\
&\ +2s(2k-2s+n-2)(3n^2-26n+72) +3 n^4 - 12 n^3 - 44 n^2+ 176 n + 192;\\
\msfc_4(n,k)&:=-2(n-4)[(3n^2-28n+60)k+3(n^3-2n^2-4n+40)];\\
\msfc_5(n)&:=(n-4)n(3n^2-12n+44).
\end{align*}

\begin{defn}[Right-hand side of the linearized equation]\label{linearized eqn RHS Q6}
\

\noindent
\noindent We define two $(s+5)\times 1$ column vectors $\Vec{b}=(b_i)$ and $\Vec{b}'=(b'_i)$, where the elements $b_i=b_i(n,k,s) \in \R$ and $b'_i=b'_i(n,k,s) \in \R$ are defined by the relation
\begin{align*}
\mfl_1[r^{2s}p^{(k-2s)}](1+r^2)^{-\frac{n-6}{2}} &:=- \frac{n-6}{4(n-1)}\sum_{i=1}^{s+5}b_i(1+r^2)^{-\frac{n+8-2i}{2}}p^{(k-2s)};\\
\mfl_1[r^{2s}p^{(k-2s)}](1+r^2)^{-\frac{n-4}{2}} &:=- \frac{1}{4(n-1)}\sum_{i=1}^{s+5}b_i'(1+r^2)^{-\frac{n+10-2i}{2}}p^{(k-2s)}. \tag*{$\diamond$}
\end{align*}
\end{defn}
The vector $\Vec{b}$ is given by $b_1=0$, $b_2=0$,
\begin{align*}
b_3&=(-1)^{s+2}\msfc_5;\\
b_4&=(-1)^{s+1}\left[\binom{s+2}{1}\msfc_5+\msfc_4\right];\\
b_5&=(-1)^s\left[\binom{s+2}{2}\msfc_5+\binom{s+1}{1}\msfc_4+\msfc_3\right],
\end{align*}
and when $s\geq 1$,
$$
b_6=(-1)^{s-1}\left[\binom{s+2}{3}\msfc_5+\binom{s+1}{2}\msfc_4+\binom{s}{1}\msfc_3+\msfc_2\right],
$$
and when $s\geq 2$, for $i=7,\ldots, s+5$,
\[b_{i}=(-1)^{s+5-i} \left[\binom{s+2}{i-3}\msfc_5+\binom{s+1}{i-4}\msfc_4+\binom{s}{i-5}\msfc_3 +\binom{s-1}{i-6}\msfc_2+\binom{s-2}{i-7}\msfc_1\right].\]

The vector $\Vec{b}'$ is given by $b_1'=0$, $b'_2=0$,
\begin{align*}
b_3'&=(-1)^{s+2}(n+2)\msfc_5;\\
b_4'&=(-1)^{s+1}\left[\binom{s+2}{1}(n+2)\msfc_5+n\msfc_4\right];\\
b_5'&=(-1)^s\left[\binom{s+2}{2}(n+2)\msfc_5+\binom{s+1}{1}n\msfc_4+(n-2)\msfc_3\right],
\end{align*}
and when $s\geq 1$,
$$
b_6'=(-1)^{s-1}\left[\binom{s+2}{3}(n+2)\msfc_5+\binom{s+1}{2}n\msfc_4 +\binom{s}{1}(n-2)\msfc_3+(n-4)\msfc_2\right],
$$
and when $s\geq 2$, for $i=7,\ldots, s+5$,
\begin{align*}
b_{i}'&=(-1)^{s+5-i} \left[\binom{s+2}{i-3}(n+2)\msfc_5+\binom{s+1}{i-4}n\msfc_4 +\binom{s}{i-5}(n-2)\msfc_3 \right. \\
&\hspace{125pt} \left. +\binom{s-1}{i-6}(n-4)\msfc_2+\binom{s-2}{i-7}(n-6)\msfc_1\right].
\end{align*}

\medskip
Let $w=w_{1,0}=(1+|\cdot|^2)^{-(n-6)/2}$ be the normalized bubble (see \eqref{eq:bubble} with $\msfk=3$). We are now ready to find an explicit solution to the linearized equation
\begin{equation}\label{linearized eqn k,sQ6}
\Delta^3\Psi^{k,s}+\tmfc_6(n)w^{\frac{12}{n-6}}\Psi^{k,s} = -\mfl_1[r^{2s}p^{(k-2s)}]w,
\end{equation}
which is the key proposition of this subsection. Recall that $K=n-8$.
\begin{prop}\label{prop:PsiQ6}
Given $n\geq 12$, $k=2,\ldots,K-2$, and $s=0,\ldots, \lfloor\frac{k-2}{2}\rfloor$, we assume $\Delta p^{(k-2s)}=0$.
Then equation \eqref{linearized eqn k,sQ6} has a solution of the form
$$
\Psi^{k,s}=\frac{n-6}{4(n-1)}\sum_{j=1}^{s+3}\Gamma_j(1+r^2)^{-\frac{n-2-2j}{2}}p^{(k-2s)},
$$
where the coefficients $\Gamma_j=\Gamma_j(n,k,s)$ satisfy the same relations \eqref{eq:Gammanew} as in the $Q^{(4)}$ case.
\end{prop}
\begin{proof}
For $j=1,\ldots,s+3$, let $a=\frac{n-2-2j}{2}$ and $b=k-2s$. By Lemma \ref{lemma:newbasis}, we find
\begin{align*}
&\begin{medsize}
\displaystyle \ (\Delta^3+\tmfc_6(n)(1+r^2)^{-6})((1+r^2)^{-\frac{n-2-2j}{2}}p^{(k-2s)})
\end{medsize} \\
&\begin{medsize}
\displaystyle =(\tmfc_6(n)-(n-2j-2)(n-2j)(n-2j+2)(n-2j+4)(n-2j+6)(n-2j+8))(1+r^2)^{-\frac{n+10-2j}{2}}p^{(k-2s)}
\end{medsize} \\
&\begin{medsize}
\displaystyle \ -6(n-2j-2)(n-2j)(n-2j+2)(n-2j+4)(n-2j+6)(k-2s+j-2)(1+r^2)^{-\frac{n+8-2j}{2}}p^{(k-2s)}
\end{medsize} \\
&\begin{medsize}
\displaystyle \ -12(n-2j-2)(n-2j)(n-2j+2)(n-2j+4)(k-2s+j-1)(k-2s+j-2)(1+r^2)^{-\frac{n+6-2j}{2}}p^{(k-2s)}
\end{medsize} \\
&\begin{medsize}
\displaystyle \ -8(n-2j-2)(n-2j)(n-2j+2)(k-2s+j)(k-2s+j-1)(k-2s+j-2)(1+r^2)^{-\frac{n+4-2j}{2}}p^{(k-2s)}.
\end{medsize}
\end{align*}
Then we define an $(s+5)\times (s+3)$ matrix $A=(a_{i,j})$:
\[(\Delta^3+\tmfc_6(n)(1+r^2)^{-6})((1+r^2)^{-\frac{n-2-2j}{2}}p^{(k-2s)}) = \sum_{i=1}^{s+5}a_{i,j}(1+r^2)^{-\frac{n+8-2i}{2}}p^{(k-2s)}.\]
We have
\begin{align*}
a_{1,1}&=-6(n-4)(n-2)n(n+2)(n+4)(k-2s-1);\\
a_{2,1}&=-12(n-4)(n-2)n(n+2)(k-2s)(k-2s-1);\\
a_{3,1}&=-8(n-4)(n-2)n(k-2s+1)(k-2s)(k-2s-1),
\end{align*}
and for $j=2,\ldots,s+3$,
\begin{align*}
a_{j-1,j}&=\tmfc_6(n)-(n-2j-2)(n-2j)(n-2j+2)(n-2j+4)(n-2j+6)(n-2j+8);\\
a_{j,j}&=-6(n-2j-2)(n-2j)(n-2j+2)(n-2j+4)(n-2j+6)(k-2s+j-2);\\
a_{j+1,j}&=-12(n-2j-2)(n-2j)(n-2j+2)(n-2j+4)(k-2s+j-1)(k-2s+j-2);\\
a_{j+2,j}&=-8(n-2j-2)(n-2j)(n-2j+2)(k-2s+j)(k-2s+j-1)(k-2s+j-2).
\end{align*}
To prove the proposition, we need to solve the following overdetermined linear system:
\begin{equation}\label{eq:AGbQ6}
A\Gamma=\Vec{b},
\end{equation}
where $\Gamma=(\Gamma_j)$ is an $(s+3)\times 1$ column vector. Using a computer software such as Mathematica, we see that the following cancellation appears:
\begin{align*}
(k-2s-1)\Gamma_1&=2\Gamma_2;\\
(k-2s)\Gamma_2&=\frac{4(n-1)}{n-2}\Gamma_3,
\end{align*}
which leads the existence of a solution to \eqref{eq:AGbQ6}.

Following the proof of Proposition \ref{prop:Psi}, we can plug the above two equations back to \eqref{eq:AGbQ6} to establish the desired relations \eqref{eq:Gammanew}.
\end{proof}
\begin{rmk}\label{rmk:lin sol rel Q6}
Let us recall the linearized equations of the Yamabe problem, the $Q^{(4)}$-curvature and $Q^{(6)}$-curvature problem: For $\msfk=1,2,3$,
\[
\Delta^{\msfk}\Psi^{2\msfk,k,s} + (-1)^{\msfk-1}\tmfc_{2\msfk}(n)w^{\frac{4\msfk}{n-2\msfk}}\Psi^{2\msfk,k,s} = -\mfl_1^{(2\msfk)}[r^{2s}p^{(k-2s)}]w.
\]
Then we can write our solutions in the unified forms:
\begin{equation}\label{relation of Psi Q6 1}
\Psi^{2\msfk,k,s} = \frac{n-2\msfk}{4(n-1)} \sum_{j=1}^{s+3}\Gamma^{(2\msfk)}_j (1+r^2)^{-\frac{n+4-2\msfk-2j}{2}}p^{(k-2s)}.
\end{equation}
Moreover, the coefficients satisfy that
\begin{equation}\label{relation of Psi Q6 2}
\Gamma^{(2)}_j=\Gamma^{(4)}_j=\Gamma^{(6)}_j \quad \text{for } j=1,\dots, s+3.
\end{equation}
By \eqref{relation of Psi Q6 1}--\eqref{relation of Psi Q6 2}, we have the identities
\begin{equation}\label{relation of Psi Q6}
\frac{1}{n-2}\Psi^{2,k,s}=\frac{1}{n-4}(1+r^2)^{-1}\Psi^{4,k,s}=\frac{1}{n-6}(1+r^2)^{-2}\Psi^{6,k,s},
\end{equation}
generalizing \eqref{relation of Psi}.

On the other hand, suppose that $S=\msd V$ for some vector field $V$ on $\R^n$ (i.e., $S$ is in the image of the conformal Killing operator defined in \eqref{eq:cko}) and $x_jS_{ij}=0$.
We observe from Appendix \ref{sec:geoexp} that the function
\begin{equation}\label{relation of Psi Q6 3}
\Psi^{2\msfk} := -V_ix_i(n-2\msfk)(1+r^2)^{-\frac{n-2\msfk+2}{2}} + \frac{n-2\msfk}{2n}\delta V (1+r^2)^{-\frac{n-2\msfk}{2}}
\end{equation}
solves the linear equation
\[\Delta^{2\msfk} \Psi + (-1)^{\msfk-1}\tmfc_{2\msfk}(n) w^{\frac{4\msfk}{n-2\msfk}} \Psi=-L_1[S]w.\]
By \eqref{relation of Psi Q6 3}, we have
\[\frac{1}{n-2}\Psi^{2}=\frac{1}{n-4}(1+r^2)^{-1}\Psi^{4}=\frac{1}{n-6}(1+r^2)^{-2}\Psi^{6}\]
which is the same relation as \eqref{relation of Psi Q6}. \hfill $\diamond$
\end{rmk}

Define
\begin{equation}\label{eq:PsiH Q6}
\Psi[H^{(k+2)}] := \sum_{s=0}^{\lfloor\frac{k-2}{2}\rfloor}\Psi^{k,s}.
\end{equation}
From Proposition \ref{prop:PsiQ6}, we immediately deduce
\[\Delta^3\Psi[H^{(k+2)}] + \tmfc_6(n)w^{\frac{12}{n-6}}\Psi[H^{(k+2)}] = - L_1[H^{(k+2)}]w \quad \text{in } \R^n,\]
and the following estimate.
\begin{cor}
For $n\ge 12$ and $k=2,\ldots,K-2$, there exists a constant $C=C(n,k)>0$ such that
$$
\left|\pa_{\beta}\Psi[H^{(k+2)}]\right|(x) \le C|H^{(k+2)}|(1+|x|)^{k+8-n-|\beta|}
$$
for a multi-index $\beta$ with $|\beta|=0,\ldots,6$. Here, $|H^{(k+2)}|$ is the quantity defined in \eqref{eq:Hnorm}.
\end{cor}

\subsection{Refined blowup analysis}\label{subsec:blowup Q6}
Throughout Subsections \ref{subsec:blowup Q6} and \ref{subsec:Weyl Q6}, we assume that $\sigma_a \to \bsi \in M$ is an isolated simple blowup point for a sequence $\{u_a\}_{a \in \N}$ of solutions to \eqref{eq:maina} with $\msfk=3$.
We work in $g_a$-normal coordinates $x$ centered at $\sigma_a$, chosen such that $\det g_a = 1$.

\medskip
We set $u_a(x) = u_a(\exp^{g_a}_{\sigma_a}(x))$, $\ep_a = u_a(0)^{-\frac{2}{n-6}} > 0$, $\tig_a(y) = g_a(\ep_a y)$, $P_a^{(6)} = P_{\tig_a}^{(6)}$, $\mce_a = \mce_{\tig_a}$, and $U_a(y) = \ep_a^{\frac{n-6}{2}} u_a(\ep_a y)$.

We note that $K=n-8 \ge d=\lfloor\frac{n-6}{2}\rfloor$ for all $n \ge 10$. Let $g_a = \exp(h_a)$ and $\tig_a=\exp(\tih_a)$ so that $\tih_a(y)=h_a(\ep_a y)$. We define the $2$-tensor $H_a$ by \eqref{eq:H}, where $h$ is replaced by $h_a$, and write
\[\wth_a^{(k)}(y) := H_a^{(k)}(\ep_a y) = \ep_a^k H_a^{(k)}(y), \quad k=2,\ldots, K,\]
and $\wth_a := \sum_{k=2}^{K} \wth_a^{(k)}$. For $n = 10$ or $11$, we set $\wtPsi_a := 0$. For $n \ge 12$, we set
\[\wtPsi_a := \sum_{k=4}^{K} \Psi[\wth_a^{(k)}],\]
where $\Psi[\wth_a^{(k)}]$ is the function in \eqref{eq:PsiH Q6} with $H^{(k+2)}$ replaced by $\wth_a^{(k)}$. Then, $\wtPsi_a$ is an explicit rational function on $\R^n$ solving
\[\Delta^3\wtPsi_a + \tmfc_6(n)w^{\frac{12}{n-6}}\wtPsi_a = - \sum_{k=4}^{K} L_1[\wth_a^{(k)}]w \quad \text{in } \R^n.\]

As in Proposition \ref{prop:refest} and Corollary \ref{cor:scale back}, one can prove the following results by modifying the argument in \cite[Section 5]{LX}. We omit the proofs.
\begin{prop}\label{prop:refest Q6}
Let $n\ge 10$ and $\sigma_a \to \bsi \in M$ be an isolated simple blowup point for a sequence $\{u_a\}$ of solutions to \eqref{eq:maina} with $\msfk=3$. Then there exist $\msfd_0 \in (0,1)$ and $C>0$ such that
$$
\left|\pa_{\beta}(U_a-w-\wtPsi_a)\right|(y) \le C\sum_{k=2}^{d-1}\ep_a^{2k}|H_a^{(k)}|^2(1+|y|)^{2k+6-n-|\beta|} +C\ep_a^{n-7}(1+|y|)^{-1-|\beta|}$$
for all $a \in \N$, multi-indices $\beta$ with $|\beta| = 0,1,\ldots,6$, and $|y|\le \msfd_0\ep_a^{-1}$.
\end{prop}

\begin{cor}\label{cor:scale back Q6}
Scaling back to the $x$-coordinates, we set $w_a(x):=\ep_a^{-\frac{n-6}{2}}w(\ep_a^{-1}x)$ and $\Psi_a(x)=\ep_a^{-\frac{n-6}{2}}\wtPsi(\ep_a^{-1}x)$ so that
$$
\Delta^3\Psi_a + \tmfc_6(n)w_a^{\frac{12}{n-6}}\Psi_a = - \sum_{k=4}^{K} L_1[H_a^{(k)}]w_a \quad \text{in } \R^n.
$$
Under the conditions of Proposition \ref{prop:refest Q6}, there exist $\msfd_0 \in (0,1)$ and $C>0$ such that
$$
\left|\pa_{\beta}(u_a-w_a-\Psi_a)\right|(x) \le C\ep_a^{\frac{n-6}{2}+|\beta|} \left[\sum_{k=2}^{d-1}|H_a^{(k)}|^2(\ep_a+|x|)^{2k+6-n-|\beta|} +(\ep_a+|x|)^{-1-|\beta|}\right]
$$
for all $a \in \N$, multi-indices $\beta$ with $|\beta| = 0,1,\ldots,6$, and $|x|\le \msfd_0$.
\end{cor}

\subsection{Weyl vanishing theorem}\label{subsec:Weyl Q6}
Let $\msfd_0 \in (0,1)$ be the small number from Proposition \ref{prop:refest Q6}. For $\ep > 0$ small, we define
\begin{align*}
\mci_{1,\ep,a} &:= \int_{B^n(0,\msfd_0\ep^{-1})} Z\mce_a(w) dy; \\
\mci_{2,\ep,a} &:= \int_{B^n(0,\msfd_0\ep^{-1})} \(y \cdot \nabla \wtPsi_a + \frac{n-6}{2}\wtPsi_a\) \mce_a(w) dy; \\
\mci_{3,\ep,a} &:= \int_{B^n(0,\msfd_0\ep^{-1})} Z\mce_a(\wtPsi_a) dy.
\end{align*}
With the local Pohozaev-Pucci-Serrin identity \eqref{eq:Poho Q6}--\eqref{eq:Poho2 Q6} in hand, one can argue as in Subsection \ref{subsec:Weyl21} to achieve
\begin{equation}\label{eq:Pohoineq2 Q6}
\begin{medsize}
\displaystyle O(\ep_a^{n-6}) \ge \mci_{1,\ep_a,a} + \mci_{2,\ep_a,a} + \mci_{3,\ep_a,a} + O\(\sum_{k=2}^{\lfloor\frac{n-6}{3}\rfloor} \ep_a^{3k}|\log\ep_a| |H_a^{(k)}|^3\) + O\(\ep_a^2|\log\ep_a| \sum_{k=2}^{d-1} \ep_a^{2k}|H_a^{(k)}|^2\).
\end{medsize}
\end{equation}

\begin{defn}[Pohozaev quadratic form]
\

\noindent
Given any symmetric $2$-tensors $h,\, h'$, we define $L_2[h,h']$ by using \eqref{eq:L2 Q6}. For $\ep > 0$ small, let
\begin{equation}\label{eq:I_1 Q6}
I_{1,\ep}[h,h'] := -\int_{B^n(0,\msfd_0\ep^{-1})} ZL_2[h,h']w.
\end{equation}
Whenever $\Psi[h]$ and $\Psi[h']$ are well-defined via \eqref{eq:PsiH Q6}, we also set
\begin{equation}\label{eq:I_2 Q6}
I_{2,\ep}[h,h'] := -\frac{1}{2}\int_{B^n(0,\msfd_0\ep^{-1})} \left[\(y_i\pa_i+\frac{n-6}{2}\)\Psi[h] L_1[h']w + \(y_i\pa_i+\frac{n-6}{2}\)\Psi[h'] L_1[h]w\right];
\end{equation}
\begin{equation}\label{eq:I_3 Q6}
I_{3,\ep}[h,h'] := -\frac{1}{2}\int_{B^n(0,\msfd_0\ep^{-1})} \(\Psi[h] L_1[h']Z+\Psi[h'] L_1[h]Z\),
\end{equation}
and
\[I_{\ep}[h,h'] := I_{1,\ep}[h,h'] + I_{2,\ep}[h,h'] + I_{3,\ep}[h,h']. \tag*{$\diamond$}\]
\end{defn}

Let $\theta_k=1$ if $k=\frac{n-6}{2}$ and $\theta_k=0$ otherwise. The following lemma corresponds to Lemma \ref{lemma:diffmciI}. We omit its proof.
\begin{lemma}\label{lemma:diffmciI Q6}
Given any small $\eta \in (0,1)$, there exists $C = C(n,M,g) > 0$ such that
\[\left|\mci_{i,\ep_a,a} - \sum_{k,m=2}^d I_{i,\ep_a}\big[\wth^{(k)}_a,\wth^{(m)}_a\big]\right| \le C\eta \sum_{k=2}^d \ep_a^{2k}|\log\ep_a|^{\theta_k}|H_a^{(k)}|^2 + C(\msfd_0\eta^{-1}+\msfd_0^{12-n})\ep_a^{n-6}\]
for all $a \in \N$ and $i = 1,2,3$.
\end{lemma}

As in Proposition \ref{Positive definiteness}, we establish a key estimate, whose proof is deferred to Subsections \ref{sec:proof of posi Q6} and \ref{sec:three cases Q6}.
\begin{prop}\label{Positive definiteness Q6}
For any $10 \le n \le 26$, there exists a constant $C=C(n)>0$ such that
$$
I_{\ep_a}\big[\wth_a,\wth_a\big] \ge C^{-1} \sum_{k=2}^d \ep_a^{2k}|\log\ep_a|^{\theta_k}|H_a^{(k)}|^2 + O(\ep_a^{n-6}) \quad \text{for all } a \in \N.
$$
\end{prop}

We are ready to establish the Weyl vanishing theorem for the sixth-order setting.
\begin{thm}\label{thm:Weyl2 Q6}
Let $10 \le n \le 26$ and $\sigma_a \to \bsi \in M$ be an isolated simple blowup point for a sequence $\{u_a\}$ of solutions to \eqref{eq:maina} with $\msfk=3$. Then for all $a \in \N$ and $k = 0,\ldots,\lfloor \frac{n-10}{2} \rfloor$,
$$
|\nabla^k_{g_a}W_{g_a}|^2(\sigma_a) \le C\ep_a^{n-10-2k}|\log\ep_a|^{-\theta_{k+2}}.
$$
Consequently,
\[\nabla_g^k W_g(\bsi) = 0.\]
\end{thm}
\begin{proof}
Following the proof of Theorem \ref{thm:Weyl2}, we incorporate Lemma \ref{lemma:diffmciI Q6} for $\eta \in (0,1)$ small and Proposition \ref{Positive definiteness Q6} with \eqref{eq:Pohoineq2 Q6} to prove Theorem \ref{thm:Weyl2 Q6}.
\end{proof}

The corollary below, corresponding to Corollary \ref{cor:scale back improved}, follows directly from Theorem \ref{thm:Weyl2 Q6} and Corollary \ref{cor:scale back Q6}.
\begin{cor}
Under the conditions of Theorem \ref{thm:Weyl2 Q6}, there exist $\msfd_0 \in (0,1)$ and $C>0$ such that
$$
\left|\pa_{\beta}(u_a-w_a-\Psi_a)\right|(x) \le C\ep_a^{\frac{n-6}{2}+|\beta|}(\ep_a+|x|)^{-1-|\beta|}
$$
for all $a \in \N$, multi-indices $\beta$ with $|\beta| = 0,1,\ldots,6$, and $|x|\le \msfd_0$.
\end{cor}

\subsection{Proof of Proposition \ref{Positive definiteness Q6}}\label{sec:proof of posi Q6}
To prove Proposition \ref{Positive definiteness Q6}, we need to obtain analogous results to Proposition \ref{I_1} and Corollary \ref{I_1 coro}.

For brevity, we will often drop the subscript $a$. In addition to \eqref{eq:inner} and \eqref{eq:inner2}, we also define
\[\bla\pa^{\ell}\delta \bar{H},\pa^{\ell}\delta \bar{H}'\bra = \sum_{|\beta|=\ell} \int_{\S^{n-1}} \pa_{\beta}\delta_j\bar{H} \pa_{\beta}\delta_j\bar{H}' \quad \text{and} \quad
\bla\pa^{\ell}\delta^2\bar{H},\pa^{\ell}\delta^2\bar{H}'\bra = \sum_{|\beta|=\ell} \int_{\S^{n-1}} \pa_{\beta}\delta^2\bar{H} \pa_{\beta}\delta^2\bar{H}'\]
for $\ell \in \N$ and two matrices $\bar{H}$ and $\bar{H}'$ of polynomials on $\R^n$.

\begin{prop}\label{I_1 Q6}
Given $H^{(k)}\in \mcv_k$ and $H^{(m)}\in \mcv_m$ for $k,m=2,\ldots, d$, we define the quadratic form $J_1[H^{(k)},H^{(m)}]$ through the following three steps.

First, we fix six constants depending only on $n$ and $k+m$:
\begin{align*}
c_1(n,k+m) &:= 4(n-6)^2(n-4)^2 \left[-\frac{n^2-4}{4(n-4)}\mci_{n}^{n+k+m-1}-\frac{n}{n-4}\mci_{n}^{n+k+m+1}\right.\\
&\left.\hspace{105pt}+\frac{n^2-20}{4(n-4)}\mci_{n}^{n+k+m+3}+\mci_{n}^{n+k+m+5}\right]^{1-\theta_{\frac{k+m}{2}}};\\
c_2(n,k+m) &:= -8(n-6)^3 \left[-\frac{n^2(n-2)}{16(n-6)}\mci_{n-1}^{n+k+m-3}+\frac{n^2(n-10)}{16(n-6)}\mci_{n-1}^{n+k+m-1}\right.\\
&\left.\hspace{75pt}+\frac{n^2-6n-4}{2(n-6)}\mci_{n-1}^{n+k+m+1}+\mci_{n-1}^{n+k+m+3}\right]^{1-\theta_{\frac{k+m}{2}}};\\
c_3(n,k+m) &:= -4(n-6)^2(n-5) \left[-\frac{n^2-2n-4}{4(n-5)}\mci_{n-2}^{n+k+m-3}\right.\\
&\left.\hspace{110pt}+\frac{n(n-6)}{4(n-5)}\mci_{n-2}^{n+k+m-1}+\mci_{n-2}^{n+k+m+1}\right]^{1-\theta_{\frac{k+m}{2}}};\\
c_4(n,k+m) &:= -\frac{(n-6)^2(n-4)}{2} \(-\mci_{n-3}^{n+k+m-3}+\mci_{n-3}^{n+k+m-1}\)^{1-\theta_{\frac{k+m}{2}}};\\
c_5(n,k+m) &:= \frac{(n-6)^2}{2} \(-\mci_{n-4}^{n+k+m-5}+\mci_{n-4}^{n+k+m-3}\)^{1-\theta_{\frac{k+m}{2}}};\\
c_6(n,k+m) &:= -\frac{n-6}{2} \(-\mci_{n-5}^{n+k+m-7}+\mci_{n-5}^{n+k+m-5}\)^{1-\theta_{\frac{k+m}{2}}}.
\end{align*}
Second, we introduce ten constants depending only on $n$, $k$, and $m$:
\begin{align*}
\bc_1(n,k,m) &:= c_6(n,k+m)(k+m-4)(n+k+m-6)(k+m-2)(n+k+m-4)\frac{n-6}{4(n-1)}\\
&\ +c_3(n,k+m)\frac{n^2-2n-8+(n-10)(k+m-2)}{2(n-1)} - c_2(n,k+m)\frac{n-2}{4(n-1)}\\
&\ +c_1(n,k+m)\frac{n^2-4n+12}{2(n-2)(n-1)} + c_5(n,k+m)(k+m-2)(n+k+m-4)^2\frac{(n-6)}{2(n-1)}\\
&\ +c_4(n,k+m)(k+m-2)(n+k+m-4)\frac{(n-6)(n-4)(n-2)+16}{2(n-4)(n-2)(n-1)}\\
&\ +c_4(n,k+m)\frac{8(n-2)(k+m)^2-8(n-6)(k+m)+16(n-4)}{(n-4)(n-2)(n-1)};\\
\bc_2(n,k,m)&:=c_6(n,k+m)(k+m-4)(n+k+m-6)\frac{2(n-6)}{(n-2)^2}\\
&\ +c_5(n,k+m)\frac{4(n-6)(k+m)+4(n^2-8n+20)}{(n-2)^2} + c_4(n,k+m)\frac{4(n-4)^2+16}{(n-4)(n-2)^2};\\
\bc_3(n,k,m) &:= c_1(n,k+m)\frac{2km}{n-2} + c_4(n,k+m)(k+m)(n+k+m-2)\frac{4km}{(n-4)(n-2)};\\
\bc_4(n,k,m) &:= -c_3(n,k+m)\frac{2}{n-2}+c_4(n,k+m)\frac{8}{(n-4)(n-2)};\\
\bc_5(n,k,m) &:= -c_5(n,k+m)\frac{4}{(n-4)(n-2)};\\
\bc_6(n,k,m) &:= c_6(n,k+m)\frac{4(n-6)}{(n-4)(n-2)^2};\\
\bc_7(n,k,m) &:= c_4(n,k+m)\frac{8}{(n-4)(n-2)};\\
\bc_8(n,k,m) &:= -c_5(n,k+m)\frac{8(n+k+m-4)km}{(n-2)^2} - c_4(n,k+m)\left[\frac{8km}{(n-4)(n-2)}+\frac{8km}{(n-2)^2}\right];\\
\bc_9(n,k,m) &:= \begin{medsize}
\displaystyle -c_6(n,k+m)\frac{n-6}{4} \left[\frac{2(3n-4)}{(n-4)(n-2)(n-1)^2}+\frac{3n-2}{8(n-1)^2}\right](n+k+m-6)(k+m-4)
\end{medsize} \\
&\begin{medsize}
\displaystyle \ -c_6(n,k+m) \frac{n-6}{2}\frac{(n-6)(n+k+m-6)(n(k+m)-4)}{(n-4)(n-2)(n-1)^2}
\end{medsize} \\
&\begin{medsize}
\displaystyle \ -c_5(n,k+m) \left[\frac{3n^3-12n^2-36n+64}{16(n-1)^2}-\frac{2(k+m)}{(n-4)(n-1)}\right]
\end{medsize} \\
&\begin{medsize}
\displaystyle \ -c_5(n,k+m) \left[\frac{(3n^2-28n+28)(k+m-4)}{16(n-1)^2}+\frac{(n-4)(n(k+m)-4)}{(n-2)(n-1)^2} +\frac{8(n+k+m-4)}{(n-2)^2(n-1)^2}\right]
\end{medsize} \\
&\begin{medsize}
\displaystyle \ -c_5(n,k+m) \frac{(3n-4)((n-6)(k+m)+(n^2-8n+20))}{(n-2)^2(n-1)^2} - c_4(n,k+m)\frac{(3n-4)(n^2-8n+20)}{(n-4)(n-2)^2(n-1)^2}
\end{medsize} \\
&\begin{medsize}
\displaystyle \ -c_4(n,k+m) \left[\frac{3n^2-12n+28}{16(n-1)^2}+\frac{8}{(n-4)(n-2)(n-1)^2} +\frac{8}{(n-2)^2(n-1)^2}\right];
\end{medsize} \\
\bc_{10}(n,k,m) &:= c_6(n,k+m)\frac{n-6}{2} \left[\frac{2(3n-4)}{(n-4)(n-2)(n-1)^2}+\frac{3n-2}{8(n-1)^2}\right]\\
&\ -c_6(n,k+m)\frac{n-6}{2} \left[\frac{2(3n-4)}{(n-2)^2(n-1)^2}+\frac{(n-6)(n+4)}{4(n-4)(n-1)^2}\right].
\end{align*}
Finally, we define
\begin{equation}\label{eq:J1HkHm}
\begin{aligned}
J_1[H^{(k)},H^{(m)}]
&:= \bc_1(n,k,m) \int_{\S^{n-1}}\Ddot{R}^{(k,m)} + \bc_2(n,k,m) \bla\Dot{\ricci}^{(k)},\Dot{\ricci}^{(m)}\bra \\
&\ + \bc_3(n,k,m) \bla H^{(k)},H^{(m)}\bra \\
&\ + \bc_4(n,k,m) \left[k\bla H^{(k)},\Dot{\ricci}^{(m)}\bra + m\bla\Dot{\ricci}^{(k)},H^{(m)}\bra\right] \\
&\ + \bc_5(n,k,m) \left[k\bla H^{(k)},\Delta\Dot{\ricci}^{(m)}\bra + m\bla\Delta\Dot{\ricci}^{(k)},H^{(m)}\bra\right] \\
&\ + \bc_6(n,k,m) \left[\bla\Dot{\ricci}^{(k)},\Delta\Dot{\ricci}^{(m)}\bra + \bla\Delta\Dot{\ricci}^{(k)},\Dot{\ricci}^{(m)}\bra\right] \\
&\ + \bc_7(n,k,m) \left[k^2\bla H^{(k)},\Dot{\ricci}^{(m)}\bra + m^2\bla\Dot{\ricci}^{(k)},H^{(m)}\bra\right] \\
&\ + \bc_8(n,k,m) \bla\delta H^{(k)},\delta H^{(m)}\bra + \bc_9(n,k,m) \bla\delta^2H^{(k)},\delta^2H^{(m)}\bra \\
&\ + \bc_{10}(n,k,m) \bla\pa\delta^2H^{(k)},\pa\delta^2H^{(m)}\bra.
\end{aligned}
\end{equation}
Then, for $k, m = 2, \ldots, d$, it holds that
\begin{equation}\label{eq:I_1J_1 Q6}
I_{1,\ep}\big[\wth^{(k)},\wth^{(m)}\big] = \ep^{k+m}|\log\ep|^{\theta_{\frac{k+m}{2}}} J_1[H^{(k)},H^{(m)}] + O(\ep^{n-6}).
\end{equation}
\end{prop}
\begin{proof}
To evaluate $I_{1,\ep}[\wth^{(k)},\wth^{(m)}]$ given in \eqref{eq:I_1 Q6}, we will examine each term in the expansion of $L_2[h,h]$ from \eqref{eq:L2 Q6}. Since $w$ is radial and $x_ih_{ij}=0$, it holds that
\begin{align*}
\Delta(\tr\Dot{A}^{(k)}(h_{ij}^{(m)}\pa_iw)_{,j})&=0;\\
\Dot{\diver_g}\left\{T_2(\nabla_g ,\cdot)\right\}^{(k)} \circ \Dot{\diver_g}\left\{g(\nabla_g w,\cdot)\right\}^{(m)}&=0.
\end{align*}
Here, $\Dot{A}^{(k)}:=\Dot{A}[H^{(k)}]$, and analogous definitions apply to $\Dot{\diver_g}\{T_2(\nabla_g ,\cdot)\}^{(k)}$ and to the other superscripts $(k)$ appearing below (cf. Subsection \ref{subsec:proppd}). Integrating by parts, and using the radial symmetry of $Z$ and $x_ih_{ij}=0$, we find
\begin{align*}
&\ \int_{B^n(0,\msfd_0\ep^{-1})} Z\Dot{\diver_g}\left\{g(\nabla_g,\cdot)\right\}^{(k)} \circ \Dot{\diver_g}\left\{T_2(\nabla_g w,\cdot)\right\}^{(m)} \\
&=\int_{B^n(0,\msfd_0\ep^{-1})} \Dot{\diver_g}\left\{g(\nabla_g Z,\cdot)\right\}^{(k)} \Dot{\diver_g}\left\{T_2(\nabla_g w,\cdot)\right\}^{(m)} + O(\ep^{n-6}) = O(\ep^{n-6});\\
&\ \int_{B^n(0,\msfd_0\ep^{-1})} Z(H^{(k)}_{ij}(\tr\Dot{A}^{(m)}\Delta w)_{,j})_{,i}\\
&=- \int_{B^n(0,\msfd_0\ep^{-1})} Z_{,i}H^{(k)}_{ij}(\tr\Dot{A}^{(m)}\Delta w)_{,j} + O(\ep^{n-6}) = O(\ep^{n-6}).
\end{align*}
Furthermore, we employ \eqref{eq:DdotR} and integration by parts again to obtain
\begin{align*}
\int_{B^n(0,\msfd_0\ep^{-1})}Z\Delta[(\tr\Ddot{A}^{(k,m)}-\Dot{A}^{(k)}\cdot h^{(m)})\Delta w] &=\frac{1}{2(n-1)} \int_{B^n(0,\msfd_0\ep^{-1})}\Ddot{R}^{(k,m)}\Delta Z\Delta w+O(\ep^{n-6});\\
\int_{B^n(0,\msfd_0\ep^{-1})} Z\Delta\circ \Ddot{\diver_g}\left\{T_2(\nabla_g w,\cdot)\right\}^{(k,m)} &=\int_{B^n(0,\msfd_0\ep^{-1})}\Delta Z \Ddot{\diver_g}\left\{T_2(\nabla_g w,\cdot)\right\}^{(k,m)}+O(\ep^{n-6}).
\end{align*}
So, we have
\begin{equation}\label{eq:Q6 polar proof 1}
\begin{aligned}
&\ \ep^{-(k+m)}I_{1,\ep}\big[\wth^{(k)},\wth^{(m)}\big] \\
&=-\frac{n-2}{4(n-1)}\int_{B^n(0,\msfd_0\ep^{-1})}\Ddot{R}^{(k,m)}\Delta Z\Delta w+\frac{n-6}{2}\int_{B^n(0,\msfd_0\ep^{-1})} \big(\Ddot{Q^{(6)}}\big)^{(k,m)} Z w\\
&\ +\int_{B^n(0,\msfd_0\ep^{-1})}\Delta Z \Ddot{\diver_g}\left\{T_2(\nabla_g w,\cdot)\right\}^{(k,m)} + \int_{B^n(0,\msfd_0\ep^{-1})} Z \Ddot{\diver_g}\left\{T_2(\nabla_g \Delta w ,\cdot)\right\}^{(k,m)} \\
&\ +\int_{B^n(0,\msfd_0\ep^{-1})}Z \Ddot{\diver_g}\left\{T_4(\nabla_g w ,\cdot)\right\}^{(k,m)}+O(\ep^{n-6}).
\end{aligned}
\end{equation}

Next, by \eqref{eq:diver T nabla u}, we see that
\begin{equation}\label{eq:Q6 polar proof 2}
\Ddot{\diver_g}\left\{T(\nabla_g u,\cdot)\right\} = \Ddot{T}_{ij}y_iy_j\(u'\frac{1}{r}\)'\frac{1}{r} + \left[\tr \,\Ddot{T} + (\Ddot{\diver_gT})_iy_i + \frac{1}{2}\Dot{T} \cdot (y_i\pa_i-2)h\right] u'\frac{1}{r}
\end{equation}
for a symmetric $2$-tensor $T$ and a radial function $u(y)=u(r)$. Also, from Lemmas \ref{Q6 curva} and \ref{T2T4} together with Corollary \ref{cor:t2t4}, it follows that
\begin{equation}\label{eq:Q6 polar proof 3}
\begin{aligned}
&\deg\Ddot{R}^{(k,m)}=k+m-2,\quad \deg \big(\Ddot{Q^{(6)}}\big)^{(k,m)}=k+m-6,\\
&\deg(\Ddot{T_2})_{ij}y_iy_j=k+m,\quad \hspace{5pt} \deg(\Ddot{T_4})_{ij}y_iy_j=k+m-2,\\
&\deg\left[\tr \,\Ddot{T_2}+(\Ddot{\diver_gT_2})_iy_i + \frac{1}{2}\Dot{T_2} \cdot (y_i\pa_i-2)h\right]=k+m-2,\\
&\deg\left[\tr \,\Ddot{T_4}+(\Ddot{\diver_gT_4})_iy_i + \frac{1}{2}\Dot{T_4} \cdot (y_i\pa_i-2)h\right]=k+m-4.
\end{aligned}
\end{equation}

Inserting \eqref{eq:Q6 polar proof 2} and \eqref{eq:Q6 polar proof 3} into \eqref{eq:Q6 polar proof 1}, we observe that
\begin{align}
&\ \ep^{-(k+m)} I_{1,\ep}\big[\wth^{(k)},\wth^{(m)}\big] \nonumber \\
&= -\frac{n-2}{4(n-1)} \int_{\S^{n-1}}\Ddot{R}^{(k,m)} \int_0^{\msfd_0\ep^{-1}}r^{n+k+m-3}\Delta Z\Delta w \nonumber \\
&\ + \int_{\S^{n-1}}(\Ddot{T_2})^{(k,m)}_{ij}y_iy_j \int_0^{\msfd_0\ep^{-1}}r^{n+k+m-2} \left[\Delta Z\(w'\frac{1}{r}\)'+ Z\((\Delta w)'\frac{1}{r}\)'\right] \nonumber \\
&\ + \int_{\S^{n-1}} \left[\tr \,\Ddot{T_2}^{(k,m)}+(\Ddot{\diver_gT_2})^{(k,m)}_iy_i + \frac{1}{4}\left\{\Dot{T_2}^{(k)} \cdot (y_i\pa_i-2)H^{(m)} + \Dot{T_2}^{(m)} \cdot (y_i\pa_i-2)H^{(k)}\right\}\right] \nonumber \\
&\hspace{250pt} \times \int_0^{\msfd_0\ep^{-1}}r^{n+k+m-4}\left[\Delta Z w'+Z (\Delta w)'\right] \nonumber \\
&\ + \int_{\S^{n-1}}(\Ddot{T_4})^{(k,m)}_{ij}y_iy_j \int_0^{\msfd_0\ep^{-1}}r^{n+k+m-4}Z\(w'\frac{1}{r}\)' \label{eq:Q6 polar proof 4} \\
&\ + \int_{\S^{n-1}} \left[\tr \,\Ddot{T_4}^{(k,m)}+(\Ddot{\diver_gT_4})^{(k,m)}_iy_i +
\frac{1}{4}\left\{\Dot{T_4}^{(k)} \cdot (y_i\pa_i-2)H^{(m)} + \Dot{T_4}^{(m)} \cdot (y_i\pa_i-2)H^{(k)}\right\}\right] \nonumber \\
&\hspace{315pt} \times \int_0^{\msfd_0\ep^{-1}}r^{n+k+m-6}Zw' \nonumber \\
&\ +\frac{n-6}{2} \int_{\S^{n-1}}\big(\Ddot{Q^{(6)}}\big)^{(k,m)} \int_0^{\msfd_0\ep^{-1}}r^{n+k+m-7}Zw + O(\ep^{n-6}). \nonumber
\end{align}
Moreover, the radial integrals appearing in \eqref{eq:Q6 polar proof 4} can be computed as follows:
\begin{equation}\label{eq:c1c6}
\begin{aligned}
&\int_0^{\msfd_0\ep^{-1}}r^{n+k+m-2} \left[\Delta Z\(w'\frac{1}{r}\)'+ Z\((\Delta w)'\frac{1}{r}\)'\right] = c_1(n,k+m)|\log \ep|^{\theta_{\frac{k+m}{2}}}+O(\ep^{n-6-k-m});\\
&\int_0^{\msfd_0\ep^{-1}}r^{n+k+m-3}\Delta Z\Delta w = c_2(n,k+m)|\log \ep|^{\theta_{\frac{k+m}{2}}}+O(\ep^{n-6-k-m});\\
&\int_0^{\msfd_0\ep^{-1}}r^{n+k+m-4}\left[\Delta Zw'+Z(\Delta w)'\right] = c_3(n,k+m)|\log \ep|^{\theta_{\frac{k+m}{2}}}+O(\ep^{n-6-k-m});\\
&\int_0^{\msfd_0\ep^{-1}}r^{n+k+m-4}Z\(w'\frac{1}{r}\)' = c_4(n,k+m)|\log \ep|^{\theta_{\frac{k+m}{2}}}+O(\ep^{n-6-k-m});\\
&\int_0^{\msfd_0\ep^{-1}}r^{n+k+m-6}Zw' = c_5(n,k+m)|\log \ep|^{\theta_{\frac{k+m}{2}}}+O(\ep^{n-6-k-m});\\
&\int_0^{\msfd_0\ep^{-1}}r^{n+k+m-7}Zw = c_6(n,k+m)|\log \ep|^{\theta_{\frac{k+m}{2}}}+O(\ep^{n-6-k-m}).
\end{aligned}
\end{equation}
In the end, the five sphere integrals in \eqref{eq:Q6 polar proof 4} associated with $T_2$, $T_4$, and $Q^{(6)}$ will be expressed in terms of sphere integrals involving $\Ddot{R}$, $\Dot{R}$, $\Dot{\ricci}$, and $H$ in Lemmas \ref{lemma:Q6 sphere}, \ref{lemma:T2 sphere}, and \ref{lemma:T4 sphere}.

Combining the above information, we can deduce \eqref{eq:I_1J_1 Q6}.
\end{proof}

\begin{cor}\label{I_1 coro Q6}
For $k, m = 2, \ldots, d$, it holds that
\begin{equation}\label{eq:I_1 coro Q6}
\begin{aligned}
J_1[H^{(k)},H^{(m)}] &= \bc'_1(n,k,m)\bla H^{(k)},H^{(m)}\bra+\bc'_2(n,k,m)\bla \mcl_kH^{(k)},H^{(m)}\bra\\
&\ +\bc'_3(n,k,m)\bla \mcl_kH^{(k)},\mcl_mH^{(m)}\bra \\
&\ +\bc'_4(n,k,m)\left[\bla \mcl_k^2H^{(k)},\mcl_m H^{(m)}\bra+\bla \mcl_k H^{(k)},\mcl_m^2H^{(m)}\bra\right]\\
&\ +\bc'_5(n,k,m)\bla \delta H^{(k)},\delta H^{(m)}\bra+\bc'_6(n,k,m)\bla \pa \delta H^{(k)},\pa \delta H^{(m)}\bra\\
&\ +\bc'_7(n,k,m)\bla \delta^2 H^{(k)},\delta^2 H^{(m)}\bra+\bc'_{8}(n,k,m)\bla \pa \delta^2 H^{(k)},\pa \delta^2H^{(m)}\bra,
\end{aligned}
\end{equation}
where
\begin{align*}
\bc'_1(n,k,m) &:= \bc_3(n,k,m)-\frac{1}{8}(k+m)(n+k+m-2)\bc_1(n,k,m)\\
&\ -\frac{1}{2}(m-k)(n+k+m-2) \bigg[(n+k+m-6)km\bc_5(n,k,m)-\frac{1}{4}\bc_1(n,k,m)\\
&\hspace{140pt} + k\left\{\bc_4(n,k,m)-2(n-4)\bc_5(n,k,m)\right\} \\
&\hspace{140pt} + k^2\left\{\bc_7(n,k,m)-(n+k+m-2)\bc_5(n,k,m)\right\}\bigg];\\
\bc'_2(n,k,m) &:= 2\left[km(n+k+m-6)\bc_5(n,k,m)-\frac{1}{4}\bc_1(n,k,m)\right]\\
&\ +(k+m)\left[\bc_4(n,k,m)-2(n-4)\bc_5(n,k,m)\right]\\
&\ +(k^2+m^2)\left[\bc_7(n,k,m)-(n+k+m-2)\bc_5(n,k,m)\right];\\
\bc'_3(n,k,m) &:= \bc_2(n,k,m)-4(n+k+m-4)\bc_6(n,k,m)-2(k+m)\bc_5(n,k,m);\\
\bc'_4(n,k,m) &:= -2\bc_6(n,k,m);\\
\bc'_5(n,k,m) &:= \bc_8(n,k,m) + \frac{km}{2}\bc_2(n,k,m) + km(2n+k+m-8)\bc_5(n,k,m)\\
&\ + \left[\frac{km}{2}(n+k+m-4)(k+m-2)-km(n+k-2)(n+m-2)\right.\\
&\hspace{20pt} +km(2n+k+m-4)\bigg]\bc_6(n,k,m);\\
\bc'_6(n,k,m) &:= -km\bc_6(n,k,m);\\
\bc'_7(n,k,m) &:= \bc_9(n,k,m) + \frac{1}{n-1}\bc_2(n,k,m) + \frac{(n-3)(k+m)}{n-1}\bc_5(n,k,m)\\
&\ + \left[\frac{(n-3)^2}{2(n-1)^2}(k+m-4)(n+k+m-6) + (k+m)+\frac{2(k+m-2)}{n-1} \right.\\
&\quad - \frac{(n-3)(n-2)}{(n-1)^2}(k^2+m^2-5(k+m)+12) -\frac{(n-3)n}{n-1}(k+m-2) \\
&\quad + \frac{k+m-4}{n-1}(n+k+m-6) - \frac{n-3}{2(n-1)}(k^2+m^2+(k+m)(n-4))\bigg]\bc_6(n,k,m);\\
\bc'_{8}(n,k,m) &:= \bc_{10}(n,k,m)-\frac{n^2-4n+7}{(n-1)^2}\bc_6(n,k,m).
\end{align*}
\end{cor}
\begin{proof}
It is a simple consequence of Proposition \ref{I_1 Q6} and Lemmas \ref{ricci ricci lemma}, \ref{lemma:comm}, and \ref{Delta ricci lemma}.
\end{proof}

\begin{rmk}\label{rmk:composed decomp}
Three remarks concerning Proposition \ref{I_1 Q6} and Corollary \ref{I_1 coro Q6} are in order.

\noindent 1. The computations carried out here are substantially more involved than those in the $Q^{(4)}$ case.

In Proposition \ref{I_1} (corresponding to Proposition \ref{I_1 Q6}), only 4 terms appears:
\[\int_{\S^{n-1}}\Ddot{R}^{(k,m)}, \quad \int_{\S^{n-1}}\Dot{\ricci}^{(k)} \cdot \Dot{\ricci}^{(m)}, \quad \int_{\S^{n-1}}H^{(k)} \cdot H^{(m)}, \quad \text{and} \quad \int_{\S^{n-1}}\delta^2H^{(k)} \cdot \delta^2H^{(m)}.\]
In Corollary \ref{I_1 coro} (corresponding to Corollary \ref{I_1 coro Q6}), only 5 terms appears:
\[\bla H^{(k)},H^{(m)}\bra, \quad \bla\mcl_kH^{(k)},H^{(m)}\bra, \quad \bla\mcl_k^2H^{(k)},H^{(m)}\bra, \quad \bla\delta H^{(k)},\delta H^{(m)}\bra, \quad \text{and} \quad \bla\delta^2H^{(k)},\delta^2H^{(m)}\bra.\]
For the comparison with the Yamabe case, refer to the paragraphs after Corollary \ref{I_1 coro}.

\noindent 2. In fact, the decomposition of $J_1[H^{(k)},H^{(m)}]$ in \eqref{eq:I_1 coro Q6} has the following equivalent form:
\begin{align*}
J_1[H^{(k)},H^{(m)}] &= \bc''_1(n,k,m)\bla H^{(k)},H^{(m)}\bra+\bc''_2(n,k,m)\bla \mcl_kH^{(k)},H^{(m)}\bra\\
&\ +\bc''_3(n,k,m)\bla \mcl_k^2H^{(k)},H^{(m)}\bra+\bc''_4(n,k,m)\bla \mcl_k^3H^{(k)}, H^{(m)}\bra\\
&\ +\bc''_5(n,k,m)\bla \delta H^{(k)},\delta H^{(m)}\bra+\bc''_6(n,k,m)\bla \pa \delta H^{(k)},\pa \delta H^{(m)}\bra\\
&\ +\bc''_7(n,k,m)\bla \delta^2 H^{(k)},\delta^2 H^{(m)}\bra+\bc''_{8}(n,k,m)\bla \pa \delta^2 H^{(k)},\pa \delta^2H^{(m)}\bra
\end{align*}
for some $\bc''_1,\ldots,\bc''_8 \in \R$. However, we will keep using \eqref{eq:I_1 coro Q6} for computational simplicity.

\noindent 3. Given $4 \le \msfk < \frac{n}{2}$, let $L^{2\msfk}_2$ be the second order expansion of $P_g^{(2\msfk)}-\Delta_g^{\msfk}$ in $h$.
We \textbf{conjecture} that the second-order expansions of $\int_{\R^n}ZL^{2\msfk}_2[H^{(k)},H^{(m)}]w$ for all $Q^{(2\msfk)}$-curvatures share the following pattern:
\[\begin{medsize}
\displaystyle \sum_{\ell=0}^{\msfk} c_{\ell}(n,k,m)\bla\mcl_k^{\ell}H^{(k)},H^{(m)}\bra + \sum_{\ell=0}^{\msfk-1} c'_{\ell}(n,k,m)\bla\pa^{\ell}\delta H^{(k)},\pa^{\ell}\delta H^{(m)}\bra
+ \sum_{\ell=0}^{\msfk-2} c''_{\ell}(n,k,m)\bla\pa^{\ell}\delta^2H^{(k)},\pa^{\ell}\delta^2H^{(m)}\bra.
\end{medsize} \tag*{$\diamond$}\]
\end{rmk}

\begin{proof}[Proof of Proposition \ref{Positive definiteness Q6}]
Proceeding as in the proof of Proposition \ref{Positive definiteness}, we derive \eqref{eq:Idecom} for the $Q^{(6)}$-case.
The positive definiteness of each term on the right-hand side of \eqref{eq:Idecom} then follows from Corollary \ref{I_1 coro Q6} together with Lemmas \ref{lemma of case 16}--\ref{lemma of case 36} of Subsection \ref{sec:three cases Q6}.
This completes the proof of Proposition \ref{Positive definiteness Q6}.
\end{proof}

\subsection{Local sign restriction}
In this subsection, we list the results corresponding to those in Section \ref{sec:lsr}, omitting the proof.
\begin{prop}\label{prop:lsr Q6}
Let $10\le n \le 26$ and $\sigma_a \to \bsi \in M$ be an isolated simple blowup point for a sequence $\{u_a\}$ of solutions to \eqref{eq:maina} with $\msfk=3$, where $\det g_a = 1$ near $\sigma_a \in M$.
Assume that $u_a(\sigma_a)u_a \to \mfg$ in $C^5_{\mathrm{loc}}(B^{g_{\infty}}_{\msfd_0}(\bsi) \setminus \{\bsi\})$, where $\msfd_0 \in (0,1)$ is the number in Proposition \ref{prop:refest Q6} and $\mfg$ is the function in Proposition \ref{prop:isosim decay}. Then
$$
\liminf_{r\to 0}\mbp^{(6)}(r,\mfg)\ge 0.
$$
\end{prop}

\begin{prop}\label{global prop Q6}
For any given $\ep, R>0$, there exists a constant $C=C(M,g,\ep,R)>0$ such that if a solution $u$ of equation \eqref{eq:main6} satisfies $\max_{\sigma\in M} u(\sigma)\ge C$, then there exist local maximum points $\sigma^1,\ldots,\sigma^N \in M$, where $N \in \N$ depends on $u$, such that
\begin{itemize}
\item[(i)] $\{B_{r_i}(\sigma^i)\}_{1\le i\le N}$ are disjoint, where $r_i=Ru(\sigma^i)^{-\frac{2}{n-6}}$;
\item[(ii)] For each $i=1,\ldots,N$, it holds that $\norm{C^6(B_R)}{u(\sigma^i)^{-1} u(u(\sigma^i)^{-\frac{2}{n-6}}\cdot)-w}<\ep$;
\item[(iii)] $u(\sigma) \le Cd_g(\sigma,\{\sigma^1,\ldots,\sigma^N\})^{-\frac{n-6}{2}}$ for any $\sigma\in M \setminus \{\sigma^1,\ldots,\sigma^N\}$.
\end{itemize}
\end{prop}

\begin{cor}
Let $10 \le n \le 26$ and $\sigma_a \to \bsi \in M$ be an isolated blowup point for a sequence $\{u_a\}$ of solutions to \eqref{eq:maina} with $\msfk=3$, where $\det g_a = 1$ near $\sigma_a \in M$. Then $\sigma_a \to \bsi \in M$ is isolated simple.
\end{cor}

\begin{cor}
Let $10 \le n\le 26$. Let $\ep,\, R,\, C=C(M,g,\ep,R),\, u$ and $\{\sigma^1,\ldots,\sigma^N\}$ be as in Proposition \ref{global prop Q6}.
Suppose that $\ep > 0$ is sufficiently small and $R>0$ is sufficiently large. Then, there exists $\oc=\oc(M,g,\ep, R) > 0$ such that if $\max_{\sigma\in M} u(\sigma)\ge C$, then $d_g(\sigma^i,\sigma^j)\ge \oc$ for any $i\neq j\in \{1,\ldots, N\}$.
\end{cor}

\subsection{Proof of Theorem \ref{thm:main6}}
Because $\mathrm{Ker}P_g^{(6)} = \{0\}$, the Green's function $G_g^{(6)}$ of the sixth-order GJMS operator $-P_g^{(6)}$ has the expansion
\[G_g(x,0)=\vsi_n^{-1}|x|^{6-n}\left[1+\sum_{k=4}^{n-6}\psi^{(k)}(x)\right]+A+B\log|x|+O(|x|),\]
in normal coordinates $x$ centered at a point $\bsi \in M$, provided $\det g = 1$ near $\bsi$. Here, $\vsi_n=8(n-6)(n-4)(n-2)|\S^{n-1}|$, $A, B \in \R$, $\psi^{(k)}\in \mcp_k$, and $\int_{\S^{n-1}}\psi^{(n-6
)}=0$. If $n$ is odd, we also have $B=0$.
For its proof, refer to \cite[Proposition 2.1]{CH}.

Using Theorem \ref{thm:Weyl2 Q6}, we can obtain the following lemma.
\begin{lemma}
Let $10\leq n\leq 26$ and $\sigma_a \to \bsi \in M$ be an isolated simple blowup point for a sequence $\{u_a\}$ of solutions to \eqref{eq:maina} with $\msfk=3$, where $\det g_a = 1$ near $\sigma_a \in M$.
Assume that $u_a(\sigma_a)u_a \to \vsi_nG_{g_{\infty}}^{(6)}(\cdot,\bsi)+\mfh$ in $C^5_{\mathrm{loc}}(B^{g_{\infty}}_{\msfd_0}(\bsi) \setminus \{\bsi\})$,
where $\msfd_0 \in (0,1)$ is the number in Proposition \ref{prop:refest Q6}, $G_{g_{\infty}}^{(6)}$ is the Green's function of the GJMS operator $-P_{g_{\infty}}^{(6)}$,
and $\mfh \in C^7(B^{g_{\infty}}_{\msfd_0}(\bsi))$ is a nonnegative function from Proposition \ref{prop:isosim decay}. Then it holds that
\begin{equation}\label{expansion of Green's function Q6}
\begin{aligned}
G_{g_{\infty}}^{(6)}(x,0)&=\vsi_n^{-1}|x|^{6-n}\left[1+\sum_{k=d+1}^{n-6}\psi^{(k)}(x)\right]+A+O^{(6)}(|x|),\\
\text{where} &\quad \int_{\S^{n-1}}\psi^{(k)}=0 \quad \text{and} \quad \int_{\S^{n-1}}x_i\psi^{(k)}=0.
\end{aligned}
\end{equation}
\end{lemma}

However, a direct computation using equations \eqref{eq:Poho Q6} and \eqref{expansion of Green's function Q6} tells us that
\[\lim_{r\to 0}\mbp^{(6)}(r,G_{g_{\infty}}^{(6)})=-\frac{n-6}{2}A<0,\]
which contradicts with Proposition \ref{prop:lsr Q6}. This finishes the proof of Theorem \ref{thm:main6}.

\subsection{Evaluation of the Pohozaev quadratic form}\label{sec:three cases Q6}
By slightly modifying the argument in Section \ref{sec:tech}, we examine the positivity of the Pohozaev quadratic form in three mutually exclusive cases, which is crucial in the proof of Proposition \ref{Positive definiteness Q6}.
We set $\theta_k=1$ if $k=\frac{n-6}{2}$ and $\theta_k=0$ otherwise.

\begin{lemma}\label{lemma of case 16}
Assume that $10\le n\le 26$ and $s=2,\ldots, d$. Let $M_q^{(s)}$ be the matrix defined in \eqref{eq:MqsEDs} and $\ep > 0$ small. Then there exists a constant $C=C(n,s) > 0$ such that
\begin{multline*}
\sum_{q,q'=0}^{\lfloor \frac{d-s}{2} \rfloor} |\log\ep|^{\theta_{q+q'+s}} J_1\left[\ep^{2q+s}|x|^{2q}M_q^{(s)},\ep^{2q'+s}|x|^{2q'}M_{q'}^{(s)}\right] \\
\ge C^{-1} \sum_{q=0}^{\lfloor \frac{d-s}{2} \rfloor} \ep^{2(2q+s)}|\log\ep|^{\theta_{2q+s}} \left\||x|^{2q}M_q^{(s)}\right\|^2.
\end{multline*}
\end{lemma}
\begin{proof}
Fixing any $s = 2,\ldots, d$, let $k=2q+s$, $m=2q'+s$, $\lambda_q= -q(n+2q+2s-2)$, and $\lambda_{q'}= -q'(n+2q'+2s-2)$. By applying Corollary \ref{I_1 coro Q6} and \eqref{eq:hatDq1}--\eqref{eq:hatDq2}, we compute
\begin{align}
&\ J_1\left[|x|^{2q}M_q^{(s)},|x|^{2q'}M_{q'}^{(s)}\right] \nonumber \\
&= \left[\bc'_1(n,k,m)+\bc'_2(n,k,m)\lambda_q+\bc'_3(n,k,m)\lambda_q\lambda_{q'} +\bc'_4(n,k,m)\lambda_q\lambda_{q'}(\lambda_q+\lambda_{q'})\right] \bla M_q^{(s)},M_{q'}^{(s)} \bra \nonumber \\
&:= (m^{D,s}_{qq'})' \bla M_q^{(s)},M_{q'}^{(s)} \bra. \label{eq:mDs0 Q6}
\end{align}
Then we set
\begin{equation}\label{eq:mDs Q6}
m^{D,s}_{qq'}:=N_0^{\theta_{\frac{k+m}{2}}}(m^{D,s}_{qq'})',
\end{equation}
where $N_0 \in \N$ is taken to be large enough; for example, $N_0 = 10^{10}$ suffices.

Using Mathematica, we observe that matrices $(m^{D,s}_{qq'})$ are positive-definite for all $s=2,\ldots, d$ when $n\le 26$. In addition, $(m^{D,2}_{qq'})$ has a negative eigenvalue when $n\ge 27$.

Following the rest of the proof of Lemma \ref{lemma of case 1}, we complete the proof.
\end{proof}

\begin{lemma}
Assume that $12 \le n \le 29$ and $s=1,\ldots, d-2$. Let $V_q^{(s+1)}$ be the vector field defined in \eqref{eq:MqsEDs} and $\ep > 0$ small. Then there exists a constant $C=C(n,s) > 0$ such that
\begin{multline*}
\sum_{q,q'=1}^{\lfloor \frac{d-s}{2} \rfloor} |\log\ep|^{\theta_{q+q'+s}} J_1\left[\proj\left[|x|^{2q}\msd V_q^{(s+1)}\right], \proj\left[|x|^{2q'}\msd V_{q'}^{(s+1)}\right]\right] \\
\ge C^{-1} \sum_{q=1}^{\lfloor \frac{d-s}{2} \rfloor} \ep^{2(2q+s)}|\log\ep|^{\theta_{2q+s}} \left\|\proj\left[|x|^{2q}\msd V_q^{(s+1)}\right]\right\|^2.
\end{multline*}
\end{lemma}
\begin{proof}
Fixing any $s = 1,\ldots, d-2$, let $k=2q+s$, $m=2q'+s$, $\lambda_q= -\frac{1}{2}(n+s+2q-2)(s+2q)$, and $\lambda_{q'}= -\frac{1}{2}(n+s+2q'-2)(s+2q')$.
By applying Corollary \ref{I_1 coro Q6}, \eqref{eq:hatWq1}--\eqref{eq:hatWq2}, and Lemma \ref{lemma:addition info of eigenvector}, we compute
\begin{align*}
&\ J_1\left[\proj\left[|x|^{2q}\msd V_q^{(s+1)}\right],\proj\left[|x|^{2q'}\msd V_{q'}^{(s+1)}\right]\right]\\
&=\left\{\left[\bc'_1(n,k,m)+\bc'_2(n,k,m)\lambda_q+\bc'_3(n,k,m)\lambda_q\lambda_{q'} +\bc'_4(n,k,m)\lambda_q\lambda_{q'}(\lambda_q+\lambda_{q'})\right]2s(n+s)\right.\\
&\hspace{15pt} +\bc'_5(n,k,m)s^2(n+s)^2+\bc'_6(n,k,m)s^2(n+s)^2\\
&\left.\hspace{70pt} \times \left[4(q-1)(q'-1)+2(s+1)(q+q'-2)+(s+1)(n+2s)\right]\right\}\bla V_q^{(s+1)},V_{q'}^{(s+1)} \bra\\
&:= (m^{W,s}_{qq'})' \bla V_q^{(s+1)},V_{q'}^{(s+1)} \bra.
\end{align*}
Then we set $m^{W,s}_{qq'}$ by exploiting \eqref{eq:mDs Q6} in which all the superscripts $D$ are replaced with $W$.

Using Mathematica, we observe that matrices $(m^{W,s}_{qq'})$ are positive-definite for all $s=1,\ldots, d-2$ when $n\le 29$. In addition, $(m^{W,1}_{qq'})$ has a negative eigenvalue when $n\ge 30$.

Following the rest of the proof of Lemma \ref{lemma of case 1}, we complete the proof.
\end{proof}

\begin{lemma}\label{lemma of case 36}
Assume that $14 \le n \le 33$ and $s=2,\ldots, d-2$. Let $\check{H}_q^{(2q+s)}$ be the matrix defined in \eqref{eq:PqsEHs} and $\ep > 0$ small. Then there exists a constant $C=C(n,s) > 0$ such that
\[\sum_{q,q'=1}^{\lfloor \frac{d-s}{2} \rfloor} I_{\ep}\left[\check{H}_q^{(2q+s)},\check{H}_{q'}^{(2q'+s)}\right] \ge C^{-1} \sum_{q=1}^{\lfloor \frac{d-s}{2} \rfloor} \ep^{2(2q+s)}|\log\ep|^{\theta_{2q+s}} \big\|\hat{H}_q^{(2q+s)}\big\|^2.\]
\end{lemma}
\begin{proof}
Fixing any $s=2,\ldots,d-2$, let $k=2q+s$ and $m=2q'+s$.
Also, let $\hat{H}_q^{(2q+s)} = \proj[|x|^{2q+2}\nabla^2P_q^{(s)}]$ be an eigenvector of the operator $\mcl_k$ with eigenvalue $A_{2q+s,q}=(s-1)[2-\frac{n-2}{n-1}(n+s-1)]-(q+1)(n+2q+2s-4)$.
We write $\lambda_q= A_{2q+s,q}$, $\lambda_{q'}= A_{2q'+s,q'}$, and $\ka_s=\frac{n-2}{n-1}s(s-1)(n+s-1)(n+s-2)$.

By applying Corollary \ref{I_1 coro Q6} and \eqref{eq:hatHq1}--\eqref{eq:hatHq2} and Lemma \ref{lemma:addition info of eigenvector}, we find
\begin{align}
&\ J_1\left[\hat{H}_q^{(k)},\hat{H}_{q'}^{(m)}\right] \nonumber \\
&=\left\{\left[\bc'_1(n,k,m)+\bc'_2(n,k,m)\lambda_q + \bc'_3(n,k,m)\lambda_q\lambda_{q'}+\bc'_4(n,k,m)\lambda_q\lambda_{q'}(\lambda_q+\lambda_{q'})\right]\ka_s\right. \nonumber \\
&\hspace{15pt} +\bc'_5(n,k,m)\frac{\ka_s^2}{s(n+s-2)} +\bc'_6(n,k,m)\frac{\ka_s^2}{s^2(n+s-2)^2}\left[s(s-1)(n+2s-2)(n+2s-4)\right. \nonumber \\
&\hspace{25pt} +s(n+2s-2)(4qq'-2(q+q')-s^2+2s(q+q'+1)) \label{eq:J_1 H Q6} \\
&\hspace{25pt} \left.+s^2(2s(1-(q+q'))-4qq'+2(q+q')+n-4)\right]+\bc'_7(n,k,m)\ka_s^2 \nonumber \\
&\hspace{15pt} +\left.\bc'_8(n,k,m)\ka_s^2[4(q-1)(q'-1)+2s(q+q'-2)+s(n+2s-2)]\right\}\bla P_q^{(s+1)},P_{q'}^{(s+1)} \bra \nonumber \\
&:= (m^{H,s,1}_{qq'})' \bla P_q^{(s+1)},P_{q'}^{(s+1)} \bra. \nonumber
\end{align}
Then we set $m^{H,s,1}_{qq'}$ as in \eqref{eq:mDs Q6}.

For the $I_{2,\ep}[\check{H}^{(k)}_{q},\check{H}^{(m)}_{q'}]$ term, we remind from \eqref{eq:ddhatH} and $s=(k-2)-2(q-1)$ that
\begin{equation}\label{eq:L_1Hw Q6}
\begin{aligned}
L_1[\hat{H}^{(k)}_q]w &= \mfl_1[\delta^2\hat{H}^{(k)}_q]w = \ka_s \mfl_1[|x|^{2(q-1)}P_q^{(s)}]w \\
&= -\frac{(n-6)\ka_s}{4(n-1)}\sum_{i=1}^{q+4}b_i(n,k-2,q-1)(1+r^2)^{-\frac{n+8-2i}{2}}P_q^{(s)},\\
\Psi[\hat{H}^{(k)}_q] &= \frac{(n-6)\ka_s}{4(n-1)} \sum_{j=1}^{q+2}\Gamma_j(n,k-2,q-1)(1+r^2)^{-\frac{n-2-2j}{2}}P_q^{(s)},
\end{aligned}
\end{equation}
where $b_i(n,k-2,q-1)$ and $\Gamma_j(n,k-2,q-1)$ are the numbers appearing in Definition \ref{linearized eqn RHS Q6} and Proposition \ref{prop:PsiQ6}, respectively. We also observe
\begin{multline*}
\(x_i\pa_i+\frac{n-6}{2}\)\((1+r^2)^{-\frac{n-2-2j}{2}}P_q^{(s)}\) \\
=(n-2-2j)(1+r^2)^{-\frac{n-2j}{2}}P_q^{(s)} - \frac{1}{2}(n+2-4j-2s)(1+r^2)^{-\frac{n-2-2j}{2}}P_q^{(s)}.
\end{multline*}
Making use of polar coordinates, we determine
\begin{align*}
&\ -\int_{B^n(0,\msfd_0\ep^{-1})} \(y_i\pa_i+\frac{n-6}{2}\)\Psi[\check{H}^{(k)}_{q}] L_1[\check{H}^{(m)}_{q'}]w dy \\
&= -\ep^{k+m}\left[\frac{(n-6)\ka_s}{4(n-1)}\right]^2 \sum_{i=1}^{q'+4}\sum_{j=1}^{q+2} b_i\Gamma_j \left[-(n-2-2j) \int_0^{\msfd_0\ep^{-1}} \frac{r^{n-1+2s}}{(1+r^2)^{n+4-i-j}} dr \right. \\
&\hspace{140pt} \left. + \frac{1}{2} (n+2-4j-2s)\int_0^{\msfd_0\ep^{-1}} \frac{r^{n-1+2s}}{(1+r^2)^{n+3-i-j}} dr\right] \bla P^{(s)}_q,P^{(s)}_{q'} \bra \\
&= \ep^{k+m}|\log\ep|^{\theta_{\frac{k+m}{2}}} (m^{H,s,2}_{qq'})' \bla P^{(s)}_q,P^{(s)}_{q'} \bra + O(\ep^{n-6}),
\end{align*}
where $b_i = b_i(n,m-2,q'-1)$, $\Gamma_j = \Gamma_j(n,k-2,q-1)$, and
\begin{multline*}
(m^{H,s,2}_{qq'})' \\
:= \begin{cases}
\begin{medsize}
\displaystyle \frac{1}{2}\left[\frac{(n-6)\ka_s}{4(n-1)}\right]^2 \sum_{i=1}^{q'+4}\sum_{j=1}^{q+2}b_i\Gamma_j \left[\frac{(n-2s-2i-2j+6)(n-2-2j)}{n+3-i-j}-(n+2-4j-2s)\right] \mci_{n+3-i-j}^{n-1+2s}
\end{medsize} \\
\hfill \begin{medsize}
\displaystyle \text{if } k+m=2(q+q'+s) < n-6,
\end{medsize} \\
\begin{medsize}
\displaystyle 0
\end{medsize}
\hfill \begin{medsize}
\displaystyle \text{if } k+m=2(q+q'+s) = n-6.
\end{medsize}
\end{cases}
\end{multline*}
We define
\begin{equation}\label{eq:mHs23 Q6}
m^{H,s,2}_{qq'} := \frac{1}{2} N_0^{\theta_\frac{k+m}{2}}\left[(m^{H,s,2}_{qq'})' + (m^{H,s,2}_{q'q})'\right]
\end{equation}
where $N_0 \in \N$ is taken to be large enough; for example, $N_0 = 10^{10}$ suffices. Clearly, $m^{H,s,2}_{qq'}=0$ when $k+m = n-6$. Then, by \eqref{eq:I_2 Q6},
\begin{equation}\label{eq:I_2H Q6}
I_{2,\ep}\left[\check{H}^{(k)}_{q},\check{H}^{(m)}_{q'}\right] = \ep^{k+m} \left[\frac{|\log\ep|}{N_0}\right]^{\theta_{\frac{k+m}{2}}} m^{H,s,2}_{qq'} \bla P^{(s)}_q,P^{(s)}_{q'} \bra + O(\ep^{n-6}).
\end{equation}

For the $I_{3,\ep}[\check{H}^{(k)}_{q},\check{H}^{(m)}_{q'}]$ term, we need to evaluate
\begin{equation}\label{eq:L_1HZ0 Q6}
L_1[\hat{H}_q^{(k)}]Z=-\frac{n-6}{2}L_1[\hat{H}_q^{(k)}] (1+r^2)^{-\frac{n-6}{2}}+(n-6)L_1[\hat{H}_q^{(k)}](1+r^2)^{-\frac{n-4}{2}}.
\end{equation}
We have \eqref{eq:L_1Hw Q6} and
\begin{equation}\label{eq:L_1HZ01 Q6}
L_1[\hat{H}^{(k)}_q](1+r^2)^{-\frac{n-4}{2}} = -\frac{\ka_s}{4(n-1)}\sum_{i=1}^{q+4} b'_i(n,k-2,q-1)(1+r^2)^{-\frac{n+10-2i}{2}}P_q^{(s)},
\end{equation}
where $b'_i(n,k-2,q-1)$ is the number appearing in Definition \ref{linearized eqn RHS Q6}. It follows that
\begin{align*}
&\ -\int_{B^n(0,\msfd_0\ep^{-1})} \Psi[\check{H}^{(k)}_{q}]L_1[\check{H}^{(m)}_{q'}]Z dy \\
&= -\ep^{k+m} \left[\frac{(n-6)\ka_s}{4(n-1)}\right]^2 \sum_{i=1}^{q'+4}\sum_{j=1}^{q+2} \Gamma_j \left[-b'_i \int_0^{\msfd_0\ep^{-1}} \frac{r^{n-1+2s}}{(1+r^2)^{n+4-i-j}} dr \right. \\
&\hspace{160pt} \left. + \frac{n-6}{2}b_i \int_0^{\msfd_0\ep^{-1}} \frac{r^{n-1+2s}}{(1+r^2)^{n+3-i-j}} dr\right] \bla P^{(s)}_q,P^{(s)}_{q'} \bra\\
&= \ep^{k+m}|\log\ep|^{\theta_{\frac{k+m}{2}}} (m^{H,s,3}_{qq'})' \bla P^{(s)}_q,P^{(s)}_{q'} \bra + O(\ep^{n-6}),
\end{align*}
where $b_i = b_i(n,m-2,q'-1)$, $b'_i = b'_i(n,m-2,q'-1)$, $\Gamma_j = \Gamma_j(n,k-2,q-1)$, and
\[(m^{H,s,3}_{qq'})' := \begin{cases}
\begin{medsize}
\displaystyle \frac{1}{2} \left[\frac{(n-6)\ka_s}{4(n-1)}\right]^2 \sum_{i=1}^{q'+4}\sum_{j=1}^{q+2}\Gamma_j \left[\frac{n-2s-2i-2j+6}{n+3-i-j}b'_i-(n-6)b_i\right] \mci_{n+3-i-j}^{n-1+2s}
\end{medsize} \\
\hfill \begin{medsize}
\displaystyle \text{if } k+m=2(q+q'+s) < n-6,
\end{medsize} \\
\begin{medsize}
\displaystyle -\frac{n-6}{2} \left[\frac{(n-6)\ka_s}{4(n-1)}\right]^2 \Gamma_{q+2}b_{q'+4}
\end{medsize}
\hfill \begin{medsize}
\displaystyle \text{if } k+m=2(q+q'+s) = n-6.
\end{medsize}
\end{cases}\]

If we define $m^{H,s,3}_{qq'}$ as in \eqref{eq:mHs23 Q6}, then by \eqref{eq:I_3 Q6},
\begin{equation}\label{eq:I_3H Q6}
I_{3,\ep}\left[\check{H}^{(k)}_{q},\check{H}^{(m)}_{q'}\right] = \ep^{k+m} \left[\frac{|\log\ep|}{N_0}\right]^{\theta_{\frac{k+m}{2}}} m^{H,s,3}_{qq'} \bla P^{(s)}_q,P^{(s)}_{q'} \bra + O(\ep^{n-6}).
\end{equation}

Adding up \eqref{eq:I_1J_1 Q6}, \eqref{eq:J_1 H Q6}, \eqref{eq:I_2H Q6}, and \eqref{eq:I_3H Q6}, we arrive at
$$
(I_{1,\ep}+I_{2,\ep}+I_{3,\ep})\left[\check{H}_q^{(k)},\check{H}_{q'}^{(m)}\right] = \ep^{k+m} \left[\frac{|\log\ep|}{N_0}\right]^{\theta_{\frac{k+m}{2}}} m^{H,s}_{qq'} \bla P^{(s)}_q,P^{(s)}_{q'} \bra + O(\ep^{n-6}),
$$
where $m^{H,s}_{qq'} := m^{H,s,1}_{qq'} + m^{H,s,2}_{qq'} + m^{H,s,3}_{qq'}$. Using Mathematica, we observe that matrices $(m^{H,s}_{qq'})$ are positive-definite for all $s=2,\ldots, d-2$ when $n\le 33$. In addition, $(m^{H,2}_{qq'})$ has a negative eigenvalue when $n\ge 34$.

Following the rest of the proof of Lemma \ref{lemma of case 16}, we complete the proof.
\end{proof}

\section{Non-compactness of the Constant $Q^{(6)}$-curvature Problem}\label{sec:noncomp6}
Building on the computations from the previous section, we now construct an example of $L^{\infty}$-unbounded solutions to \eqref{eq:main6} on $M = \S^n$ for all $n \ge 27$, thereby proving Theorem \ref{thm:main6n}.
This serves as a sixth-order analogue of the results of Brendle \cite{Br2} and Brendle and Marques \cite{BM} for the Yamabe problem, and that of Wei and Zhao \cite{WZ} for the $Q^{(4)}$-curvature problem.

\subsection{The background metric and the energy functional}
Let $W_{iklj}$ be a fixed multilinear form satisfying all algebraic properties of the Weyl tensor. Define
\begin{align*}
|W|_{\sharp} &:= \Bigg[\sum_{i,j,k,l=1}^n (W_{iklj} + W_{ilkj})^2\Bigg]^\frac{1}{2};\\
(W\times W)_{\msfpq} &:= \sum_{i,j,k=1}^n (W_{ik\msfp j} + W_{i\msfp kj})(W_{ik\msfq j} + W_{i\msfq kj})
\end{align*}
for $\msfp, \msfq = 1,\ldots,n$. Moreover, we assume that the matrix $((W\times W)_{\msfpq})_{\msfp,\msfq=1}^n$ is non-singular.
Since $((W\times W)_{\msfpq})_{\msfp,\msfq=1}^n$ can be viewed as a Gram matrix of a collection of vectors in the space of trilinear forms endowed with the natural inner product, it is positive definite. Consequently, $|W|_{\sharp}^2=\tr (W\times W)>0$.
Let us recall from \eqref{eq:mcv decom2} that $\mcd_k = \{D\in \mcv_k\mid \delta D=0\}$. By the algebraic properties of $W$, we know that
$$
(W_{iklj}x_kx_l)_{i,j=1}^n \in\{M \in \mcd_{2}\mid \Delta M_{ij}=0 \text{ for each } i,j=1,\ldots,n\}.
$$

We fix $s=2$ and $q=0,\ldots,4$. Denoting $k=2q+s$, we set matrix functions on $\R^n$:
\begin{equation}\label{eq:Hkij}
H^{(k)}_{ij}(x) := \msfa_q|x|^{2q}W_{iabj}x_ax_b\in \mcd_k,
\end{equation}
where the coefficients $\msfa_q \in \R$ will be determined later. Then we define
\begin{equation}\label{eq:Hij}
H_{ij}(x) := \sum_{q=0}^{4} H^{(2q+s)}_{ij}(x) = \sum_{q=0}^{4}\msfa_q|x|^{2q}W_{iabj}x_ax_b,
\end{equation}
(cf. \eqref{eq:H} and Lemma \ref{D hat}).

The tensor $H^{(k)}_{ij}$ in \eqref{eq:Hkij} exhibits commutative properties that will be useful in our subsequent computations: Let $q'=0,\ldots,4$ and $m=2q'+s$. Let $l,\, \bar{l}\geq 0$ be integers and $\msfp,\msfq = 1,\ldots,n$. Then,
\begin{align}
\Delta^{l} H^{(k)}_{\msfp i}\Delta^{\bar{l}} H^{(m)}_{\msfq i}&=\Delta^{l} H^{(k)}_{\msfq i}\Delta^{\bar{l}} H^{(m)}_{\msfp i}; \label{eq:commutative laplace} \\
\pa_{\msfp} H^{(k)}\cdot \pa_{\msfq} H^{(m)}&=\pa_{\msfq} H^{(k)}\cdot \pa_{\msfp} H^{(m)}. \label{eq:commutative pa_pq}
\end{align}

\begin{rmk}\label{rmk:log}
In Proposition \ref{I_1 Q6}, we required that $k=2,\ldots,\lfloor\frac{n-6}{2}\rfloor$. Here, we choose $k \leq 10$ to ensure that $2k<n-6$ holds when $n \geq 27$, which guarantees that logarithmic-order terms do not appear. \hfill $\diamond$
\end{rmk}

We consider a smooth Riemannian metric $g_0=\exp(h)$ on $\R^n$, where $h$ is a trace-free symmetric $2$-tensor on $\R^n$ such that $h(x)=0$ for $|x| \ge 1$ and
\begin{equation}\label{eq:hij}
h_{ij}(x)=\mu \ep^{10}H_{ij}(\ep^{-1}x) \quad \text{for } |x|\leq \rho.
\end{equation}
Here, $\ep$ and $\rho$ are small numbers satisfying $0 < \ep \ll \rho \leq \frac{1}{4}$ and $\mu:=\ep^{\frac{1}{4}}$. In addition, we impose the condition
\begin{equation}\label{eq:alpha0}
\sum_{m=0}^7 |\nabla^mh(x)| \le \alpha_0 \quad \text{for } x \in \R^n,
\end{equation}
where $\alpha_0$ is a small number satisfying $0 < \ep \ll \alpha_0 \leq 1$. Note that $h_{ii}(x) = 0$, $h_{ij}(x)x_j = 0$ and $\delta_i h(x)=0$ for $|x|\leq \rho$, and $dv_{g_0} = dx$.

We recall the bubble $w_{\dx}$ and the normalized bubble $w=w_{1,0}$ from \eqref{eq:bubble} (with $\msfk=3$).
Let $\Xi: \S^n \setminus \{(0,\ldots,0,1)\} \to \R^n$ be the stereographic projection, whose inverse is given as
\begin{equation}\label{eq:Xi}
\Xi^{-1}(x) = \(\frac{2x}{|x|^2+1},\frac{|x|^2-1}{|x|^2+1}\) \quad \text{for } x \in \R^n.
\end{equation}
Then, $g_{\S^n} = 4\, \Xi^*(w^{\frac{4}{n-6}}g_{\R^n})$. Besides, the pullback metric $g := 4\, \Xi^*(w^{\frac{4}{n-6}}g_0)$ in $\S^n \setminus \{(0,\ldots,0,1)\}$ smoothly extends to a metric on $\S^n$ (still denoted by $g$),
because $g_0$ equals the standard metric $g_{\R^n}$ in $\R^n$ outside of the unit ball $B_1$.

\medskip
Our next concern is the positivity of the operators $-P_{g_0}^{(6)}$ and $-P_g^{(6)}$, along with the positivity of their associated Green's functions $G_{g_0}^{(6)}$ and $G_g^{(6)}$, respectively.
\begin{lemma}\label{lemma:Green Q6}
There exists a constant $\alpha_0 = \alpha_0(n) > 0$ for which the following properties hold:
\begin{itemize}
\item[(i)] There exists a constant $C = C(n,\alpha_0) > 0$ such that
\begin{equation}\label{eq:coer Q6}
\int_{\R^n} u(-P_{g_0}^{(6)}u) dx \ge C^{-1} \sum_{m=0}^3 \|\nabla^mu\|_{L^{\frac{2n}{n-2(3-m)}}(\R^n)}^2 \quad \text{for all } u \in \dot{H}^3(\R^n)
\end{equation}
and
\begin{equation}\label{eq:coer2 Q6}
\int_{\S^n} u(-P_g^{(6)}u) dv_g \ge C^{-1}\|u\|_{H^3(\S^n)}^2 \quad \text{for all } u \in H^3(\S^n).
\end{equation}
In particular, $\textup{Ker}P_{g_0}^{(6)} = \textup{Ker}P_g^{(6)} = \{0\}$, and the Green's functions $G_{g_0}^{(6)}$ and $G_g^{(6)}$ exist.
\item[(ii)] It holds that $G_g^{(6)}(X,Y) > 0$ for all $X, Y \in \S^n$ such that $X \ne Y$.
    In particular, $G_{g_0}^{(6)}(x,y) > 0$ for all $x, y \in \R^n$ such that $x \ne y$.
\item[(iii)] If $u \in C^6(\S^n)$ satisfies $-P_g^{(6)}u \ge 0$ on $\S^n$, then either $u > 0$ or $u \equiv 0$ on $\S^n$.
\end{itemize}
\end{lemma}
\begin{proof}
(i) By \eqref{eq:P6}, \eqref{eq:alpha0}, $\text{supp}(h) \subset B_1$, H\"older's inequality and the Sobolev inequality, it holds that
\[\int_{\R^n} u(-P_{g_0}^{(6)}u) dx = \int_{\R^n} |\nabla\Delta u|^2 dx + O(\alpha_0) \sum_{m=0}^3 \int_{B_1} |\nabla^m u|^2 dx \ge C^{-1} \sum_{m=0}^3 \|\nabla^mu\|_{L^{\frac{2n}{n-2(3-m)}}(\R^n)}^2\]
for all $u \in \dot{H}^3(\R^n)$, which is \eqref{eq:coer Q6}.

On the other hand, according to Branson \cite{Bra},
\begin{equation}\label{eq:GJMS6 sphere}
-P_{g_{\S^n}}^{(6)}u = \prod_{m=1}^3 \left[-\Delta_{\S^n} + \frac{(n-2m)(n+2m-2)}{4}\right]u.
\end{equation}
It follows that
\[\int_{\S^n} u(-P_g^{(6)}u) dv_g = \int_{\S^n} u(-P_{g_{\S^n}}^{(6)}u) dv_{g_{\S^n}} + O(\alpha_0)\|u\|_{H^3(\S^n)}^2 \ge C^{-1}\|u\|_{H^3(\S^n)}^2\]
for all $u \in H^3(\S^n)$, which is \eqref{eq:coer2 Q6}.

The existence of $G_g^{(6)}$ follows from \eqref{eq:coer2 Q6}, the Lax-Milgram theorem, and elliptic regularity; refer to the proof of \cite[Proposition 2.4]{GM} for the fourth-order setting.
Moreover, owing to the conformal covariance property \eqref{eq:ccp}, we have
\begin{equation}\label{eq:Gg0}
G_{g_0}^{(6)}(x,y) = 2^{n-6}w(x)w(y)G_g^{(6)}(\Xi^{-1}(x),\Xi^{-1}(y)) \quad \text{for } x, y \in \R^n,\ x \ne y.
\end{equation}

\medskip \noindent (ii) Because each second-order factor on the right-hand side of \eqref{eq:GJMS6 sphere} has a positive Green's function, the Green's function $G_{g_{\S^n}}^{(6)}$ of $-P_{g_{\S^n}}^{(6)}$ is positive as well.
Also, since $P_g^{(6)}$ is a small lower-order perturbation of $P_{g_{\S^n}}^{(6)}$ in $L^{\infty}(\S^n)$, a suitable simplification of the argument in \cite[Section 5]{GS} (see also \cite[Section 5.1]{GGS}), which relies on the Neumann series, shows that $G_g^{(6)}$ is likewise positive.
The positivity of $G_{g_0}^{(6)}$ is an immediate consequence of \eqref{eq:Gg0}.

\medskip \noindent (iii) This follows directly from (ii).
\end{proof}

Throughout this section, we always assume that $n \ge 27$ and the parameters $(\dx)$ belongs to an admissible set
\[\mca := \(1-\vep_0, 1+\vep_0\) \times B^n(0,\vep_0),\]
where $\vep_0 \in (0,1)$ is a sufficiently small number.

Inspired by \cite{Br, Sc4}, we give the following definition.
\begin{defn}[Modified bubbles]
Let
\begin{equation}\label{eq:vdx}
v_{\dx}(y) = \zeta_{\rho^{-1}\ep}(|y-\xi|)w_{\dx}(y) + [1-\zeta_{\rho^{-1}\ep}(|y-\xi|)]\delta^{\frac{n-6}{2}}\ga_n^{-1}G_{\tig_0}^{(6)}(y,\xi) \quad \text{for } y \in \R^n,
\end{equation}
where $\tig_0 := g_0(\ep\cdot)$, $\ga_n=2^{-7}\pi^{-n/2}\Gamma(\frac{n-6}{2})$, $G_{\tig_0}^{(6)}$ is the Green's function of $-P_{\tig_0}^{(6)}$,
$\zeta \in C_c^{\infty}([0,\infty))$ is a non-negative cutoff function satisfying $\zeta(r)=1$ for $r \in [0,1]$ and $\zeta(r)=0$ for $r \in [2,\infty)$, and $\zeta_{\rho^{-1}\ep}:=\zeta(\rho^{-1}\ep\cdot)$.
Given $\ep > 0$ small, we also set
\[v_{\edx}(x) = \ep^{-\frac{n-6}{2}}v_{\dx}(\ep^{-1}x) \quad \text{for } x \in \R^n. \tag*{$\diamond$}\]
\end{defn}
\noindent To build an example of $L^{\infty}$-unbounded solutions to \eqref{eq:main6} on $M = \S^n$, we will search for a solution to \eqref{eq:main6} in $(M,g) = (\R^n,g_0)$ of the form $v_{\edx} + \Psi_{\edx}$, where $\Psi_{\edx}$ is a correction term to be determined later.
The current definition of $v_{\edx}$ plays a crucial role in reducing the error in the neck region (see the proof of Lemma \ref{lemma:mcrest}) and thereby ensuring the positivity of our solution in $\R^n$.

For future use, we provide basic properties of $G_{\tig_0}^{(6)}$ here.
\begin{lemma}
For $\ep > 0$ small and $(\dx) \in \mca$, it holds that
\begin{equation}\label{eq:Gtig0}
G_{\tig_0}^{(6)}(y,\xi) = \ep^{n-6}G_{g_0}^{(6)}(\ep y,\ep\xi) \quad \text{for } y \in \R^n \setminus \{\xi\}.
\end{equation}
Moreover, there is a constant $C = C(n,g_0,\vep_0) > 1$ such that
\begin{equation}\label{eq:Gtig01}
G_{\tig_0}^{(6)}(y,\xi) \ge \frac{C^{-1}}{|y-\xi|^{n-6}} \quad \text{and} \quad \left|\nabla^m_y G_{\tig_0}^{(6)}(y,\xi)\right| \le \frac{C}{|y-\xi|^{n-6+m}}
\end{equation}
for $|y-\xi| \ge \frac{\rho}{\ep}$ and $m = 0,1,\ldots,6$, and
\begin{equation}\label{eq:Gtig02}
\left|\nabla^m_y\left[w_{\dx}(y)-\delta^{\frac{n-6}{2}}\ga_n^{-1}G_{\tig_0}^{(6)}(y,\xi)\right]\right| \le \frac{C\alpha_0\ep^2}{|y-\xi|^{n-8+m}}
\end{equation}
for $\frac{\rho}{2\ep} \le |y-\xi| \le \frac{2\rho}{\ep}$ and $m = 0,1,\ldots,6$. Here, $\alpha_0 > 0$ is the number appearing in \eqref{eq:alpha0}.
\end{lemma}
\begin{proof}
The identity \eqref{eq:Gtig0} is straightforward to verify.

Assume that $|x-\ep\xi| \ge \rho$. We have that $G_{g_0}^{(6)}(x,\ep\xi) = 2^{n-6}w(\ep\xi)w(x) G_g^{(6)}(\Xi^{-1}(x),\Xi^{-1}(\ep\xi))$, as follows from \eqref{eq:Gg0}.
Furthermore, a direct computation using \eqref{eq:bubble} and \eqref{eq:Xi} shows that $w(x)G_g^{(6)}(\Xi^{-1}(x),\Xi^{-1}(\ep\xi)) \ge C|x|^{6-n}$ and $|\nabla^m_x [w(x)G_g^{(6)}(\Xi^{-1}(x),\Xi^{-1}(\ep\xi))]| \le C|x|^{6-n-m}$.
Thus, \eqref{eq:Gtig01} follows from \eqref{eq:Gtig0}.

Next, suppose that $\frac{\rho}{2} \le |x-\ep\xi| \le 2\rho$. Because of \eqref{eq:alpha0}, $h_{ij}(0)=h_{ij,k}(0)=0$, and $|\ep\xi| \le \ep \ll \frac{\rho}{4} \le |x|$, the parametrix expansion of $G_{g_0}^{(6)}$ yields
\begin{equation}\label{eq:Gg01}
G_{g_0}^{(6)}(x,\ep\xi) = \frac{\ga_n}{|x-\ep\xi|^{n-6}} + O^{(6)}\(\frac{\alpha_0}{|x|^{n-8}}\).
\end{equation}
Inequality \eqref{eq:Gtig02} is a consequence of \eqref{eq:bubble}, \eqref{eq:Gg01}, and \eqref{eq:Gtig0}.
\end{proof}

In the following lemma, we expand the energy, defined in \eqref{eq:mcf}, of modified bubbles $\mcf_{g_0}(v_{\edx})$.
\begin{lemma}\label{lemma:mcfexp}
Let $L_1[h]$ and $L_2[h,h]$ be the differential operators from Lemma \ref{mce6}. For $(\dx) \in \mca$, we define
\begin{align*}
\Dot{\mcf}[h](\dx) &:=\frac{1}{2} \int_{\R^n} \(-w_{\dx} L_1[h]w_{\dx}-2h_{ij}\pa_i\Delta^2w_{\dx} \pa_j w_{\dx}-h_{ij}\pa_j \Delta w_{\dx}\pa_i\Delta w_{\dx}\);\\
\Ddot{\mcf}[h](\dx) &:=\frac{1}{2} \int_{\R^n} \(-w_{\dx} L_2[h,h]w_{\dx}+h_{ik}h_{kj}\pa_i\Delta^2w_{\dx} \pa_j w_{\dx}+\frac{1}{2}h_{ik}h_{kj}\pa_j \Delta w_{\dx}\pa_i\Delta w_{\dx}\) \\
&\ +\frac{1}{2}\int_{\R^n}\left[2h_{ij}\pa_iw_{\dx} \pa^2_{jk} (h_{kl}\pa_l\Delta w_{\dx})+h_{ij}\pa_iw_{\dx} \pa^2_{jk} \Delta(h_{kl}\pa_l w_{\dx})\right].
\end{align*}
If we also set $\Dot{\msfF} := \Dot{\mcf}[H]$ and $\Ddot{\msfF} := \Ddot{\mcf}[H]$ by replacing $h$ with $H$, then it holds that
\begin{equation}\label{eq:mcfexp}
\mcf_{g_0}(v_{\edx}) = \mcf_0(w) + \mu\ep^{10} \Dot{\msfF}(\dx) + \mu^2\ep^{20} \Ddot{\msfF}(\dx) + O\bigg(\mu^3\ep^{20(1+\vep_1)} + \bigg(\frac{\ep}{\rho}\bigg)^{n-6}\bigg)
\end{equation}
uniformly for $(\dx) \in \mca$, where $\mcf_0(w) := \frac{3}{n}\int_{\R^n} |\nabla\Delta w|^2 dx > 0$ and $\vep_1 \in (0,\frac{1}{4})$ is a small number.
\end{lemma}
\begin{proof}
Plugging $v_{\edx}$ into the energy functional \eqref{eq:mcf} and using \eqref{eq:Gtig01}, we find
\begin{align*}
\mcf_{g_0}(v_{\edx}) &= \left[\frac{1}{2} \int_{\R^n} |\nabla\Delta w|^2 - \frac{n-6}{2n}\mfc_6(n) \int_{\R^n} w^{\frac{2n}{n-6}}\right]
+ \frac{1}{2} \int_{\R^n} w_{\dx} (-P_{\tig_0}^{(6)}+\Delta^3)w_{\dx} \\
&\ + \frac{1}{2} \int_{B^n(\xi,\rho\ep^{-1})^c} v_{\dx} (-P_{\tig_0}^{(6)}v_{\dx}) - \frac{n-6}{2n}\mfc_6(n) \int_{B^n(\xi,\rho\ep^{-1})^c} v_{\dx}^{\frac{2n}{n-6}} \\
&= \mcf_0(w) + \Dot{\mcf}[h](\dx) + \Ddot{\mcf}[h](\dx) + O\bigg(\mu^3\ep^{20(1+\vep_1)} + \bigg(\frac{\ep}{\rho}\bigg)^{n-6}\bigg).
\end{align*}
Moreover,
\begin{align*}
\Dot{\mcf}[h](\dx) &= \frac{1}{2} \(\int_{B^n(0,\rho)} + \int_{B^n(0,\rho)^c}\) \(-w_{\dx} L_1[h]w_{\dx}-2h_{ij}\pa_i\Delta^2w_{\dx} \pa_j w_{\dx}-h_{ij}\pa_j \Delta w_{\dx}\pa_i\Delta w_{\dx}\) \\
&= \begin{medsize}
\displaystyle \frac{1}{2} \mu\ep^{10} \int_{B^n(0,\rho\ep^{-1})} \(-w_{\dx} L_1[H]w_{\dx}-2H_{ij}\pa_i\Delta^2w_{\dx} \pa_j w_{\dx}-H_{ij}\pa_j \Delta w_{\dx}\pa_i\Delta w_{\dx}\) + O\bigg(\bigg(\frac{\ep}{\rho}\bigg)^{n-6}\bigg)
\end{medsize} \\
&= \mu\ep^{10} \Dot{\msfF}(\dx) + O\bigg(\bigg(\frac{\ep}{\rho}\bigg)^{n-6}\bigg),
\end{align*}
and similarly,
\[\Ddot{\mcf}[h](\dx) = \mu^2\ep^{20} \Ddot{\msfF}(\dx) + O\bigg(\bigg(\frac{\ep}{\rho}\bigg)^{n-6}\bigg).\]
As a result, \eqref{eq:mcfexp} follows.
\end{proof}

Making use of the condition $\delta H=0$, we see that the first-order term $\Dot{\msfF}(\dx)$ vanishes.
\begin{lemma}\label{lemma:DotE}
For each $(\dx) \in \mca$, it holds that
\begin{equation}\label{eq:DotE}
\Dot{\msfF}(\dx)=0.
\end{equation}
\end{lemma}
\begin{proof}
Since $\Dot{R}[H]=0$, we see from \eqref{Q6 L1 formula} that
\begin{equation}\label{eq:DotE3}
L_1[H]=\frac{16}{n-2}\Dot{\ricci}_{ij}[H]\pa^2_{ij}\Delta+\frac{16}{n-2}\Dot{\ricci}_{ij,k}[H]\pa^3_{ijk} +\frac{8}{n-4}\Delta\Dot{\ricci}_{ij}[H]\pa^2_{ij}.
\end{equation}
Therefore,
\begin{align*}
&\ \Dot{\msfF}(\dx) \\
&\begin{medsize}
\displaystyle =-\frac{1}{2} \int_{\R^n} \(\frac{16}{n-2}\Dot{\ricci}_{ij}[H]w_{\dx}\pa^2_{ij}\Delta w_{\dx} + \frac{16}{n-2}\Dot{\ricci}_{ij,k}[H]w_{\dx}\pa^3_{ijk}w_{\dx} + \frac{8}{n-4}\Delta\Dot{\ricci}_{ij}[H]w_{\dx}\pa^2_{ij}w_{\dx}\)
\end{medsize} \\
&\ -\frac{1}{2} \int_{\R^n} \(2H_{ij}\pa_i\Delta^2w_{\dx} \pa_j w_{\dx}+H_{ij}\pa_j \Delta w_{\dx}\pa_i\Delta w_{\dx}\).
\end{align*}

Let us deal with $\int_{\R^n}\Dot{\ricci}_{ij,k}[H] w_{\dx}\pa^3_{ijk} w_{\dx}$ first. Integrating by parts, we obtain
\begin{multline}\label{eq:DotE1}
\int_{\R^n}\Dot{\ricci}_{ij}[H] \Delta\pa_i w_{\dx}\pa_j w_{\dx} \\
=\int_{\R^n}\pa_i w_{\dx}\(\Delta \Dot{\ricci}_{ij}[H] \pa_j w_{\dx}+2\Dot{\ricci}_{ij,k}[H] \pa^2_{jk} w_{\dx}+\Dot{\ricci}_{ij}[H]\Delta \pa_j w_{\dx}\).
\end{multline}
Since $\Dot{\ricci}_{ij}[H]$ is symmetric, the third term on the right-hand side of \eqref{eq:DotE1} cancels out with the term on the left-hand side. Hence,
\begin{equation}\label{eq:DotE2}
\int_{\R^n}\Delta \Dot{\ricci}_{ij}[H] \pa_i w_{\dx}\pa_j w_{\dx}=-2\int_{\R^n}\Dot{\ricci}_{ij,k}[H] \pa_i w_{\dx}\pa^2_{jk} w_{\dx}.
\end{equation}
Using $\delta_j\Dot{\ricci}[H]=0$ and integrating by parts on both sides of \eqref{eq:DotE2}, we observe that
$$
\int_{\R^n}\Delta \Dot{\ricci}_{ij}[H] w_{\dx}\pa^2_{ij} w_{\dx}=-2\int_{\R^n}\Dot{\ricci}_{ij,k}[H] w_{\dx}\pa^3_{ijk} w_{\dx}.
$$
It follows that
\begin{align*}
\Dot{\msfF}(\dx) &=-\frac{1}{2} \int_{\R^n} \left[\frac{16}{n-2}\Dot{\ricci}_{ij}[H]w_{\dx}\pa^2_{ij}\Delta w_{\dx} +\frac{16}{(n-4)(n-2)}\Delta\Dot{\ricci}_{ij}[H]w_{\dx}\pa^2_{ij}w_{\dx}\right] \\
&\ -\frac{1}{2} \int_{\R^n} \(2H_{ij}\pa_i\Delta^2w_{\dx} \pa_j w_{\dx}+H_{ij}\pa_j \Delta w_{\dx}\pa_i\Delta w_{\dx}\).
\end{align*}

We next prove that $\int_{\R^n}\Delta\Dot{\ricci}_{ij}[H]w_{\dx}\pa^2_{ij}w_{\dx}$ vanishes, by adapting the ideas from \cite[Proposition 13]{Br2} and \cite[Lemma 4.14]{WZ}.
On one hand, integrating by parts and using $\delta_j\Dot{\ricci}[H]=0$, we obtain
$$
\int_{\R^n}\Delta\Dot{\ricci}_{ij}[H]w_{\dx}\pa^2_{ij}w_{\dx} = -\int_{\R^n}\Delta\Dot{\ricci}_{ij}[H]\pa_iw_{\dx}\pa_j w_{\dx}.
$$
On the other hand, directly computing $\pa^2_{ij}w_{\dx}$ and $\pa_{i}w_{\dx}$ and using $\tr \Dot{\ricci}=0$, we obtain
$$
\int_{\R^n}\Delta\Dot{\ricci}_{ij}[H]w_{\dx}\pa^2_{ij}w_{\dx} = \frac{n-4}{n-6}\int_{\R^n}\Delta\Dot{\ricci}_{ij}[H]\pa_iw_{\dx}\pa_j w_{\dx}.
$$
Combining the two equations above, we arrive at $\int_{\R^n}\Delta\Dot{\ricci}_{ij}[H]w_{\dx}\pa^2_{ij}w_{\dx}=0$.

To show that $\int_{\R^n} \Dot{\ricci}_{ij}[H]w_{\dx}\pa^2_{ij}\Delta w_{\dx}$ vanishes, as \cite[Lemma 4.11]{WZ}, we combine the following three identities:
\begin{align*}
\int_{\R^n} \Dot{\ricci}_{ij}[H]w_{\dx}\pa^2_{ij}\Delta w_{\dx}&=-\int_{\R^n} \Dot{\ricci}_{ij}[H]\pa_iw_{\dx}\pa_{j}\Delta w_{\dx}=\int_{\R^n} \Dot{\ricci}_{ij}\pa^2_{ij}w_{\dx}\Delta w_{\dx};\\
\begin{medsize}
\displaystyle \int_{\R^n} \Dot{\ricci}_{ij}[H]w_{\dx}\pa^2_{ij}\Delta w_{\dx}
\end{medsize}
&\begin{medsize}
\displaystyle =\frac{2(n-2)}{n-6}\int_{\R^n} \Dot{\ricci}_{ij}[H]\pa_iw_{\dx}\pa_j\Delta w_{\dx}-\frac{n-2}{n-6}\int_{\R^n} \Dot{\ricci}_{ij}[H]\pa^2_{ij}w_{\dx}\Delta w_{\dx}.
\end{medsize}
\end{align*}

Finally, to show that both $\int_{\R^n}H_{ij}\pa_i\Delta^2w_{\dx} \pa_j w_{\dx}$ and $\int_{\R^n}H_{ij}\pa_j \Delta w_{\dx}\pa_i\Delta w_{\dx}$ vanish, we combine the following five identities:
\begin{align*}
\int_{\R^n}H_{ij}\pa_j \Delta w_{\dx}\pa_i\Delta w_{\dx}&=-\int_{\R^n}H_{ij}\Delta w_{\dx}\pa^2_{ij}\Delta w_{\dx};\\
\int_{\R^n}H_{ij}\pa^2_{ij}\Delta^2w_{\dx} w_{\dx}&=-\int_{\R^n}H_{ij}\pa_i\Delta^2w_{\dx} \pa_j w_{\dx}=\int_{\R^n}H_{ij}\Delta^2w_{\dx} \pa^2_{ij} w_{\dx};\\
\begin{medsize}
\displaystyle \int_{\R^n}H_{ij}\pa_j \Delta w_{\dx}\pa_i\Delta w_{\dx}
\end{medsize}
&\begin{medsize}
\displaystyle =c\int_{\R^n}H_{ij}\pa^2_{ij}\Delta^2w_{\dx} w_{\dx}+b\int_{\R^n}H_{ij}\pa_i\Delta^2w_{\dx} \pa_j w_{\dx}+a\int_{\R^n}H_{ij}\Delta^2w_{\dx} \pa^2_{ij}w_{\dx};
\end{medsize} \\
\begin{medsize}
\displaystyle \int_{\R^n}H_{ij}\Delta w_{\dx}\pa^2_{ij}\Delta w_{\dx}
\end{medsize}
&\begin{medsize}
\displaystyle =c'\int_{\R^n}H_{ij}\pa^2_{ij}\Delta^2w_{\dx} w_{\dx}+b'\int_{\R^n}H_{ij}\pa_i\Delta^2w_{\dx} \pa_j w_{\dx}+a'\int_{\R^n}H_{ij}\Delta^2w_{\dx} \pa^2_{ij}w_{\dx};
\end{medsize}
\end{align*}
where
\begin{align*}
c &= -\frac{1}{8}(n-4)(n+2)\frac{n-6}{n}, & b &= \frac{1}{4}(n-4)(n+2), & a &= -\frac{1}{8}(n^2-20),\\
c'&= -\frac{1}{8}(n-6)n, & b' &= \frac{1}{4}n^2, & a' &= -\frac{1}{8}(n+4)(n-2).
\end{align*}

Consequently, \eqref{eq:DotE} holds.
\end{proof}

\subsection{Linear theory}
The main objective of this subsection is to establish the invertibility of the operator $P_{g_0}^{(6)}+\tmfc_6(n) v_{\edx}^{\frac{12}{n-6}}$ in suitable weighted $L^{\infty}(\R^n)$ spaces.

\begin{defn}[Weighted $L^{\infty}(\R^n)$-norms]\label{def of norms}
\

Given a small number $\ep > 0$ and a parameter $(\dx) \in \mca$, we set two weighted $L^{\infty}(\R^n)$-norms:
\begin{align*}
\|\wtPsi\|_* &:= \sup_{y \in \R^n} \left[\chi_{\{|y-\xi| < \frac{\rho}{2\ep}\}} \(\frac{\mu\ep^{10}}{1+|y-\xi|^{n-16}} + \frac{\alpha_0\ep^{n-6}}{\rho^{n-6}}\)^{-1}
+ \chi_{\{|y-\xi| \ge \frac{\rho}{2\ep}\}} \frac{|y-\xi|^{n-6}}{\alpha_0}\right] |\wtPsi(y)|; \\
\|f\|_{**} &\begin{medsize}
\displaystyle := \sup_{y \in \R^n} \left[\chi_{\{|y-\xi| < \frac{\rho}{2\ep}\}} \frac{1+|y-\xi|^{n-10}}{\mu\ep^{10}}
+ \chi_{\{|y-\xi| \ge \frac{\rho}{2\ep},\, |y| < \frac{1}{\ep}\}}
\frac{|y-\xi|^{n-2}}{\alpha_0\ep^2}
+ \chi_{\{|y| \ge \frac{1}{\ep}\}} \frac{|y-\xi|^{n+\ka}}{\alpha_0}\right] |f(y)|,
\end{medsize}
\end{align*}
where $\mu=\ep^{\frac{1}{4}}$, $\ka \in (0,1)$ is a small constant, and $\rho \in (\ep,1]$ and $\alpha_0 > 0$ are the numbers appearing in \eqref{eq:hij}--\eqref{eq:alpha0}.
Besides, $\chi_S$ stands for the characteristic function of a set $S \subset \R^n$. \hfill $\diamond$
\end{defn}
\noindent The choice of the above norms was influenced by \cite{WZ, KMW2}.

\medskip
We consider the linear problem
\begin{equation}\label{eq:lin}
\begin{cases}
\displaystyle P_{\tig_0}^{(6)}\wtPsi + \tmfc_6(n) v_{\dx}^{\frac{12}{n-6}}\wtPsi = f + \sum_{i=0}^n t_iv_{\dx}^{\frac{12}{n-6}}Z_{\dx}^i &\text{in } \R^n,\\
\displaystyle \int_{\R^n} v_{\dx}^{\frac{12}{n-6}}Z_{\dx}^i \wtPsi = 0 &\text{for } i = 0, 1,\ldots, n,
\end{cases}
\end{equation}
where $\tig_0 = g_0(\ep\cdot)$, $\wtPsi \in \dot{H}^3(\R^n) \cap C^0(\R^n)$ and $f \in C^0(\R^n)$ satisfy $\|\wtPsi\|_* + \|f\|_{**} < \infty$, $(t_0, \ldots, t_n) \in \R^{n+1}$, and
$Z_{\dx}^i$ for $i=0,1,\ldots,n$ are the functions defined in \eqref{eq:bubbleZ}.

\begin{prop}\label{prop:lin}
Assume that $\ep > 0$ and $\rho, \alpha_0 \gg \ep > 0$ are small enough. Given any $(\dx) \in \mca$ and $f \in C^0(\R^n)$ such that $\|f\|_{**} < \infty$,
equation \eqref{eq:lin} admits a unique solution $\wtPsi \in \dot{H}^3(\R^n) \cap C^0(\R^n)$ and $(t_0, \ldots, t_n) \in \R^{n+1}$. Moreover, there exists a constant $C = C(n,\vep_0) > 0$ such that
\begin{equation}\label{eq:apriori}
\|\wtPsi\|_* \le C \|f\|_{**}.
\end{equation}
\end{prop}
\begin{proof}
The existence result follows from a standard application of the Fredholm alternative, once a priori estimate \eqref{eq:apriori} has been established.
The remainder of the proof is therefore devoted to deriving \eqref{eq:apriori}, which we only sketch since it is a slight modification of \cite[Proposition 5.1]{WZ}.
Our argument is simpler, however, as it uses the Green’s function of $P_{g_0}^{(6)}$, which was built in Lemma \ref{lemma:Green Q6}, instead of that of $-\Delta_{g_0}^3$.

\medskip
Suppose that \eqref{eq:apriori} is false. Then, there exist sequences $\{\ep_a\}_{a \in \N} \subset (0,1)$, $\{(\delta_a,\xi_a)\}_{a \in \N} \subset \mca$, $\{(t_i)_a\} \subset \R$ for $i=0,\ldots,n$,
$\{\wtPsi_a\}_{a \in \N} \subset \dot{H}^3(\R^n) \cap C^0(\R^n)$, and $\{f_a\}_{a \in \N} \subset C^0(\R^n)$ satisfying \eqref{eq:lin} such that $\|\wtPsi_a\|_* = 1$ for all $a \in \N$ and $\ep_a + \|f_a\|_{**} \to 0$ as $a \to \infty$.
For the sake of brevity, we will omit the subscripts $a \in \N$ for the rest of the proof.

First, we claim that
\begin{equation}\label{eq:apriori1}
\sum_{i=0}^n |t_i| = o(\mu\ep^{10}) \quad \text{as } \ep \to 0.
\end{equation}
Testing \eqref{eq:lin} with $Z_{\dx}^j$ for a fixed $j=0,\ldots,n$, using \eqref{eq:bubbleZeq}, \eqref{eq:vdx} and \eqref{eq:Gtig01}, and integrating by parts, we obtain
\[\int_{\R^n} (P_{\tig_0}^{(6)}-\Delta^3)Z_{\dx}^j \wtPsi + \tmfc_6(n) \int_{B^n(\xi,\rho\ep^{-1})^c} \(v_{\dx}^{\frac{12}{n-6}}-w_{\dx}^{\frac{12}{n-6}}\)\wtPsi = \int_{\R^n} fZ_{\dx}^j + \sum_{i=0}^n t_i \int_{\R^n} v_{\dx}^{\frac{12}{n-6}}Z_{\dx}^iZ_{\dx}^j.\]
Applying Lemma \ref{mce6} (see also \eqref{eq:DotE3}), we estimate each integral to find
\[\left[o(\mu\ep^{10}) + O\(\frac{\alpha_0\ep^{n-6}}{\rho^{n-6}}\)\right] \|\wtPsi\|_* = O\(\mu\ep^{10}+\frac{\alpha_0\ep^{n-6}}{\rho^{n-8}}\)\|f\|_{**} + t_jC_j + \sum_{i=0}^n |t_i| O\(\frac{\alpha_0\ep^n}{\rho^n}\)\]
for some numbers $C_0 > 0$ and $C_1 = \cdots = C_n > 0$ depending only on $n$, from which \eqref{eq:apriori1} follows.

Second, we claim that for $\ep > 0$ small enough, the maximum point $y_0$ of the function
\[\left[\chi_{\{|y-\xi| < \frac{\rho}{2\ep}\}} \(\frac{\mu\ep^{10}}{1+|y-\xi|^{n-16}} + \frac{\alpha_0\ep^{n-6}}{\rho^{n-6}}\)^{-1}
+ \chi_{\{|y-\xi| \ge \frac{\rho}{2\ep}\}} \frac{|y-\xi|^{n-6}}{\alpha_0}\right] |\wtPsi(y)| \quad \text{for } y \in \R^n\]
must belong to a large ball $B(\xi,r_0)$ for some $r_0 > 1$. For this purpose, we bound $\wtPsi$ by using the Green's representation formula: By Lemma \ref{lemma:Green Q6}, \eqref{eq:Green} and \eqref{eq:apriori1}, it holds that
\begin{align*}
|\wtPsi(y)| &\le C \left[\int_{\R^n} \frac{1}{|y-z|^{n-6}} \frac{|\wtPsi(z)|}{1+|z-\xi|^{12}} dz + \int_{\R^n} \frac{|f(z)|}{|y-z|^{n-6}} dz\right] \\
&\ + o(\mu\ep^{10}) \int_{\R^n} \frac{1}{|y-z|^{n-6}} \frac{1}{1+|z-\xi|^{n+6}} dz
\end{align*}
for $y \in \R^n$. Additionally,
\begin{align*}
&\ \int_{\R^n} \frac{1}{|y-z|^{n-6}} \frac{|\wtPsi(z)|}{1+|z-\xi|^{12}} dz \\
&\le C\|\wtPsi\|_* \left[\chi_{\{|y-\xi| < \frac{\rho}{\ep}\}} \(\frac{\mu\ep^{10}}{1+|y-\xi|^{n-10}} + \frac{\alpha_0\ep^{n-6}}{\rho^{n-6}} \frac{1}{1+|y-\xi|^6} + \frac{\alpha_0\ep^n}{\rho^n}\) \right. \\
&\hspace{200pt} \left. + \chi_{\{|y-\xi| \ge \frac{\rho}{\ep}\}} \frac{\alpha_0\ep^6}{\rho^6} \frac{1}{|y-\xi|^{n-6}}\right],
\end{align*}
\[\int_{\R^n} \frac{|f(z)|}{|y-z|^{n-6}} dz \le C\|f\|_{**} \left[\chi_{\{|y-\xi| < \frac{\rho}{\ep}\}} \(\frac{\mu\ep^{10}}{1+|y-\xi|^{n-16}} + \frac{\alpha_0\ep^{n-6}}{\rho^{n-8}}\) + \chi_{\{|y| \ge \frac{\rho}{\ep}\}} \frac{\alpha_0}{|y-\xi|^{n-6}}\right],\]
and
\[\int_{\R^n} \frac{1}{|y-z|^{n-6}} \frac{1}{1+|z-\xi|^{n+6}} dz \le \frac{C}{1+|y-\xi|^{n-6}}.\]
Consequently, if $|y_0-\xi| \ge \frac{\rho}{2\ep}$, then
\begin{multline*}
\frac{\alpha_0}{|y_0-\xi|^{n-6}} = |\wtPsi(y_0)| = o(1) \left[\chi_{\{\frac{\rho}{2\ep} \le |y-\xi| < \frac{\rho}{\ep}\}}(y_0) \(\frac{\mu\ep^{n-6}}{\rho^{n-16}} + \frac{\alpha_0\ep^{n-6}}{\rho^{n-8}}\) \right. \\
\left. + \chi_{\{|y-\xi| \ge \frac{\rho}{\ep}\}}(y_0) \frac{\alpha_0}{|y_0-\xi|^{n-6}}\right] + \frac{o(\mu\ep^{10})}{1+|y_0-\xi|^{n-6}},
\end{multline*}
so it cannot happen for $\ep > 0$ small. If $|y_0-\xi| < \frac{\rho}{2\ep}$, then
\begin{multline*}
\frac{\mu\ep^{10}}{1+|y_0-\xi|^{n-16}} + \frac{\alpha_0\ep^{n-6}}{\rho^{n-6}} = |\wtPsi(y_0)| \\
\le C\(\frac{\mu\ep^{10}}{1+|y_0-\xi|^{n-10}} + \frac{\alpha_0\ep^{n-6}}{\rho^{n-6}} \frac{1}{1+|y_0-\xi|^6} + \frac{\alpha_0\ep^n}{\rho^n}\) + \frac{o(\mu\ep^{10})}{1+|y_0-\xi|^{n-6}},
\end{multline*}
which tells us that $y_0 \in B(\xi,r_0)$ for some large $r_0 > 1$.

Now, a standard argument based on the non-degeneracy result for the linear equation \eqref{eq:bubbleZeq} (stated in Subsection \ref{subsec:bubble}) shows that $\|\wtPsi\|_* \to 0$ as $\ep \to 0$, leading to a contradiction.
\end{proof}

\subsection{Nonlinear problem}
We keep assuming that $\ep > 0$ and $\rho, \alpha_0 \gg \ep > 0$ are small enough. For $(\dx) \in \mca$, let $\mcr_{\dx}$ be the error term given by
\[\mcr_{\dx} := -P_{\tig_0}^{(6)}v_{\dx}-v_{\dx}^{\frac{n+6}{n-6}} \quad \text{in } \R^n,\]
where $\tih := h(\ep\cdot)$.

\begin{lemma}\label{lemma:mcrest}
It holds that
\begin{equation}\label{eq:mcrest}
\left\|\mcr_{\dx}\right\|_{**} \le C
\end{equation}
uniformly for $\ep > 0$ small and $(\dx) \in \mca$.
\end{lemma}
\begin{proof}
We first assume that $|y-\xi| < \frac{\rho}{2\ep}$. Then, $|\ep y| \le \rho$ and $v_{\dx}(y) = w_{\dx}(y)$.
Owing to \eqref{eq:DotE3} and $\delta H=0$ (that yields $\Dot{\ricci}_{ij}[H] = -\frac{1}{2}\Delta H_{ij}$), we have
\begin{align}
\mcr_{\dx}(y) &= -L_1[\tih]w_{\dx}(y) -(\Delta_{\tig_0}^3-\Delta^3)w_{\dx}(y) + O\Bigg(\sum_{\beta=0}^4 \sum_{\substack{0 \le \alpha_1,\alpha_2 \le 6-\beta \\ \alpha_1+\alpha_2=6-\beta}} \(|\pa^{\alpha_1}\tih||\pa^{\alpha_2}\tih| \pa^{\beta}w_{\dx}\)(y)\Bigg) \nonumber \\
&= \mcr_{\dx}^1(y) + O(\mu^2\ep^{20}(1+|y-\xi|)^{20-n}), \label{eq:mcrexp}
\end{align}
where
\begin{equation}\label{eq:mcr1}
\begin{aligned}
\mcr_{\dx}^1(y) &:= \mu\ep^{10} \left[\frac{8}{n-2}\Delta H_{ij}\pa^2_{ij}\Delta w_{\dx} + \frac{8}{n-2}\Delta H_{ij,k}\pa^3_{ijk}w_{\dx} + \frac{4}{n-4}\Delta^2H_{ij}\pa^2_{ij}w_{\dx}\right](y) \\
&\ + \mu\ep^{10} \left[\Delta^2(H_{ij}\pa^2_{ij} w_{\dx}) + \Delta(H_{ij}\pa^2_{ij}\Delta w_{\dx}) + H_{ij}\pa^2_{ij}\Delta^2w_{\dx}\right](y).
\end{aligned}
\end{equation}
It follows that
\[|\mcr_{\dx}(y)| \le \frac{C\mu\ep^{10}}{1+|y-\xi|^{n-10}}.\]

We next assume that $y$ belongs to the neck region $\{y: |y-\xi| \ge \frac{\rho}{2\ep},\, |\ep y| \le 1\}$.
Setting $v_{\dx}^1(y) := w_{\dx}(y) - \delta^{\frac{n-6}{2}}\ga_n^{-1}G_{\tig_0}^{(6)}(y,\xi)$, we observe that $v_{\dx}(y) = \zeta_{\rho^{-1}\ep}(|y-\xi|)v_{\dx}^1(y) + \delta^{\frac{n-6}{2}}\ga_n^{-1}G_{\tig_0}^{(6)}(y,\xi)$.
Thus, \eqref{eq:Gtig02} implies
\begin{align*}
|\mcr_{\dx}(y)| &\le \left|P_{\tig_0}^{(6)}\left[\zeta_{\rho^{-1}\ep}(|y-\xi|)v_{\dx}^1(y)\right]\right| + \frac{C}{|y-\xi|^{n+6}} \\
&\le C\sum_{\beta=0}^6 \left|(\pa^{6-\beta}\tig_0)(y)\right| \left|\pa^{\beta}\left[\zeta_{\rho^{-1}\ep}(|y-\xi|)v_{\dx}^1(y)\right]\right| + \frac{C}{|y-\xi|^{n+6}}
\le \frac{C\alpha_0\ep^2}{|y-\xi|^{n-2}}.
\end{align*}

Finally, if $|\ep y| \ge 1$, then $v_{\dx}(y) = \delta^{\frac{n-6}{2}}\ga_n^{-1}G_{\tig_0}^{(6)}(y,\xi)$, and so
\[\mcr_{\dx}(y) = -\(\delta^{\frac{n-6}{2}}\ga_n^{-1}G_{\tig_0}^{(6)}(y,\xi)\)^{\frac{n+6}{n-6}} = O\(\frac{1}{|y-\xi|^{n+6}}\) = O\(\frac{\alpha_0}{|y-\xi|^{n+\ka}}\)\]
for $\ka \in (0,1)$.

Combining the above computations leads to \eqref{eq:mcrest}.
\end{proof}

We next study the unique solvability of an intermediate nonlinear problem
\begin{equation}\label{eq:nonlin}
\begin{cases}
\displaystyle P_{\tig_0}^{(6)}\wtPsi + \tmfc_6(n) v_{\dx}^{\frac{12}{n-6}}\wtPsi = \mcr_{\dx} + N_{\dx}(\wtPsi) + \sum_{i=0}^n t_iv_{\dx}^{\frac{12}{n-6}}Z_{\dx}^i &\text{in } \R^n,\\
\displaystyle \int_{\R^n} v_{\dx}^{\frac{12}{n-6}}Z_{\dx}^i \wtPsi = 0 &\text{for } i = 0, 1,\ldots, n,
\end{cases}
\end{equation}
where
\[N_{\dx}(\wtPsi) := -\mfc_6(n) \left[|v_{\dx}+\wtPsi|^{\frac{12}{n-6}}(v_{\dx}+\wtPsi) - v_{\dx}^{\frac{n+6}{n-6}}-\(\frac{n+6}{n-6}\) v_{\dx}^{\frac{12}{n-6}}\wtPsi\right].\]
\begin{prop}
For $\ep > 0$ small and $(\dx) \in \mca$, equation \eqref{eq:nonlin} admits a unique solution $\wtPsi_{\dx} \in \dot{H}^3(\R^n) \cap C^0(\R^n)$ and $((t_0)_{\dx}, \ldots, (t_n)_{\dx}) \in \R^{n+1}$ such that
\begin{equation}\label{eq:wtPsi}
\|\wtPsi_{\dx}\|_* \le C
\end{equation}
for some $C = C(n,\vep_0) > 0$. Furthermore, the map $(\dx) \in \mca \mapsto \wtPsi_{\dx}$ is of class $C^1$ and
\[\|\nabla_{(\dx)}\wtPsi_{\dx}\|_* \le C\]
for some $C = C(n,\vep_0) > 0$.
\end{prop}
\begin{proof}
One can follow the proofs of \cite[Propositions 6.1 and 6.2]{WZ}. Particularly, one can derive
\begin{equation}\label{eq:Nest}
\|N_{\dx}(\wtPsi)\|_{**} \le C\left\|(1+|\cdot-\xi|)^{n-18}\wtPsi^2\right\|_{**} \le C\ep^{6-\ka}\|\wtPsi\|_*^2 \le C\ep^{6-\ka},
\end{equation}
where $\ka \in (0,1)$ is the small number from the $**$-norm in Definition \ref{def of norms}. We omit the details.
\end{proof}

We now turn to refining the estimate of $\wtPsi_{\dx}$. Let $\mcr_{\dx}^1$ be the function in \eqref{eq:mcr1}.
By the proof of Lemma \ref{lemma:mcrest}, we have that $\|\mcr_{\dx}^1\|_{**} \le C$.
Thus, a slight modification of the proof of Proposition \ref{prop:lin} yields the unique existence of the solution $\wtPsi_{\dx}^1 \in \dot{H}^3(\R^n) \cap C^0(\R^n)$ and $(t_0^1, \ldots, t_n^1) \in \R^{n+1}$ to the linear problem
\begin{equation}\label{eq:wtPsi1}
\begin{cases}
\displaystyle \Delta^3\wtPsi_{\dx}^1 + \tmfc_6(n) v_{\dx}^{\frac{12}{n-6}}\wtPsi_{\dx}^1 = \zeta_{\rho^{-1}\ep}(|\cdot-\xi|)\mcr_{\dx}^1 + \sum_{i=0}^n t_i^1v_{\dx}^{\frac{12}{n-6}}Z_{\dx}^i &\text{in } \R^n,\\
\displaystyle \int_{\R^n} v_{\dx}^{\frac{12}{n-6}}Z_{\dx}^i \wtPsi_{\dx}^1 = 0 &\text{for } i = 0, 1,\ldots, n,
\end{cases}
\end{equation}
such that $\|\wtPsi_{\dx}^1\|_* \le C$. Here, $\zeta_{\rho^{-1}\ep}$ denotes the cutoff function from \eqref{eq:vdx}.
\begin{defn}[Modified weighted $L^{\infty}(\R^n)$-norms]
\

Given $\ep > 0$ small and $(\dx) \in \mca$, we set two additional weighted $L^{\infty}(\R^n)$-norms:
\begin{align*}
\|\wtPsi\|_*' &:= \sup_{y \in \R^n} \left[\chi_{\{|y-\xi| < \frac{\rho}{2\ep}\}} \(\frac{\mu^2\ep^{20}}{1+|y-\xi|^{n-26}} + \frac{\alpha_0\ep^{n-6}}{\rho^{n-6}}\)^{-1}
+ \chi_{\{|y-\xi| \ge \frac{\rho}{2\ep}\}} \frac{|y-\xi|^{n-6}}{\alpha_0}\right] |\wtPsi(y)|; \\
\|f\|_{**}' &\begin{medsize}
\displaystyle := \sup_{y \in \R^n} \left[\chi_{\{|y-\xi| < \frac{\rho}{2\ep}\}} \frac{1+|y-\xi|^{n-20}}{\mu^2\ep^{20}}
+ \chi_{\{|y-\xi| \ge \frac{\rho}{2\ep},\, |y| < \frac{1}{\ep}\}}
\frac{|y-\xi|^{n-2}}{\alpha_0\ep^2}
+ \chi_{\{|y| \ge \frac{1}{\ep}\}} \frac{|y-\xi|^{n+\ka}}{\alpha_0} \right] |f(y)|.
\end{medsize}
\end{align*}
Here, the parameters $\mu$, $\ka$, $\rho$, and $\alpha_0$ are the ones in Definition \ref{def of norms}. \hfill $\diamond$
\end{defn}
\begin{lemma}
It holds that
\begin{equation}\label{eq:wtPsiest ref}
\left\|\wtPsi_{\dx}-\wtPsi_{\dx}^1\right\|_*' \le C
\end{equation}
uniformly for $\ep > 0$ small and $(\dx) \in \mca$.
\end{lemma}
\begin{proof}
Standard interior elliptic estimates for $\wtPsi_{\dx}^1$ yield
\begin{equation}\label{eq:wtPsidec2}
\begin{medsize}
\displaystyle \left|\pa_{\beta}\wtPsi_{\dx}^1\right|(y) \le C\left[\chi_{\{|y-\xi| < \frac{\rho}{2\ep}\}} \frac{1}{1+|y-\xi|^{|\beta|}}\(\frac{\mu\ep^{10}}{1+|y-\xi|^{n-16}} + \frac{\alpha_0\ep^{n-6}}{\rho^{n-6}}\)
+ \chi_{\{|y-\xi| \ge \frac{\rho}{2\ep}\}} \frac{\alpha_0}{|y-\xi|^{n-6+|\beta|}}\right]
\end{medsize}
\end{equation}
for a multi-index $\beta$ with $|\beta|=0,\ldots,6$. By using \eqref{eq:mcrexp} and \eqref{eq:wtPsidec2}, and arguing as in the proof of Proposition \ref{prop:lin}, we observe
\begin{align*}
\left\|\wtPsi_{\dx}-\wtPsi_{\dx}^1\right\|_*' &\le C\(\left\|\mcr_{\dx}-\zeta_{\rho^{-1}\ep}(|\cdot-\xi|)\mcr_{\dx}^1\right\|_{**}' + \left\|(P_{\tig_0}^{(6)}-\Delta^3)\wtPsi_{\dx}^1\right\|_{**}' + \|N_{\dx}(\wtPsi)\|_{**}'\) \\
&\le C,
\end{align*}
which is \eqref{eq:wtPsiest ref}.
\end{proof}

\subsection{Variational reduction}
We reduce the problem to a finite-dimensional variational problem over the admissible set $\mca$ of parameters.
Let $\Psi_{\edx}(x) := \ep^{-\frac{n-6}{2}}\wtPsi_{\dx}(\ep^{-1}x)$ for $x \in \R^n$.
\begin{lemma}\label{lem:u epsilon}
Assume that $\ep > 0$ and $\rho, \alpha_0 \gg \ep > 0$ are small enough. Then the following assertions hold:
\begin{itemize}
\item[(i)] The map $(\dx) \in \mca \mapsto \mcf_{g_0}(v_{\edx} + \Psi_{\edx}) \in \R$ is continuously differentiable, where $\mcf_{g_0}$ is the energy functional defined in \eqref{eq:mcf}.
\item[(ii)] If $(\delta(\ep),\xi(\ep))$ is a critical point of the above map, then $u_{\ep}:=v_{\ep\delta(\ep),\ep\xi(\ep)} + \Psi_{\ep\delta(\ep),\ep\xi(\ep)}$ is a critical point of $\mcf_{g_0}$. Moreover, the function $u_{\ep}$ is a solution to \eqref{eq:main6} in $(M,g) = (\R^n,g_0)$ such that
$$
\sup_{|x|<\ep}u_{\ep}(x)\geq C^{-1}\ep^{-\frac{n-6}{2}},
$$
where $C=C(n,\ep_0)>0$.
\end{itemize}
\end{lemma}
\begin{proof}
The proof is standard. Note that $v_{\ep\delta(\ep),\ep\xi(\ep)} + \Psi_{\ep\delta(\ep),\ep\xi(\ep)} > 0$ in $\R^n$ provided $\alpha_0 > 0$ small, since \eqref{eq:vdx}, \eqref{eq:Gtig01}, and \eqref{eq:wtPsi} imply that there exists
\[v_{\ep\delta(\ep),\ep\xi(\ep)}(x) \ge \frac{C^{-1}\ep^{\frac{n-6}{2}}}{(\ep^2+|x-\ep\xi|^2)^{\frac{n-6}{2}}} \quad \text{and} \quad |\Psi_{\ep\delta(\ep),\ep\xi(\ep)}(x)| \le \frac{C\alpha_0\ep^{\frac{n-6}{2}}}{(\ep^2+|x-\ep\xi|^2)^{\frac{n-6}{2}}} \quad \text{in } \R^n. \qedhere\]
\end{proof}

We set $\omcr_{\dx}^1 := (\mu\ep^{10})^{-1}\mcr_{\dx}^1$ and $\opsi_{\dx}^1 := (\mu\ep^{10})^{-1}\wtPsi_{\dx}^1$.
\begin{prop}
For each fixed small number $\ep > 0$, it holds that
\begin{equation}\label{eq:mcfexp2}
\begin{aligned}
\mcf_{g_0}(v_{\edx} + \Psi_{\edx}) &= \mcf_0(w) + \mu^2\ep^{20} \left[\Ddot{\msfF}(\dx) + \frac{1}{2} \int_{\R^n} \omcr_{\dx}^1\opsi_{\dx}^1 dx\right] \\
&\ + O\bigg(\mu^3\ep^{20(1+\vep_1)} + \bigg(\frac{\ep}{\rho}\bigg)^{n-6}\bigg)
\end{aligned}
\end{equation}
uniformly for $(\dx) \in \mca$. Here, the number $\mcf_0(w) > 0$ and the function $\Ddot{\msfF}$ are given in Lemma \ref{lemma:mcfexp}, and $\vep_1 \in (0,\frac{1}{4})$ is a small number.
\end{prop}
\begin{proof}
Testing the first equation of \eqref{eq:nonlin} with $\wtPsi_{\dx}$, we obtain
\begin{equation}\label{eq:nonlin2}
\int_{\R^n} \wtPsi_{\dx} P_{\tig_0}^{(6)}\wtPsi_{\dx} dx + \tmfc_6(n) \int_{\R^n} v_{\dx}^{\frac{12}{n-6}}\wtPsi_{\dx}^2 dx = \int_{\R^n} [\mcr_{\dx} + N_{\dx}(\wtPsi_{\dx})]\wtPsi_{\dx} dx.
\end{equation}
In view of \eqref{eq:nonlin2}, \eqref{eq:wtPsiest ref}, \eqref{eq:Nest}, and Taylor's theorem, we observe that
\begin{equation}\label{eq:mcfexp21}
\begin{aligned}
&\ \mcf_{g_0}(v_{\edx} + \Psi_{\edx}) - \mcf_{g_0}(v_{\edx}) \\
&= \frac{1}{2} \int_{\R^n} \zeta_{\rho^{-1}\ep}(|x-\xi|)\mcr_{\dx}^1\wtPsi_{\dx}^1 dx + \frac{1}{2} \int_{\R^n} \left[\mcr_{\dx}-\zeta_{\rho^{-1}\ep}(|x-\xi|)\mcr_{\dx}^1\right] \wtPsi_{\dx}^1 dx \\
&\ + \frac{1}{2} \int_{\R^n} \mcr_{\dx}\(\wtPsi_{\dx}-\wtPsi_{\dx}^1\) dx - \frac{1}{2} \int_{\R^n} N_{\dx}(\wtPsi_{\dx})\wtPsi_{\dx} dx \\
&\ - \frac{n-6}{2n} \mfc_6(n) \int_{\R^n} \left[|v_{\dx}+\wtPsi_{\dx}|^{\frac{2n}{n-6}} - v_{\dx}^{\frac{2n}{n-6}} - \frac{2n}{n-6} v_{\dx}^{\frac{n+6}{n-6}}\wtPsi_{\dx} - \frac{n(n+6)}{(n-6)^2} v_{\dx}^{\frac{12}{n-6}}\wtPsi_{\dx}^2\right] dx \\
&= \frac{1}{2} \mu^2\ep^{20} \int_{\R^n} \omcr_{\dx}^1\opsi_{\dx}^1 dx + O\bigg(\mu^2\ep^{26-\ka} + \bigg(\frac{\ep}{\rho}\bigg)^{n-6}\bigg).
\end{aligned}
\end{equation}
Consequently, \eqref{eq:mcfexp2} follows from Lemmas \ref{lemma:mcfexp} and \ref{lemma:DotE}, and \eqref{eq:mcfexp21}.
\end{proof}
In the subsequent subsections, we will adjust the value of $\msfa_0,\ldots,\msfa_4 \in \R$ in \eqref{eq:Hkij} so that $(\dx) = (1,0)$ becomes a strict local minimum of the function
$$
(\dx) \in \mca \mapsto \Ddot{\msfF}(\dx) + \frac{1}{2} \int_{\R^n} \omcr_{\dx}^1\opsi_{\dx}^1 dx.
$$
Thanks to the following elementary lemmas, it will be sufficient to evaluate $\pa_{\delta}\Ddot{\msfF}(\delta,0)$, $\pa^2_{\delta}\Ddot{\msfF}(\delta,0)$ and $\pa^2_{\xi_i\xi_j}\Ddot{\msfF}(\delta,0)$.
\begin{lemma}\label{delta xi direction}
It holds that $\Ddot{\msfF}(\dx)=\Ddot{\msfF}(\delta,-\xi)$ for all $(\dx) \in (0,\infty) \times \R^n$. Consequently,
\[\pa_{\xi_i}\Ddot{\msfF}(\delta,0)=0 \quad \text{and} \quad \pa^2_{\delta \xi_i}\Ddot{\msfF}(\delta,0)=0\]
for any $\delta \in (0,\infty)$.
\end{lemma}
\begin{proof}
It immediately follows from the identity $H_{ij}(x)=H_{ij}(-x)$ for any $x\in \R^n$.
\end{proof}

\begin{lemma}\label{lem:R Psi direction}
It holds that
\begin{equation}\label{eq:R Psi direction}
\left.\pa_{(\dx)}\int_{\R^n} \omcr_{\dx}^1\opsi_{\dx}^1 dx\right|_{(\dx)=(\delta,0)}=0 \quad \text{and} \quad \left.\pa^2_{(\dx)}\int_{\R^n} \omcr_{\dx}^1\opsi_{\dx}^1 dx\right|_{(\dx)=(\delta,0)}=0
\end{equation}
for any $\delta \in (0,\infty)$.
\end{lemma}
\begin{proof}
With $\tr H=0$, $x_jH_{ij}=0$, and $\delta_iH=0$, we see that each term in the definition of $\mcr_{\dx}^1$ from \eqref{eq:mcr1} is at least quadratic in $\xi$, implying $\mcr_{\delta,0}^1=\pa_{\xi}\mcr_{\delta,0}^1=0$ in $\R^n$.
Hence, applying a variant of Proposition \ref{prop:lin} to \eqref{eq:wtPsi1} and the equation for $\pa_{\xi}\wtPsi_{\delta,0}^1$, we get $\opsi_{\delta,0}^1=\pa_{\xi}\opsi_{\delta,0}^1=0$ in $\R^n$.
Due to the arbitrariness of $\delta$, we also have that $\pa_{\delta}\omcr_{\delta,0}^1=\pa_{\delta}\opsi_{\delta,0}^1=0$ in $\R^n$.

The identities in \eqref{eq:R Psi direction} follow from the preceding discussion and the application of the chain rule.
\end{proof}
\noindent In Subsection \ref{subsec:noncomp6d1}, we will calculate the derivatives of $\Ddot{\msfF}(\delta,0)$ along the $\delta$-direction.
In Subsection \ref{subsec:noncomp6d2}, we shall compute the Hessian matrix of $\Ddot{\msfF}(\delta,0)$ along the $\xi$-direction.

\subsection{Derivatives in the scaling direction}\label{subsec:noncomp6d1}
Since $w_{\delta,0}$ is radial, we have
$$
\Ddot{\msfF}(\delta,0) =-\frac{1}{2}\sum_{\substack{k,m=2\\ k,m: \textup{even}}}^{10}\int_{\R^n}w_{\delta,0} L_2[H^{(k)},H^{(m)}]w_{\delta,0}.
$$

\begin{prop}\label{prop:the k+m relation}
Let $k,m = 2,\ldots,10$. For $H^{(k)}\in \mcv_k$ and $H^{(m)}\in\mcv_m$, it holds that
\begin{equation}\label{eq: k+m relation}
\int_{\R^n}Z_{\delta,0}L_2[H^{(k)},H^{(m)}]w_{\delta,0}=-\frac{k+m}{2}\delta^{-1}\int_{\R^n}w_{\delta,0} L_2[H^{(k)},H^{(m)}]w_{\delta,0}.
\end{equation}
\end{prop}
\begin{proof}
On one hand, due to the homogeneity
$$
\int_{\R^n}w_{\delta,0}L_2[H^{(k)},H^{(m)}]w_{\delta,0}=\delta^{k+m}\int_{\R^n}w L_2[H^{(k)},H^{(m)}]w,
$$
we obtain that
\begin{align*}
\pa_{\delta}\int_{\R^n}w_{\delta,0}L_2[H^{(k)},H^{(m)}]w_{\delta,0}&=(k+m)\delta^{k+m-1}\int_{\R^n}w L_2[H^{(k)},H^{(m)}]w\\
&=(k+m)\delta^{-1}\int_{\R^n}w_{\delta,0} L_2[H^{(k)},H^{(m)}]w_{\delta,0}.
\end{align*}
On the other hand, since $\pa_{\delta}w_{\delta,0}=-Z_{\delta,0}$, we have
$$
\pa_{\delta}\int_{\R^n}w_{\delta,0}L_2[H^{(k)},H^{(m)}]w_{\delta,0}=-2\int_{\R^n}Z_{\delta,0}L_2[H^{(k)},H^{(m)}]w_{\delta,0}.
$$
Combining the above two equations, we deduce \eqref{eq: k+m relation}.
\end{proof}

\begin{rmk}\label{rmk:k+m relation}
The quantity
$
\int_{\R^n}w_{\delta,0} L_2[H^{(k)},H^{(m)}]w_{\delta,0}
$
can be evaluated in the same way as in Proposition \ref{I_1 Q6}. First, we determine the numbers $c_{w,i}(n,k+m)$ for $i = 1,\ldots,6$ and $k,m = 2,\ldots,10$ through the following relations:
\begin{align*}
c_{w,1}(n,k+m) &:= \int_0^{\infty}r^{n+k+m-2} \left[\Delta w\(w'\frac{1}{r}\)'+ w\((\Delta w)'\frac{1}{r}\)'\right]; \\
c_{w,2}(n,k+m) &:= \int_0^{\infty}r^{n+k+m-3} |\Delta w|^2; \\
c_{w,3}(n,k+m) &:= \int_0^{\infty}r^{n+k+m-4} \left[\Delta ww'+w(\Delta w)'\right]; \\
c_{w,4}(n,k+m) &:= \int_0^{\infty}r^{n+k+m-4}w\(w'\frac{1}{r}\)'; \\
c_{w,5}(n,k+m) &:= \int_0^{\infty}r^{n+k+m-6}ww'; \\
c_{w,6}(n,k+m) &:= \int_0^{\infty}r^{n+k+m-7}w^2;
\end{align*}
cf. \eqref{eq:c1c6} (We note that no logarithmic term arises here, as ensured by Remark \ref{rmk:log}).
Second, we obtain the coefficients $\bc_{w,i}(n,k,m)$ by the same computations as in Proposition \ref{I_1 Q6}, replacing only $c_i(n,k+m)$ with $c_{w,i}(n,k+m)$.
Finally, we deduce $\bc'_{w,i}(n,k,m)$ by the same computations as in Corollary \ref{I_1 coro Q6}, changing $\bc_i(n,k,m)$ to $\bc_{w,i}(n,k,m)$. It follows that
\begin{align*}
&\ -\delta^{-(k+m)}\int_{\R^n}w_{\delta,0}L_2[H^{(k)},H^{(m)}]w_{\delta,0}\\
&= \bc'_{w,1}(n,k,m)\bla H^{(k)},H^{(m)}\bra+\bc'_{w,2}(n,k,m)\bla \mcl_kH^{(k)},H^{(m)}\bra\\
&\ +\bc'_{w,3}(n,k,m)\bla \mcl_kH^{(k)},\mcl_mH^{(m)}\bra \\
&\ +\bc'_{w,4}(n,k,m)\left[\bla \mcl_k^2H^{(k)},\mcl_m H^{(m)}\bra+\bla \mcl_k H^{(k)},\mcl_m^2H^{(m)}\bra\right]\\
&\ +\bc'_{w,5}(n,k,m)\bla \delta H^{(k)},\delta H^{(m)}\bra+\bc'_{w,6}(n,k,m)\bla \pa \delta H^{(k)},\pa \delta H^{(m)}\bra\\
&\ +\bc'_{w,7}(n,k,m)\bla \delta^2 H^{(k)},\delta^2 H^{(m)}\bra+\bc'_{w,8}(n,k,m)\bla \pa \delta^2 H^{(k)},\pa \delta^2H^{(m)}\bra.
\end{align*}

After the lengthy computation, one can verify that
$$
\bc'_{i}(n,k,m)=-\frac{k+m}{2}\bc'_{w,i}(n,k,m) \quad \text{for all } i=1,\ldots, 8,
$$
where $\bc'_{i}(n,k,m)$ are from Corollary \ref{I_1 coro Q6}. This is an alternative way to prove Proposition \ref{prop:the k+m relation}.

It is noteworthy that not all $c_i(n,k,m)$ and $\bc_i(n,k,m)$ satisfy this ``variational-type'' relation; for example, $c_{1}(n,k,m)\neq -\frac{k+m}{2}c_{w,1}(n,k,m)$.
This reflects the distinctive role of $\bc_{i}'(n,k,m)$ and indicates a hierarchical structure in the decomposition given in Corollary \ref{I_1 coro Q6}. \hfill $\diamond$
\end{rmk}

\begin{cor}\label{delta direction}
When $n\geq 27$, there exist a number $\msfa_0 \in \R$, determined in \eqref{eq:a0}, and a vector $\Vec{\msfa} = [\msfa_0,\msfa_1,\ldots,\msfa_4]:=[\msfa_0,-3634, 803, -62, 1]$ such that
$$
\Ddot{\msfF}(1,0)<0, \quad \pa_{\delta}\Ddot{\msfF}(1,0)=0,\quad \text{and} \quad \pa^2_{\delta}\Ddot{\msfF}(1,0)>0.
$$
\end{cor}
\begin{proof}
Because of \eqref{eq: k+m relation} and the proof of Lemma \ref{lemma of case 16}, we can write
\begin{align*}
\ -\frac{1}{2}\int_{\R^n}w_{\delta,0}L_2[H^{(k)},H^{(m)}]w_{\delta,0}
&=\frac{\delta}{k+m}\int_{\R^n}Z_{\delta,0}L_2[H^{(k)},H^{(m)}]w_{\delta,0}\\
&=-\frac{1}{k+m}m^{D,2}_{qq'}\delta^{k+m}\msfa_q\msfa_{q'}\int_{\S^{n-1}} W_{iabj}W_{icdj}y_ay_by_cy_d,
\end{align*}
where $m^{D,2}_{qq'} \in \R$ is the quantity defined by \eqref{eq:mDs0 Q6}--\eqref{eq:mDs Q6}. By Lemma \ref{IBP}, we also have that
$$
\int_{\S^{n-1}} W_{iabj}W_{icdj}y_ay_by_cy_d=\frac{|\S^{n-1}|}{2n(n+2)}|W|_{\sharp}^2.
$$

Recalling the notation $k=2q+2$ and $m=2q'+2$, we define a quantity
$$
m^{w,D,2}_{qq'}:=-\frac{1}{k+m}m^{D,2}_{qq'}
$$
and a polynomial in $\delta$,
$$
P(\delta):=\sum_{q,q'=0}^{4}m^{w,D,2}_{qq'} \msfa_q\msfa_{q'} \delta^{k+m}.
$$
Then we have that
$$
\Ddot{\msfF}(\delta,0) =-\frac{1}{2}\sum_{\substack{k,m=2\\ k,m: \textup{even}}}^{10}\int_{\R^n}w_{\delta,0} L_2[H^{(k)},H^{(m)}]w_{\delta,0}=P(\delta)\frac{|\S^{n-1}|}{2n(n+2)}|W|_{\sharp}^2.
$$
A direct calculation yields
\begin{align*}
P'(1) &= \sum_{q,q'=0}^{4}(k+m)m^{w,D,2}_{qq'}\msfa_q\msfa_{q'} = -\sum_{q,q'=0}^{4}m^{D,2}_{qq'}\msfa_q\msfa_{q'};\\
P''(1) &= \sum_{q,q'=0}^{4}(k+m)(k+m-1)m^{w,D,2}_{qq'}\msfa_q\msfa_{q'} = -\sum_{q,q'=0}^{4}(k+m-1)m^{D,2}_{qq'}\msfa_q\msfa_{q'}.
\end{align*}

Next, we determine $\msfa_0$ such that $P'(1)=0$. Substituting the values of $\msfa_1,\ldots,\msfa_4$ into $P'(1)$ reduces it to a quadratic polynomial in $\msfa_0$.
Using the Mathematica implementation of Lemma \ref{lemma of case 16}, we verify that the discriminant of $P'(1)$ is positive for $n \geq 27$. Accordingly, we select $\msfa_0$ to be the larger root of the equation $P'(1)=0$:
\begin{equation}\label{eq:a0}
\msfa_0 := \frac{\msfA_1+\sqrt{\msfA_2}}{\msfA_3},
\end{equation}
where
\begin{align*}
\msfA_1 &:= (n - 26) (n - 24) (n - 22) (n - 20) (n + 4) \\
&\ \times \(11991 n^9-852294 n^8+24029888 n^7-334408272 n^6+2238186992 n^5-4359884256 n^4\right.\\
&\quad \left.-20759728000 n^3+78857215488 n^2+8339503104 n-124262055936\);\\
\msfA_2 &:= (n - 26) (n - 24) (n - 22) (n - 20) (n + 4) \\
&\ \times (10989225 n^{23}-2987756460 n^{22}+378031055952 n^{21}-29507030164560 n^{20}\\
&\quad +1587504902043088 n^{19}-62286471762681984 n^{18}+1838499343113943552 n^{17}\\
&\quad -41499228022465995264 n^{16}+720749814386841727744 n^{15}-9608374930260373355520 n^{14}\\
&\quad +97259447027246828171264 n^{13}-732525312300365433016320 n^{12}\\
&\quad +3980505368562305038315520 n^{11}-15065834669595927057334272 n^{10}\\
&\quad +40459950924707392220168192 n^9-103397876171264024797249536 n^8\\
&\quad +339819087637723673505300480 n^7 -740098364899745956009869312 n^6\\
&\quad -394627146196739293779591168 n^5 +4735360453239104195348398080 n^4\\
&\quad -3600382007256808416243351552 n^3-13598394732655532252847931392 n^2\\
&\quad +27513309069085642550126051328 n-15449764981452555902999592960);\\
\msfA_3 &:= (n-26) (n - 24) (n - 22) (n - 20) (n - 18) (n - 16) (n - 14) (n - 12) (n - 4) (n - 2) \\
&\ \times (3 n^4-24 n^3-4 n^2+208 n+384).
\end{align*}

Similarly, $P(1)$ and $P''(1)$ are also quadratic functions in $\msfa_0$. Picking the above $\msfa_0$, we can use Mathematica to prove that
\[P(1)<0 \quad \text{and} \quad P''(1)>0\]
when $n\geq 27$. The proof is concluded.
\end{proof}

\begin{rmk}
In the Yamabe case and $Q^{(4)}$ case, the non-compactness phenomenon happens from $n\geq 25$. In the $Q^{(6)}$ case, the required lowest dimension increases by two.

However, if we take a vector $\Vec{\msfa}=[\msfa_0,1]$, Mathematica shows that the discriminant of the quadratic polynomial $-\frac{1}{2}\sum_{q,q'=0}^{1}m^{D,2}_{qq'}\msfa_q\msfa_{q'}$ in $\msfa_0$ is positive whenever $n \geq 52$. This dimension threshold coincides with those in the Yamabe case and the $Q^{(4)}$ case. \hfill $\diamond$
\end{rmk}

\subsection{Hessian matrix in the translation direction}\label{subsec:noncomp6d2}
Let $\msfp, \msfq = 1,\ldots,n$ and $L_2[H,H]$ be the operator defined by \eqref{eq:L2 Q6}. Since
$$
\pa_{\xi_i}w_{\dx}(y)=-\pa_iw_{\dx}(y) \quad \text{for } y \in \R^n
$$
and $L_2[H,H]$ is self-adjoint, we have that
\begin{align*}
\pa^2_{\xi_{\msfp}\xi_{\msfq}}\Ddot{\msfF}(\delta,0) &=-\int_{\R^n} \pa^2_{\msfpq}w_{\delta,0}L_2[H,H]w_{\delta,0} - \int_{\R^n}\pa_{\msfp}w_{\delta,0}L_2[H,H]\pa_{\msfq} w_{\delta,0}\\
&\ +\int_{\R^n}H_{ik}H_{kj}\pa^2_{i\msfp}\Delta^2w_{\delta,0} \pa^2_{j\msfq} w_{\delta,0} + \frac{1}{2}\int_{\R^n}H_{ik}H_{kj}\pa^2_{i\msfp} \Delta w_{\delta,0}\pa^2_{j\msfq}\Delta w_{\delta,0}\\
&\ +2\int_{\R^n}H_{ij}\pa^2_{i\msfp}w_{\delta,0} \pa^2_{jk} (H_{kl}\pa^2_{l\msfq}\Delta w_{\delta,0}) + \int_{\R^n}H_{ij}\pa^2_{i\msfp}w_{\delta,0} \pa^2_{jk} \Delta(H_{kl}\pa^2_{l\msfq} w_{\delta,0}).
\end{align*}
Because
\begin{equation}\label{divergence free cancellation}
\pa_k(H_{kl}\pa^2_{l\msfq}u)=\pa_k\(H_{k\msfq}u'\frac{1}{r}\)=\delta_{\msfq}H u'\frac{1}{r}+H_{k\msfq}\pa_k\(u'\frac{1}{r}\)=0
\end{equation}
for a radial function $u$, we can obtain
\begin{align}
\pa^2_{\xi_{\msfp}\xi_{\msfq}}\Ddot{\msfF}(\delta,0) &=-\sum_{\substack{k,m=2\\ k,m: \textup{even}}}^{10}\int_{\R^n} \pa^2_{\msfpq}w_{\delta,0}L_2[H^{(k)},H^{(m)}]w_{\delta,0} + \sum_{\substack{k,m=2\\ k,m: \textup{even}}}^{10}\int_{\R^n}\pa_{\msfp}w_{\delta,0}L_2[H^{(k)},H^{(m)}]\pa_{\msfq} w_{\delta,0} \nonumber \\
&\ +\frac{1}{2}\sum_{\substack{k,m=2\\ k,m: \textup{even}}}^{10}\int_{\R^n} H^{(k)}_{\msfp i}H^{(m)}_{\msfq i} \left[2(\Delta^2w_{\delta,0})' w_{\delta,0}'+\{(\Delta w_{\delta,0})'\}^2\right]\frac{1}{r^2}. \label{eq:FHess}
\end{align}
The last term on the right-hand side of \eqref{eq:FHess} can be expressed directly in polar coordinates.
Thus, in Parts 1 and 2 below, we focus on evaluating the first and second terms, respectively, and in Part 3, we assemble all terms to compute $\pa^2_{\xi_{\msfp}\xi_{\msfq}}\Ddot{\msfF}(\delta,0)$.

\subsubsection{Part 1: Evaluation of $\int_{\R^n} \pa^2_{\msfpq}w_{\delta,0}L_2[H^{(k)},H^{(m)}]w_{\delta,0}$}
\begin{lemma}\label{lemma:rn11}
We define
\begin{align*}
\RN{1}_{\msfpq}^{(k,m)} &:= \begin{medsize}
\displaystyle \frac{n-2}{4(n-1)}\delta_{\msfpq} \int_{\R^n}\Ddot{R}^{(k,m)}(\Delta w_{\delta,0})'\Delta w_{\delta,0}\frac{1}{r}
- \frac{n-6}{2}\delta_{\msfpq} \int_{\R^n} \big(\Ddot{Q^{(6)}}\big)^{(k,m)} w_{\delta,0}'w_{\delta,0}\frac{1}{r}
\end{medsize} \\
&\begin{medsize}
\displaystyle \ -\delta_{\msfpq} \int_{\R^n}(\Ddot{T_2})^{(k,m)}_{ij} y_iy_j\left[(\Delta w_{\delta,0})'\(w_{\delta,0}'\frac{1}{r}\)'+w_{\delta,0}'\left\{(\Delta w_{\delta,0})'\frac{1}{r}\right\}'\right]\frac{1}{r^2}
\end{medsize} \\
&\begin{medsize}
\displaystyle \ -\delta_{\msfpq} \int_{\R^n} \left[\tr \,\Ddot{T_2}^{(k,m)}+(\Ddot{\diver_{g_0}}T_2)^{(k,m)}_iy_i + \frac{1}{4}\left\{\Dot{T_2}^{(k)} \cdot (y_i\pa_i-2)H^{(m)} + \Dot{T_2}^{(m)} \cdot (y_i\pa_i-2)H^{(k)}\right\}\right]
\end{medsize} \\
&\begin{medsize}
\displaystyle \hspace{240pt} \times \left[(\Delta w_{\delta,0})'w_{\delta,0}'+ w_{\delta,0}'(\Delta w_{\delta,0})'\right]\frac{1}{r^2}
\end{medsize} \\
&\begin{medsize}
\displaystyle \ -\delta_{\msfpq} \int_{\R^n}(\Ddot{T_4})^{(k,m)}_{ij}y_iy_j w_{\delta,0}'\(w_{\delta,0}'\frac{1}{r}\)'\frac{1}{r^2}
\end{medsize} \\
&\begin{medsize}
\displaystyle \ -\delta_{\msfpq} \int_{\R^n} \left[\tr \,\Ddot{T_4}^{(k,m)}+(\Ddot{\diver_{g_0}T_4})^{(k,m)}_iy_i + \frac{1}{4}\left\{\Dot{T_4}^{(k)} \cdot (y_i\pa_i-2)H^{(m)} + \Dot{T_4}^{(m)} \cdot (y_i\pa_i-2)H^{(k)}\right\}\right] (w_{\delta,0}')^2\frac{1}{r^2},
\end{medsize}
\end{align*}
and
\begin{align*}
\RN{2}_{\msfpq}^{(k,m)} &:=\begin{medsize}
\displaystyle \frac{n-2}{4(n-1)} \int_{\R^n} \Ddot{R}^{(k,m)}y_{\msfp}y_{\msfq} \left[(\Delta w_{\delta,0})''-(\Delta w_{\delta,0})'\frac{1}{r}\right] \Delta w_{\delta,0}\frac{1}{r^2}
\end{medsize} \\
&\begin{medsize}
\displaystyle \ - \frac{n-6}{2}\int_{\R^n} \big(\Ddot{Q^{(6)}}\big)^{(k,m)} y_{\msfp}y_{\msfq} \(w_{\delta,0}''- w_{\delta,0}'\frac{1}{r}\)w_{\delta,0}\frac{1}{r^2}
\end{medsize} \\
&\begin{medsize}
\displaystyle \ -\int_{\R^n}(\Ddot{T_2})^{(k,m)}_{ij}y_iy_jy_{\msfp}y_{\msfq} \left[\left\{(\Delta w_{\delta,0})''- (\Delta w_{\delta,0})'\frac{1}{r}\right\}\(w_{\delta,0}'\frac{1}{r}\)'+\(w_{\delta,0}''- w_{\delta,0}'\frac{1}{r}\)\left\{(\Delta w_{\delta,0})'\frac{1}{r}\right\}'\right]\frac{1}{r^3}
\end{medsize} \\
&\begin{medsize}
\displaystyle \ -\int_{\R^n} \left[\tr \,\Ddot{T_2}^{(k,m)}+(\Ddot{\diver_{g_0}T_2})^{(k,m)}_iy_i + \frac{1}{4}\left\{\Dot{T_2}^{(k)} \cdot (y_i\pa_i-2)H^{(m)} + \Dot{T_2}^{(m)} \cdot (y_i\pa_i-2)H^{(k)}\right\}\right] y_{\msfp}y_{\msfq}
\end{medsize} \\
&\begin{medsize}
\displaystyle \hspace{135pt} \times \left[\left\{(\Delta w_{\delta,0})''- (\Delta w_{\delta,0})'\frac{1}{r}\right\}w_{\delta,0}'+\(w_{\delta,0}''- w_{\delta,0}'\frac{1}{r}\)(\Delta w_{\delta,0})'\right]\frac{1}{r^3}
\end{medsize} \\
&\begin{medsize}
\displaystyle \ -\int_{\R^n}(\Ddot{T_4})^{(k,m)}_{ij}y_iy_jy_{\msfp}y_{\msfq} \(w_{\delta,0}''-w_{\delta,0}'\frac{1}{r}\) \(w_{\delta,0}'\frac{1}{r}\)'\frac{1}{r^3}
\end{medsize} \\
&\begin{medsize}
\displaystyle \ -\int_{\R^n} \left[\tr \,\Ddot{T_4}^{(k,m)}+(\Ddot{\diver_{g_0}T_4})^{(k,m)}_iy_i + \frac{1}{4}\left\{\Dot{T_4}^{(k)} \cdot (y_i\pa_i-2)H^{(m)} + \Dot{T_4}^{(m)} \cdot (y_i\pa_i-2)H^{(k)}\right\}\right]y_{\msfp}y_{\msfq}
\end{medsize} \\
&\begin{medsize}
\displaystyle \hspace{290pt} \times \(w_{\delta,0}''- w_{\delta,0}'\frac{1}{r}\)w_{\delta,0}'\frac{1}{r^3},
\end{medsize}
\end{align*}
where $r=|y|$. For $k,m=2,\ldots,10$ even, it holds that
\begin{equation}\label{eq:part1}
\int_{\R^n} \pa^2_{\msfpq}w_{\delta,0}L_2[H^{(k)},H^{(m)}]w_{\delta,0}=\RN{1}_{\msfpq}^{(k,m)}+\RN{2}_{\msfpq}^{(k,m)}.
\end{equation}
\end{lemma}
\begin{proof}
To derive \eqref{eq:part1}, we will use \eqref{eq:L2 Q6} and the radial symmetry of $w_{\delta,0}$.

\medskip
Since $\tr H=\delta H=0$ and $y_iH_{ij}=0$, it holds that $\tr \Dot{A}[H]=0$ and $y_i(\Dot{T_2}[H])_{ij}=0$. Hence, if $u$ is radial, then
\begin{align*}
\Delta(\tr\Dot{A}[H](h_{ij}\pa_i u)_{,j}) &= (h_{ij}(\tr\Dot{A}[H]\Delta u)_{,j})_{,i}=0,\\
\Dot{\diver_{g_0}}\left\{g(\nabla u,\cdot)\right\}[H]\circ \Dot{\diver_{g_0}}\left\{T_2(\nabla u,\cdot)\right\}[H] &= \Dot{\diver_{g_0}}\left\{T_2(\nabla ,\cdot)\right\}[H]\circ \Dot{\diver_{g_0}}\left\{g(\nabla u,\cdot)\right\}[H]=0.
\end{align*}
By \eqref{eq:L2 Q6}, we have
\begin{align*}
L_2[H,H]u &= \frac{n-2}{4(n-1)}\Delta(\Ddot{R}[H,H]\Delta u)-\frac{n-6}{2}\Ddot{Q^{(6)}}[H,H]u \\
&\ -\Delta\circ \Ddot{\diver_{g_0}}\left\{T_2(\nabla u,\cdot)\right\}[H,H] -\Ddot{\diver_{g_0}}\left\{T_2(\nabla\Delta u,\cdot)\right\}[H,H] \\
&\ -\Ddot{\diver_{g_0}}\left\{T_4(\nabla u,\cdot)\right\}[H,H].
\end{align*}
Integrating by parts, we obtain
\begin{align*}
&\ \int_{\R^n} \pa^2_{\msfpq}w_{\delta,0}L_2[H,H]w_{\delta,0}\\
&= \frac{n-2}{4(n-1)}\int_{\R^n}\Ddot{R}[H,H]\pa^2_{\msfpq}\Delta w_{\delta,0}\Delta w_{\delta,0} -\frac{n-6}{2}\int_{\R^n}\Ddot{Q^{(6)}}[H,H] \pa^2_{\msfpq}w_{\delta,0}w_{\delta,0} \\
&\ -\int_{\R^n}\pa^2_{\msfpq}\Delta w_{\delta,0}\Ddot{\diver_{g_0}}\left\{T_2(\nabla w_{\delta,0},\cdot)\right\}[H,H]
-\int_{\R^n} \pa^2_{\msfpq}w_{\delta,0}\Ddot{\diver_{g_0}}\left\{T_2(\nabla \Delta w_{\delta,0} ,\cdot)\right\}[H,H] \\
&\ -\int_{\R^n}\pa^2_{\msfpq}w_{\delta,0}\Ddot{\diver_{g_0}}\left\{T_4(\nabla w_{\delta,0},\cdot)\right\}[H,H].
\end{align*}
Since $\pa^2_{\msfpq} u=y_{\msfp}y_{\msfq}\(u''- u'\frac{1}{r}\)\frac{1}{r^2}+\delta_{\msfpq}u'\frac{1}{r}$ for a radial function $u$, we can separate the terms involving $\delta_{\msfpq}$ and those involving $y_{\msfp}y_{\msfq}$ and then apply the polarization identity \eqref{eq:polar} to deduce \eqref{eq:part1}.
\end{proof}

We represent the term $\RN{1}_{\msfpq}^{(k,m)}$ by using sphere integrals.
\begin{lemma}\label{lemma:rn1}
First, we fix six constants depending only on $n$ and $k+m$:
\begin{align*}
c_{1}^{\RN{1}}(n,k+m) &:= 2 (n-6)^2(n-4) \left[(n^2-8)\mci_{n-1}^{n+k+m-1}+4(n-3)\mci_{n-1}^{n+k+m+1}\right];\\
c_{2}^{\RN{1}}(n,k+m) &:= - (n-6)^2(n-4) \left[n(n+2)\mci_{n-1}^{n+k+m-3}+8(n+1) \mci_{n-1}^{n+k+m-1}+16\mci_{n-1}^{n+k+m+1}\right];\\
c_{3}^{\RN{1}}(n,k+m) &:= -2 (n-6)^2(n-4) \left[(n+2)\mci_{n-2}^{n+k+m-3}+4\mci_{n-2}^{n+k+m-1}\right];\\
c_{4}^{\RN{1}}(n,k+m) &:= -(n-6)^2(n-4) \mci_{n-3}^{n+k+m-3};\\
c_{5}^{\RN{1}}(n,k+m) &:= (n-6)^2 \mci_{n-4}^{n+k+m-5};\\
c_{6}^{\RN{1}}(n,k+m) &:= -(n-6) \mci_{n-5}^{n+k+m-7}.
\end{align*}
Second, for $i=1,\ldots,7$, we define the coefficients $-\bc_{i}^{\RN{1}}(n,k,m)$ analogously to $\bc_i(n,k,m)$ in the statement of Proposition \ref{I_1 Q6}, replacing $c_i(n,k+m)$ with $c_{i}^{\RN{1}}(n,k+m)$. Then we have
\begin{align}
\delta^{-(k+m-2)} \RN{1}_{\msfpq}^{(k,m)} &= \left[\bc^{\RN{1}}_1(n,k,m)\int_{\S^{n-1}}\Ddot{R}^{(k,m)} + \bc^{\RN{1}}_2(n,k,m) \bla\Dot{\ricci}^{(k)},\Dot{\ricci}^{(m)}\bra \right. \nonumber \\
&\hspace{15pt} + \bc^{\RN{1}}_3(n,k,m) \bla H^{(k)},H^{(m)}\bra + \bc^{\RN{1}}_4(n,k,m) \left\{k\bla H^{(k)},\Dot{\ricci}^{(m)}\bra + m\bla\Dot{\ricci}^{(k)},H^{(m)}\bra\right\} \nonumber \\
&\hspace{15pt} + \bc^{\RN{1}}_5(n,k,m) \left\{k\bla H^{(k)},\Delta\Dot{\ricci}^{(m)}\bra + m\bla\Delta\Dot{\ricci}^{(k)},H^{(m)}\bra\right\} \label{eq:RN1_pq} \\
&\hspace{15pt} + \bc^{\RN{1}}_6(n,k,m)\left\{\bla\Dot{\ricci}^{(k)},\Delta\Dot{\ricci}^{(m)}\bra + \bla\Delta\Dot{\ricci}^{(k)},\Dot{\ricci}^{(m)}\bra\right\} \nonumber \\
&\hspace{15pt} \left. + \bc^{\RN{1}}_7(n,k,m) \left\{k^2\bla H^{(k)},\Dot{\ricci}^{(m)}\bra + m^2\bla\Dot{\ricci}^{(k)},H^{(m)}\bra\right\}\right]\delta_{\msfpq}. \nonumber
\end{align}
\end{lemma}
\begin{proof}
Expressing $\RN{1}_{\msfpq}^{(k,m)}$ in polar coordinates produces sphere integrals that agree with those in Proposition \ref{I_1 Q6}, up to an overall minus sign. The radial integrals in \eqref{eq:c1c6} are to be substituted with
\begin{align*}
c_{1}^{\RN{1}}(n,k+m)\delta^{k+m-2} &= \int_{0}^{\infty}r^{n+k+m-3}\left[(\Delta w_{\delta,0})'\(w_{\delta,0}'\frac{1}{r}\)'+w_{\delta,0}'\((\Delta w_{\delta,0})'\frac{1}{r}\)'\right];\\
c_{2}^{\RN{1}}(n,k+m)\delta^{k+m-2} &= \int_0^{\infty}r^{n+k+m-4}(\Delta w_{\delta,0})'\Delta w_{\delta,0};\\
c_{3}^{\RN{1}}(n,k+m)\delta^{k+m-2} &= 2\int_0^{\infty}r^{n+k+m-5}(\Delta w_{\delta,0})'w_{\delta,0}';\\
c_{4}^{\RN{1}}(n,k+m)\delta^{k+m-2} &= \int_0^{\infty} r^{n+k+m-5}w_{\delta,0}'\(w_{\delta,0}'\frac{1}{r}\)';\\
c_{5}^{\RN{1}}(n,k+m)\delta^{k+m-2} &= \int_0^{\infty}r^{n+k+m-7}(w_{\delta,0}')^2;\\
c_{6}^{\RN{1}}(n,k+m)\delta^{k+m-2} &= \int_0^{\infty}r^{n+k+m-8} w_{\delta,0}'w_{\delta,0}.
\end{align*}
Consequently, replacing each $\bc_i(n,k+m)$ with $\bc_{i}^{\RN{1}}(n,k+m)$ in the right-hand side of \eqref{eq:J1HkHm} (and noting that $\delta H=0$) leads directly to \eqref{eq:RN1_pq}.
\end{proof}

We represent the term $\RN{2}_{\msfpq}^{(k,m)}$ by using sphere integrals.
\begin{lemma}\label{lemma:rn2}
First, we fix six constants depending only on $n$ and $k+m$:
\begin{align*}
c_{1}^{\RN{2}}(n,k+m) &:= \begin{medsize}
\displaystyle -2 (n-6)^2(n-4)^2(n-2) \left[(n+4)\mci_{n}^{n+k+m+1}+4\mci_{n}^{n+k+m+3}\right];
\end{medsize}\\
c_{2}^{\RN{2}}(n,k+m) &:= \begin{medsize}
\displaystyle (n-6)^2(n-4)(n-2) \left[n(n+4)\mci_{n}^{n+k+m-1}+8(n+2)\mci_{n}^{n+k+m+1}+16\mci_{n}^{n+k+m+3}\right];
\end{medsize}\\
c_{3}^{\RN{2}}(n,k+m) &:= \begin{medsize}
\displaystyle 2 (n-6)^2(n-4) \left[(n^2-8)\mci_{n-1}^{n+k+m-1}+4(n-3)\mci_{n-1}^{n+k+m+1}\right];
\end{medsize}\\
c_{4}^{\RN{2}}(n,k+m) &:= \begin{medsize}
\displaystyle (n-6)^2(n-4)^2\mci_{n-2}^{n+k+m-1};
\end{medsize}\\
c_{5}^{\RN{2}}(n,k+m) &:= \begin{medsize}
\displaystyle -(n-6)^2(n-4) \mci_{n-3}^{n+k+m-3};
\end{medsize}\\
c_{6}^{\RN{2}}(n,k+m) &:= \begin{medsize}
\displaystyle (n-6)(n-4)\mci_{n-4}^{n+k+m-5}.
\end{medsize}
\end{align*}
Second, we define
\begin{align*}
\bc^{\RN{2}}_1(n,k,m) &:= -c^{\RN{2}}_6(n,k+m)(k+m-6)(n+k+m-4)(k+m-4)(n+k+m-2)\frac{n-6}{4(n-1)}\\
&\ -c^{\RN{2}}_3(n,k+m)\frac{n^2-2n-8+(n-10)(k+m-2)}{2(n-1)}+c^{\RN{2}}_2(n,k+m)\frac{n-2}{4(n-1)}\\
&\ -c^{\RN{2}}_1(n,k+m)\frac{n^2-4n+12}{2(n-2)(n-1)}\\
&\ -c^{\RN{2}}_5(n,k+m)(k+m-4)(n+k+m-2)(n+k+m-4)\frac{(n-6)}{2(n-1)}\\
&\ -c^{\RN{2}}_4(n,k+m)(k+m-4)(n+k+m-2)\frac{(n-6)(n-4)(n-2)+16}{2(n-4)(n-2)(n-1)}\\
&\ -c^{\RN{2}}_4(n,k+m)\frac{8(n-2)(k+m)^2-8(n-6)(k+m)+16(n-4)}{(n-4)(n-2)(n-1)};\\
\bc^{\RN{2}}_2(n,k,m) &:= -c^{\RN{2}}_6(n,k+m)(k+m-6)(n+k+m-4)\frac{2(n-6)}{(n-2)^2}\\
&\ -c^{\RN{2}}_5(n,k+m)\frac{4(n-6)(k+m)+4(n^2-8n+20)}{(n-2)^2} -c^{\RN{2}}_4(n,k+m)\frac{4(n^2-8n+20)}{(n-4)(n-2)^2};\\
\bc^{\RN{2}}_3(n,k,m) &:= -c^{\RN{2}}_1(n,k+m)\frac{2km}{n-2} -c^{\RN{2}}_4(n,k+m)(k+m-2)(n+k+m)\frac{4km}{(n-4)(n-2)};\\
\bc^{\RN{2}}_4(n,k,m) &:= c^{\RN{2}}_3(n,k+m)\frac{2}{n-2}-c^{\RN{2}}_4(n,k+m)\frac{8}{(n-4)(n-2)};\\
\bc^{\RN{2}}_5(n,k,m) &:= c^{\RN{2}}_5(n,k+m)\frac{4}{(n-4)(n-2)};\\
\bc^{\RN{2}}_6(n,k,m) &:= -c^{\RN{2}}_6(n,k+m)\frac{4(n-6)}{(n-4)(n-2)^2};\\
\bc^{\RN{2}}_7(n,k,m) &:= -c^{\RN{2}}_4(n,k+m)\frac{8}{(n-4)(n-2)};\\
\bc^{\RN{2}}_8(n,k,m) &:= -c^{\RN{2}}_6(n,k+m)(k+m-4)(n+k+m-4)\frac{n-6}{n-1}\\
&\ -c^{\RN{2}}_4(n,k+m)\frac{(n-6)(n-4)(n-2)+16}{(n-4)(n-2)(n-1)} -c^{\RN{2}}_5(n,k+m)(n+k+m-4)\frac{n-6}{n-1};\\
\bc^{\RN{2}}_9(n,k,m) &:= -c^{\RN{2}}_6(n,k+m)\frac{4(n-6)}{(n-2)^2};\\
\bc^{\RN{2}}_{10}(n,k,m) &:= -c^{\RN{2}}_4(n,k+m)\frac{8km}{(n-4)(n-2)}.
\end{align*}
Then we have
\begin{align}
&\begin{medsize}
\displaystyle \ \delta^{-(k+m-2)}\RN{2}_{\msfpq}^{(k,m)}
\end{medsize} \nonumber \\
&\begin{medsize}
\displaystyle = \bc^{\RN{2}}_1(n,k,m) \int_{\S^{n-1}}\Ddot{R}^{(k,m)}y_{\msfp}y_{\msfq} + \bc^{\RN{2}}_2(n,k,m)\int_{\S^{n-1}}\Dot{\ricci}^{(k)}\cdot\Dot{\ricci}^{(m)}y_{\msfp}y_{\msfq} + \bc^{\RN{2}}_3(n,k,m)\int_{\S^{n-1}}H^{(k)}\cdot H^{(m)} y_{\msfp}y_{\msfq}
\end{medsize} \nonumber \\
&\begin{medsize}
\displaystyle \ + \bc^{\RN{2}}_4(n,k,m)\int_{\S^{n-1}} \big(kH^{(k)} \cdot \Dot{\ricci}^{(m)}+m\Dot{\ricci}^{(k)} \cdot H^{(m)}\big)y_{\msfp}y_{\msfq}
\end{medsize} \nonumber \\
&\begin{medsize}
\displaystyle \ + \bc^{\RN{2}}_5(n,k,m)\int_{\S^{n-1}} \big(kH^{(k)} \cdot \Delta\Dot{\ricci}^{(m)}+m\Delta\Dot{\ricci}^{(k)} \cdot H^{(m)}\big)y_{\msfp}y_{\msfq}
\end{medsize} \label{eq:RN2_pq} \\
&\begin{medsize}
\displaystyle \ + \bc^{\RN{2}}_6(n,k,m)\int_{\S^{n-1}} \big(\Dot{\ricci}^{(k)} \cdot \Delta\Dot{\ricci}^{(m)}+\Delta\Dot{\ricci}^{(k)} \cdot \Dot{\ricci}^{(m)}\big)y_{\msfp}y_{\msfq}
\end{medsize} \nonumber \\
&\begin{medsize}
\displaystyle \ + \bc^{\RN{2}}_7(n,k,m)\int_{\S^{n-1}} \big(k^2H^{(k)} \cdot \Dot{\ricci}^{(m)}+m^2\Dot{\ricci}^{(k)} \cdot H^{(m)}\big)y_{\msfp}y_{\msfq}
\end{medsize} \nonumber \\
&\begin{medsize}
\displaystyle \ + \left[\bc^{\RN{2}}_8(n,k,m) \int_{\S^{n-1}}\Ddot{R}^{(k,m)} + \bc^{\RN{2}}_9(n,k,m) \bla\Dot{\ricci}^{(k)},\Dot{\ricci}^{(m)}\bra + \bc^{\RN{2}}_{10}(n,k,m) \bla\Dot{\ricci}^{(k)},\Dot{\ricci}^{(m)}\bra\right]\delta_{\msfpq}.
\end{medsize} \nonumber
\end{align}
\end{lemma}
\begin{proof}
We express $\RN{2}_{\msfpq}^{(k,m)}$ in polar coordinates:
\begin{align}
&\begin{medsize}
\displaystyle \ \delta^{-(k+m-2)} \RN{2}_{\msfpq}^{(k,m)}
\end{medsize} \nonumber \\
&\begin{medsize}
\displaystyle = \frac{n-2}{4(n-1)}\int_{\S^{n-1}}\Ddot{R}^{(k,m)}y_{\msfp}y_{\msfq} \cdot c_{2}^{\RN{2}}(n,k+m)
-\frac{n-6}{2}\int_{\S^{n-1}}\big(\Ddot{Q^{(6)}}\big)^{(k,m)} y_{\msfp}y_{\msfq} \cdot c_{6}^{\RN{2}}(n,k+m)
\end{medsize} \nonumber \\
&\begin{medsize}
\displaystyle \ -\int_{\S^{n-1}}(\Ddot{T_2})^{(k,m)}_{ij}y_iy_jy_{\msfp}y_{\msfq} \cdot c_{1}^{\RN{2}}(n,k+m)
\end{medsize} \label{eq:RN2_pq1} \\
&\begin{medsize}
\displaystyle \ -\int_{\S^{n-1}} \left[\tr \,\Ddot{T_2}^{(k,m)}+(\Ddot{\diver_{g_0}T_2})^{(k,m)}_iy_i +\frac{1}{4}\left\{\Dot{T_2}^{(k)} \cdot (y_i\pa_i-2)H^{(m)} + \Dot{T_2}^{(m)} \cdot (y_i\pa_i-2)H^{(k)}\right\}\right]y_{\msfp}y_{\msfq} \cdot c_{3}^{\RN{2}}(n,k+m)
\end{medsize} \nonumber \\
&\begin{medsize}
\displaystyle \ -\int_{\S^{n-1}}(\Ddot{T_4})^{(k,m)}_{ij}y_iy_jy_{\msfp}y_{\msfq}\cdot c_{4}^{\RN{2}}(n,k+m)
\end{medsize} \nonumber \\
&\begin{medsize}
\displaystyle \ -\int_{\S^{n-1}} \left[\tr \,\Ddot{T_4}^{(k,m)}+(\Ddot{\diver_{g_0}T_4})^{(k,m)}_iy_i +\frac{1}{4}\left\{\Dot{T_4}^{(k)} \cdot (y_i\pa_i-2)H^{(m)} + \Dot{T_4}^{(m)} \cdot (y_i\pa_i-2)H^{(k)}\right\}\right]y_{\msfp}y_{\msfq} \cdot c_{5}^{\RN{2}}(n,k+m),
\end{medsize} \nonumber
\end{align}
where
\begin{align*}
c_{1}^{\RN{2}}(n,k+m) &= \int_0^{\infty}r^{n+k+m-2}\left[\left\{(\Delta w)''- (\Delta w)'\frac{1}{r}\right\} \(w'\frac{1}{r}\)'+\(w''- w'\frac{1}{r}\)\left\{(\Delta w)'\frac{1}{r}\right\}'\right];\\
c_{2}^{\RN{2}}(n,k+m) &= \int_0^{\infty}r^{n+k+m-3}\left\{(\Delta w)''- (\Delta w)'\frac{1}{r}\right\}\Delta w;\\
c_{3}^{\RN{2}}(n,k+m) &= \int_0^{\infty}r^{n+k+m-4}\left[\left\{(\Delta w)''- (\Delta w)'\frac{1}{r}\right\}w'+\(w''- w'\frac{1}{r}\)(\Delta w)'\right];\\
c_{4}^{\RN{2}}(n,k+m) &= \int_0^{\infty}r^{n+k+m-4}\(w''- w'\frac{1}{r}\)\(w'\frac{1}{r}\)';\\
c_{5}^{\RN{2}}(n,k+m) &= \int_0^{\infty} r^{n+k+m-6} \(w''- w'\frac{1}{r}\)w';\\
c_{6}^{\RN{2}}(n,k+m) &= \int_{0}^{\infty}r^{n+k+m-7}\(w''- w'\frac{1}{r}\)w.
\end{align*}
In the end, the five sphere integrals in \eqref{eq:RN2_pq1} associated with $T_2$, $T_4$, and $Q^{(6)}$ will be expressed in terms of sphere integrals involving $\Ddot{R}$, $\Dot{R}$, $\Dot{\ricci}$, and $H$ in Lemma \ref{lemma:sphere with xpxq}.

Combining the above information, we can deduce \eqref{eq:RN2_pq}.
\end{proof}

\subsubsection{Part 2: Evaluation of $\int_{\R^n}\pa_{\msfp}w_{\delta,0}L_2[H^{(k)},H^{(m)}]\pa_{\msfq} w_{\delta,0}$}
\begin{lemma}
We define
\begin{align*}
\RN{3}_{\msfpq}^{(k,m)} &:=\frac{n-2}{4(n-1)}\int_{\R^n}\Ddot{R}^{(k,m)}y_{\msfp}y_{\msfq}\{(\Delta w_{\delta,0})'\}^2\frac{1}{r^2} -\frac{n-6}{2}\int_{\R^n}\big(\Ddot{Q}^{(6)}\big)^{(k,m)}y_{\msfp}y_{\msfq}(w_{\delta,0}')^2\frac{1}{r^2}\\
&\ +2\int_{\R^n}(\Ddot{T_2})^{(k,m)}_{ij}y_iy_jy_{\msfp}y_{\msfq}\left[(\Delta w_{\delta,0})''-(\Delta w_{\delta,0})'\frac{1}{r}\right]\(w_{\delta,0}''-w_{\delta,0}'\frac{1}{r}\)\frac{1}{r^4}\\
&\ +2\int_{\R^n}(\Ddot{T_2})^{(k,m)}_{\msfpq}(\Delta w_{\delta,0})'w_{\delta,0}'\frac{1}{r^2}\\
&\ +2\int_{\R^n}(\Ddot{T_2})^{(k,m)}_{i\msfp}y_iy_{\msfq}\left[\left\{(\Delta w_{\delta,0})''-(\Delta w_{\delta,0})'\frac{1}{r}\right\}w_{\delta,0}'+\(w_{\delta,0}''-w_{\delta,0}'\frac{1}{r}\)(\Delta w_{\delta,0})'\right]\frac{1}{r^3}\\
&\ +\int_{\R^n}(\Ddot{T_4})^{(k,m)}_{ij}y_iy_jy_{\msfp}y_{\msfq} \(w_{\delta,0}''-w_{\delta,0}'\frac{1}{r}\)^2\frac{1}{r^4} + 2\int_{\R^n}(\Ddot{T_4})^{(k,m)}_{i\msfp}y_iy_{\msfq} \(w_{\delta,0}''-w_{\delta,0}'\frac{1}{r}\)w_{\delta,0}'\frac{1}{r^3} \\
&\ +\int_{\R^n}(\Ddot{T_4})^{(k,m)}_{\msfpq}(w_{\delta,0}')^2\frac{1}{r^2},
\end{align*}
where $r=|y|$. For $k,m=2,\ldots,10$ even, it holds that
\begin{equation}\label{eq:part2}
\int_{\R^n}\pa_{\msfp}w_{\delta,0}L_2[H^{(k)},H^{(m)}]\pa_{\msfq} w_{\delta,0}=\RN{3}_{\msfpq}^{(k,m)}.
\end{equation}
\end{lemma}
\begin{proof}
Because $\pa_{\msfq} w_{\delta,0}$ is not radial, we cannot use the argument in the proof of Lemma \ref{lemma:rn11}.
To overcome this difficulty, we make use of the variational structure of the constant $Q^{(6)}$-curvature problem, with the associated energy functional defined in \eqref{eq:mcf}--\eqref{eq:mcf2}.

Doing the second-order expansion on both sides of \eqref{eq:mcf2}, we discover
\begin{align*}
&\begin{medsize}
\displaystyle \ -\int_{\R^n}\pa_{\msfp}w_{\delta,0}L_2[H,H]\pa_{\msfq} w_{\delta,0}
\end{medsize} \\
&\begin{medsize}
\displaystyle = \int_{\R^n}\left[-2\Ddot{T_2}[H,H](\nabla\Delta \pa_{\msfp}w_{\delta,0},\nabla \pa_{\msfq}w_{\delta,0})-\Ddot{T_4}[H,H](\nabla \pa_{\msfp}w_{\delta,0},\nabla \pa_{\msfq}w_{\delta,0})\right]
\end{medsize} \\
&\begin{medsize}
\displaystyle \ + \int_{\R^n}\left[-\frac{n-2}{4(n-1)}\Ddot{R}[H,H]\Delta\pa_{\msfp} w_{\delta,0}\Delta\pa_{\msfq} w_{\delta,0}+\frac{n-6}{2}\Ddot{Q}^{(6)}[H,H]\pa_{\msfp}w_{\delta,0}\pa_{\msfq}w_{\delta,0}\right]
\end{medsize} \\
&\begin{medsize}
\displaystyle \ + \int_{\R^n}\left[2(\Dot{T_2})_{ij}[H](H_{il}\pa^2_{\msfp l}\Delta w_{\delta,0}\pa^2_{\msfq j}w_{\delta,0}+H_{jl}\pa^2_{\msfp i}\Delta w_{\delta,0}\pa^2_{\msfq l}w_{\delta,0})
+(\Dot{T_4})_{ij}[H](H_{il}\pa^2_{\msfp l} w_{\delta,0}\pa^2_{\msfq j}w_{\delta,0}+H_{jl}\pa^2_{\msfp i} w_{\delta,0}\pa^2_{\msfq l}w_{\delta,0})\right]
\end{medsize} \\
&\begin{medsize}
\displaystyle \ + \int_{\R^n}\left[2\Dot{T_2}[H](\nabla \pa_j(H_{ij}\pa^2_{i\msfp}w_{\delta,0}),\nabla \pa_{\msfq}w_{\delta,0})
+ \frac{n-2}{4(n-1)}\Dot{R}[H]\left\{\pa_j(H_{ij}\pa^2_{i\msfp}w_{\delta,0})\Delta\pa_{\msfq} w_{\delta,0} + \pa_j(H_{ij}\pa^2_{i\msfq}w_{\delta,0})\Delta\pa_{\msfp} w_{\delta,0}\right\}\right].
\end{medsize}
\end{align*}
The third term vanishes because $(\Dot{T_2})_{ij}[H]y_i=(\Dot{T_4})_{ij}[H]y_i=0$. The fourth term vanishes because of \eqref{divergence free cancellation}. In summary, we have that
\begin{align*}
&\ \int_{\R^n}\pa_{\msfp}w_{\delta,0}L_2[H,H]\pa_{\msfq} w_{\delta,0} \\
&= \int_{\R^n} \left[2(\Ddot{T_2})_{ij}[H,H]\pa^2_{\msfp i}\Delta w_{\delta,0} \pa^2_{\msfq j}w_{\delta,0} +(\Ddot{T_4})_{ij}[H,H]\pa^2_{\msfp i}w_{\delta,0}\pa^2_{\msfq j}w_{\delta,0}\right] \\
&\ +\int_{\R^n} \left[\frac{n-2}{4(n-1)}\Ddot{R}[H,H]y_{\msfp}y_{\msfq}\{(\Delta w_{\delta,0})'\}^2\frac{1}{r^2}-\frac{n-6}{2}\Ddot{Q}^{(6)}[H,H]y_{\msfp}y_{\msfq}(w_{\delta,0}')^2\frac{1}{r^2}\right].
\end{align*}
From this, we can derive \eqref{eq:part2}.
\end{proof}

We represent the term $\RN{3}_{\msfpq}^{(k,m)}$ by using sphere integrals.
\begin{lemma}\label{lemma:rn3}
First, we fix eight constants depending only on $n$ and $k+m$:
\begin{align*}
c_{1}^{\RN{3}}(n,k+m) &:= -(n-6)^2(n-4)^2(n-2) \left[(n+4)\mci_{n}^{n+k+m+1}+4\mci_{n}^{n+k+m+3}\right] = \frac{1}{2}c_{1}^{\RN{2}}(n,k+m);\\
c_{2}^{\RN{3}}(n,k+m) &:= (n-6)^2(n-4)^2 \left[(n+2)^2\mci_{n}^{n+k+m-1}+8(n+2)\mci_{n}^{n+k+m+1}+16\mci_{n}^{n+k+m+3}\right];\\
c_{3}^{\RN{3}}(n,k+m) &:= 2(n-6)^2(n-4) \left[(n^2-8)\mci_{n-1}^{n+k+m-1}+4(n-3)\mci_{n-1}^{n+k+m+1}\right]=c_3^{\RN{2}}(n,k+m);\\
c_{4}^{\RN{3}}(n,k+m) &:= (n-6)^2(n-4)^2 \mci_{n-2}^{n+k+m-1}=c_{4}^{\RN{2}}(n,k+m);\\
c_{5}^{\RN{3}}(n,k+m) &:= -(n-6)^2(n-4) \mci_{n-3}^{n+k+m-3}=c_5^{\RN{2}}(n,k+m);\\
c_{6}^{\RN{3}}(n,k+m) &:= (n-6)^2\mci_{n-4}^{n+k+m-5};\\
c_{7}^{\RN{3}}(n,k+m) &:= -2(n-6)^2(n-4) \left[(n+2)\mci_{n-2}^{n+k+m-3}+4\mci_{n-2}^{n+k+m-1}\right];\\
c_{8}^{\RN{3}}(n,k+m) &:= (n-6)^2\mci_{n-4}^{n+k+m-5}.
\end{align*}
Second, we define
\begin{align*}
\bc^{\RN{3}}_1(n,k,m) &:= -c^{\RN{3}}_6(n,k+m)(k+m-6)(n+k+m-4)(k+m-4)(n+k+m-2)\frac{n-6}{4(n-1)}\\
&\ +c^{\RN{3}}_3(n,k+m)\frac{n^2-4n+12}{(n-2)(n-1)}+c^{\RN{3}}_2(n,k+m)\frac{n-2}{4(n-1)}\\
&\ +c^{\RN{3}}_1(n,k+m)\frac{n^2-4n+12}{(n-2)(n-1)} +c^{\RN{3}}_5(n,k+m)(k+m-4)(n+k+m-2)\frac{n-6}{n-1}\\
&\ +c^{\RN{3}}_5(n,k+m)(k+m-2)(n+k+m-2)\frac{16}{(n-4)(n-2)(n-1)}\\
&\ +c^{\RN{3}}_5(n,k+m)(k+m-1)(n+k+m-2)\frac{8}{(n-4)(n-1)}\\
&\ +c^{\RN{3}}_4(n,k+m)(k+m-4)(n+k+m-2)\frac{(n-6)(n-4)(n-2)+16}{2(n-4)(n-2)(n-1)}\\
&\ +c^{\RN{3}}_4(n,k+m)\frac{8(n-2)(k+m)^2-8(n-6)(k+m)+16(n-4)}{(n-4)(n-2)(n-1)}\\
&\ +c^{\RN{3}}_8(n,k+m)(n+k+m-4)(n+k+m-2)\frac{8}{(n-4)(n-1)};\\
\bc^{\RN{3}}_2(n,k,m) &:= -c^{\RN{3}}_6(n,k+m)(k+m-6)(n+k+m-4)\frac{2(n-6)}{(n-2)^2}\\
&\ +c^{\RN{3}}_5(n,k+m)\frac{8(n^2-8n+20)}{(n-4)(n-2)^2} +c^{\RN{3}}_4(n,k+m)\frac{4(n^2-8n+20)}{(n-4)(n-2)^2};\\
\bc^{\RN{3}}_3(n,k,m) &:= c^{\RN{3}}_1(n,k+m)\frac{4km}{n-2} +c^{\RN{3}}_4(n,k+m)(k+m-2)(n+k+m)\frac{4km}{(n-4)(n-2)};\\
\bc^{\RN{3}}_4(n,k,m) &:= c^{\RN{3}}_4(n,k+m)\frac{8}{(n-4)(n-2)};\\
\bc^{\RN{3}}_6(n,k,m) &:= -c^{\RN{3}}_6(n,k+m)\frac{4(n-6)}{(n-4)(n-2)^2};\\
\bc^{\RN{3}}_7(n,k,m) &:= c^{\RN{3}}_4(n,k+m)\frac{8}{(n-4)(n-2)};\\
\bc^{\RN{3}}_8(n,k,m) &:= -c^{\RN{3}}_6(n,k+m)(k+m-4)(n+k+m-4)\frac{n-6}{n-1}\\
&\ +c^{\RN{3}}_4(n,k+m)\frac{(n-6)(n-4)(n-2)+16}{(n-4)(n-2)(n-1)}\\
&\ +c^{\RN{3}}_5(n,k+m)\frac{2(n-6)}{n-1} -c^{\RN{3}}_5(n,k+m)(k+m-1)\frac{8}{(n-4)(n-1)}\\
&\ +c^{\RN{3}}_7(n,k+m)\frac{n^2-4n+12}{2(n-2)(n-1)} -c^{\RN{3}}_8(n,k+m)(n+k+m-4)\frac{8}{(n-4)(n-1)}\\
&\ +c^{\RN{3}}_8(n,k+m)(k+m-2)(n+k+m-4)\frac{(n-6)(n-4)(n-2)+16}{2(n-4)(n-2)(n-1)};\\
\bc^{\RN{3}}_9(n,k,m) &:= -c^{\RN{3}}_6(n,k+m)\frac{4(n-6)}{(n-2)^2} +c^{\RN{3}}_8(n,k+m) \left[\frac{4(n-4)}{(n-2)^2}+\frac{16}{(n-4)(n-2)^2}\right];\\
\bc^{\RN{3}}_{10}(n,k,m) &:= c^{\RN{3}}_4(n,k+m)\frac{8km}{(n-4)(n-2)};\\
\bc^{\RN{3}}_{11}(n,k,m) &:= c^{\RN{3}}_3(n,k+m)\frac{2}{n-2} +c^{\RN{3}}_5(n,k+m)(k+m-2)(n+k+m-2)\frac{4}{(n-4)(n-2)};\\
\bc^{\RN{3}}_{12}(n,k,m) &:= c^{\RN{3}}_5(n,k+m)\frac{16}{(n-4)(n-2)};\\
\bc^{\RN{3}}_{13}(n,k,m) &:= -c^{\RN{3}}_7(n,k+m)\frac{4}{n-2} -c^{\RN{3}}_8(n,k+m)(k+m-2)(n+k+m-4)\frac{8}{(n-4)(n-2)};\\
\bc^{\RN{3}}_{14}(n,k,m) &:= c^{\RN{3}}_8(n,k+m)(n+k+m-4)\frac{8}{(n-4)(n-2)};\\
\bc^{\RN{3}}_{15}(n,k,m) &:= c^{\RN{3}}_8(n,k+m) \left[-\frac{48}{(n-2)^2}+\frac{16}{(n-2)^2}+\frac{32}{(n-4)(n-2)}\right];\\
\bc^{\RN{3}}_{16}(n,k,m) &:= c^{\RN{3}}_7(n,k+m)\frac{2}{n-2} +c^{\RN{3}}_8(n,k+m)(k+m-2)(n+k+m-4)\frac{4}{(n-4)(n-2)};\\
\bc^{\RN{3}}_{17}(n,k,m) &:= c^{\RN{3}}_8(n,k+m)\frac{8}{(n-4)(n-2)}.
\end{align*}
Then we have
\begin{align}
&\begin{medsize}
\displaystyle \ \delta^{-(k+m-2)}\RN{3}_{\msfpq}^{(k,m)}
\end{medsize} \nonumber \\
&\begin{medsize}
\displaystyle = \bc^{\RN{3}}_1(n,k,m)\int_{\S^{n-1}}\Ddot{R}^{(k,m)}y_{\msfp}y_{\msfq} +\bc^{\RN{3}}_2(n,k,m)\int_{\S^{n-1}}\Dot{\ricci}^{(k)}\cdot\Dot{\ricci}^{(m)} y_{\msfp}y_{\msfq} +\bc^{\RN{3}}_3(n,k,m)\int_{\S^{n-1}}H^{(k)}\cdot H^{(m)} y_{\msfp}y_{\msfq}
\end{medsize} \nonumber \\
&\begin{medsize}
\displaystyle \ +\bc^{\RN{3}}_4(n,k,m)\int_{\S^{n-1}} \big(kH^{(k)} \cdot \Dot{\ricci}^{(m)} +m\Dot{\ricci}^{(k)} \cdot H^{(m)}\big) y_{\msfp}y_{\msfq}
\end{medsize} \nonumber \\
&\begin{medsize}
\displaystyle \ +\bc^{\RN{3}}_6(n,k,m)\int_{\S^{n-1}} \big(\Dot{\ricci}^{(k)} \cdot \Delta\Dot{\ricci}^{(m)}+\Delta\Dot{\ricci}^{(k)} \cdot \Dot{\ricci}^{(m)}\big)y_{\msfp}y_{\msfq}
\end{medsize} \nonumber \\
&\begin{medsize}
\displaystyle \ +\bc^{\RN{3}}_7(n,k,m)\int_{\S^{n-1}} \big(k^2H^{(k)} \cdot \Dot{\ricci}^{(m)}+m^2\Dot{\ricci}^{(k)} \cdot H^{(m)}\big) y_{\msfp}y_{\msfq}
\end{medsize} \nonumber \\
&\begin{medsize}
\displaystyle \ +\left[\bc^{\RN{3}}_8(n,k,m)\int_{\S^{n-1}}\Ddot{R}^{(k,m)} +\bc^{\RN{3}}_9(n,k,m) \bla\Dot{\ricci}^{(k)},\Dot{\ricci}^{(m)}\bra +\bc^{\RN{3}}_{10}(n,k,m) \bla H^{(k)},H^{(m)} \bra\right]\delta_{\msfpq}
\end{medsize} \label{eq:RN3_pq}\\
&\begin{medsize}
\displaystyle \ +\bc^{\RN{3}}_{11}(n,k,m)\int_{\S^{n-1}} \big(k H^{(k)}\cdot y_{\msfp}\pa_{\msfq} H^{(m)}+m\, y_{\msfp}\pa_{\msfq} H^{(k)} \cdot H^{(m)}\big)
\end{medsize} \nonumber \\
&\begin{medsize}
\displaystyle \ +\bc^{\RN{3}}_{12}(n,k,m)\int_{\S^{n-1}} \big(k\, y_{\msfp}\pa_{\msfq} H^{(k)} \cdot \Dot{\ricci}^{(m)}+m\Dot{\ricci}^{(k)} \cdot y_{\msfp}\pa_{\msfq} H^{(m)}\big)
\end{medsize} \nonumber \\
&\begin{medsize}
\displaystyle \ +\bc^{\RN{3}}_{13}(n,k,m)\int_{\S^{n-1}} \big(H_{\msfp l}^{(k)}\Dot{\ricci}_{\msfq l}^{(m)}+\Dot{\ricci}_{\msfq l}^{(k)}H_{\msfp l}^{(m)}\big) +\bc^{\RN{3}}_{14}(n,k,m)\int_{\S^{n-1}}\big(kH_{\msfp l}^{(k)}\Dot{\ricci}_{\msfq l}^{(m)}+m\Dot{\ricci}_{\msfq l}^{(k)}H_{\msfp l}^{(m)}\big)
\end{medsize} \nonumber \\
&\begin{medsize}
\displaystyle \ +\bc^{\RN{3}}_{15}(n,k,m)\int_{\S^{n-1}}\Dot{\ricci}^{(k)}_{\msfp l}\Dot{\ricci}^{(m)}_{\msfq l}
+\bc^{\RN{3}}_{16}(n,k,m)\int_{\S^{n-1}}\pa_{\msfp} H^{(k)}\cdot \pa_{\msfq} H^{(m)}
\end{medsize} \nonumber \\
&\begin{medsize}
\displaystyle \ +\bc^{\RN{3}}_{17}(n,k,m)\int_{\S^{n-1}}\big(\Dot{\ricci}^{(k)}\cdot \pa^2_{\msfpq} H^{(m)}+\pa^2_{\msfpq} H^{(k)}\cdot \Dot{\ricci}^{(m)}\big).
\end{medsize} \nonumber
\end{align}
\end{lemma}
\begin{proof}
We express $\RN{3}_{\msfpq}^{(k,m)}$ in polar coordinates:
\begin{align}
&\ \delta^{-(k+m-2)} \RN{3}_{\msfpq}^{(k,m)} \nonumber \\
&=\frac{n-2}{4(n-1)}\int_{\S^{n-1}}\Ddot{R}^{(k,m)}y_{\msfp}y_{\msfq} \cdot c_{2}^{\RN{3}}(n,k+m) -\frac{n-6}{2}\int_{\S^{n-1}}\big(\Ddot{Q}^{(6)}\big)^{(k,m)}y_{\msfp}y_{\msfq} \cdot c_{6}^{\RN{3}}(n,k+m) \nonumber \\
&\ +2\int_{\S^{n-1}}(\Ddot{T_2})^{(k,m)}_{ij}y_iy_jy_{\msfp}y_{\msfq} \cdot c_{1}^{\RN{3}}(n,k+m) +2\int_{\S^{n-1}}(\Ddot{T_2})^{(k,m)}_{i\msfp}y_iy_{\msfq} \cdot c_{3}^{\RN{3}}(n,k+m) \label{eq:RN3_pq1} \\
&\ +\int_{\S^{n-1}}(\Ddot{T_4})^{(k,m)}_{ij}y_iy_jy_{\msfp}y_{\msfq} \cdot c_{4}^{\RN{3}}(n,k+m) +2\int_{\S^{n-1}}(\Ddot{T_4})^{(k,m)}_{i\msfp}y_iy_{\msfq} \cdot c_{5}^{\RN{3}}(n,k+m) \nonumber \\
&\ +\int_{\S^{n-1}}(\Ddot{T_2})^{(k,m)}_{\msfpq} \cdot c_{7}^{\RN{3}}(n,k+m) + \int_{\S^{n-1}}(\Ddot{T_4})^{(k,m)}_{\msfpq} \cdot c_{8}^{\RN{3}}(n,k+m), \nonumber
\end{align}
where
\begin{align*}
c_{1}^{\RN{3}}(n,k+m)\delta^{k+m-2} &= \int_0^{\infty}r^{n+k+m-3} \left[(\Delta w_{\delta,0})''-(\Delta w_{\delta,0})'\frac{1}{r}\right] \(w_{\delta,0}''-w_{\delta,0}'\frac{1}{r}\);\\
c_{2}^{\RN{3}}(n,k+m)\delta^{k+m-2} &= \int_0^{\infty}r^{n+k+m-3}\{(\Delta w_{\delta,0})'\}^2;\\
c_{3}^{\RN{3}}(n,k+m)\delta^{k+m-2} &=
\begin{medsize}
\displaystyle \int_0^{\infty}r^{n+k+m-4}\left[\left\{(\Delta w_{\delta,0})''- (\Delta w_{\delta,0})'\frac{1}{r}\right\}w_{\delta,0}'+\(w_{\delta,0}''- w_{\delta,0}'\frac{1}{r}\)(\Delta w_{\delta,0})'\right];
\end{medsize}\\
c_{4}^{\RN{3}}(n,k+m)\delta^{k+m-2} &= \int_0^{\infty}r^{n+k+m-5}\(w_{\delta,0}''-w_{\delta,0}'\frac{1}{r}\)^2;\\
c_{5}^{\RN{3}}(n,k+m)\delta^{k+m-2} &= \int_0^{\infty} r^{n+k+m-6} \(w_{\delta,0}''- w_{\delta,0}'\frac{1}{r}\)w_{\delta,0}';\\
c_{6}^{\RN{3}}(n,k+m)\delta^{k+m-2} &= \int_0^{\infty}r^{n+k+m-7}(w_{\delta,0}')^2;\\
c_{7}^{\RN{3}}(n,k+m)\delta^{k+m-2} &= 2\int_0^{\infty}r^{n+k+m-5}(\Delta w_{\delta,0})'w_{\delta,0}';\\
c_{8}^{\RN{3}}(n,k+m)\delta^{k+m-2} &= \int_0^{\infty}r^{n+k+m-7}(w_{\delta,0}')^2.
\end{align*}
Among the eight sphere integrals in \eqref{eq:RN3_pq1}, four already appeared in \eqref{eq:RN2_pq1} and were evaluated in Lemma \ref{lemma:sphere with xpxq}.
In Lemma \ref{lemma:sphere int new four}, we compute the remaining four sphere integrals:
$$
\int_{\S^{n-1}}(\Ddot{T_2})^{(k,m)}_{i\msfp}y_iy_{\msfq}, \quad \int_{\S^{n-1}}(\Ddot{T_4})^{(k,m)}_{i\msfp}y_iy_{\msfq}, \quad \int_{\S^{n-1}}(\Ddot{T_2})^{(k,m)}_{\msfpq}, \quad \text{and} \quad \int_{\S^{n-1}}(\Ddot{T_4})^{(k,m)}_{\msfpq}.
$$

Combining the above information, we can deduce \eqref{eq:RN3_pq}.
\end{proof}

\subsubsection{Part 3: Evaluation of $\pa^2_{\xi_{\msfp}\xi_{\msfq}}\Ddot{\msfF}(\delta,0)$}
In Lemma \ref{lemma:FHess2}, we show that the quantity $\pa^2_{\xi_{\msfp}\xi_{\msfq}}\Ddot{\msfF}(\delta,0)$ can be written in terms of seven sphere integrals (up to a change in the order of the superscripts ${(k)}$ and $(m)$):
\begin{equation}\label{eq:seven}
\begin{aligned}
&\int_{\S^{n-1}}\Ddot{R}^{(k,m)}, \quad \int_{\S^{n-1}}\Ddot{R}^{(k,m)}y_{\msfp}y_{\msfq}, \quad \int_{\S^{n-1}}H^{(k)}\cdot H^{(m)}, \quad \int_{\S^{n-1}}H^{(k)}\cdot H^{(m)} y_{\msfp}y_{\msfq}, \\
&\int_{\S^{n-1}}H^{(k)}_{\msfp l}H^{(m)}_{\msfq l}, \quad \int_{\S^{n-1}}H^{(k)}\cdot y_{\msfp}\pa_{\msfq} H^{(m)}, \quad \text{and} \quad \int_{\S^{n-1}}H^{(k)}\cdot \pa^2_{\msfpq}H^{(m)}.
\end{aligned}
\end{equation}
Apparently, $\int_{\S^{n-1}}\Ddot{R}^{(k,m)}y_{\msfp}y_{\msfq}$, $\int_{\S^{n-1}}H^{(k)}\cdot H^{(m)} y_{\msfp}y_{\msfq}$, and $\int_{\S^{n-1}}H^{(k)}\cdot \pa^2_{\msfpq}H^{(m)}$ are symmetric in $\msfp$ and $\msfq$.
By \eqref{eq:commutative laplace}--\eqref{eq:commutative pa_pq} and the divergence theorem, $\int_{\S^{n-1}}H^{(k)}_{\msfp l}H^{(m)}_{\msfq l}$ and $\int_{\S^{n-1}}H^{(k)}\cdot y_{\msfp}\pa_{\msfq} H^{(m)}$ are also symmetric in $\msfp$ and $\msfq$.

\begin{lemma}\label{lemma:FHess2}
We define
\begin{equation}\label{eq:part3}
\RN{4}_{\msfpq}^{(k,m)}:=\frac{\delta^{k+m-2}}{2} \int_{\S^{n-1}}H^{(k)}_{\msfp l}H^{(m)}_{\msfq l} \int_{0}^{\infty}r^{n+k+m-3}\left[2(\Delta^2w)' w'+(\Delta w)'^2\right].
\end{equation}
Then, it holds that
\begin{equation}\label{eq:FHess2}
\pa^2_{\xi_{\msfp}\xi_{\msfq}}\Ddot{\msfF}(\delta,0) =\sum_{\substack{k,m=2\\ k,m: \textup{even}}}^{10} \left[-\RN{1}_{\msfpq}^{(k,m)}-\RN{2}_{\msfpq}^{(k,m)}-\RN{3}_{\msfpq}^{(k,m)}+\RN{4}_{\msfpq}^{(k,m)}\right],
\end{equation}
where $\lambda_q= -q(n+2q+2)$, $\lambda_{q'}= -q'(n+2q'+2)$,
\begin{align}
\delta^{-(k+m-2)}\RN{1}_{\msfpq}^{(k,m)} &= \left[\bc^{\RN{1}}_2(n,k,m)\lambda_q\lambda_{q'} +\bc^{\RN{1}}_3(n,k,m)+\bc^{\RN{1}}_4(n,k,m)(k\lambda_{q'}+m\lambda_q)\right. \nonumber \\
&\hspace{15pt} -2\bc^{\RN{1}}_5(n,k,m)(k\lambda_{q'-1}\lambda_{q'}+m\lambda_{q-1}\lambda_{q}) -2\bc^{\RN{1}}_6(n,k,m)(\lambda_{q'-1}+\lambda_{q-1})\lambda_{q'}\lambda_q \nonumber \\
&\hspace{15pt} \left.+\bc^{\RN{1}}_7(n,k,m)(k^2\lambda_{q'}+m^2\lambda_q)\right] \int_{\S^{n-1}}H^{(k)}\cdot H^{(m)}\delta_{\msfpq} \label{eq:rn1} \\
&\ +\bc^{\RN{1}}_1(n,k,m)\int_{\S^{n-1}}\Ddot{R}^{(k,m)}\delta_{\msfpq}, \nonumber
\end{align}
\begin{align}
\delta^{-(k+m-2)}\RN{2}_{\msfpq}^{(k,m)}
&= \left[\bc^{\RN{2}}_2(n,k,m)\lambda_q\lambda_{q'} +\bc^{\RN{2}}_3(n,k,m)+\bc^{\RN{2}}_4(n,k,m)(k\lambda_{q'}+m\lambda_q)\right. \nonumber \\
&\hspace{15pt} -2\bc^{\RN{2}}_5(n,k,m)(k\lambda_{q'-1}\lambda_{q'}+m\lambda_{q-1}\lambda_{q}) -2\bc^{\RN{2}}_6(n,k,m)(\lambda_{q'-1}+\lambda_{q-1})\lambda_{q'}\lambda_q \nonumber \\
&\hspace{15pt} \left.+\bc^{\RN{2}}_7(n,k,m)(k^2\lambda_{q'}+m^2\lambda_q)\right]\int_{\S^{n-1}}H^{(k)}\cdot H^{(m)}y_{\msfp}y_{\msfq} \label{eq:rn2} \\
&\ +\bc^{\RN{2}}_1(n,k,m)\int_{\S^{n-1}}\Ddot{R}^{(k,m)}y_{\msfp}y_{\msfq} +\bc^{\RN{2}}_8(n,k,m)\int_{\S^{n-1}}\Ddot{R}^{(k,m)}\delta_{\msfpq} \nonumber \\
&\ +\left[\bc^{\RN{2}}_9(n,k,m)\lambda_q\lambda_{q'}+\bc^{\RN{2}}_{10}(n,k,m)\right]\int_{\S^{n-1}}H^{(k)}\cdot H^{(m)}\delta_{\msfpq}, \nonumber
\end{align}
\begin{align}
&\ \delta^{-(k+m-2)}\RN{3}_{\msfpq}^{(k,m)} \nonumber \\
&=\left[\bc^{\RN{3}}_2(n,k,m)\lambda_q\lambda_{q'}+\bc^{\RN{3}}_3(n,k,m)+\bc^{\RN{3}}_4(n,k,m)(k\lambda_{q'}+m\lambda_q)\right. \nonumber \\
&\hspace{15pt} -\left.2\bc^{\RN{3}}_6(n,k,m)(\lambda_{q'-1}+\lambda_{q-1})\lambda_{q'}\lambda_q+\bc^{\RN{3}}_7(n,k,m)(k^2\lambda_{q'}+m^2\lambda_q)\right] \int_{\S^{n-1}}H^{(k)}\cdot H^{(m)} y_{\msfp}y_{\msfq} \nonumber \\
&\ +\bc^{\RN{3}}_1(n,k,m)\int_{\S^{n-1}}\Ddot{R}^{(k,m)}y_{\msfp}y_{\msfq} +\bc^{\RN{3}}_8(n,k,m)\int_{\S^{n-1}}\Ddot{R}^{(k,m)}\delta_{\msfpq} \nonumber \\
&\ +\left[\bc^{\RN{3}}_9(n,k,m)\lambda_q\lambda_{q'}+\bc^{\RN{3}}_{10}(n,k,m)\right]\int_{\S^{n-1}}H^{(k)}\cdot H^{(m)}\delta_{\msfpq} \label{eq:rn3} \\
&\ +\left[\bc^{\RN{3}}_{11}(n,k,m)m+\bc^{\RN{3}}_{12}(n,k,m)k \lambda_{q'} +\frac{1}{2}\bc^{\RN{3}}_{16}(n,k,m)(n+k+m-2)\right]\int_{\S^{n-1}}H^{(m)} \cdot y_{\msfp}\pa_{\msfq} H^{(k)} \nonumber \\
&\ +\left[\bc^{\RN{3}}_{11}(n,k,m)k+\bc^{\RN{3}}_{12}(n,k,m)m \lambda_{q} +\frac{1}{2}\bc^{\RN{3}}_{16}(n,k,m)(n+k+m-2)\right]\int_{\S^{n-1}} H^{(k)} \cdot y_{\msfp}\pa_{\msfq} H^{(m)} \nonumber \\
&\ +\left[\bc^{\RN{3}}_{13}(n,k,m)(\lambda_{q}+\lambda_{q'})+\bc^{\RN{3}}_{14}(n,k,m)(m\lambda_q+k\lambda_{q'}) +\bc^{\RN{3}}_{15}(n,k,m)\lambda_q\lambda_{q'}\right]\int_{\S^{n-1}}H_{\msfp l}^{(k)}H_{\msfq l}^{(m)} \nonumber \\
&\ +\left[\bc^{\RN{3}}_{17}(n,k,m)\lambda_{q'}-\frac{1}{2}\bc^{\RN{3}}_{16}(n,k,m)\right]\int_{\S^{n-1}}H^{(m)}\cdot \pa^2_{\msfpq} H^{(k)} \nonumber \\
&\ +\left[\bc^{\RN{3}}_{17}(n,k,m)\lambda_{q}-\frac{1}{2}\bc^{\RN{3}}_{16}(n,k,m)\right]\int_{\S^{n-1}}H^{(k)}\cdot \pa^2_{\msfpq} H^{(m)}, \nonumber
\end{align}
and
\begin{equation}\label{eq:rn4}
\begin{aligned}
&\ \delta^{-(k+m-2)}\RN{4}_{\msfpq}^{(k,m)}\\
&=\frac{1}{2}(n-6)^2(n-4)\left[(3n^3+8n^2-20n-48)\mci_{n}^{n+k+m-1}+16 (n^2-8)\mci_{n}^{n+k+m+1}
\right.\\
&\left.\hspace{95pt}+32(n-3)\mci_{n}^{n+k+m+3}\right]\int_{\S^{n-1}}H_{\msfp l}^{(k)}H_{\msfq l}^{(m)}.
\end{aligned}
\end{equation}
\end{lemma}
\begin{proof}
Combining \eqref{eq:FHess}, \eqref{eq:part1}, \eqref{eq:part2}, and \eqref{eq:part3}, we deduce \eqref{eq:FHess2}.

By combining Lemmas \ref{lemma:rn1}--\ref{lemma:rn2} with the identities
\begin{equation}\label{eq:three equations}
\Dot{\ricci}[H]_{ij} = -\frac{1}{2}\Delta H_{ij}, \quad \mcl_k H^{(k)}=|y|^2\Dot{\ricci}[H^{(k)}]=\lambda_q H^{(k)}, \quad |y|^4\Delta \Dot{\ricci}[H^{(k)}]=-2\lambda_{q-1}\lambda_q H^{(k)}
\end{equation}
for $y \in \S^{n-1}$, we can obtain \eqref{eq:rn1} and \eqref{eq:rn2}.

By applying Lemma \ref{IBP}, we derive
\begin{align*}
\int_{\S^{n-1}}\pa_{\msfp} H^{(k)} \cdot \pa_{\msfq} H^{(m)} &= \frac{1}{2}(n+k+m-2)\int_{\S^{n-1}} \left[H^{(k)}\cdot y_{\msfp}\pa_{\msfq} H^{(m)} + H^{(m)}\cdot y_{\msfp}\pa_{\msfq} H^{(k)}\right]\\
&\ -\frac{1}{2}\int_{\S^{n-1}} \left[H^{(k)}\cdot \pa^2_{\msfpq} H^{(m)} + H^{(m)}\cdot \pa^2_{\msfpq} H^{(k)}\right].
\end{align*}
Combining this with Lemma \ref{lemma:rn3} and \eqref{eq:three equations}, we can obtain \eqref{eq:rn3}.

In the end, \eqref{eq:rn4} follows directly from evaluating $\int_{0}^{\infty}r^{n+k+m-3}[2(\Delta^2w )' w'+(\Delta w)'^2]$.
\end{proof}

It remains to evaluate the seven sphere integrals in \eqref{eq:seven}. From \cite[Lemmas 9.5--9.6]{WZ}, we borrow the following lemma.
\begin{lemma}
Let $\bar{H}^{(2)}_{ij} = \bar{H}_{ij} :=W_{iabj}y_ay_b$. It holds that
\begin{align*}
\int_{\S^{n-1}}\bar{H}\cdot \bar{H} &= \frac{|\S^{n-1}|}{2n(n+2)}|W|_{\sharp}^2;\\
\int_{\S^{n-1}}(\bar{H}_{ij,l})^2 &= \frac{|\S^{n-1}|}{n}|W|^2_{\sharp};\\
\int_{\S^{n-1}}\bar{H}\cdot \bar{H} y_{\msfp}y_{\msfq} &= \frac{2|\S^{n-1}|}{n(n+2)(n+4)}(W\times W)_{\msfpq}+\frac{|\S^{n-1}|}{2n(n+2)(n+4)}|W|^2_{\sharp}\delta_{\msfpq};\\
\int_{\S^{n-1}}(\bar{H}_{ij,l})^2y_{\msfp}y_{\msfq} &= \frac{2|\S^{n-1}|}{n(n+2)}(W\times W)_{\msfpq}+\frac{|\S^{n-1}|}{n(n+2)}|W|^2_{\sharp}\delta_{\msfpq};\\
\int_{\S^{n-1}}\bar{H}_{\msfp i}\bar{H}_{\msfq i} &= \frac{|\S^{n-1}|}{2n(n+2)}(W\times W)_{\msfpq};\\
\int_{\S^{n-1}}\bar{H}\cdot y_{\msfp}\pa_{\msfq}\bar{H} &= \frac{|\S^{n-1}|}{n(n+2)}(W\times W)_{\msfpq};\\
\int_{\S^{n-1}}\bar{H}\cdot \pa^2_{\msfpq}\bar{H} &= 0.
\end{align*}
\end{lemma}
\begin{cor}\label{cor:seven sph integral}
Recall that $k=2q+2$ and $m=2q'+2$ where $q,q'\in \{0,\ldots,4\}$. It holds that
\begin{align*}
\int_{\S^{n-1}}H^{(k)}\cdot H^{(m)} &= \msfa_q\msfa_{q'}|\S^{n-1}||W|_{\sharp}^2\frac{1}{2n(n+2)},\\
\int_{\S^{n-1}}\Ddot{R}^{(k,m)} &= -\msfa_q\msfa_{q'}|\S^{n-1}||W|_{\sharp}^2 \left[\frac{(q+q'+qq')}{2n(n+2)}+\frac{1}{4n}\right],\\
\int_{\S^{n-1}}H^{(k)}\cdot H^{(m)} y_{\msfp}y_{\msfq} &= \msfa_q\msfa_{q'}|\S^{n-1}|(W\times W)_{\msfpq}\frac{2}{n(n+2)(n+4)}\\
&\ +\msfa_q\msfa_{q'}|\S^{n-1}||W|^2_{\sharp}\delta_{\msfpq}\frac{1}{2n(n+2)(n+4)},\\
\int_{\S^{n-1}}\Ddot{R}^{(k,m)}y_{\msfp}y_{\msfq} &= -\msfa_q\msfa_{q'}|\S^{n-1}|(W\times W)_{\msfpq} \left[\frac{2(q+q'+qq')}{n(n+2)(n+4)}+\frac{1}{2n(n+2)}\right]\\
&\ -\msfa_q\msfa_{q'}|\S^{n-1}||W|^2_{\sharp}\delta_{\msfpq} \left[\frac{(q+q'+qq')}{2n(n+2)(n+4)}+\frac{1}{4n(n+2)}\right],\\
\int_{\S^{n-1}}H_{\msfp i}^{(k)}H_{\msfq i}^{(m)} &= \msfa_q\msfa_{q'}|\S^{n-1}|(W\times W)_{\msfpq}\frac{1}{2n(n+2)},\\
\int_{\S^{n-1}}H^{(k)}\cdot y_{\msfp}\pa_{\msfq} H^{(m)} &= \msfa_q\msfa_{q'}|\S^{n-1}|(W\times W)_{\msfpq} \left[\frac{1}{n(n+2)}+\frac{4q'}{n(n+2)(n+4)}\right]\\
&\ +\msfa_q\msfa_{q'}|\S^{n-1}||W|^2_{\sharp}\delta_{\msfpq}\frac{q'}{n(n+2)(n+4)},\\
\int_{\S^{n-1}} H^{(k)}\cdot \pa^2_{\msfpq} H^{(m)} &= \msfa_q\msfa_{q'}|\S^{n-1}|(W\times W)_{\msfpq} \left[\frac{8q'(q'-1)}{n(n+2)(n+4)}+\frac{4q'}{n(n+2)}\right]\\
&\ +\msfa_q\msfa_{q'}|\S^{n-1}||W|^2_{\sharp}\delta_{\msfpq} \left[\frac{2q'(q'-1)}{n(n+2)(n+4)}+\frac{q'}{n(n+2)}\right].
\end{align*}
\end{cor}
\begin{proof}
We remind from \eqref{eq:Hkij} that $H^{(k)}_{ij}=\msfa_q|x|^{2q}\bar{H}^{(2)}_{ij}$ and $H^{(m)}_{ij}=\msfa_{q'}|x|^{2q'}\bar{H}^{(2)}_{ij}$. By using $\Ddot{R}^{(k,m)} = -\frac{1}{4}H_{ij,l}^{(k)}H_{ij,l}^{(m)}$, we have
\begin{align*}
\int_{\S^{n-1}}H^{(k)}\cdot H^{(m)} &= \msfa_q\msfa_{q'}\int_{\S^{n-1}}\bar{H}\cdot \bar{H};\\
\int_{\S^{n-1}}\Ddot{R}^{(k,m)} &= -\msfa_q\msfa_{q'}(q+q'+qq')\int_{\S^{n-1}}\bar{H}\cdot \bar{H} -\frac{1}{4}\msfa_q\msfa_{q'}\int_{\S^{n-1}}(\bar{H}_{ij,l})^2;\\
\int_{\S^{n-1}}H^{(k)}\cdot H^{(m)} y_{\msfp}y_{\msfq} &= \msfa_q\msfa_{q'}\int_{\S^{n-1}}\bar{H}\cdot \bar{H} y_{\msfp}y_{\msfq};\\
\int_{\S^{n-1}}\Ddot{R}^{(k,m)}y_{\msfp}y_{\msfq} & =-\msfa_q\msfa_{q'}(q+q'+qq')\int_{\S^{n-1}}\bar{H}\cdot \bar{H} y_{\msfp}y_{\msfq}-\frac{1}{4}\msfa_q\msfa_{q'}\int_{\S^{n-1}}(\bar{H}_{ij,l})^2y_{\msfp}y_{\msfq};\\
\int_{\S^{n-1}}H_{\msfp i}^{(k)}H_{\msfq i}^{(m)} &= \msfa_q\msfa_{q'}\int_{\S^{n-1}}\bar{H}_{\msfp i}\bar{H}_{\msfq i};\\
\int_{\S^{n-1}}H^{(k)} \cdot y_{\msfp}\pa_{\msfq} H^{(m)} &= \msfa_q\msfa_{q'}\int_{\S^{n-1}}\bar{H}\cdot y_{\msfp}\pa_{\msfq}\bar{H}+2\msfa_q\msfa_{q'}q'\int_{\S^{n-1}}\bar{H}\cdot \bar{H}y_{\msfp}y_{\msfq};\\
\int_{\S^{n-1}} H^{(k)} \cdot \pa^2_{\msfpq} H^{(m)} &= 4\msfa_q\msfa_{q'}q'(q'-1)\int_{\S^{n-1}}\bar{H}\cdot \bar{H}y_{\msfp}y_{\msfq}+2\msfa_q\msfa_{q'}q'\int_{\S^{n-1}}\bar{H}\cdot \bar{H}\delta_{\msfpq}\\
&\ +4\msfa_q\msfa_{q'}q'\int_{\S^{n-1}}\bar{H}\cdot y_{\msfp}\pa_{\msfq}\bar{H}.
\end{align*}
Applying the previous lemma, we can prove the assertion.
\end{proof}

Putting \eqref{eq:rn1}--\eqref{eq:rn4} together with Corollary \ref{cor:seven sph integral}, we have the following result.
\begin{lemma}\label{lemma:rn123}
It holds that
\begin{align*}
\pa^2_{\xi_{\msfp}\xi_{\msfq}}\Ddot{\msfF}(\delta,0) &= |\S^{n-1}|(W\times W)_{\msfpq}\sum_{q,q'=0}^{4}\delta^{k+m-2} \(-\msfm^{\RN{2},1}_{q,q'}-\msfm^{\RN{3},1}_{q,q'}+\msfm^{\RN{4}}_{q,q'}\) \msfa_q\msfa_{q'}\\
&\ +|\S^{n-1}||W|_{\sharp}^2\delta_{\msfpq}\sum_{q,q'=0}^{4}\delta^{k+m-2} \(-\msfm^{\RN{1}}_{q,q'}-\msfm^{\RN{2},2}_{q,q'}-\msfm^{\RN{3},2}_{q,q'}\) \msfa_q\msfa_{q'},
\end{align*}
where the six $5\times 5$ coefficient matrices $\msfm^{\RN{1}}_{q,q'},\, \msfm^{\RN{2},1}_{q,q'},\, \msfm^{\RN{2},2}_{q,q'},\, \msfm^{\RN{3},1}_{q,q'},\, \msfm^{\RN{3},2}_{q,q'}$, and $\msfm^{\RN{4}}_{q,q'}$ are given as follows:
\begin{align*}
\msfm^{\RN{1}}_{q,q'} &:= \frac{1}{2n(n+2)}\left[\bc^{\RN{1}}_2(n,k,m)\lambda_q\lambda_{q'} +\bc^{\RN{1}}_3(n,k,m)+\bc^{\RN{1}}_4(n,k,m)(k\lambda_{q'}+m\lambda_q)\right.\\
&\hspace{15pt} -2\bc^{\RN{1}}_5(n,k,m)(k\lambda_{q'-1}\lambda_{q'}+m\lambda_{q-1}\lambda_{q}) -2\bc^{\RN{1}}_6(n,k,m)(\lambda_{q'-1}+\lambda_{q-1})\lambda_{q'}\lambda_q\\
&\hspace{15pt} +\left.\bc^{\RN{1}}_7(n,k,m)(k^2\lambda_{q'}+m^2\lambda_q)\right] -\left[\frac{q+q'+qq'}{2n(n+2)}+\frac{1}{4n}\right]\bc^{\RN{1}}_1(n,k,m);\\
\msfm^{\RN{2},1}_{q,q'} &:= \frac{2}{n(n+2)(n+4)}\left[\bc^{\RN{2}}_2(n,k,m)\lambda_q\lambda_{q'} +\bc^{\RN{2}}_3(n,k,m)+\bc^{\RN{2}}_4(n,k,m)(k\lambda_{q'}+m\lambda_q)\right.\\
&\hspace{15pt} -2\bc^{\RN{2}}_5(n,k,m)(k\lambda_{q'-1}\lambda_{q'}+m\lambda_{q-1}\lambda_{q}) -2\bc^{\RN{2}}_6(n,k,m)(\lambda_{q'-1}+\lambda_{q-1})\lambda_{q'}\lambda_q\\
&\hspace{15pt} +\left.\bc^{\RN{2}}_7(n,k,m)(k^2\lambda_{q'}+m^2\lambda_q)\right] -\left[\frac{2(q+q'+qq')}{n(n+2)(n+4)}+\frac{1}{2n(n+2)}\right]\bc^{\RN{2}}_1(n,k,m);\\
\msfm^{\RN{2},2}_{q,q'} &:= \frac{1}{2n(n+2)(n+4)}\left[\bc^{\RN{2}}_2(n,k,m)\lambda_q\lambda_{q'} +\bc^{\RN{2}}_3(n,k,m)+\bc^{\RN{2}}_4(n,k,m)(k\lambda_{q'}+m\lambda_q)\right.\\
&\hspace{15pt} -2\bc^{\RN{2}}_5(n,k,m)(k\lambda_{q'-1}\lambda_{q'}+m\lambda_{q-1}\lambda_{q}) -2\bc^{\RN{2}}_6(n,k,m)(\lambda_{q'-1}+\lambda_{q-1})\lambda_{q'}\lambda_q\\
&\hspace{15pt} \left.+\bc^{\RN{2}}_7(n,k,m)(k^2\lambda_{q'}+m^2\lambda_q)\right] -\left[\frac{q+q'+qq'}{2n(n+2)(n+4)}+\frac{1}{4n(n+2)}\right]\bc^{\RN{2}}_1(n,k,m)\\
&\ -\left[\frac{q+q'+qq'}{2n(n+2)}+\frac{1}{4n}\right]\bc^{\RN{2}}_8(n,k,m) +\frac{1}{2n(n+2)}\left[\bc^{\RN{2}}_9(n,k,m)\lambda_q\lambda_{q'}+\bc^{\RN{2}}_{10}(n,k,m)\right];\\
\msfm^{\RN{3},1}_{q,q'} &:= \frac{2}{n(n+2)(n+4)}\left[\bc^{\RN{3}}_2(n,k,m)\lambda_q\lambda_{q'} +\bc^{\RN{3}}_3(n,k,m)+\bc^{\RN{3}}_4(n,k,m)(k\lambda_{q'}+m\lambda_q)\right.\\
&\hspace{15pt} -\left.2\bc^{\RN{3}}_6(n,k,m)(\lambda_{q'-1}+\lambda_{q-1})\lambda_{q'}\lambda_q +\bc^{\RN{3}}_7(n,k,m)(k^2\lambda_{q'}+m^2\lambda_q)\right]\\
&\ -\left[\frac{2(q+q'+qq')}{n(n+2)(n+4)}+\frac{1}{2n(n+2)}\right]\bc^{\RN{3}}_1(n,k,m)\\
&\ +\frac{1}{2n(n+2)}\left[\bc^{\RN{3}}_{13}(n,k,m)(\lambda_{q}+\lambda_{q'})+\bc^{\RN{3}}_{14}(n,k,m)(m \lambda_q+k\lambda_{q'})+\bc^{\RN{3}}_{15}(n,k,m)\lambda_q\lambda_{q'}\right]\\
&\ +\left[\bc^{\RN{3}}_{11}(n,k,m)m+\bc^{\RN{3}}_{12}(n,k,m)k \lambda_{q'} +\frac{1}{2}\bc^{\RN{3}}_{16}(n,k,m)(n+k+m-2)\right]\frac{n+4+4q}{n(n+2)(n+4)}\\
&\ +\left[\bc^{\RN{3}}_{11}(n,k,m)k+\bc^{\RN{3}}_{12}(n,k,m)m \lambda_{q} +\frac{1}{2}\bc^{\RN{3}}_{16}(n,k,m)(n+k+m-2)\right]\frac{n+4+4q'}{n(n+2)(n+4)}\\
&\ +\left[\bc^{\RN{3}}_{17}(n,k,m)\lambda_{q'}-\frac{1}{2}\bc^{\RN{3}}_{16}(n,k,m)\right] \cdot \left[\frac{8q(q-1)}{n(n+2)(n+4)}+\frac{4q}{n(n+2)}\right]\\
&\ +\left[\bc^{\RN{3}}_{17}(n,k,m)\lambda_{q}-\frac{1}{2}\bc^{\RN{3}}_{16}(n,k,m)\right] \cdot \left[\frac{8q'(q'-1)}{n(n+2)(n+4)}+\frac{4q'}{n(n+2)}\right];\\
\msfm^{\RN{3},2}_{q,q'} &:= \frac{1}{2n(n+2)(n+4)}\left[\bc^{\RN{3}}_2(n,k,m)\lambda_q\lambda_{q'} +\bc^{\RN{3}}_3(n,k,m)+\bc^{\RN{3}}_4(n,k,m)(k\lambda_{q'}+m\lambda_q)\right.\\
&\hspace{15pt} -\left.2\bc^{\RN{3}}_6(n,k,m)(\lambda_{q'-1}+\lambda_{q-1})\lambda_{q'}\lambda_q +\bc^{\RN{3}}_7(n,k,m)(k^2\lambda_{q'}+m^2\lambda_q)\right]\\
&\ -\left[\frac{q+q'+qq'}{2n(n+2)(n+4)}+\frac{1}{4n(n+2)}\right]\bc^{\RN{3}}_1(n,k,m)\\
&\ -\left[\frac{q+q'+qq'}{2n(n+2)}+\frac{1}{4n}\right]\bc^{\RN{3}}_8(n,k,m) +\frac{1}{2n(n+2)}\left[\bc^{\RN{3}}_9(n,k,m)\lambda_q\lambda_{q'}+\bc^{\RN{3}}_{10}(n,k,m)\right]\\
&\ +\left[\bc^{\RN{3}}_{11}(n,k,m)m+\bc^{\RN{3}}_{12}(n,k,m)k \lambda_{q'} +\frac{1}{2}\bc^{\RN{3}}_{16}(n,k,m)(n+k+m-2)\right]\frac{q}{n(n+2)(n+4)}\\
&\ +\left[\bc^{\RN{3}}_{11}(n,k,m)k+\bc^{\RN{3}}_{12}(n,k,m)m \lambda_{q} +\frac{1}{2}\bc^{\RN{3}}_{16}(n,k,m)(n+k+m-2)\right]\frac{q'}{n(n+2)(n+4)}\\
&\ +\left[\bc^{\RN{3}}_{17}(n,k,m)\lambda_{q'}-\frac{1}{2}\bc^{\RN{3}}_{16}(n,k,m)\right] \cdot \left[\frac{2q(q-1)}{n(n+2)(n+4)}+\frac{q}{n(n+2)}\right]\\
&\ +\left[\bc^{\RN{3}}_{17}(n,k,m)\lambda_{q}-\frac{1}{2}\bc^{\RN{3}}_{16}(n,k,m)\right] \cdot \left[\frac{2q'(q'-1)}{n(n+2)(n+4)}+\frac{q'}{n(n+2)}\right];\\
\msfm^{\RN{4}}_{q,q'} &:= \frac{(n-6)^2(n-4)}{4n(n+2)}\left[(3n^3+8n^2-20n-48)\mci_{n}^{n+k+m-1}+16 (n^2-8)\mci_{n}^{n+k+m+1}\right.\\
&\hspace{265pt} \left.+32(n-3)\mci_{n}^{n+k+m+3}\right].
\end{align*}
\end{lemma}

\begin{prop}\label{xi direction}
Assume that $n\geq 27$. Let $\Vec{\msfa}=[\msfa_0,-3634, 803, -62, 1]$ be the vector from Corollary \ref{delta direction}. Then,
$$
\pa^2_{\xi_{\msfp}\xi_{\msfq}}\Ddot{\msfF}(1,0) \text{ is positive definite}.
$$
\end{prop}
\begin{proof}
We set $\msfm^{1}_{q,q'}:=-\msfm^{\RN{2},1}_{q,q'}-\msfm^{\RN{3},1}_{q,q'}+\msfm^{\RN{4}}_{q,q'}$ and $\msfm^{2}_{q,q'}:=-\msfm^{\RN{1}}_{q,q'}-\msfm^{\RN{2},2}_{q,q'}-\msfm^{\RN{3},2}_{q,q'}$. Employing the Mathematica implementation of Lemma \ref{lemma:rn123}, we obtain that
\begin{align*}
&\ \sum_{q,q'=0}^{4}(\msfm^{1}_{q,q'}+\msfm^{2}_{q,q'})\msfa_q\msfa_{q'}\\
&=\frac{(n-6)^2}{1024 (n-2) (n+2) (n+4)} \frac{\Gamma(\frac{n}{2}-12) \Gamma(\frac{n}{2}+3)}{\Gamma(n+1)}\\
&\ \times [96 (n-24) (n-22) (n-20) (n-18) (n-16) (n-14) (n-12) (n-10) (n-3) (n-2)^2 \msfa_0^2\\
&\hspace{15pt} + (n-24) (n-22) (n-20) (n-18) (n-2) \\
&\hspace{15pt} \times (16575 n^9-1195250 n^8+34609680 n^7-498405392 n^6 +3446826416 n^5\\
&\hspace{25pt} -6825460384 n^4 -32854222976 n^3+140880911360 n^2 -106071106560 n+18351046656) \msfa_0\\
&\hspace{15pt} -62587200 n^{14}+9336113958 n^{13}-606097935636 n^{12}\\
&\hspace{15pt} +22249394723672 n^{11}-498762775048144 n^{10}+6750998185878816 n^9\\
&\hspace{15pt} -47179375607237312 n^8+11091250449666688 n^7+2371218672260016384 n^6\\
&\hspace{15pt} -14692080003168780288 n^5+16691759536983257088 n^4+91210104770428968960 n^3\\
&\hspace{15pt} -251248956860387328000 n^2+196206255022978105344 n-29818376183737221120],
\end{align*}
and
\begin{align*}
&\ \sum_{q,q'=0}^{4}\msfm^{2}_{q,q'}\msfa_q\msfa_{q'}\\
&=\frac{(n-6)^2}{1024 (n-2) (n+2) (n+4)} \frac{\Gamma(\frac{n}{2}-12) \Gamma(\frac{n}{2}+3)}{\Gamma(n+1)}\\
&\ \times [(n-24) (n-22) (n-20) (n-18) (n-2) \\
&\hspace{15pt} \times (3315 n^9-210762 n^8+5346128 n^7-67844560 n^6+431303152 n^5\\
&\hspace{25pt} -1014186016 n^4-2114580096 n^3+14491618304 n^2-18999469056 n+4521295872)\msfa_0\\
&\hspace{15pt} -9586980 n^{14}+1357155654 n^{13}-82964460564 n^{12}\\
&\hspace{15pt} +2837453335768 n^{11}-58221379164240 n^{10}+692018215328032 n^9\\
&\hspace{15pt} -3505745916586176 n^8-18173061039999360 n^7+375084465957971200 n^6\\
&\hspace{15pt} -1981203034836353024 n^5+2356795745930203136 n^4+11804945493335236608 n^3\\
&\hspace{15pt} -35984415983967043584 n^2+27690912838249611264 n-1017973919522488320].
\end{align*}
Recalling the definition of $\msfa_0$ from \eqref{eq:a0}, we can verify that
$$
(\msfm^{1}_{q,q'}+\msfm^{2}_{q,q'})\msfa_q\msfa_{q'}>0 \text{ and }\msfm^{2}_{q,q'}\msfa_q\msfa_{q'}>0
$$
provided $n\geq 27$. By Lemma \ref{lemma:A+trA}, $\pa^2_{\xi_{\msfp}\xi_{\msfq}}\Ddot{\msfF}(1,0)$ is positive definite.
\end{proof}

\subsection{Proof of Theorem \ref{thm:main6n}}
Combining Lemmas \ref{delta xi direction}--\ref{lem:R Psi direction}, Corollary \ref{delta direction}, and Proposition \ref{xi direction}, we immediately establish the following corollary.
\begin{cor}\label{cor:local minimum}
Under the same condition, $(1,0)\in \mca $ is a strict local minimum of the function
$$
(\dx) \mapsto \Ddot{\msfF}(\dx) + \frac{1}{2} \int_{\R^n} \omcr_{\dx}^1\opsi_{\dx}^1 dx.
$$
\end{cor}

We are now ready to complete the proof of Theorem \ref{thm:main6n}.
\begin{proof}[Proof of Theorem \ref{thm:main6n}]
Fix a non-negative cutoff function $\zeta \in C_c^{\infty}([0,\infty))$ satisfying $\zeta(r)=1$ for $r \in [0,1]$ and $\zeta(r)=0$ for $r \in [2,\infty)$.
Given any $N_0 \in \N$, we define a trace-free symmetric $2$-tensor on $\R^n$ by
$$
h_{ij}(x)=\sum_{N=N_0}^{\infty} \zeta(4N^2|x-x_N|)2^{-(10+\frac{1}{4})N}H_{ij}(2^N(x-x_N)),
$$
where $x_N:=(N^{-1},0,\ldots,0)\in \R^n$, $H_{ij}$ is the $2$-tensor defined in \eqref{eq:Hij}, and $\msfa_0,\ldots,\msfa_4$ in \eqref{eq:Hij} are the coefficients given in Corollary \ref{delta direction}.
It is straightforward to verify that $h$ is smooth in $\R^n$.

Then, we choose $N_0 \in \N$ sufficiently large so that, for each $N\geq N_0$, a translation $h_{N}(x):=h(x+x_N)$ of $h$ satisfies \eqref{eq:hij}--\eqref{eq:alpha0} with $\ep=2^{-N}$, $\rho=\frac{1}{4}N^{-2}$, and $\alpha_0 = \alpha_0(N_0)$ small.

We define $g_N:=\exp(h_N)$. By Corollary \ref{cor:local minimum} and estimate \eqref{eq:mcfexp2}, there exists a critical point $(\delta(\ep),\xi(\ep))\in \mca$ of the map $(\dx) \mapsto \mcf_{g_N}(v_{\edx} + \Psi_{\edx})$,
where $\mcf$ and $v_{\edx}$ are given in \eqref{eq:mcf} and \eqref{eq:vdx}, respectively, and $\Psi_{\edx}$ is obtained by solving \eqref{eq:nonlin}.
Hence, according to Lemma \ref{lem:u epsilon}, $u_{N}:=v_{\ep\delta(\ep),\ep\xi(\ep)} + \Psi_{\ep\delta(\ep),\ep\xi(\ep)}$ is a critical point of $\mcf_{g_N}$.

Finally, by translation invariance, for any $N\geq N_0$, the functions $u_{N}(\cdot-x_N)$ are solutions to \eqref{eq:main6} with $(M,g)=(\R^n,g_0:=\exp(h))$, and their $L^{\infty}$-norms diverge as $N \to \infty$.
\end{proof}

\appendix
\section{Useful Tools}\label{sec:useful}
In this appendix, we collect several useful results needed for the proof of Theorems \ref{thm:main4}, \ref{thm:main6}, \ref{thm:main6n}, and \ref{thm:Weyl}.

\medskip
The proofs of Lemma \ref{Euler's homo thm}--Lemma \ref{lemma:newbasis} follow directly from elementary computations, so we omit them.
\begin{lemma}[Euler's homogeneous function theorem]\label{Euler's homo thm}
If $p^{(k)}\in \mcp_{k}$, then
$$
x_i\pa_i p^{(k)}=kp^{(k)}.
$$
\end{lemma}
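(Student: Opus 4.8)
The plan is to exploit the defining homogeneity property of elements of $\mcp_k$. By definition every $p^{(k)} \in \mcp_k$ is a linear combination of monomials $x_\alpha = x_1^{\alpha_1}\cdots x_n^{\alpha_n}$ with $|\alpha| = k$, so it satisfies the scaling relation $p^{(k)}(tx) = t^k p^{(k)}(x)$ for all $t \in \R$ and $x \in \R^n$.

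First I would differentiate both sides of $p^{(k)}(tx) = t^k p^{(k)}(x)$ with respect to $t$. By the chain rule the left-hand side equals $\sum_{i=1}^n x_i (\pa_i p^{(k)})(tx)$, while the right-hand side equals $k t^{k-1} p^{(k)}(x)$. Setting $t = 1$ yields $x_i \pa_i p^{(k)}(x) = k\, p^{(k)}(x)$, which is exactly the asserted identity under the summation convention in force (so that $x_i \pa_i$ means $\sum_{i=1}^n x_i \pa_{x_i}$).

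Alternatively, and equivalently, one can argue purely by linearity: it is enough to verify the relation on a single monomial $x_\alpha$ with $|\alpha| = k$, for which $x_i \pa_i x_\alpha = \sum_{i=1}^n \alpha_i x_\alpha = |\alpha|\, x_\alpha = k\, x_\alpha$, and then extend to all of $\mcp_k$ by taking linear combinations. There is no genuine obstacle here — this is the classical Euler relation for homogeneous functions — and the only point worth flagging is the implicit Einstein summation over the repeated index $i$ that the paper uses throughout.
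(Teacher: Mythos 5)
Your proof is correct, and it is exactly the elementary computation the paper has in mind: the paper omits the proof of Lemma \ref{Euler's homo thm} as a direct elementary calculation, and both your scaling/differentiation argument and your monomial check $x_i\pa_i x_\alpha = |\alpha| x_\alpha = k x_\alpha$ (extended by linearity, with the Einstein summation convention) settle it. Nothing further is needed.
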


\begin{lemma}\label{harmonic poly}
If $|x|^{2s} p^{(k-2s)}\in \mcp_{k}$ for some $p^{(k-2s)}\in \mch_{k-2s}$, then
$$
\Delta \left[|x|^{2s} p^{(k-2s)}\right]=2s(2k-2s+n-2)|x|^{2s-2}p^{(k-2s)}.
$$
\end{lemma}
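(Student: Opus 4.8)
The plan is to apply the Leibniz rule for the Laplace operator to the product $|x|^{2s} p^{(k-2s)}$, writing
\[
\Delta\left[|x|^{2s} p^{(k-2s)}\right] = \left(\Delta |x|^{2s}\right) p^{(k-2s)} + 2\,\nabla |x|^{2s}\cdot \nabla p^{(k-2s)} + |x|^{2s}\,\Delta p^{(k-2s)}.
\]
Each of the three terms on the right-hand side is then evaluated by an elementary computation, after which one collects coefficients.

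First, since $|x|^{2s} = r^{2s}$ is radial, the standard formula $\Delta r^m = m(m+n-2)\,r^{m-2}$ (which may also be verified directly from $\pa_i r = x_i/r$) gives $\Delta |x|^{2s} = 2s(2s+n-2)|x|^{2s-2}$. Second, from $\pa_i |x|^{2s} = 2s|x|^{2s-2} x_i$ together with Euler's homogeneous function theorem (Lemma \ref{Euler's homo thm}) applied to $p^{(k-2s)} \in \mcp_{k-2s}$, we obtain $\nabla |x|^{2s}\cdot \nabla p^{(k-2s)} = 2s|x|^{2s-2} x_i \pa_i p^{(k-2s)} = 2s(k-2s)|x|^{2s-2} p^{(k-2s)}$. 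Third, $\Delta p^{(k-2s)} = 0$ by the assumption $p^{(k-2s)} \in \mch_{k-2s}$, so the last term in the Leibniz expansion vanishes.

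Combining these, the surviving coefficient is $2s(2s+n-2) + 4s(k-2s) = 2s(2k-2s+n-2)$, which is exactly the claimed identity. There is no genuine obstacle here: the only points requiring a small amount of care are recalling the radial Laplacian formula and invoking Euler's theorem to handle the cross term, both of which are entirely routine.
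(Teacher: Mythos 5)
Your proof is correct: the Leibniz expansion, the radial formula $\Delta |x|^{2s}=2s(2s+n-2)|x|^{2s-2}$, Euler's theorem for the cross term, and harmonicity of $p^{(k-2s)}$ combine to give exactly the stated coefficient $2s(2k-2s+n-2)$. The paper omits the proof precisely because it is this elementary computation, so your argument matches the intended one.
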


\begin{lemma}\label{IBP}
Assume that $n \ge 2$. For $i=1,\ldots,n$ and $F^{(k)}\in \mathcal{P}_k$, it holds that
$$
\int_{\S^{n-1}}x_iF^{(k)}=\int_{B_1}\pa_iF^{(k)} = \frac{1}{n+k-1}\int_{\S^{n-1}}\pa_iF^{(k)}.
$$
Therefore, for $p^{(k)}\in \mathcal{P}_{k}$ with $k \in \N$,
$$
\int_{\S^{n-1}}p^{(k)}=\frac{1}{k(n+k-2)}\int_{\S^{n-1}}\Delta p^{(k)}.
$$
\end{lemma}

\begin{lemma}\label{lemma:mcinl}
Let $\mci_n^l$ be the integral defined in \eqref{eq:mci}. It holds that
\begin{align*}
\mci_n^l&=\frac{2n-l-3}{2n-2}\mci_{n-1}^l;\\
\mci_n^l&=\frac{2n-l-3}{l+1}\mci_{n}^{l+2}.
\end{align*}
\end{lemma}

\begin{cor}\label{cor:mcinl}
It holds that
\begin{align*}
\mci_n^l&=\frac{l-1}{2n-l-1}\mci_{n}^{l-2}=\frac{l-1}{2n-2}\mci_{n-1}^{l-2},\\
\int_0^{\infty}\frac{(1-r^2)r^l}{(1+r^2)^{n}}dr&=\frac{2n-2l-4}{2n-l-3}\mci_n^l.
\end{align*}
\end{cor}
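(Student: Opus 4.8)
The plan is to obtain Corollary \ref{cor:mcinl} as a purely algebraic consequence of the two recursions recorded in Lemma \ref{lemma:mcinl}; no new integral evaluation is needed. Throughout, I assume that all indices lie in the range where the relevant symbols $\mci_n^l$ are defined (i.e.\ the exponent on $(1+r^2)$ exceeds the exponent on $r$ by more than $\tfrac12$), so that every manipulation below is legitimate and reversible.

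First I would prove the two equalities on the first line. Starting from the second identity of Lemma \ref{lemma:mcinl}, $\mci_n^l = \tfrac{2n-l-3}{l+1}\mci_n^{l+2}$, and replacing $l$ by $l-2$, I get $\mci_n^{l-2} = \tfrac{2n-l-1}{l-1}\mci_n^l$; inverting yields $\mci_n^l = \tfrac{l-1}{2n-l-1}\mci_n^{l-2}$, which is the first equality. For the second, I apply the first identity of Lemma \ref{lemma:mcinl}, $\mci_n^l = \tfrac{2n-l-3}{2n-2}\mci_{n-1}^l$, again with $l$ replaced by $l-2$, obtaining $\mci_n^{l-2} = \tfrac{2n-l-1}{2n-2}\mci_{n-1}^{l-2}$; substituting this into the expression just derived gives
$$
\mci_n^l = \frac{l-1}{2n-l-1}\cdot\frac{2n-l-1}{2n-2}\,\mci_{n-1}^{l-2} = \frac{l-1}{2n-2}\,\mci_{n-1}^{l-2},
$$
as claimed.

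For the last identity, I would split $1-r^2=(1)-(r^2)$ to write $\int_0^{\infty}\frac{(1-r^2)r^l}{(1+r^2)^n}\,dr = \mci_n^l - \mci_n^{l+2}$. Using the second identity of Lemma \ref{lemma:mcinl} once more in the form $\mci_n^{l+2} = \tfrac{l+1}{2n-l-3}\mci_n^l$, the right-hand side becomes
$$
\left(1 - \frac{l+1}{2n-l-3}\right)\mci_n^l = \frac{2n-2l-4}{2n-l-3}\,\mci_n^l,
$$
which is exactly the asserted formula.

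The argument is entirely elementary. The only point requiring any care is bookkeeping of the index shifts and checking, for each symbol $\mci$ that appears, that its convergence condition is met — in particular, $\mci_n^{l+2}$ is well-defined precisely when the integral $\int_0^\infty (1-r^2)r^l(1+r^2)^{-n}\,dr$ on the left-hand side converges, so the last identity holds throughout its natural range of validity. There is no substantive obstacle.
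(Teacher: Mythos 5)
Your derivation is correct: the paper omits the proof of this corollary as an elementary computation, and obtaining it algebraically from the two recursions of Lemma \ref{lemma:mcinl} (index shift $l\mapsto l-2$ for the first line, and the splitting $\int_0^\infty(1-r^2)r^l(1+r^2)^{-n}dr=\mci_n^l-\mci_n^{l+2}$ for the second) is exactly the intended route. All index shifts and convergence conditions check out, so nothing further is needed.
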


\begin{lemma}\label{lemma:newbasis}
Let $a> 0$, $b\ge 2$, and $p^{(b)} \in \mch_b$. Then
\begin{align*}
\Delta((1+r^2)^{-a}p^{(b)})
&=-2a(2a+2)(1+r^2)^{-a-2}p^{(b)} -2a(n-2a+2b-2)(1+r^2)^{-a-1}p^{(b)};\\
\Delta^2((1+r^2)^{-a}p^{(b)})&=2a(2a+2)(2a+4)(2a+6)(1+r^2)^{-a-4}p^{(b)}\\
&\ +2a(2a+2)(2a+4)2(n-2a+2b-4)(1+r^2)^{-a-3}p^{(b)}\\
&\ +2a(2a+2)(n-2a+2b-2)(n-2a+2b-4)(1+r^2)^{-a-2}p^{(b)}.
\end{align*}
Furthermore,
\begin{align*}
&\ \Delta^3((1+r^2)^{-a}p^{(b)})\\
&=-2a(2a+2)(2a+4)(2a+6)(2a+8)(2a+10)(1+r^2)^{-a-6}p^{(b)}\\
&\ -2a(2a+2)(2a+4)(2a+6)(2a+8)3(n-2a+2b-6)(1+r^2)^{-a-5}p^{(b)}\\
&\ -2a(2a+2)(2a+4)(2a+6)(n-2a+2b-4)3(n-2a+2b-6)(1+r^2)^{-a-4}p^{(b)}\\
&\ -2a(2a+2)(2a+4)(n-2a+2b-2)(n-2a+2b-4)(n-2a+2b-6)(1+r^2)^{-a-3}p^{(b)}.
\end{align*}
\end{lemma}

\begin{lemma}\label{lemma:tr}
For $l \in \N$, let $A$ be an $l \times l$ positive-definite symmetric matrix and $B$ an $l \times l$ positive semi-definite symmetric matrix. Then
\[\tr(AB) \ge \lambda_1(A)\, \tr(B),\]
where $\lambda_1(A) > 0$ is the least eigenvalue of $A$.
\end{lemma}
\begin{proof}
We express $A$ as $A = Q\Lambda Q^T$, where $\Lambda := \text{diag}(\lambda_1,\ldots,\lambda_l)$ is a diagonal matrix and $Q$ is an orthogonal matrix. We may assume that $0 < \lambda_1 \le \cdots \le \lambda_l$.
Set $B' = Q^TBQ$. Then, $B'$ is positive semi-definite, so
\[\tr(AB) = \tr(Q\Lambda Q^TB) = \tr(\Lambda Q^TBQ) = \tr(\Lambda B')
= \sum_{i=1}^l \lambda_iB_{ii}' \ge \lambda_1\tr(B') = \lambda_1\tr(B). \qedhere\]
\end{proof}

\begin{lemma}\label{lemma:A+trA}
For $l \in \N$, let $A$ be an $l \times l$ positive-definite symmetric matrix. Let $a,b\in \R$ with $a+b>0$ and $b>0$. Then
\[\text{the matrix } aA+b\,(\tr A)I \text{ is also positive-definite,}\]
where $I$ is the $l \times l$ identity matrix.
\end{lemma}
\begin{proof}
We express $A$ as $A = Q\Lambda Q^T$, where $\Lambda := \text{diag}(\lambda_1,\ldots,\lambda_l)$ is a diagonal matrix and $Q$ is an orthogonal matrix. Then $\tr A= \tr\, \Lambda$. Combining with $a>-b$ and $b>0$, we obtain that
\[Q^T[aA+b\,(\tr A)I]Q=a\Lambda+b\,(\tr\,\Lambda)I>[(\tr\,\Lambda)I-\Lambda]b>0. \qedhere\]
\end{proof}

\section{Curvature-driven Quantities in Conformal Normal Coordinates}\label{sec:cnc}
For the result in this section, we recall the notations given in Sections \ref{sec:curv}, Subsection \ref{subsec:proppd}, and Subsection \ref{subsec:curv6}.

\medskip
The following lemma follows from a straightforward calculation, we omit the proof.
\begin{lemma}\label{lemma:CNC appendix}
In conformal normal coordinates, there holds that
\begin{align*}
h_{il,j}x_l &= -h_{ij};\\
2\Dot{\Gamma}_{ij}^lx_l &= -(x_l\pa_l+2)h_{ij};\\
2\Dot{\Gamma}_{il}^jx_l &= (x_l\pa_l)h_{ij};\\
2\Dot{\ricci}_{ij}x_i &= (x_i\pa_i+1)\delta_jh;\\
4\Ddot{\ricci}_{ij}x_ix_j &= -x_l\pa_l h\cdot x_p\pa_p h.
\end{align*}
Consequently,
\begin{align*}
2\Gamma_{ij,l}^p\Dot{A}_{lp}x_ix_j &= -2\Dot{A}\cdot (x_i\pa_i)h;\\
(h_{lp}\Dot{A}_{ij,l})_{,p}x_ix_j &= (h_{lp}\Dot{A}_{ij,l}x_ix_j-2h_{lp}\Dot{A}_{lj}x_j)_{,p}+2\delta_lh \Dot{A}_{lj}x_j+2\Dot{A}\cdot h;\\
2\Dot{\Gamma}_{il,l}^p\Dot{A}_{pj}x_ix_j &= (x_i\pa_i-1)\delta_ph\Dot{A}_{pj}x_j;\\
2\Dot{\Gamma}_{il}^p\Dot{A}_{jp,l}x_ix_j &= [(x_i\pa_i)h_{lp}\Dot{A}_{jp}x_j]_{,l}-(x_i\pa_i+1)\delta_ph \Dot{A}_{jp}x_j-\Dot{A}\cdot (x_i\pa_i)h.
\end{align*}
\end{lemma}

\begin{lemma}
It holds that
\begin{align}
\int_{\S^{n-1}}\Ddot{R}^{(k,m)} &= \int_{\S^{n-1}} \left[\frac{1}{2}\delta_i H^{(k)}\delta_i H^{(m)} - \frac{1}{4}H_{ij,l}^{(k)}H_{ij,l}^{(m)}\right]; \label{eq:ricci h1} \\
\bla\Dot{\ricci}^{(k)},H^{(m)}\bra &= -\int_{\S^{n-1}} \left[2\Ddot{R}^{(k,m)} + \frac{k(n+k+m-2)}{2}H^{(k)}_{ij}H^{(m)}_{ij}\right]. \label{eq:ricci h2}
\end{align}
\end{lemma}
\begin{proof}
Identity \eqref{eq:ricci h1} is a direct corollary of \eqref{conformal normal}, the definition of $\Ddot{R}$ in \eqref{eq:R2}, and Lemma \ref{IBP}.

We next take \eqref{eq:ricci h2} into account. Due to the definition of $\Dot{\ricci}$ in \eqref{eq:Ric1}, we have
\begin{align*}
\Dot{\ricci}^{(k)}_{ij}H_{ij}^{(m)} &= H^{(k)}_{il,jl}H^{(m)}_{ij} - \frac{1}{2}H^{(k)}_{ij,ll}H^{(m)}_{ij}\\
&= \pa_j(H^{(k)}_{il,l}H^{(m)}_{ij}) - \frac{1}{2}\pa_l(H^{(k)}_{ij,l}H^{(m)}_{ij}) - H^{(k)}_{il,l}H^{(m)}_{ij,j} + \frac{1}{2}H^{(k)}_{ij,l}H^{(m)}_{ij,l}.
\end{align*}
Integrate it over $\S^{n-1}$, we discover
$$
\int_{\S^{n-1}}\Dot{\ricci}^{(k)}_{ij}H^{(m)}_{ij} = -\int_{\S^{n-1}} \left[2\Ddot{R}^{(k,m)} +\frac{1}{2}\pa_l(H^{(k)}_{ij,l}H^{(m)}_{ij})\right].
$$
Then, by invoking Lemma \ref{IBP} and Lemma \ref{Euler's homo thm}, we deduce \eqref{eq:ricci h2}.
\end{proof}

\begin{lemma}\label{ricci ricci lemma}
Let $\mcl_k$ be the linear operator introduced in \eqref{eq:mclk}. It holds that
\begin{align*}
\int_{\S^{n-1}}\Ddot{R}^{(k,m)} &= -\frac{1}{4}\left[\bla \mcl_kH^{(k)},H^{(m)}\bra+\bla H^{(k)}, \mcl_mH^{(m)}\bra\right] \\
&\ -\frac{1}{8}(k+m)(n+k+m-2)\bla H^{(k)},H^{(m)}\bra;\\
\bla\Dot{\ricci}^{(k)},\Dot{\ricci}^{(m)}\bra &= \bla \mcl_kH^{(k)},\mcl_mH^{(m)}\bra+\frac{km}{2}\bla\delta H^{(k)},\delta H^{(m)}\bra+\frac{1}{n-1}\bla\delta^2H^{(k)},\delta^2H^{(m)}\bra.
\end{align*}
\end{lemma}
\begin{proof}
Let us derive the first identity. According to the previous lemma, we have
$$
\int_{\S^{n-1}}\Ddot{R}^{(k,m)} = \frac{1}{4}\int_{\S^{n-1}} \left[\pa_{l}(H_{ij,j}^{(k)} H_{il}^{(m)})+\pa_{i}(H_{lj,j}^{(k)} H_{il}^{(m)}) -\pa_{j}(H_{il,j}^{(k)}H_{il}^{(m)})-2\Dot{\ricci}^{(k)}_{il}H_{il}^{(m)}\right].
$$
By employing the definition of $\mcl_k$ in \eqref{eq:mclk}, (\ref{conformal normal}), Lemma \ref{IBP}, and Lemma \ref{Euler's homo thm}, we derive
$$
\int_{\S^{n-1}}\Ddot{R}^{(k,m)} = -\frac{1}{4} \int_{\S^{n-1}} \left[2(\mcl_kH^{(k)})_{ij}H^{(m)}_{ij} + k(n+k+m-2)H^{(k)}_{ij}H^{(m)}_{ij}\right].
$$
Then, using the symmetry of $\Ddot{R}^{(k,m)}$ with respect to $k$ and $m$, we can obtain the first identity.

We next turn to the second identity. We use the definition of $\mcl_k$ and $\mcl_m$ to obtain
\begin{align*}
&\ \int_{\S^{n-1}} (\mcl_kH^{(k)})_{ij}(\mcl_mH^{(m)})_{ij} \\
&= \int_{\S^{n-1}} \left[\Dot{\ricci}^{(k)}_{ij}\Dot{\ricci}^{(m)}_{ij} + \frac{km}{2}\delta_i H^{(k)}\delta_i H^{(m)} + \frac{1}{n-1}\delta^2H^{(k)}\delta^2H^{(m)}\right] \\
&\ -\int_{\S^{n-1}}\Dot{\ricci}^{(k)}_{ij} \left[m x_j\delta_iH^{(m)} - \frac{1}{n-1}\delta^2H^{(m)}(x_ix_j-|x|^2\delta_{ij})\right]\\
&\ - \int_{\S^{n-1}}\Dot{\ricci}^{(m)}_{ij} \left[k x_j\delta_iH^{(k)} - \frac{1}{n-1}\delta^2H^{(k)}(x_ix_j-|x|^2\delta_{ij})\right]\\
&\ + \int_{\S^{n-1}} \left[\frac{k}{n-1}x_j\delta_iH^{(k)}\delta^2H^{(m)} + \frac{m}{n-1}x_j\delta_iH^{(m)}\delta^2H^{(k)}\right] (x_ix_j-|x|^2\delta_{ij}).
\end{align*}
Here, the fourth term vanishes because of (\ref{conformal normal}) and (\ref{conformal normal coro}). For the second and third terms, we need the following claim:
\begin{claim}\label{2ricci1}
$$
2\Dot{\ricci}^{(k)}_{ij}x_j=k\delta_iH^{(k)}.
$$
\end{claim}
\noindent \medskip \textsc{Proof of Claim \ref{2ricci1}.} By the definition of $\Dot{\ricci}$ in \eqref{eq:Ric1}, we know that
$$
2\Dot{\ricci}^{(k)}_{ij}x_j = x_j\pa_jH_{il,l}^{(k)} + \left[\pa^2_{li}(x_jH_{jl}^{(k)})-H_{ll,i}^{(k)}-\delta_iH^{(k)}\right] - \left[\Delta(x_jH^{(k)}_{ij})-2\delta_i H^{(k)}\right].
$$
Then, using (\ref{conformal normal}) and Lemma \ref{Euler's homo thm}, we finish the proof of the claim.

Combining Claim \ref{2ricci1} with (\ref{conformal normal}), (\ref{conformal normal coro}) and $\Dot{\ricci}^{(k)}_{ii}=\delta^2H^{(k)}$, we can compute the second term as
\begin{align*}
&\ \int_{\S^{n-1}}\Dot{\ricci}^{(k)}_{ij} \left[m x_j\delta_iH^{(m)} - \frac{1}{n-1}\delta^2H^{(m)}(x_ix_j-|x|^2\delta_{ij})\right] \\
&= \int_{\S^{n-1}} \left[\frac{km}{2}\delta_i H^{(k)}\delta_i H^{(m)} + \frac{1}{n-1}\delta^2H^{(k)}\delta^2H^{(m)}\right].
\end{align*}
By symmetry, the third term can be handled similarly to the second term.

Finally, by putting all four terms together, we complete the proof of the second identity.
\end{proof}

We now establish the commutator relations.
\begin{lemma}\label{lemma:comm}
Let $A^{(k)}$ and $B^{(m)}$ be matrices whose entries are homogeneous symmetric polynomials of degrees $k$ and $m$, respectively. Then,
\begin{equation}\label{eq:Laplace IBP}
\bla \Delta A^{(k)},B^{(m)}\bra-\bla A^{(k)},\Delta B^{(m)}\bra = (k-m)(n+k+m-2)\bla A^{(k)},B^{(m)}\bra;
\end{equation}
\begin{equation}\label{eq:mcl_k IBP}
\bla \mcl_kH^{(k)},H^{(m)}\bra-\bla H^{(k)},\mcl_mH^{(m)}\bra = -\frac{1}{2}(k-m)(n+k+m-2)\bla H^{(k)},H^{(m)}\bra;
\end{equation}
\begin{equation}\label{eq:diver of mcl_k}
\bla \delta(\mcl_kH^{(k)}),\delta H^{(m)}\bra = -\frac{n-3}{2(n-1)}\bla\delta^2H^{(k)},\delta^2 H^{(m)}\bra - \frac{1}{2}k(n+k-2) \bla\delta H^{(k)},\delta H^{(m)}\bra;
\end{equation}
\begin{align}
\begin{medsize}
\displaystyle \bla \delta^2(\mcl_kH^{(k)}),\delta^2H^{(m)}\bra
\end{medsize} &
\begin{medsize}
\displaystyle = \left[\frac{(n-3)(k-2)}{2(n-1)}(n+k+m-6)-\frac{n-2}{n-1}(k-2)(k-3)-n(k-1)\right] \bla \delta^2H^{(k)},\delta^2H^{(m)}\bra
\end{medsize} \nonumber \\
&\begin{medsize}
\displaystyle \ -\frac{n-3}{2(n-1)}\bla\pa\delta^2H^{(k)},\pa\delta^2H^{(m)}\bra.
\end{medsize} \label{eq:double diver of mcl_k}
\end{align}
\end{lemma}
\begin{proof}
The proofs of \eqref{eq:Laplace IBP} and \eqref{eq:mcl_k IBP} are straightforward. We omit them.

To justify \eqref{eq:diver of mcl_k} and \eqref{eq:double diver of mcl_k}, we need to compute $\delta(\mcl_k H^{(k)})$ and $\delta^2(\mcl_k H^{(k)})$. By the definition \eqref{eq:mclk} of $\mcl_k$, we have that
$$
\delta_j(\mcl_k H^{(k)}) = -\frac{1}{2}k(n+k-2)\delta_jH^{(k)} + \frac{n-3}{2(n-1)}|x|^2\delta^2H^{(k)}_{,j} + \(\frac{k-2}{n-1}-\frac{k-2}{2}\) x_j\delta^2H^{(k)},
$$
and
$$
\delta^2(\mcl_k H^{(k)}) = \frac{n-3}{2(n-1)}|x|^2\Delta\delta^2H^{(k)} - \left[\frac{n-2}{n-1}(k-2)(k-3)+n(k-1)\right]\delta^2H^{(k)}.
$$
Then, using $x_j\delta_j H^{(k)}=0$ and Lemma \ref{IBP}, we obtain \eqref{eq:diver of mcl_k} and \eqref{eq:double diver of mcl_k}.
\end{proof}

\begin{lemma}\label{Delta ricci lemma}
It holds that
\begin{align}
\bla\Dot{\ricci}^{(k)},H^{(m)}\bra &= \bla \mcl_kH^{(k)},H^{(m)}\bra; \label{eq:ric H}\\
\bla\Delta\Dot{\ricci}^{(k)},H^{(m)}\bra &=-2\bla \mcl_kH^{(k)},\mcl_mH^{(m)}\bra+(k-m-2)(n+k+m-4)\bla\mcl_kH^{(k)},H^{(m)}\bra \nonumber \\
&\ +\frac{n-3}{n-1}\bla\delta^2H^{(k)},\delta^2 H^{(m)}\bra+k(n+k-4)\bla\delta H^{(k)},\delta H^{(m)}\bra, \label{eq:lap ric H}
\end{align}
and
\begin{equation}\label{eq:lap ric ric}
\begin{aligned}
&\begin{medsize}
\displaystyle \ \bla\Delta\Dot{\ricci}^{(k)},\Dot{\ricci}^{(m)}\bra
\end{medsize} \\
&\begin{medsize}
\displaystyle =-2\bla \mcl_k^2H^{(k)},\mcl_m H^{(m)}\bra-2(n+2k-4)\bla\mcl_kH^{(k)},\mcl_mH^{(m)}\bra
\end{medsize} \\
&\begin{medsize}
\displaystyle \ +\frac{km}{2}[(n+k+m-4)(k-1)-(n+k-4)(n+m-2)]\bla\delta H^{(k)},\delta H^{(m)}\bra
\end{medsize} \\
&\begin{medsize}
\displaystyle \ +\left[m+\frac{2(k-1)}{n-1}+\frac{k-2}{n-1}(n+k+m-6) -\frac{n-3}{2(n-1)}k(n+k-4)\right] \bla\delta^2H^{(k)},\delta^2 H^{(m)}\bra
\end{medsize} \\
&\begin{medsize}
\displaystyle \ +\frac{n-3}{n-1}\left[\frac{(n-3)(m-2)}{2(n-1)}(n+k+m-6)-\frac{n-2}{n-1}(m-2)(m-3)-n(m-1)\right] \bla\delta^2H^{(k)},\delta^2H^{(m)}\bra
\end{medsize} \\
&\begin{medsize}
\displaystyle \ -\frac{km}{2}\bla\pa\delta H^{(k)},\pa \delta H^{(m)}\bra - \frac{n^2-4n+7}{2(n-1)^2} \bla\pa\delta^2H^{(k)},\pa\delta^2H^{(m)}\bra.
\end{medsize}
\end{aligned}
\end{equation}
\end{lemma}
\begin{proof}
The derivation of \eqref{eq:ric H} is straightforward. We omit it.

Let us consider \eqref{eq:lap ric H}. Applying \eqref{eq:mclk} and \eqref{eq:Laplace IBP} yields
\begin{equation}\label{eq:lap ric H1}
\begin{aligned}
\bla \Delta\Dot{\ricci}^{(k)}, H^{(m)}\bra &= \bla \mcl_k H^{(k)},\Delta H^{(m)}\bra + \frac{k}{2}\bla x_j\delta_iH^{(k)}+x_i\delta_j H^{(k)},\Delta H^{(m)}\bra \\
&\ +(k-m-2)(n+k+m-4)\bla\mcl_kH^{(k)},H^{(m)}\bra.
\end{aligned}
\end{equation}
For the first term in the right-hand side of \eqref{eq:lap ric H1}, we use Lemma \ref{IBP}, \eqref{eq:mclk}, and \eqref{eq:Ric1} to get
\begin{equation}\label{eq:lap ric H2}
\begin{aligned}
\bla\mcl_kH^{(k)},\Delta H^{(m)}\bra &= -2\bla\mcl_kH^{(k)},\mcl_mH^{(m)}\bra + \bla\mcl_kH^{(k)},\pa_j\delta_iH^{(m)}+\pa_i\delta_jH^{(m)}\bra\\
&=-2\bla\mcl_kH^{(k)},\mcl_mH^{(m)}\bra - 2\bla\delta(\mcl_k H^{(k)}),\delta H^{(m)}\bra.
\end{aligned}
\end{equation}
For the second term, we use $x_jH_{ij}^{(m)}=0$ to see
\begin{equation}\label{eq:lap ric H3}
\bla x_j\delta_i H^{(k)},\Delta H^{(m)}\bra=-2\bla\delta H^{(k)},\delta H^{(m)}\bra.
\end{equation}
By putting \eqref{eq:lap ric H3} and the combination of \eqref{eq:lap ric H2} and \eqref{eq:diver of mcl_k} into \eqref{eq:lap ric H1}, we deduce \eqref{eq:lap ric H}.

It remains to establish \eqref{eq:lap ric ric}. Employing \eqref{eq:mclk} and \eqref{eq:Laplace IBP}, we have that
\begin{align*}
\bla\Delta\Dot{\ricci}^{(k)},\Dot{\ricci}^{(m)}\bra &= \bla \Delta\Dot{\ricci}^{(k)},\mcl_mH^{(m)}\bra + \frac{m}{2} \bla\Delta\Dot{\ricci}^{(k)}, x_j\delta_i H^{(m)}+x_i\delta_jH^{(m)}\bra \\
&\ -\frac{1}{n-1} \bla\Delta\Dot{\ricci}^{(k)}, (x_ix_j-|x|^2\delta_{ij})\delta^2H^{(m)}\bra.
\end{align*}
Each of the three terms on the right-hand side is evaluated below: First, we infer from \eqref{eq:lap ric H} that
\begin{align*}
\bla \Delta\Dot{\ricci}^{(k)}, \mcl_m H^{(m)}\bra &= -2\bla\mcl_kH^{(k)},\mcl_m^2H^{(m)}\bra + (k-m-2)(n+k+m-4)\bla\mcl_k H^{(k)},\mcl_mH^{(m)}\bra\\
&\ + \frac{n-3}{n-1} \bla\delta^2H^{(k)},\delta^2\mcl_mH^{(m)}\bra + k(n+k-4)\bla\delta H^{(k)},\delta\mcl_mH^{(m)}\bra.
\end{align*}
Combining this with \eqref{eq:mcl_k IBP}--\eqref{eq:double diver of mcl_k}, we obtain that
\begin{align*}
&\ \begin{medsize}
\displaystyle \bla \Delta\Dot{\ricci}^{(k)}, \mcl_m H^{(m)}\bra
\end{medsize} \\
&\begin{medsize}
\displaystyle= -2\bla\mcl_k^2H^{(k)},\mcl_m H^{(m)}\bra - 2(n+2k-4)\bla\mcl_kH^{(k)},\mcl_mH^{(m)}\bra - \frac{km}{2}(n+k-4)(n+m-2)\bla\delta H^{(k)},\delta H^{(m)}\bra
\end{medsize} \\
&\begin{medsize}
\displaystyle \ +\frac{n-3}{n-1}\left[\frac{(n-3)(m-2)}{2(n-1)}(n+k+m-6)-\frac{n-2}{n-1}(m-2)(m-3)-n(m-1)\right] \bla \delta^2H^{(k)},\delta^2H^{(m)}\bra
\end{medsize} \\
&\begin{medsize}
\displaystyle \ -\frac{(n-3)^2}{2(n-1)^2}\bla\pa\delta^2H^{(k)},\pa\delta^2H^{(m)}\bra - \frac{n-3}{2(n-1)}k(n+k-4)\bla \delta^2H^{(k)},\delta^2H^{(m)}\bra.
\end{medsize}
\end{align*}
Second, Lemmas \ref{lemma:CNC appendix} and \ref{IBP} lead to
\begin{align*}
&\ \frac{m}{2} \bla\Delta\Dot{\ricci}^{(k)}, x_j\delta_i H^{(m)}+x_i\delta_jH^{(m)}\bra\\
&= \frac{km}{2} \bla\Delta(\delta H^{(k)}),\delta H^{(m)}\bra + m\bla\delta^2H^{(k)},\delta^2H^{(m)}\bra\\
&= \frac{km}{2}(n+k+m-4)(k-1) \bla\delta H^{(k)},\delta H^{(m)}\bra - \frac{km}{2}\bla\pa\delta H^{(k)},\pa\delta H^{(m)}\bra + m\bla\delta^2H^{(k)},\delta^2H^{(m)}\bra.
\end{align*}
Finally, by Lemmas \ref{lemma:CNC appendix} and \ref{IBP} again, we arrive at
\begin{align*}
&\ -\frac{1}{n-1} \bla\Delta\Dot{\ricci}^{(k)},(x_ix_j-|x|^2\delta_{ij})\delta^2 H^{(m)}\bra\\
&= \frac{1}{n-1} \bla(x_i\pa_i+k)\delta^2H^{(k)},\delta^2H^{(m)}\bra + \frac{1}{n-1} \bla\Delta(\delta^2H^{(k)}),\delta^2H^{(m)}\bra\\
&= \left[\frac{2(k-1)}{n-1}+\frac{k-2}{n-1}(n+k+m-6)\right] \bla\delta^2H^{(k)},\delta^2H^{(m)}\bra - \frac{1}{n-1}\bla\pa\delta^2H^{(k)},\pa\delta^2H^{(m)}\bra.
\end{align*}
The preceding computations together yield \eqref{eq:lap ric ric}.
\end{proof}

\section{Sphere Integrals Related to the $Q^{(6)}$-curvatures}\label{sec:sphere Q6}
We collect integrals of geometric quantities induced by the $Q^{(6)}$-curvature over the unit sphere, which play a role in Sections \ref{sec:comp6}--\ref{sec:noncomp6}.
In Lemmas \ref{lemma:Q6 sphere} and \ref{lemma:T4 sphere}, by \textbf{pure-divergence terms}, we mean terms that vanish whenever $\delta H^{(k)}=\delta H^{(m)}=0$.

\begin{lemma}\label{lemma:Q6 sphere}
Let $d=\lfloor\frac{n-6}{2}\rfloor$ and $k,m \in \{2,\ldots,d\}$. It holds that
\begin{align*}
\int_{\S^{n-1}}\big(\Ddot{Q^{(6)}}\big)^{(k,m)} &= \frac{1}{2(n-1)} \int_{\S^{n-1}}\Delta^2\Ddot{R}^{(k,m)} + \frac{4}{(n-2)^2} \int_{\S^{n-1}}\Delta \big(\Dot{\ricci}^{(k)}\cdot\Dot{\ricci}^{(m)}\big) \\
&\ + \frac{8}{(n-4)(n-2)^2}\left[\bla\Dot{\ricci}^{(k)},\Delta\Dot{\ricci}^{(m)}\bra + \bla\Delta\Dot{\ricci}^{(k)},\Dot{\ricci}^{(m)}\bra\right] \\
&\ +\textup{\textbf{(pure-divergence terms)}},
\end{align*}
where
\begin{align*}
&\begin{medsize}
\displaystyle \ \textup{\textbf{(pure-divergence terms)}}
\end{medsize} \\
&\begin{medsize}
\displaystyle := -\left[\left\{\frac{3n-4}{(n-4)(n-2)(n-1)^2}+\frac{3n-2}{16(n-1)^2}\right\} (k+m-4) + \frac{(n-6)(n(k+m)-4)}{(n-4)(n-2)(n-1)^2}\right](n+k+m-6)
\end{medsize} \\
&\begin{medsize}
\displaystyle \hspace{340pt} \times \int_{\S^{n-1}}\Dot{R}^{(k)}\Dot{R}^{(m)}
\end{medsize} \\
&\begin{medsize}
\displaystyle \ -\left[\frac{2(3n-4)}{(n-2)^2(n-1)^2}+\frac{(n-6)(n+4)}{4(n-4)(n-1)^2} -\frac{2(3n-4)}{(n-4)(n-2)(n-1)^2}-\frac{3n-2}{8(n-1)^2}\right] \int_{\S^{n-1}}\Dot{R}^{(k)}_{,i}\Dot{R}^{(m)}_{,i}.
\end{medsize}
\end{align*}
\end{lemma}
\begin{proof}
By Lemma \ref{Q6 curva},
\[\Ddot{Q^{(6)}}=\frac{16}{n-4}\Dot{A}\cdot \Delta \Dot{A} +\frac{8(n-6)}{n-4}\Dot{A}_{ij}\tr\Dot{A}_{,ij}+\RN{1}+\RN{2},\]
where
\begin{align*}
\RN{1} &:= \Delta^2(\tr\Ddot{A}-\Dot{A}\cdot h)+4\Delta(\Dot{A}\cdot\Dot{A}) -\frac{3n-2}{2}\tr\Dot{A}\Delta\tr\Dot{A}-(n-6)\tr\Dot{A}_{,i}\tr\Dot{A}_{,i};\\
\RN{2} &:= -[h_{ij}\Delta\tr\Dot{A}_{,i}+\Delta (h_{ij}\tr\Dot{A}_{,i})]_{,j}.
\end{align*}

By Lemma \ref{Schouten tensor}, it holds that $\tr\Dot{A}=\frac{1}{2(n-1)}\Dot{R}$, $\tr\Ddot{A}-\Dot{A}\cdot h=\frac{1}{2(n-1)}\Ddot{R}$, and
\[\Dot{A}\cdot \Dot{A} =\frac{1}{(n-2)^2}\Dot{\ricci}\cdot\Dot{\ricci} -\frac{3n-4}{4(n-2)^2(n-1)^2}\Dot{R}^2.\]
Using these identities, we can deal with the $\RN{1}$ term.

Integrating the $\RN{2}$ term over $\S^{n-1}$ and applying Lemma \ref{IBP} yields $\int_{\S^{n-1}} \RN{2} = 0$.

By employing Lemma \ref{Schouten tensor} again, we obtain
\begin{equation}\label{eq:Q6 sphere1}
\Dot{A}\cdot \Delta\Dot{A}=\frac{1}{(n-2)^2}\Dot{\ricci}\cdot\Delta\Dot{\ricci} -\frac{3n-4}{4(n-2)^2(n-1)^2}\Dot{R}\Delta\Dot{R}.
\end{equation}
Moreover, we infer from Lemmas \ref{IBP} and \ref{Euler's homo thm} that
\begin{equation}\label{eq:Q6 sphere2}
\begin{aligned}
\int_{\S^{n-1}}\Dot{R}^{(k)}\Delta\Dot{R}^{(m)} &= \int_{\S^{n-1}}\big(\Dot{R}^{(k)}_{,i}\Dot{R}^{(m)}\big)_{,i} -\int_{\S^{n-1}}\Dot{R}^{(k)}_{,i}\Dot{R}^{(m)}_{,i}\\
&= (n+k+m-6)(k-2)\int_{\S^{n-1}}\Dot{R}^{(k)}\Dot{R}^{(m)} -\int_{\S^{n-1}}\Dot{R}^{(k)}_{,i}\Dot{R}^{(m)}_{,i}.
\end{aligned}
\end{equation}
Combining \eqref{eq:Q6 sphere1} and \eqref{eq:Q6 sphere2}, we can treat the $\Dot{A}\cdot \Delta\Dot{A}$ term.

It suffices to handle the $\Dot{A}_{ij}\tr\Dot{A}_{,ij}$ term. Since $\Dot{A}_{ij}\tr\Dot{A}_{,ij}=(\Dot{A}_{ij}\tr\Dot{A}_{,i})_{,j}-\tr\Dot{A}_{,i}\tr\Dot{A}_{,i}$, we keep the $-\tr\Dot{A}_{,i}\tr\Dot{A}_{,i}$ part as it is and use the following claim to evaluate the $(\Dot{A}_{ij}\tr\Dot{A}_{,i})_{,j}$ part.
\begin{claim}\label{claim:Q6 sphere}
We have
$$
\int_{\S^{n-1}}\big(\Dot{A}^{(k)}_{ij}\tr\Dot{A}^{(m)}_{,i}\big)_{,j} =
-\frac{(n+k+m-6)((n-1)k+m-2)}{4(n-2)(n-1)^2} \int_{\S^{n-1}}\Dot{R}^{(k)}\Dot{R}^{(m)}.
$$
\end{claim}
Collecting the above computations completes the proof of Lemma \ref{lemma:Q6 sphere}.

\medskip \noindent \textsc{Proof of Claim \ref{claim:Q6 sphere}.} First, we use Lemma \ref{IBP} to obtain
$$
\int_{\S^{n-1}}\big(\Dot{A}^{(k)}_{ij}\tr\Dot{A}^{(m)}_{,i}\big)_{,j} =(n+k+m-6)\int_{\S^{n-1}}y_j\Dot{A}^{(k)}_{ij}\tr\Dot{A}^{(m)}_{,i}.
$$
Second, we use Lemma \ref{lemma:CNC appendix} to obtain
$$
y_j\Dot{A}_{ij}=\frac{(y_j\pa_j+1)}{2(n-2)}\delta_ih -\frac{1}{2(n-2)(n-1)}\Dot{R}y_i.
$$
In the final step, we use Lemmas \ref{lemma:CNC appendix} and \ref{IBP} and \eqref{conformal normal} to obtain
\begin{align*}
\int_{\S^{n-1}} (y_j\pa_j+1)\delta_ih\tr\Dot{A}_{,i}
&= \int_{\S^{n-1}} \left[\big\{(y_j\pa_j+1)\delta_ih\tr\Dot{A}\big\}_{,i} -\frac{(y_i\pa_i+2)}{2(n-1)}\delta^2h\Dot{R}\right] \\
&= -\int_{\S^{n-1}}\frac{(y_i\pa_i+2)}{2(n-1)}\delta^2h\Dot{R}.
\end{align*}
Combining the above three equations, we deduce
\begin{align*}
\int_{\S^{n-1}}\big(\Dot{A}^{(k)}_{ij}\tr\Dot{A}^{(m)}_{,i}\big)_{,j}
&= -\frac{n+k+m-6}{4(n-2)(n-1)}\int_{\S^{n-1}} \left[(y_i\pa_i+2)\delta^2H^{(k)}\Dot{R}^{(m)} +\frac{1}{n-1}y_i\Dot{R}^{(k)}\Dot{R}^{(m)}_{,i}\right] \\
&= -\frac{(n+k+m-6)((n-1)k+m-2)}{4(n-2)(n-1)^2} \int_{\S^{n-1}}\Dot{R}^{(k)}\Dot{R}^{(m)},
\end{align*}
proving the claim.
\end{proof}

\begin{lemma}\label{lemma:T2 sphere}
It holds that
$$
\int_{\S^{n-1}}(\Ddot{T_2})^{(k,m)}_{ij}y_iy_j
=\frac{n^2-4n+12}{2(n-2)(n-1)}\int_{\S^{n-1}}\Ddot{R}^{(k,m)} +\frac{2km}{n-2}\bla H^{(k)},H^{(m)}\bra
$$
and
\begin{align*}
&\begin{medsize}
\displaystyle \ \int_{\S^{n-1}} \left[\tr \,\Ddot{T_2}^{(k,m)}+(\Ddot{\diver_{g}T_2})^{(k,m)}_iy_i + \frac{1}{4}\left\{\Dot{T_2}^{(k)} \cdot (y_i\pa_i-2)H^{(m)} + \Dot{T_2}^{(m)} \cdot (y_i\pa_i-2)H^{(k)}\right\}\right]
\end{medsize} \\
&\begin{medsize}
\displaystyle = \frac{n^2-2n-8+(n-10)(k+m-2)}{2(n-1)} \int_{\S^{n-1}}\Ddot{R}^{(k,m)} -\frac{2}{n-2}\left[k\bla H^{(k)},\Dot{\ricci}^{(m)}\bra + m\bla\Dot{\ricci}^{(k)},H^{(m)}\bra\right].
\end{medsize}
\end{align*}
\end{lemma}
\begin{proof}
Using Lemma \ref{T2T4} and Corollary \ref{cor:t2t4}, we have that
\begin{align*}
(\Ddot{T_2})_{ij}y_iy_j &= -8\Ddot{A}_{ij}y_iy_j+(n-2)(\tr\Ddot{A}-\Dot{A}\cdot h)|y|^2;\\
\tr \,\Ddot{T_2} &= (n^2-2n-8)(\tr\Ddot{A}-\Dot{A}\cdot h)-8\Dot{A}\cdot h;\\
(\Ddot{\diver_{g} T_2})_iy_i &= (n-10)(y_i\pa_i)(\tr\Ddot{A}-\Dot{A}\cdot h);\\
\Dot{T_2}\cdot h &= -\frac{8}{n-2}\Dot{\ricci}\cdot h.
\end{align*}
By Lemma \ref{lemma:CNC appendix}, we also have that
$$
\Ddot{A}_{ij}y_iy_j=-\frac{1}{4(n-2)}y_l\pa_l h\cdot y_s\pa_s h-\frac{1}{2(n-2)(n-1)}\Ddot{R}|y|^2.
$$
Using the identities above, we can prove the lemma.
\end{proof}

\begin{lemma}\label{lemma:T4 sphere}
It holds that
\begin{equation}\label{eq:T4xx}
\begin{aligned}
&\begin{medsize}
\displaystyle \ \int_{\S^{n-1}}(\Ddot{T_4})^{(k,m)}_{ij}y_iy_j
\end{medsize} \\
&\begin{medsize}
\displaystyle = \left[\frac{(n-6)(n-4)(n-2)+16}{2(n-4)(n-2)(n-1)} (k+m-2)(n+k+m-4) \right.
\end{medsize} \\
&\begin{medsize}
\displaystyle \hspace{130pt} \left. +\frac{8(n-2)(k+m)^2-8(n-6)(k+m)+16(n-4)}{(n-4)(n-2)(n-1)}\right] \times \int_{\S^{n-1}}\Ddot{R}^{(k,m)}
\end{medsize} \\
&\begin{medsize}
\displaystyle \ +\frac{4km}{(n-4)(n-2)}(k+m)(n+k+m-2)\bla H^{(k)},H^{(m)}\bra +\frac{4(n^2-8n+20)}{(n-4)(n-2)^2}\bla\Dot{\ricci}^{(k)},\Dot{\ricci}^{(m)}\bra \end{medsize} \\
&\begin{medsize}
\displaystyle \ +\frac{8}{(n-4)(n-2)} \left[(k^2+k)\bla H^{(k)},\Dot{\ricci}^{(m)}\bra + (m^2+m)\bla\Dot{\ricci}^{(k)},H^{(m)}\bra \right] +\textup{\textbf{(pure-divergence terms)}},
\end{medsize}
\end{aligned}
\end{equation}
where
\begin{align*}
&\textup{\textbf{(pure-divergence terms)}}\\
&:=-\left[\frac{(3n-4)(n^2-8n+20)}{(n-4)(n-2)^2(n-1)^2} +\frac{3n^2-12n+28}{16(n-1)^2}+\frac{16(n-3)}{(n-4)(n-2)^2(n-1)^2}\right] \int_{\S^{n-1}}\Dot{R}^{(k)}\Dot{R}^{(m)}\\
&\ -\left[\frac{8km}{(n-4)(n-2)}+\frac{8km}{(n-2)^2}\right] \bla\delta H^{(k)},\delta H^{(m)}\bra.
\end{align*}
Also, there holds that
\begin{align}
&\ \int_{\S^{n-1}} \left[\tr \,\Ddot{T_4}^{(k,m)}+(\Ddot{\diver_{g}T_4})^{(k,m)}_iy_i + \frac{1}{4}\left\{\Dot{T_4}^{(k)} \cdot (y_i\pa_i-2)H^{(m)} + \Dot{T_4}^{(m)} \cdot (y_i\pa_i-2)H^{(k)}\right\}\right] \nonumber \\
&=-\frac{4}{(n-4)(n-2)} \left[k\bla H^{(k)},\Delta\Dot{\ricci}^{(m)}\bra + m\bla\Delta\Dot{\ricci}^{(k)},H^{(m)}\bra\right] \label{eq:T4diverx} \\
&\ +(k+m-2)(n+k+m-4)^2\frac{(n-6)}{2(n-1)}\int_{\S^{n-1}} \Ddot{R}^{(k,m)} \nonumber \\
&\ +\frac{4(n-6)(k+m)+4(n^2-8n+20)}{(n-2)^2} \bla\Dot{\ricci}^{(k)},\Dot{\ricci}^{(m)}\bra +\textup{\textbf{(pure-divergence terms)}}, \nonumber
\end{align}
where
\begin{align*}
&\begin{medsize}
\displaystyle \ \textup{\textbf{(pure-divergence terms)}}
\end{medsize} \\
&\begin{medsize}
\displaystyle := -\left[\frac{3n^3-12n^2-36n+64}{16(n-1)^2}-\frac{2(k+m)}{(n-4)(n-1)} +\frac{(3n^2-28n+28)(k+m-4)}{16(n-1)^2} \right.
\end{medsize} \\
&\begin{medsize}
\displaystyle \hspace{25pt} \left. +\frac{(n-4)(n(k+m)-4)}{(n-2)(n-1)^2} +\frac{8(n+k+m-4)}{(n-2)^2(n-1)^2}
+\frac{(3n-4)((n-6)(k+m)+(n^2-8n+20))}{(n-2)^2(n-1)^2}\right] \int_{\S^{n-1}}\Dot{R}^{(k)}\Dot{R}^{(m)}
\end{medsize} \\
&\begin{medsize}
\displaystyle \ +\frac{8(n+k+m-4)km}{(n-2)^2}\bla\delta H^{(k)},\delta H^{(m)}\bra.
\end{medsize}
\end{align*}
\end{lemma}
\begin{proof}
Let us prove equation \eqref{eq:T4xx} first. Using Lemmas \ref{T2T4} and \ref{lemma:CNC appendix}, and equation \eqref{eq:ddot bach}, we have that
\begin{align*}
(\Ddot{T_4})_{ij}y_iy_j &= -\frac{16}{n-4}\Ddot{B}_{ij}y_iy_j+\RN{1};\\
\Ddot{B}_{ij}y_iy_j &= \Delta\Ddot{A}_{ij}y_iy_j-2\Dot{A}\cdot h+\RN{2},
\end{align*}
where
\begin{align*}
\RN{1} &:= \left[(n-6)\Delta(\tr\Ddot{A}-\Dot{A}\cdot h)+4(n-4)\Dot{A}\cdot\Dot{A}\right]|y|^2 -\frac{1}{4}(3n^2-12n+28)(\tr\Dot{A})^2|y|^2\\
&\ -48(y_i\Dot{A}_{ij})^2-(n-6)(h_{ls}\tr\Dot{A}_{,s})_{,l}|y|^2;\\
\RN{2} &:= -(\tr\Ddot{A}-\Dot{A}\cdot h)_{,ij}y_iy_j -\Dot{A}\cdot\Dot{A}|y|^2-h_{ls,ij}\Dot{A}_{ls}y_iy_j-(n-4)(y_i\Dot{A}_{ij})^2 +(y_i\pa_i+1)\delta_sh \Dot{A}_{js}y_j\\
&\ +\frac{2}{n-2}(\tr\Dot{A})^2|y|^2 - \left[h_{ls}\Dot{A}_{ij,l}y_iy_j+2(y_i\pa_i-1)h_{ls}\Dot{A}_{lj}y_j\right]_{,s}.
\end{align*}
We can deal with terms $\RN{1}$ and $\RN{2}$ using the same techniques in proof of Lemmas \ref{lemma:Q6 sphere} and \ref{lemma:T2 sphere}.
Thus, it remains to evaluate $\Delta\Ddot{A}_{ij}y_iy_j-2\Dot{A}\cdot h$. We can write
$$
\Delta\Ddot{A}_{ij}y_iy_j=\Delta(\Ddot{A}_{ij}y_iy_j)-4\delta_i\Ddot{A}y_i-2\tr\Ddot{A}.
$$
Using the divergence formula \eqref{eq:diver schouten} of $A_{g}$, we have
$$
\delta_i\Ddot{A}y_i=(\tr\Ddot{A}-\Dot{A}\cdot h)_{,i}y_i-\Dot{A}\cdot h+\frac{1}{2}\Dot{A}\cdot (y_i\pa_i)h+(h_{ls}\Dot{A}_{il}y_i)_{,s}.
$$
In conclusion,
\begin{align*}
\Delta\Ddot{A}_{ij}y_iy_j-2\Dot{A}\cdot h &= \Delta(\Ddot{A}_{ij}y_iy_j)-4(\tr\Ddot{A}-\Dot{A}\cdot h)_{,i}y_i-2\Dot{A}\cdot (y_i\pa_i)h\\
&\ -2(\tr\Ddot{A}-\Dot{A}\cdot h)-4(h_{ls}\Dot{A}_{il}y_i)_{,s}.
\end{align*}
Using the above formulas, we can derive \eqref{eq:T4xx}.

Next, let us prove equation \eqref{eq:T4diverx}. Using Corollary \ref{cor:t2t4}, we have that
\begin{align*}
\tr \,\Ddot{T_4} &= -\frac{16}{n-4} h\cdot \Delta\Dot{A}+4(n^2-4n-12)\Dot{A}\cdot\Dot{A}+(n-6)n\Delta(\tr\Ddot{A}
-\Dot{A}\cdot h)-\frac{16}{n-4}\delta_ih\tr\Dot{A}_{,i}\\
&\ -\frac{1}{4}(3n^3-12n^2-36n+64)(\tr\Dot{A})^2 +\left[\frac{16}{n-4}-(n-6)n\right](h_{ls}\tr\Dot{A}_{,s})_{,l},\\
(\Ddot{\diver_{g} T_4})_iy_i &= 8(n-4)y_i\Dot{A}_{ij}\tr\Dot{A}_{,j}-32(\Dot{A}_{jl}y_i\Dot{A}_{ij})_{,l}+\RN{3},
\end{align*}
where
\begin{align*}
\RN{3} &:= (n-6)(y_i\pa_i)\left[\Delta (\tr\Ddot{A}-\Dot{A}\cdot h)\right]+4((n-6)y_i\pa_i+8)(\Dot{A}\cdot\Dot{A})\\
&\ -\frac{1}{4}(3n^2-28n+28)(y_i\pa_i)(\tr\Dot{A})^2-(n-6)(h_{lj}\tr\Dot{A}_{,l})_{,ij}y_i.
\end{align*}
We can deal with terms $\tr \,\Ddot{T_4}$ and $\RN{3}$ using the same techniques in proof of Lemmas \ref{lemma:Q6 sphere} and \ref{lemma:T2 sphere}.
For the term $y_i\Dot{A}_{ij}\tr\Dot{A}_{,j}$, we use the identity
\[\int_{\S^{n-1}}y_i\Dot{A}^{(k)}_{ij}\tr\Dot{A}^{(m)}_{,j} = -\frac{((n-1)k+m-2)}{4(n-2)(n-1)^2} \int_{\S^{n-1}}\Dot{R}^{(k)}\Dot{R}^{(m)}.\]
obtained in the proof of Claim \ref{claim:Q6 sphere}. For the term $(\Dot{A}_{jl}y_i\Dot{A}_{ij})_{,l}$, we use
\begin{multline*}
\frac{1}{n+k+m-4}\int_{\S^{n-1}}\big(\Dot{A}^{(k)}_{jl}y_i\Dot{A}^{(m)}_{ij}\big)_{,l} =\int_{\S^{n-1}}y_l\Dot{A}^{(k)}_{jl}y_i\Dot{A}^{(m)}_{ij} \\
=\frac{km}{4(n-2)^2}\bla\delta H^{(k)},\delta H^{(m)}\bra+\frac{1}{4(n-2)^2(n-1)^2}\int_{\S^{n-1}}\Dot{R}^{(k)}\Dot{R}^{(m)}.
\end{multline*}
Using the above formulas, we can derive \eqref{eq:T4diverx}.
\end{proof}

In the remainder of this section, we derive further estimates needed in Section \ref{sec:noncomp6}.
Under the divergence-free condition on $h$, the following corollary holds as a direct consequence of the results in Subsection \ref{subsec:curv6}.
\begin{cor}\label{cor:diver free sphere}
Assume that $\delta h=0$. Then there holds that
\begin{align*}
\Ddot{Q^{(6)}} &= \frac{1}{2(n-1)} \Delta^2 \Ddot{R} + \frac{4}{(n-2)^2} \Delta (\Dot{\ricci} \cdot \Dot{\ricci}) + \frac{16}{(n-4)(n-2)^2}\Dot{\ricci}\cdot\Delta\Dot{\ricci};\\
(\Ddot{T_2})_{ij}y_iy_j &= \frac{n^2-4n+12}{2(n-2)(n-1)} \Ddot{R}|y|^2 + \frac{2}{n-2}y_l\pa_lh \cdot y_s\pa_sh;\\
(\Ddot{T_4})_{ij}y_iy_j &= -\frac{16}{n-4}\Ddot{B}_{ij}y_iy_j + \left[\frac{n-6}{2(n-1)}
\Delta\Ddot{R} + \frac{4(n-4)}{(n-2)^2}\Dot{\ricci}\cdot\Dot{\ricci}\right] |y|^2;\\
\Ddot{B}_{ij}y_iy_j &= \Delta\left[-\frac{1}{4(n-2)}y_l\pa_lh \cdot y_s\pa_sh - \frac{1}{2(n-2)(n-1)}\Ddot{R}|y|^2\right] \\
&\ -\frac{1}{2(n-1)}\Ddot{R}_{,ij}y_iy_j - \frac{2}{n-1}\Ddot{R}_{,i}y_i - \frac{1}{n-1}\Ddot{R}\\
&\ -\frac{1}{n-2}\Dot{\ricci}\cdot y_iy_j \pa^2_{ij}h - \frac{2}{n-2}\Dot{\ricci}\cdot y_i\pa_ih - \frac{1}{(n-2)^2}\Dot{\ricci}\cdot\Dot{\ricci}|y|^2,
\end{align*}
and
\begin{align*}
&\ \tr \,\Ddot{T_2}+(\Ddot{\diver_{g}T_2})_iy_i+\frac{1}{2}\Dot{T_2} \cdot (y_i\pa_i-2)h \\
&= \frac{n^2-2n-8}{2(n-1)}\Ddot{R}+\frac{n-10}{2(n-1)}y_i\pa_i\Ddot{R}-\frac{4}{n-2}\Dot{\ricci}\cdot y_i\pa_ih;\\
&\ \tr \,\Ddot{T_4}+(\Ddot{\diver_{g}T_4})_iy_i+\frac{1}{2}\Dot{T_4} \cdot (y_i\pa_i-2)h \\
&= -\frac{8}{(n-4)(n-2)} \Delta\Dot{\ricci}\cdot y_i\pa_ih + \frac{4(n^2-4n-4)}{(n-2)^2} \Dot{\ricci}\cdot\Dot{\ricci}+ \frac{4(n-6)}{(n-2)^2}y_i\pa_i (\Dot{\ricci}\cdot\Dot{\ricci})\\
&\ +\frac{(n-6)n}{2(n-1)} \Delta\Ddot{R}+\frac{n-6}{2(n-1)} y_i\pa_i\Delta\Ddot{R}.
\end{align*}
\end{cor}

\begin{lemma}\label{lemma:sphere with xpxq}
Assume that $\delta H^{(k)}=\delta H^{(m)}=0$. Then there holds that
\begin{align*}
&\ \int_{\S^{n-1}}\big(\Ddot{Q^{(6)}}\big)^{(k,m)}y_{\msfp}y_{\msfq} \\
&= \frac{(k+m-6)(n+k+m-4)(k+m-4)(n+k+m-2)}{2(n-1)} \int_{\S^{n-1}}\Ddot{R}^{(k,m)}y_{\msfp}y_{\msfq}\\
&\ +\frac{2(k+m-4)(n+k+m-4)}{n-1} \int_{\S^{n-1}}\Ddot{R}^{(k,m)}\delta_{\msfpq}\\
&\ +\frac{4}{(n-2)^2}\left[(k+m-6)(n+k+m-4) \int_{\S^{n-1}}\Dot{\ricci}^{(k)}\cdot\Dot{\ricci}^{(m)}y_{\msfp}y_{\msfq} +2\bla\Dot{\ricci}^{(k)},\Dot{\ricci}^{(m)}\bra \delta_{\msfpq}\right] \\
&\ +\frac{8}{(n-4)(n-2)^2} \int_{\S^{n-1}}\big(\Dot{\ricci}^{(k)}\cdot\Delta\Dot{\ricci}^{(m)} + \Delta\Dot{\ricci}^{(k)}\cdot\Dot{\ricci}^{(m)}\big)y_{\msfp}y_{\msfq};
\end{align*}
\[\int_{\S^{n-1}}(\Ddot{T_2})^{(k,m)}_{ij}y_iy_jy_{\msfp}y_{\msfq}
=\frac{n^2-4n+12}{2(n-2)(n-1)}\int_{\S^{n-1}}\Ddot{R}^{(k,m)}y_{\msfp}y_{\msfq} +\frac{2km}{n-2}\int_{\S^{n-1}}H^{(k)}\cdot H^{(m)}y_{\msfp}y_{\msfq},\]
and
\begin{align*}
&\ \int_{\S^{n-1}} \left[\tr \,\Ddot{T_2}^{(k,m)}+(\Ddot{\diver_{g}T_2})^{(k,m)}_iy_i +\frac{1}{4}\left\{\Dot{T_2}^{(k)} \cdot (y_i\pa_i-2)H^{(m)} + \Dot{T_2}^{(m)} \cdot (y_i\pa_i-2)H^{(k)}\right\}\right]y_{\msfp}y_{\msfq} \\
&=\frac{n^2-2n-8+(n-10)(k+m-2)}{2(n-1)}\int_{\S^{n-1}}\Ddot{R}^{(k,m)}y_{\msfp}y_{\msfq} \\
&\ -\frac{2}{n-2} \int_{\S^{n-1}}\(kH^{(k)}\cdot\Dot{\ricci}^{(m)}+m\Dot{\ricci}^{(k)}\cdot H^{(m)}\)y_{\msfp}y_{\msfq}.
\end{align*}
In addition, we have
\begin{align*}
&\ \int_{\S^{n-1}}(\Ddot{T_4})^{(k,m)}_{ij}y_iy_jy_{\msfp}y_{\msfq}\\
&=\frac{(n-6)(n-4)(n-2)+16}{2(n-4)(n-2)(n-1)}
\int_{\S^{n-1}}\left[(k+m-4)(n+k+m-2)\Ddot{R}^{(k,m)}y_{\msfp}y_{\msfq} +2\Ddot{R}^{(k,m)}\delta_{\msfpq}\right]\\
&\ +\frac{8(n-2)(k+m)^2-8(n-6)(k+m)+16(n-4)}{(n-4)(n-2)(n-1)} \int_{\S^{n-1}}\Ddot{R}^{(k,m)}y_{\msfp}y_{\msfq}\\
&\ +\frac{4km}{(n-4)(n-2)}\left[(k+m-2)(n+k+m) \int_{\S^{n-1}}H^{(k)}\cdot H^{(m)}y_{\msfp}y_{\msfq} +2\bla H^{(k)},H^{(m)}\bra \delta_{\msfpq}\right] \\
&\ +\frac{8}{(n-4)(n-2)} \int_{\S^{n-1}}\left[(k^2+k)H^{(k)}\cdot\Dot{\ricci}^{(m)}+(m^2+m)\Dot{\ricci}^{(k)}\cdot H^{(m)}\right]y_{\msfp}y_{\msfq}\\
&\ +\frac{4(n^2-8n+20)}{(n-4)(n-2)^2} \int_{\S^{n-1}}\Dot{\ricci}^{(k)}\cdot\Dot{\ricci}^{(m)}y_{\msfp}y_{\msfq},
\end{align*}
and
\begin{align*}
&\ \int_{\S^{n-1}} \left[\tr \,\Ddot{T_4}^{(k,m)} + (\Ddot{\diver_{g}T_4})^{(k,m)}_iy_i +\frac{1}{4}\left\{\Dot{T_4}^{(k)} \cdot (y_i\pa_i-2)H^{(m)} + \Dot{T_4}^{(m)} \cdot (y_i\pa_i-2)H^{(k)}\right\}\right] y_{\msfp}y_{\msfq}\\
&= -\frac{4}{(n-4)(n-2)}\int_{\S^{n-1}} \big(kH^{(k)}\cdot\Delta\Dot{\ricci}^{(m)}+m\Delta\Dot{\ricci}^{(k)}\cdot H^{(m)}\big)y_{\msfp}y_{\msfq} \\
&\ +(n+k+m-4)\frac{(n-6)}{2(n-1)} \int_{\S^{n-1}}\left[(k+m-4)(n+k+m-2)\Ddot{R}^{(k,m)}y_{\msfp}y_{\msfq} +2\Ddot{R}^{(k,m)}\delta_{\msfpq}\right] \\
&\ +\frac{4(n-6)(k+m)+4(n^2-8n+20)}{(n-2)^2} \int_{\S^{n-1}}\Dot{\ricci}^{(k)}\cdot\Dot{\ricci}^{(m)}y_{\msfp}y_{\msfq}.
\end{align*}
\end{lemma}
\begin{proof}
Given $f\in \mcp_k$, Lemmas \ref{Euler's homo thm} and \ref{IBP} yield
\begin{equation}\label{eq:Lap xpxq}
\int_{\S^{n-1}}\Delta fy_{\msfp}y_{\msfq}=(k-2)(n+k)\int_{\S^{n-1}} fy_{\msfp}y_{\msfq}+2\delta_{\msfpq}\int_{\S^{n-1}}f.
\end{equation}
Straightforward computations using Corollary \ref{cor:diver free sphere}, Lemma \ref{Euler's homo thm}, and \eqref{eq:Lap xpxq} provide all the sphere integrals stated in the lemma.
\end{proof}

\begin{lemma}\label{lemma:sphere int new four}
Assume that $\delta H^{(k)}=\delta H^{(m)}=0$ and \eqref{eq:commutative laplace}--\eqref{eq:commutative pa_pq}. Then there holds that
\begin{equation}\label{eq:T2ipiq}
\begin{aligned}
\int_{\S^{n-1}}(\Ddot{T_2})^{(k,m)}_{i\msfp}y_iy_{\msfq} &= \frac{1}{n-2}\int_{\S^{n-1}}\left[kH^{(k)}\cdot y_{\msfp}\pa_{\msfq}H^{(m)}+m y_{\msfp}\pa_{\msfq}H^{(k)}\cdot H^{(m)}\right] \\
&\ +\frac{n^2-4n+12}{2(n-2)(n-1)} \int_{\S^{n-1}}\Ddot{R}^{(k,m)}y_{\msfp}y_{\msfq}
\end{aligned}
\end{equation}
and
\begin{equation}\label{eq:T2pq}
\begin{aligned}
\int_{\S^{n-1}}(\Ddot{T_2})^{(k,m)}_{\msfpq} &= \frac{2}{n-2}\int_{\S^{n-1}} \left[H_{\msfp l}^{(k)}\Delta H_{\msfq l}^{(m)}+H^{(m)}_{\msfp l}\Delta H_{\msfq l}^{(k)}+\pa_{\msfp} H^{(k)}\cdot \pa_{\msfq} H^{(m)}\right] \\
&\ +\frac{n^2-4n+12}{2(n-2)(n-1)} \int_{\S^{n-1}}\Ddot{R}^{(k,m)}\delta_{\msfpq}.
\end{aligned}
\end{equation}
In addition, we have
\begin{align}
&\ \int_{\S^{n-1}}(\Ddot{T_4})^{(k,m)}_{i\msfp}y_iy_{\msfq} \nonumber \\
&=\frac{n-6}{2(n-1)} \int_{\S^{n-1}}\left[(k+m-4)(n+k+m-2)\Ddot{R}^{(k,m)}y_{\msfp}y_{\msfq} +2\Ddot{R}^{(k,m)}\delta_{\msfpq}\right] \nonumber \\
&\ +\frac{8}{(n-4)(n-2)(n-1)}(k+m-2)(n+k+m-2) \int_{\S^{n-1}}\Ddot{R}^{(k,m)}y_{\msfp}y_{\msfq} \nonumber \\
&\ +\frac{8(k+m-1)}{2(n-4)(n-1)} \int_{\S^{n-1}}\left[(n+k+m-2)\Ddot{R}^{(k,m)}y_{\msfp}y_{\msfq} -\Ddot{R}^{(k,m)}\delta_{\msfpq}\right] \label{eq:T4ipiq} \\
&\ +\frac{2}{(n-4)(n-2)}(k+m-2)(n+k+m-2)\int_{\S^{n-1}} \left[kH^{(k)}\cdot y_{\msfp}\pa_{\msfq}H^{(m)}+m y_{\msfp}\pa_{\msfq}H^{(k)}\cdot H^{(m)}\right] \nonumber \\
&\ +\frac{4(n^2-8n+20)}{(n-4)(n-2)^2} \int_{\S^{n-1}}\Dot{\ricci}^{(k)}\cdot\Dot{\ricci}^{(m)}y_{\msfp}y_{\msfq} \nonumber \\
&\ +\frac{8}{(n-4)(n-2)} \int_{\S^{n-1}}\left[ky_{\msfp}\pa_{\msfq}H^{(k)} \cdot\Dot{\ricci}^{(m)} + m\Dot{\ricci}^{(k)}\cdot y_{\msfp}\pa_{\msfq}H^{(m)}\right], \nonumber
\end{align}
and
\begin{equation}\label{eq:T4pq}
\begin{aligned}
&\ \int_{\S^{n-1}}(\Ddot{T_4})^{(k,m)}_{\msfpq}\\
&= \left[\frac{n-6}{2(n-1)}+\frac{8}{(n-4)(n-2)(n-1)}\right](k+m-2)(n+k+m-4) \int_{\S^{n-1}}\Ddot{R}^{(k,m)}\delta_{\msfpq}\\
&\ +\frac{8}{(n-4)(n-1)}(n+k+m-4) \int_{\S^{n-1}}\left[(n+k+m-2)\Ddot{R}^{(k,m)}y_{\msfp}y_{\msfq} -\Ddot{R}^{(k,m)}\delta_{\msfpq}\right]\\
&\ +\frac{4}{(n-4)(n-2)}(k+m-2)(n+k+m-4) \\
&\hspace{135pt} \times \int_{\S^{n-1}}\left[H_{\msfp l}^{(k)}\Delta H_{\msfq l}^{(m)}+H^{(m)}_{\msfp l}\Delta H_{\msfq l}^{(k)}+\pa_{\msfp} H^{(k)}\cdot \pa_{\msfq} H^{(m)}\right]\\
&\ -\frac{48}{(n-2)^2}\int_{\S^{n-1}}\Dot{\ricci}^{(k)}_{\msfp l}\Dot{\ricci}^{(m)}_{\msfq l}
+\frac{4(n-4)}{(n-2)^2} \bla\Dot{\ricci}^{(k)},\Dot{\ricci}^{(m)}\bra \delta_{\msfpq}\\
&\ +\frac{16}{(n-4)(n-2)^2} \left[\bla\Dot{\ricci}^{(k)},\Dot{\ricci}^{(m)}\bra \delta_{\msfpq} +(n-4)\int_{\S^{n-1}}\Dot{\ricci}_{\msfp l}^{(k)}\Dot{\ricci}_{\msfq l}^{(m)}\right]\\
&\ +\frac{8}{(n-4)(n-2)} \int_{\S^{n-1}}\left[\pa^2_{\msfpq}H^{(k)}\cdot\Dot{\ricci}^{(m)} + \Dot{\ricci}^{(k)}\cdot\pa^2_{\msfpq}H^{(m)}-2\Delta H_{\msfp l}^{(k)}\Dot{\ricci}_{\msfq l}^{(m)}\right]\\
&\ +\frac{8}{(n-4)(n-2)}(n+k+m-4)\int_{\S^{n-1}}\left[kH_{\msfp l}^{(k)}\Dot{\ricci}_{\msfq l}^{(m)}+m\Dot{\ricci}_{\msfq l}^{(k)}H_{\msfp l}^{(m)}\right].
\end{aligned}
\end{equation}
\end{lemma}
\begin{proof}
We start by computing $\int_{\S^{n-1}}(\Ddot{T_2})_{i\msfp}y_iy_{\msfq}$ and $\int_{\S^{n-1}}(\Ddot{T_4})_{i\msfp}y_iy_{\msfq}$. Combining Lemmas \ref{T2T4} and \ref{Bach tensor} with $\delta h=0$, we know that
\begin{align*}
(\Ddot{T_2})_{i\msfp} y_i &= -\frac{8}{n-2}\Ddot{\ricci}_{i\msfp}y_i+\frac{n^2-4n+12}{2(n-2)(n-1)}\Ddot{R}y_{\msfp};\\
4\Ddot{\ricci}_{i\msfp}y_i &= -y_i\pa_i h\cdot \pa_{\msfp}h +(h_{l\msfp}y_i\pa_ih_{ls} -h_{ls}y_i\pa_ih_{l\msfp})_{,s};\\
(\Ddot{T_4})_{i\msfp} y_i &= -\frac{16}{n-4}\Ddot{B}_{i\msfp}y_i + \left[\frac{n-6}{2(n-1)}\Delta\Ddot{R} +\frac{4(n-4)}{(n-2)^2}\Dot{\ricci}\cdot\Dot{\ricci}\right]y_{\msfp};\\
\Ddot{B}_{i\msfp}y_i &= \Delta(\Ddot{A}_{i\msfp}y_i)-2\Ddot{A}_{i\msfp,i}-(\tr\Ddot{A}-\Dot{A}\cdot h)_{,i\msfp}y_i-\Dot{A}\cdot\Dot{A}y_{\msfp}\\
&\ -(h_{ls}\Dot{A}_{i\msfp,l})_{,s}y_i-h_{ls,i\msfp}y_i\Dot{A}_{ls}+2\Dot{\Gamma}_{i\msfp,l}^sy_i\Dot{A}_{ls} -\Dot{\Gamma}_{il,l}^sy_i\Dot{A}_{s\msfp}-2(\Dot{\Gamma}_{il}^sy_i\Dot{A}_{\msfp s,l}+\Dot{\Gamma}_{\msfp l}^sy_i\Dot{A}_{is,l})\\
&=\frac{1}{n-2}\Delta\left[\Ddot{\ricci}_{i\msfp}y_i-\frac{1}{2(n-1)}\Ddot{R}y_{\msfp}\right] -\frac{1}{2(n-1)}\Ddot{R}_{,\msfp}-\frac{1}{2(n-1)}(\Ddot{R}_{,i}y_i)_{,\msfp}\\
&\ -\frac{1}{(n-2)^2}\Dot{\ricci}\cdot\Dot{\ricci}y_{\msfp} -\frac{1}{n-2}\Dot{\ricci}_{ls}(y_i\pa_ih_{ls})_{,\msfp} +\frac{1}{n-2}(\Dot{\ricci}_{ls}y_i\pa_ih_{l\msfp} -\Dot{\ricci}_{l\msfp}y_i\pa_ih_{ls})_{,s}.
\end{align*}
Applying \eqref{eq:commutative pa_pq} and
\[\int_{\S^{n-1}}(h_{l\msfp}y_i\pa_ih_{ls}-h_{ls}y_i\pa_ih_{l\msfp})_{,s}y_{\msfq} =\int_{\S^{n-1}}(\Dot{\ricci}_{ls}y_i\pa_ih_{l\msfp}-\Dot{\ricci}_{l\msfp}y_i\pa_ih_{ls})_{,s}y_{\msfq}=0,\]
we can obtain \eqref{eq:T2ipiq} and
\begin{align*}
&\begin{medsize}
\displaystyle \ \int_{\S^{n-1}}(\Ddot{T_4})^{(k,m)}_{i\msfp}y_iy_{\msfq}
\end{medsize} \\
&\begin{medsize}
\displaystyle =\frac{n-6}{2(n-1)} \int_{\S^{n-1}}\left[(k+m-4)(n+k+m-2)\Ddot{R}^{(k,m)}y_{\msfp}y_{\msfq} +2\Ddot{R}^{(k,m)}\delta_{\msfpq}\right]
\end{medsize} \\
&\begin{medsize}
\displaystyle \ -\frac{16}{(n-4)(n-2)} \int_{\S^{n-1}}\left[\Delta\(\Ddot{\ricci}^{(k,m)}_{i\msfp}y_iy_{\msfq} -\frac{1}{2(n-1)}\Ddot{R^{(k,m)}}y_{\msfp}y_{\msfq}\) -2\(\Ddot{\ricci}^{(k,m)}_{i\msfp}y_i-\frac{1}{2(n-1)}\Ddot{R}^{(k,m)}y_{\msfp}\)_{,\msfq}\right]
\end{medsize} \\
&\begin{medsize}
\displaystyle \ +\frac{8(k+m-1)}{2(n-4)(n-1)} \int_{\S^{n-1}}\left[(n+k+m-2)\Ddot{R}^{(k,m)}y_{\msfp}y_{\msfq} -\Ddot{R}^{(k,m)}\delta_{\msfpq}\right]
\end{medsize} \\
&\begin{medsize}
\displaystyle \ +\frac{4(n^2-8n+20)}{(n-4)(n-2)^2} \int_{\S^{n-1}}\Dot{\ricci}^{(k)}\cdot\Dot{\ricci}^{(m)}y_{\msfp}y_{\msfq}
\end{medsize} \\
&\begin{medsize}
\displaystyle \ +\frac{8}{(n-4)(n-2)} \int_{\S^{n-1}}\left[ky_{\msfp}\pa_{\msfq}H^{(k)} \cdot\Dot{\ricci}^{(m)} + m\Dot{\ricci}^{(k)}\cdot y_{\msfp}\pa_{\msfq}H^{(m)}\right].
\end{medsize}
\end{align*}
Then, by employing Lemma \ref{IBP}, we have \eqref{eq:T4ipiq}.

Next, let us evaluate $\int_{\S^{n-1}}(\Ddot{T_2})_{\msfpq}$ and $\int_{\S^{n-1}}(\Ddot{T_4})_{\msfpq}$. Combining Lemmas \ref{T2T4}, \ref{Bach tensor}, and \ref{lemma:CNC appendix} with $\delta h=0$, we know that
\begin{align*}
(\Ddot{T_2})_{\msfpq} &= -\frac{8}{n-2}\Ddot{\ricci}_{\msfpq}+\frac{n^2-4n+12}{2(n-2)(n-1)}\Ddot{R}\delta_{\msfpq};\\
4\Ddot{\ricci}_{\msfpq} &= -h_{\msfp l}\Delta h_{\msfq l}-h_{\msfq l}\Delta h_{\msfp l}-h_{ls,\msfp}h_{ls,\msfq}+\RN{1};\\
(\Ddot{T_4})_{\msfpq} &= -\frac{16}{n-4}\Ddot{B}_{\msfpq}-\frac{48}{(n-2)^2}\Dot{\ricci}_{\msfp l}\Dot{\ricci}_{\msfq l}+\(\frac{n-6}{2(n-1)}\Delta \Ddot{R}+\frac{4(n-4)}{(n-2)^2}\Dot{\ricci}\cdot\Dot{\ricci}\)\delta_{\msfpq};\\
\Ddot{B}_{\msfpq} &= \Delta\Ddot{A}_{\msfpq}-(\tr\Ddot{A}-\Dot{A}\cdot h)_{,\msfpq}-\Dot{A}\cdot\Dot{A}\delta_{\msfpq}-(n-4)\Dot{A}_{\msfp l}\Dot{A}_{l\msfq}\\
&\ -h_{ls,\msfpq}\Dot{A}_{ls}+\frac{1}{2}(\Delta h_{l\msfp}\Dot{A}_{l\msfq}+\Delta h_{l\msfq}\Dot{A}_{l\msfp}) -2(\Dot{\Gamma}_{\msfp l}^s\Dot{A}_{\msfq s}+\Dot{\Gamma}_{\msfq l}^s\Dot{A}_{\msfp s})_{,l} -(h_{ls}\Dot{A}_{\msfpq,l})_{,s}+2(\Dot{\Gamma}_{\msfpq}^s\Dot{A}_{ls})_{,l}\\
&= \frac{1}{n-2}\Delta\left[\Ddot{\ricci}_{\msfpq}-\frac{1}{2(n-1)}\Ddot{R}\delta_{\msfpq}\right] -\frac{1}{2(n-1)}\Ddot{R}_{,\msfpq}-\frac{1}{(n-2)^2}\Dot{\ricci}\cdot\Dot{\ricci}\delta_{\msfpq}\\
&\ -\frac{n-4}{(n-2)^2}\Dot{\ricci}_{\msfp l}\Dot{\ricci}_{\msfq l} -\frac{1}{n-2}h_{ls,\msfpq}\Dot{\ricci}_{ls} +\frac{1}{2(n-2)}(\Delta h_{l\msfp}\Dot{\ricci}_{l\msfq}+\Delta h_{l\msfq}\Dot{\ricci}_{l\msfp})\\
&-\frac{2}{n-2}(\Dot{\Gamma}_{\msfp l}^s\Dot{\ricci}_{\msfq s}+\Dot{\Gamma}_{ql}^s\Dot{\ricci}_{\msfp s})_{,l}+\RN{2}.
\end{align*}
where
\begin{align*}
\RN{1} &:= (h_{ls,\msfp}h_{l\msfq}+h_{ls,\msfq}h_{l\msfp}-2h_{\msfp l}h_{\msfq s,l})_{,s} +[h_{ls}(2h_{\msfpq,l}-h_{l\msfp,\msfq}-h_{l\msfq,\msfp})]_{,s}, \\
\RN{2} &:= -\frac{1}{n-2}(h_{ls}\Dot{\ricci}_{\msfpq,l})_{,s} +\frac{2}{n-2}(\Dot{\Gamma}_{\msfpq}^s\Dot{\ricci}_{ls})_{,l}.
\end{align*}
We infer from Lemma \ref{IBP} that $\int_{\S^{n-1}}\RN{1}=\int_{\S^{n-1}}\RN{2}=0$.
Then, by applying Lemmas \ref{lemma:CNC appendix} and \ref{IBP}, and equations \eqref{eq:commutative laplace}--\eqref{eq:commutative pa_pq}, we can prove \eqref{eq:T2pq} and \eqref{eq:T4pq}.
\end{proof}

\section{Eigenspaces of the Operator $\mcl_k$}\label{sec:eigenspaces}
We start by discussing a projection onto the space $\mcv_k$ introduced at the beginning of Subsection \ref{subsec:proppd}.
\begin{lemma}[Lemma A.5 in \cite{KMS}]\label{projection lemma}
Let $\bar{H}$ be a symmetric matrix whose elements are all homogeneous polynomials on $\R^n$ of degree $k$. Suppose there are $p,t\in \mcp_{k-2}$, $q_j\in \mcp_{k-1}$ such that
\begin{align*}
\bar{H}_{ii}&=p|x|^2;\\
x_i\bar{H}_{ij}&=q_j|x|^2;\\
x_ix_j\bar{H}_{ij}&=t|x|^4.
\end{align*}
Then
$$
\bar{H}_{ij}+\mu_ix_j+\mu_jx_i+\nu\delta_{ij} \in \mcv_k,
$$
where $\mu_j := -q_j+\frac{p+(n-2)t}{2(n-1)}x_j$ and $\nu := \frac{t-p}{n-1}|x|^2$. We define
\begin{equation}\label{eq:projH}
(\proj\bar{H})_{ij} := \bar{H}_{ij}+\mu_ix_j+\mu_jx_i+\nu\delta_{ij}.
\end{equation}
\end{lemma}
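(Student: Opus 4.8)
The statement is purely algebraic, so the plan is to verify directly that $(\proj\bar{H})_{ij} := \bar{H}_{ij} + \mu_ix_j + \mu_jx_i + \nu\delta_{ij}$, with $\mu_j$ and $\nu$ as prescribed, belongs to $\mcv_k$; that is, that it is a symmetric matrix with entries in $\mcp_k$, that its trace $(\proj\bar{H})_{ii}$ vanishes, and that the contraction $x_i(\proj\bar{H})_{ij}$ vanishes. Symmetry is immediate, and the degree bookkeeping is trivial: since $q_j\in\mcp_{k-1}$ and $p,t\in\mcp_{k-2}$, we get $\mu_j\in\mcp_{k-1}$ and $\nu=\frac{t-p}{n-1}|x|^2\in\mcp_k$, so $(\proj\bar{H})_{ij}\in\mcp_k$. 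The real content is the two vanishing conditions.

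First I would record the auxiliary identity $x_jq_j = t|x|^2$, which follows by contracting the hypothesis $x_i\bar{H}_{ij}=q_j|x|^2$ with $x_j$ and matching the result against $x_ix_j\bar{H}_{ij}=t|x|^4$. This immediately gives $x_i\mu_i = -x_iq_i + \frac{p+(n-2)t}{2(n-1)}|x|^2 = \frac{p-nt}{2(n-1)}|x|^2$. Then I would compute the trace $(\proj\bar{H})_{ii} = \bar{H}_{ii} + 2x_i\mu_i + n\nu$ and check that the contributions $p|x|^2$, $\frac{p-nt}{n-1}|x|^2$, and $\frac{n(t-p)}{n-1}|x|^2$ sum to zero. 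Finally I would compute $x_i(\proj\bar{H})_{ij} = q_j|x|^2 + (x_i\mu_i)x_j + \mu_j|x|^2 + \nu x_j$: the term $q_j|x|^2$ cancels against the $-q_j$ piece of $\mu_j|x|^2$, and the remaining multiple of $|x|^2x_j$ carries coefficient $\frac{p-nt}{2(n-1)}+\frac{p+(n-2)t}{2(n-1)}+\frac{t-p}{n-1}$, which one checks is zero.

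There is no genuine obstacle here — only careful tracking of the scalar coefficients. The one point worth flagging is that the particular constants $\frac{p+(n-2)t}{2(n-1)}$ inside $\mu_j$ and $\frac{t-p}{n-1}$ inside $\nu$ are precisely the unique solution of the $2\times 2$ linear system one obtains by leaving these two constants free and demanding that both the trace and the $|x|^2x_j$-part of the contraction vanish (using the identity $x_jq_j=t|x|^2$). So if one does not want to take the stated formulas on faith, the cleanest route is simply to run that two-equation two-unknown computation and recover them.
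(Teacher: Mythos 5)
Your verification is correct: the identity $x_jq_j=t|x|^2$, the resulting value $x_i\mu_i=\frac{p-nt}{2(n-1)}|x|^2$, and the two coefficient cancellations for the trace and for $x_i(\proj\bar{H})_{ij}$ all check out, and this direct computation is exactly the standard argument (the paper itself gives no proof, citing \cite{KMS}, whose Lemma A.5 is proved the same way). Nothing further is needed.
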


\subsection{Three eigenspaces}
We study the eigenspaces $\mcv_k/\mcw_k$, $\mcw_k/\mcd_k$, and $\mcd_k$ of the operator $\mcl_k$ (see \eqref{eq:mclk} and \eqref{eq:mcv decom2}), which induces the orthogonal decomposition \eqref{eq:mcv decom} of the space $\mcv_k$.
\begin{lemma}[Lemma A.6 in \cite{KMS}]\label{H hat}
Let $\bar{H} \in \mcv_k$. Then there exist $\bar{W} \in \mcw_k$ and $\hat{H}_q\in \mcv_k/\mcw_k$ for $q=1,\ldots,\lfloor\frac{k-2}{2}\rfloor$ such that
$$
\bar{H} = \bar{W}+\sum_{q=1}^{\lfloor\frac{k-2}{2}\rfloor}\hat{H}_q,
$$
where $\langle \hat{H}_{q_1},\hat{H}_{q_2} \rangle=0$ for $q_1\neq q_2$. Furthermore, it holds that
\begin{align}
\mcl_k\hat{H}_q&=A_{k,q}\hat{H}_q; \label{eq:hatHq1} \\
\hat{H}_q&=\proj\left[|x|^{2q+2}\nabla^2P^{(k-2q)}\right] \label{eq:hatHq2}
\end{align}
for some $P^{(k-2q)}\in \mch_{k-2q}$, where $\nabla^2P^{(k-2q)}$ denotes the Hessian of $P^{(k-2q)}$ and
$$
A_{k,q} := (k-2q-1)\left[2-\frac{n-2}{n-1}(n+k-2q-1)\right]-(q+1)(n+2k-2q-4).
$$
\end{lemma}

\begin{lemma}[Lemma A.7 in \cite{KMS}]\label{D hat}
Let $\bar{D} \in \mcd_k$. Then there exist $\hat{D}_q\in \mcd_k$ for $q=0,\ldots,\lfloor\frac{k-2}{2}\rfloor$ such that
$$
\bar{D} = \sum_{q=0}^{\lfloor\frac{k-2}{2}\rfloor}\hat{D}_q,
$$
where $\langle \hat{D}_{q_1},\hat{D}_{q_2} \rangle=0$ for $q_1\neq q_2$. Furthermore, it holds that
\begin{align}
\mcl_k\hat{D}_q&=-q(n-2q+2k-2)\hat{D}_q; \label{eq:hatDq1} \\
\hat{D}_q&=|x|^{2q} M^{(k-2q)}, \label{eq:hatDq2}
\end{align}
for some $M^{(k-2q)} \in\{M \in \mcd_{k-2q}\mid \Delta M_{ij}=0 \text{ for each } i, j = 1,\ldots,n\}$.
\end{lemma}

\begin{lemma}[cf. Lemma \ref{lemma:W_hat0}]\label{W hat}
Let $\bar{W} \in \mcw_k$. Then there exist $\bar{D} \in \mcd_k$ and $\hat{W}_q\in \mcw_k/\mcd_k$ for $q=1,\ldots,\lfloor\frac{k-1}{2}\rfloor$ such that
\begin{equation}\label{eq:hatWq0}
\bar{W} = \bar{D}+\sum_{q=1}^{\lfloor\frac{k-1}{2}\rfloor}\hat{W}_q,
\end{equation}
where $\langle \hat{W}_{q_1},\hat{W}_{q_2} \rangle=0$ for $q_1\neq q_2$. Furthermore, if $\msd$ is the conformal Killing operator defined in \eqref{eq:cko}, then it holds that
\begin{align}
\mcl_k\hat{W}_q &= -\frac{(n+k-2)k}{2}\hat{W}_q; \label{eq:hatWq1} \\
\hat{W}_q &= \proj\left[|x|^{2q}\msd V^{(k-2q+1)}\right] \label{eq:hatWq2}
\end{align}
for some vector field $V^{(k-2q+1)}$ on $\R^n$ such that
\[V^{(k-2q+1)} \in \{V = (V_1,\ldots,V_n) \mid V_i \in \mch_{k-2q+1} \text{ for } i = 1,\ldots,n,\, \delta V=0,\, x_iV_i=0\}.\]
\end{lemma}
\begin{proof}
For each $j = 1,\ldots,k$, we consider the spherical harmonic decomposition of $\delta_j\bar{W}$: There exist $\bar{V}_j^{(k-2q-1)}\in \mch_{k-2q-1}$ for $q=0,\ldots,\lfloor\frac{k-1}{2}\rfloor$ such that
$$
\delta_j\bar{W}=\sum_{q=0}^{\lfloor\frac{k-1}{2}\rfloor}|x|^{2q}\bar{V}_j^{(k-2q-1)}.
$$

Let us define a property for a vector field $V$:
\begin{equation}\label{vector field property}
\delta V=V_{i,i}=0 \quad \text{and} \quad x_iV_i=0.
\end{equation}
From $\bar{W}\in \mcw_k$, we see that the vector field $\delta W$ satisfies property (\ref{vector field property}).
We assert that $\bar{V}^{(k-2q-1)}$ satisfies (\ref{vector field property}) for all $q=0,\ldots, \lfloor\frac{k-1}{2}\rfloor$.

First, since
$$
x_j\Delta \delta_j\bar{W} = \Delta(x_j\delta_j\bar{W})-2\pa_j\delta_j\bar{W}=0,
$$
$\Delta \delta \bar{W}$ satisfies (\ref{vector field property}). An induction argument shows that $\Delta^q \delta \bar{W}$ satisfies (\ref{vector field property}) for all $q \in \N \cup \{0\}$.
Picking $q = \lfloor \frac{k-1}{2} \rfloor$, we observe that $\bar{V}^{(0)}$ and $\bar{V}^{(1)}$ satisfy (\ref{vector field property}).

Second, by applying the identity
$$
\pa_j\(|x|^{2q'}\bar{V}_j^{(k-2q-1)}\) = 2q'|x|^{2q'-2}x_j\bar{V}_j^{(k-2q-1)}+|x|^{2q'}\pa_j\bar{V}_j^{(k-2q-1)}=0,
$$
and an induction argument, we can deduce that $\bar{V}^{(k-2q-1)}$ satisfies (\ref{vector field property}) for all $q=0,\ldots, \lfloor\frac{k-1}{2}\rfloor$. This proves the assertion.

As a result, $\bar{V}^{(0)} = 0$ when $k$ is odd. Moreover, the following claim holds, which will be proved at the end of the proof.
\begin{claim}\label{claim V}
Assume that $\bar{W}^* \in\mcw_k/\mcd_k$, $k=2m+2 \in \N$ even, and $\delta_j\bar{W}^* = |x|^{2m}\bar{V}_j^{(1)}$. If $\bar{V}^{(1)}$ satisfies (\ref{vector field property}), then $\bar{V}^{(1)}=0$.
\end{claim}

For $q=0,\ldots,\lfloor\frac{k-3}{2}\rfloor$, we define $V^{(k-2q-1)} = (V_1^{(k-2q-1)},\ldots,V_n^{(k-2q-1)})$, where
\[V_i^{(k-2q-1)}:=-\tfrac{1}{(n+k-2q-2)(k-2q-2)}\bar{V}_i^{(k-2q-1)} \in \mch_{k-2q-1}.\]
We also set $\hat{W}_{q+1}=\proj[|x|^{2q+2}\msd V^{(k-2q-1)}]$.
It follows from Lemma \ref{lemma:W_hat0} and Remark \ref{rmk:W_hat0} that $\hat{W}_{q+1} \in \mcw_k/\mcd_k$, $\delta_j\hat{W}_{q+1}=|x|^{2q}\bar{V}_j^{(k-2q-1)}$, and \eqref{eq:hatWq1} holds. Let
\[\bar{W}^{\sharp} = \bar{W}-\sum_{q=1}^{\lfloor\frac{k-1}{2}\rfloor}\hat{W}_q \in \mcw_k.\]
Then, $\bar{W}^{\sharp}$ can be decomposed as $\bar{W}^{\sharp} = \hat{W}^{\sharp} + \hat{D}^{\sharp}$ with $\hat{W}^{\sharp} \in \mcw_k/\mcd_k$ and $\hat{D}^{\sharp} \in \mcd_k$.
By applying Claim \ref{claim V} for $\hat{W}^{\sharp}$, we find that $\delta_j\hat{W}^{\sharp}=0$, i.e., $\hat{W}^{\sharp} \in \mcd_k$.
Thus, $\hat{W}^{\sharp} = 0$ and $\bar{W} = \sum_{q=1}^{\lfloor\frac{k-1}{2}\rfloor}\hat{W}_q + \hat{D}^{\sharp}$. Setting $\bar{D} = \hat{D}^{\sharp} \in \mcd_k$, we obtain \eqref{eq:hatWq0} as desired.

Also, a straight calculation by means of Lemma \ref{IBP} shows that $\langle \hat{W}_{q_1},\hat{W}_{q_2} \rangle=0$ for $q_1\neq q_2$.

\noindent \medskip \textsc{Proof of Claim \ref{claim V}.}
Let $\bar{V}_i^{(1)}(x)=a_{ij}x_j$ for $a_{ij} \in \R$. From property (\ref{vector field property}), we know that $a_{ij}+a_{ji}=0$. Combining this with the definition \eqref{eq:mclk} of $\mcl_{k}$, we get
\begin{align*}
(\mcl_{2m+2}\bar{W}^*)_{ij}&=\frac{|x|^2}{2}\left[2m|x|^{2m-2}\(x_i\bar{V}_j^{(1)}+x_j\bar{V}_i^{(1)}\)-\Delta \bar{W}^*_{ij}\right] \\
&\ -(m+1)|x|^{2m}\(x_i\bar{V}_j^{(1)}+x_j\bar{V}_i^{(1)}\).
\end{align*}
On the other hand, every element in $\mcw_k/\mcd_k$ is an eigenvector of $\mcl_k$ with the eigenvalue $-\frac{(n+k-2)k}{2}$, as shown in \cite[Page 184]{KMS}. Particularly, $(\mcl_{2m+2}\bar{W}^*)_{ij}=-(n+2m)(m+1) \bar{W}^*_{ij}$. Therefore,
\begin{equation}\label{eq:Wijid}
2|x|^{2m}\(x_i\bar{V}_j^{(1)}+x_j\bar{V}_i^{(1)}\)+|x|^2\Delta \bar{W}^*_{ij}=2(n+2m)(m+1)\bar{W}^*_{ij}.
\end{equation}
Now, by exploiting the spherical harmonic decomposition of $\bar{W}^*_{ij}$, Lemma \ref{harmonic poly}, and \eqref{eq:Wijid}, we can assume
$$
\bar{W}^*_{ij} =b_{ij}|x|^{2m+2}+\frac{1}{n}|x|^{2m}\(x_i\bar{V}_j^{(1)}+x_j\bar{V}_i^{(1)}\),
$$
where $b_{ij}=b_{ji} \in \R$. Then, given any $j=1,\ldots,n$,
$$
x_i\bar{W}^*_{ij} = \(b_{ij}+\frac{1}{n}a_{ij}\)x_i|x|^{2m+2}=0 \quad \text{for all } x\in \R^n,
$$
so $b_{ij}=-\frac{1}{n}a_{ij}$. Consequently, we conclude that $a_{ij} = b_{ij} = 0$, thereby confirming Claim \ref{claim V}.
\end{proof}

\subsection{Computational properties of eigenvectors}
We provide additional information about eigenvectors of $\mcl_k$, which will be useful for evaluating the Pohozaev quadratic forms in Section \ref{sec:tech} and Subsection \ref{sec:three cases Q6}.
\begin{lemma}\label{lemma:addition info of eigenvector}
We write $k=2q+s$ and $m=2q'+s$. Also, let $V_q^{(s+1)}=V^{(k-2q+1)}$ and $\hat{W}_q^{(k)}=\proj[|x|^{2q}\msd V^{(k-2q+1)}]$ be from Lemma \ref{W hat}. Then
\begin{equation}\label{eq:ddhatW}
\begin{aligned}
\delta_j\hat{W}_q^{(k)} &= -s(n+s)|x|^{2q-2}(V^{(s+1)}_q)_j; \\
\pa_i\delta_j\hat{W}_q^{(k)} &= -s(n+s)|x|^{2q-4}\left[2(q-1)x_i(V^{(s+1)}_q)_j+|x|^2(V^{(s+1)}_q)_{j,i}\right].
\end{aligned}
\end{equation}
Let $P_q^{(s)}=P^{(k-2q)}$ and $\hat{H}_q^{(k)}=\proj[|x|^{2q+2}\nabla^2P^{(k-2q)}]$ be from Lemma \ref{H hat}. Then
\begin{equation}\label{eq:ddhatH}
\begin{aligned}
\delta_j\hat{H}_q^{(k)} &= -\frac{n-2}{n-1}(s-1)(n+s-1)|x|^{2q-2} \left[|x|^{2}\pa_jP_q^{(s)}-sx_jP_q^{(s)}\right]; \\
\pa_i\delta_j\hat{H}_q^{(k)} &= -\frac{n-2}{n-1}(s-1)(n+s-1)|x|^{2q-4} \left[|x|^{4}\pa^2_{ij}P_q^{(s)}+2q|x|^{2}x_i\pa_{j}P_q^{(s)}\right.\\
&\hspace{70pt} \left.-s|x|^{2}x_j\pa_iP_q^{(s)}-2(q-1)sx_ix_jP_q^{(s)} -s|x|^{2}\delta_{ij}P_q^{(s)}\right]; \\
\delta^2\hat{H}_q^{(k)} &= \ka_s|x|^{2q-2}P_q^{(s)},
\end{aligned}
\end{equation}
where $\ka_s=\frac{n-2}{n-1}s(s-1)(n+s-1)(n+s-2)$.

Moreover, if $\hat{W}_{q'}^{(m)}=\proj[|x|^{2q'}\msd V^{(m-2q'+1)}]$ and $\hat{H}_{q'}^{(m)}=\proj[|x|^{2q'+2}\nabla^2P^{(m-2q')}]$ are from Lemmas \ref{W hat} and \ref{H hat}, respectively, then we have
\begin{equation}\label{eq:ddhatW2}
\begin{aligned}
\bla \hat{W}_q^{(k)}, \hat{W}_{q'}^{(m)} \bra &= 2s(n+s)\bla V_q^{(s+1)},V_{q'}^{(s+1)} \bra; \\
\bla \pa\delta\hat{W}_q^{(k)}, \pa \delta\hat{W}_{q'}^{(m)} \bra &= s^2(n+s)^2\left[4(q-1)(q'-1)+2(s+1)(q+q'-2)\right. \\
&\hspace{150pt} \left.+(s+1)(n+2s)\right]\bla V_q^{(s+1)},V_{q'}^{(s+1)} \bra,
\end{aligned}
\end{equation}
and
\begin{equation}\label{eq:ddhatH2}
\begin{aligned}
\bla \hat{H}_q^{(k)},\hat{H}_{q'}^{(m)} \bra &= \ka_s\bla P^{(s)}_q,P^{(s)}_{q'} \bra; \\
\bla \delta\hat{H}_q^{(k)},\delta\hat{H}_{q'}^{(m)} \bra &= \frac{\ka_s^2}{s(n+s-2)}\bla P^{(s)}_q,P^{(s)}_{q'} \bra; \\
\bla \pa \delta\hat{H}_q^{(k)},\pa \delta\hat{H}_{q'}^{(m)} \bra &= \frac{\ka_s^2}{s^2(n+s-2)^2}\left[s(s-1)(n+2s-2)(n+2s-4)\right. \\
&\hspace{25pt} +s(n+2s-2)(4qq'-2(q+q')-s^2+2s(q+q'+1)) \\
&\hspace{25pt} \left.+s^2(2s(1-(q+q'))-4qq'+2(q+q')+n-4)\right]\bla P^{(s)}_q,P^{(s)}_{q'} \bra; \\
\bla \pa \delta^2\hat{H}_q^{(k)},\pa \delta^2\hat{H}_{q'}^{(m)} \bra &= \ka_s^2[4(q-1)(q'-1)+2s(q+q'-2)+s(n+2s-2)]\bla P^{(s)}_q,P^{(s)}_{q'} \bra.
\end{aligned}
\end{equation}
\end{lemma}
\begin{proof}
Equations \eqref{eq:ddhatW} and \eqref{eq:ddhatH} follow from direct calculations. Readers may also refer to Lemma \ref{lemma:W_hat0} and \cite[(A.5)--(A.6)]{KMS}.

On the other hand, by the harmonicity of $P_q^{(s)}$ and $(V_q^{(s+1)})_1,\ldots,(V_q^{(s+1)})_n$, together with Lemmas \ref{Euler's homo thm} and \ref{IBP}, we have
\begin{align*}
\int_{\S^{n-1}}\big(V_q^{(s+1)}\big)_{j,i}\big(V_{q'}^{(s+1)}\big)_{j,i} &= \int_{\S^{n-1}}\pa_i\left[\big(V_q^{(s+1)}\big)_{j,i}\big(V_{q'}^{(s+1)}\big)_{j}\right] =(s+1)(n+2s)\bla V_q^{(s+1)},V_{q'}^{(s+1)}\bra;\\
\int_{\S^{n-1}}\pa_jP_q^{(s)} \pa_jP_{q'}^{(s)} &= \int_{\S^{n-1}}\pa_j\left[\pa_jP_q^{(s)} P_{q'}^{(s)}\right]=s(n+2s-2)\bla P_q^{(s)},P_{q'}^{(s)}\bra;\\
\int_{\S^{n-1}}\pa^2_{ij}P_q^{(s)} \pa^2_{ij}P_{q'}^{(s)} &= \int_{\S^{n-1}}\pa_i\left[\pa^2_{ij}P_q^{(s)} \pa_jP_{q'}^{(s)}\right] =(s-1)(n+2s-4)\int_{\S^{n-1}} \pa_jP_q^{(s)}\pa_jP_{q'}^{(s)}.
\end{align*}
Using these identities, we can prove \eqref{eq:ddhatW2} and \eqref{eq:ddhatH2}. See \cite[Page 190]{KMS} for the derivation of the first equality of \eqref{eq:ddhatH2}.
\end{proof}

\subsection{Orthogonality}
We prove orthogonality properties of the bilinear forms $I_{1,\ep}$ in \eqref{eq:I_1} and \eqref{eq:I_1 Q6}.
\begin{lemma}\label{lemma:Hortho}
Given $k = 2,\ldots,d$, we write $H^{(k)} = \hat{H}^{(k)} + \hat{W}^{(k)} + \hat{D}^{(k)}$, where $\hat{H}^{(k)} \in \mcv_k/\mcw_k$, $\hat{W}^{(k)} \in \mcw_k/\mcd_k$, and $\hat{D}^{(k)} \in \mcd_k$. Then it holds that
\[I_{1,\ep}[H^{(k)},H^{(m)}\big] = I_{1,\ep}\big[\hat{H}^{(k)},\hat{H}^{(m)}\big] + I_{1,\ep}\big[\hat{W}^{(k)},\hat{W}^{(m)}\big] + I_{1,\ep}\big[\hat{D}^{(k)},\hat{D}^{(m)}\big]\]
for $k, m \in \{2,\ldots,d\}$.
\end{lemma}
\begin{proof}
We must prove that
\begin{equation}\label{eq:I_1decom1}
I_{1,\ep}\big[\hat{H}^{(k)},\hat{W}^{(m)}\big] = I_{1,\ep}\big[\hat{H}^{(k)},\hat{D}^{(m)}\big] = I_{1,\ep}\big[\hat{W}^{(k)},\hat{D}^{(m)}\big] = 0.
\end{equation}
We first observe that, whether the indices $k_1$, $k_2$, and $k_3$ are the same or not, the inner products satisfy
\begin{equation}\label{eq:I_1decom2}
\bla \hat{H},\hat{W} \bra = \bla \hat{H},\hat{D} \bra = \bla \hat{W},\hat{D} \bra = 0
\end{equation}
for any $\hat{H} \in \mcv_{k_1}/\mcw_{k_1}$, $\hat{W} \in \mcw_{k_2}/\mcd_{k_2}$, and $\hat{D} \in \mcd_{k_3}$.
Indeed, the proof that the first two inner products are $0$ is provided in \cite[Page 186]{KMS}. The third inner product is also $0$, because of Lemma \ref{W hat} and
\begin{align*}
\int_{\S^{n-1}} \proj\left[|x|^{2q} \msd V^{(k_2-2q+1)}\right]_{ij} \hat{D}_{ij} &= \int_{\S^{n-1}} \(\pa_i V^{(k_2-2q+1)}_j + \pa_j V^{(k_2-2q+1)}_i\) \hat{D}_{ij} \\
&= (n+k_2-2q+k_3) \int_{B_1} \(\pa_i V^{(k_2-2q+1)}_j + \pa_j V^{(k_2-2q+1)}_i\) \hat{D}_{ij} \\
&= -2(n+k_2-2q+k_3) \int_{B_1} V^{(k_2-2q+1)}_j \delta_j\hat{D} = 0
\end{align*}
for $q=1,\ldots,\lfloor\frac{k_2-1}{2}\rfloor$, where we used integration by parts and $\hat{D} \in \mcd_{k_3}$.

\medskip
In the remainder of the proof, we will show that $I_{1,\ep}[\hat{H}^{(k)},\hat{W}^{(m)}] = 0$ only, since one can deal with the other terms in \eqref{eq:I_1decom1} analogously once \eqref{eq:I_1decom2} is known.
Owing to \eqref{eq:I_1exp}, Lemma \ref{ricci ricci lemma},
\eqref{eq:Q6 polar proof 4}, Lemmas \ref{lemma:Q6 sphere}, \ref{lemma:T2 sphere}, \ref{lemma:T4 sphere}, and \ref{Delta ricci lemma},
\eqref{eq:hatHq1}, and \eqref{eq:hatWq1}, it suffices to check that
\begin{equation}\label{eq:HW}
\begin{aligned}
\int_{\S^{n-1}} \hat{H}^{(k)}_{ij}\hat{W}^{(m)}_{ij} &= \int_{\S^{n-1}} \delta_i\hat{H}^{(k)} \delta_i\hat{W}^{(m)} = \int_{\S^{n-1}} \delta^2\hat{H}^{(k)} \delta^2\hat{W}^{(m)} \\
&= \int_{\S^{n-1}} \pa_j\delta_i\hat{H}^{(k)} \pa_j\delta_i\hat{W}^{(m)} = \int_{\S^{n-1}} \pa_j\delta^2\hat{H}^{(k)} \pa_j\delta^2\hat{W}^{(m)} = 0.
\end{aligned}
\end{equation}
By \eqref{eq:I_1decom2}, it holds that $\int_{\S^{n-1}} \hat{H}^{(k)}_{ij}\hat{W}^{(m)}_{ij} = \bla \hat{H}^{(k)},\hat{W}^{(m)} \bra = 0$.
Because $\delta^2\hat{W}^{(m)} = 0$, we also have that $\int_{\S^{n-1}}\delta^2\hat{H}^{(k)} \delta^2\hat{W}^{(m)} = \int_{\S^{n-1}} \pa_j\delta^2\hat{H}^{(k)} \pa_j\delta^2\hat{W}^{(m)} = 0$.

For the rest two integrals in \eqref{eq:HW}, we use \eqref{eq:hatHq2} and \eqref{eq:hatWq2} to write
\[\hat{H}^{(k)} = \sum_{q=1}^{\lfloor\frac{k-2}{2}\rfloor} \proj\left[|x|^{2q+2}\nabla^2P^{(k-2q)}\right] \quad \text{and} \quad \hat{W}^{(m)} = \sum_{q'=1}^{\lfloor\frac{m-1}{2}\rfloor} \proj\left[|x|^{2q'}\msd V^{(m-2q'+1)}\right].\]
A direct computation yields
\begin{align*}
\delta_i\hat{H}^{(k)} &= \sum_{q=1}^{\lfloor\frac{k-2}{2}\rfloor} \left[(c_1)_{k,q}|x|^{2q}\pa_iP^{(k-2q)} + (c_2)_{k,q}|x|^{2q-2}P^{(k-2q)}x_i\right]; \\
\pa_j\delta_i\hat{H}^{(k)} &= \sum_{q=1}^{\lfloor\frac{k-2}{2}\rfloor} \left[(c_1)_{k,q}|x|^{2q}\pa^2_{ij}P^{(k-2q)} + 2q(c_1)_{k,q}|x|^{2q-2}\pa_iP^{(k-2q)}x_j+(c_2)_{k,q}|x|^{2q-2}\pa_jP^{(k-2q)}x_i\right.\\
&\hspace{40pt} \left.+2(q-1)(c_2)_{k,q}|x|^{2q-4}P^{(k-2q)}x_ix_j +(c_2)_{k,q}|x|^{2q-2}P^{(k-2q)}\delta_{ij}\right]
\end{align*}
and
\begin{align*}
\delta_i\hat{W}^{(m)} &= \sum_{q'=1}^{\lfloor\frac{m-1}{2}\rfloor} c_{m,q'}|x|^{2q'-2}V^{(m-2q'+1)}_i;\\
\pa_j\delta_i\hat{W}^{(m)} &= \sum_{q'=1}^{\lfloor\frac{m-1}{2}\rfloor} c_{m,q'}|x|^{2q'-2}\pa_jV^{(m-2q'+1)}_i+2(q'-1)c_{m,q'}|x|^{2q'-4}V^{(m-2q'+1)}_ix_j,
\end{align*}
for some $(c_1)_{k,q},\, (c_2)_{k,q},\, c_{m,q'} \in \R$, the exact values of which can be seen in Lemma \ref{lemma:addition info of eigenvector}. Employing Lemma \ref{ricci ricci lemma} and $x_iV^{(m-2q'+1)}_i = \delta V^{(m-2q'+1)} = 0$, we deduce that
\begin{align*}
\int_{\S^{n-1}} \delta_i\hat{H}^{(k)} \delta_i\hat{W}^{(m)} &= \sum_{q=1}^{\lfloor\frac{k-2}{2}\rfloor} \sum_{q'=1}^{\lfloor\frac{m-1}{2}\rfloor} (c_1)_{k,q}c_{m,q'} \int_{\S^{n-1}} \pa_iP^{(k-2q)}V^{(m-2q'+1)}_i.
\end{align*}
Similarly, using Lemmas \ref{Euler's homo thm} and \ref{IBP}, $x_iV^{(m-2q'+1)}_i = \delta V^{(m-2q'+1)} = 0$, and $\Delta V^{(m-2q'+1)}_i=0$, we can also reduce $\int_{\S^{n-1}} \pa_j\delta_i\hat{H}^{(k)} \pa_j\delta_i\hat{W}^{(m)}$ to $\int_{\S^{n-1}} \pa_iP^{(k-2q)}V^{(m-2q'+1)}_i$. In the end, by combining with
$$
\int_{\S^{n-1}} \pa_iP^{(k-2q)}V^{(m-2q'+1)}_i=-\int_{\S^{n-1}} P^{(k-2q)} \delta V^{(m-2q'+1)} = 0,
$$
we prove that $\int_{\S^{n-1}}\delta_i \hat{H}^{(k)} \delta_i\hat{W}^{(m)} = \int_{\S^{n-1}} \pa_j\delta_i\hat{H}^{(k)} \pa_j\delta_i\hat{W}^{(m)} = 0$.

This completes the proof of $I_{1,\ep}[\hat{H}^{(k)},\hat{W}^{(m)}] = 0$.
\end{proof}

Following the approach from the proof of Lemma \ref{lemma:Hortho} and using \eqref{eq:L_1Hw}, \eqref{eq:L_1HZ0}--\eqref{eq:L_1HZ01}, \eqref{eq:L_1Hw Q6}, \eqref{eq:L_1HZ0 Q6}--\eqref{eq:L_1HZ01 Q6} above, along with the orthogonality of harmonic polynomials on $\S^{n-1}$ with different degrees such as
\[\int_{\S^{n-1}} (M_q^{(s_1)})_{ij}(M_{q'}^{(s_2)})_{ij} = \int_{\S^{n-1}} (V_q^{(s_1)})_i (V_{q'}^{(s_2)})_i = \int_{\S^{n-1}} P_q^{(s_1)}P_{q'}^{(s_2)} = 0 \quad \text{for } s_1 \neq s_2,\]
we obtain the following result. We omit the details.
\begin{lemma}\label{lemma:I_1ortho}
Let $\wte^D_s$, $\wte^W_s$ and $\wte^H_s$ be the matrices in \eqref{eq:MqsEDs}, \eqref{eq:VqsEWs} and \eqref{eq:PqsEHs}, respectively.
Then $I_{1,\ep}[\wte^D_s,\wte^D_{s'}] = 0$ for any $2 \le s \ne s' \le d$, $I_{1,\ep}[\wte^W_s,\wte^W_{s'}] = 0$ for any $1 \le s \ne s' \le d-2$, and $I_{i,\ep}[\wte^H_s,\wte^H_{s'}] = 0$ for any $i = 1,2,3$ and $2 \le s \ne s' \le d-2$.
\end{lemma}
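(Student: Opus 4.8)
The plan is to reduce every claimed vanishing to the elementary orthogonality $\int_{\S^{n-1}}p\,q=0$ for homogeneous harmonic polynomials $p,q$ of different degrees on $\R^n$. By bilinearity of $I_{i,\ep}$ and the definitions \eqref{eq:MqsEDs}, \eqref{eq:VqsEWs}, \eqref{eq:PqsEHs} of $\wte^D_s,\wte^W_s,\wte^H_s$ as sums of rescaled generating blocks, it suffices to show that $I_{i,\ep}$ vanishes on each pair of generating blocks --- $|x|^{2q}M_q^{(s)}$ of $\mcd$-type, $\proj[|x|^{2q}\msd V_q^{(s+1)}]$ of $\mcw$-type, or $\proj[|x|^{2q+2}\nabla^2P_q^{(s)}]$ of $\mcv/\mcw$-type --- whenever the two blocks carry different underlying harmonic degrees $s\ne s'$. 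Throughout we use that $\mcl_k$ acts as a scalar on each such block (Lemmas \ref{H hat}, \ref{D hat}, \ref{W hat}); that $\delta=\delta^2=0$ on $\mcd$-type and $\delta^2=0$ on $\mcw$-type blocks; and that applying $\delta$, $\delta^2$, or the entrywise Laplacian to a block keeps it in the span of $|x|^{2j}$ times a fixed tensor whose entries are harmonic polynomials built with at most two derivatives from $M_q^{(s)}$, $V_q^{(s+1)}$, or $P_q^{(s)}$.

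\emph{The case $i=1$.} By Proposition \ref{I_1}, \eqref{eq:ricci h1}, Lemma \ref{ricci ricci lemma}, and the scalar action of $\mcl_k$, the value $I_{1,\ep}[\text{block},\text{block}']$ is a finite linear combination of radial integrals, each multiplied by one of the angular integrals $\int_{\S^{n-1}}H_{ij}H'_{ij}$, $\int_{\S^{n-1}}\delta_iH\,\delta_iH'$, $\int_{\S^{n-1}}\delta^2H\,\delta^2H'$, or $\int_{\S^{n-1}}H_{ij,l}H'_{ij,l}$ (for $\mcd$-type blocks only the first and last survive; for $\mcw$-type blocks the $\delta^2$-term drops). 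Each of these angular integrals vanishes when $s\ne s'$: one first discards the $\proj$-corrections, which are multiples of $x_i$ or $\delta_{ij}$ and hence are annihilated upon contraction with the trace-free, $x$-orthogonal partner, and then integrates by parts using Lemma \ref{IBP}, each time invoking Lemma \ref{Euler's homo thm} to collapse an occurrence of $x\cdot\nabla$ into a scalar and Lemma \ref{harmonic poly} --- or the explicit Laplacian formulas derived in the proofs of Lemmas \ref{H hat} and \ref{W hat} --- to return an occurrence of $\Delta$ to a block of the same underlying degree; the last angular integral is reduced to the first by one such integration by parts. One is then left with a pairing of a harmonic polynomial of degree determined by $s$ against one of degree determined by $s'$, which is $0$. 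These are precisely the manipulations already carried out in the proof of Lemma \ref{lemma:Hortho}, now performed with blocks of different underlying degrees rather than of different eigentype, and they dispatch $I_{1,\ep}[\wte^D_s,\wte^D_{s'}]$, $I_{1,\ep}[\wte^W_s,\wte^W_{s'}]$, and $I_{1,\ep}[\wte^H_s,\wte^H_{s'}]$.

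\emph{The cases $i=2,3$.} Here only $\wte^H$-blocks are relevant, since by \eqref{eq:L1r} and \eqref{eq:L_1mfl_1} the correction $\Psi$ vanishes on $\mcd$- and $\mcw$-type blocks. A block $\proj[|x|^{2q+2}\nabla^2P_q^{(s)}]$ enters $\Psi$ only through its double divergence, which is a constant multiple of $|x|^{2q-2}P_q^{(s)}$, so Proposition \ref{prop:Psi} and \eqref{eq:PsiH} exhibit $\Psi$ of this block as a radial rational function times $P_q^{(s)}$; likewise \eqref{eq:L_1mfl_1}, \eqref{def of mfl_1(k,s)} and the analogous explicit formula for $L_1[H^{(k+2)}]Z^0$ exhibit $L_1[\,\cdot\,]w$ and $L_1[\,\cdot\,]Z^0$ of a block as radial functions times $P_{q'}^{(s')}$, and applying $y\cdot\nabla+\tfrac{n-4}{2}$ preserves this product form. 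Writing the integrands of \eqref{eq:I_2} and \eqref{eq:I_3} in polar coordinates then factors each term into a purely radial integral times $\int_{\S^{n-1}}P_q^{(s)}P_{q'}^{(s')}$, which is $0$ for $s\ne s'$.

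\emph{Main difficulty.} The only genuine subtlety --- and the reason a crude degree count fails --- is that differentiating a degree-$s$ block produces harmonic content of degrees $s\pm1$ (and, for second derivatives, $s$ and $s\pm2$), so a priori the cross-terms between $\wte^X_s$ and $\wte^X_{s\pm2}$ need not vanish on degree grounds alone. Neutralizing these is exactly the purpose of integrating every angular integral by parts back onto the pure generators $M_q^{(s)}$, $V_q^{(s+1)}$, $P_q^{(s)}$, where the divergence constraints and the explicit Laplacian formulas guarantee that no genuinely higher-degree spherical harmonic remains. Verifying that the various constant and $\mcl_k$-eigenvalue prefactors never obstruct these cancellations is the bulk of the (here omitted) bookkeeping, although each individual step is elementary.
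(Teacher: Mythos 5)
Your proposal is correct and takes essentially the same route as the paper's (omitted) proof: the paper likewise handles $i=1$ by repeating the scheme of Lemma \ref{lemma:Hortho} (reduction via Lemma \ref{ricci ricci lemma} and the eigenvalue action of $\mcl_k$ to the pairings $\int H H'$, $\int \delta H\,\delta H'$, $\int \delta^2H\,\delta^2H'$) and handles $i=2,3$ via the explicit radial-times-harmonic expressions \eqref{eq:L_1Hw} and \eqref{eq:L_1HZ0}--\eqref{eq:L_1HZ01}, concluding in all cases from the orthogonality on $\S^{n-1}$ of harmonic polynomials of different degrees. Your treatment of the cross-degree terms produced by differentiation matches the intended mechanism, so there is nothing to correct.
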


\section{Geometric Explanation for the Cancellation Phenomenon}\label{sec:geoexp}
We observe the following two propositions inspired by \cite[Proposition 5]{Br}.

\medskip \noindent \textbf{The fourth-order case.} As before, let $w=(1+r^2)^{-\frac{n-4}{2}}$,
$$
\mfc_4(n) = (n-4)(n-2)n(n+2), \quad \text{and} \quad \tmfc_4(n) = (n-2)n(n+2)(n+4).
$$

\begin{prop}\label{prop:geoexp}
Let $S=\msd V$ where $\msd$ is the conformal Killing operator defined in \eqref{eq:cko} and
$$
\Psi=V_i\pa_iw+\frac{n-4}{2n}\delta V w.
$$
Then,
\begin{equation}\label{eq:geoexp}
\Delta^2 \Psi-\tmfc_4(n) w^{\frac{8}{n-4}} \Psi=-L_1[S]w+\Delta(\pa_i(S_{ij}\pa_j w))+\pa_i(S_{ij}\pa_j \Delta w),
\end{equation}
where $L_1[S]$ is defined by \eqref{eq:L1}.
\end{prop}
\begin{proof}
We have
\begin{align*}
\delta_jS&=\Delta V_j+\frac{n-2}{n}\pa_j\delta V;\\
\delta^2S&=\frac{2(n-1)}{n}\Delta \delta V;\\
(\Dot{\ricci}[S])_{ij}&=\frac{n-2}{n}\pa^2_{ij}\delta V+\frac{1}{n}\Delta \delta V\delta_{ij}.
\end{align*}
Then, there holds
\begin{align*}
&\ -L_1[S]w+\Delta(\pa_i(S_{ij}\pa_j w))+\pa_i(S_{ij}\pa_j \Delta w)\\
&= 4\pa_iV_j\pa^2_{ij}\Delta w-\frac{4}{n}\delta V\Delta^2w+2\Delta V_j\pa_j \Delta
w+\frac{2(n-4)}{n}\pa_j\delta V\pa_j\Delta w+4\pa^2_{jk}V_i\pa^3_{ijk}w\\
&\ +\frac{2(n-4)}{n}\pa^2_{ij}\delta V \pa^2_{ij}w+\frac{n-4}{n}\Delta \delta V\Delta w+4 \pa_i \Delta V_j \pa^2_{ij}w\\
&\ +\Delta^2V_j\pa_j w+\frac{2(n-4)}{n}\pa_j\Delta\delta V \pa_j w+\frac{n-4}{2n}\Delta^2\delta V w.
\end{align*}
Continuing the computation and plugging in $\Delta^2 w =\mfc_4(n)w^{\frac{n+4}{n-4}}$, we deduce
\begin{align*}
&\ -L_1[S]w+\Delta(\pa_i(S_{ij}\pa_j w))+\pa_i(S_{ij}\pa_j \Delta w)\\
&= \Delta^2\(V_i\pa_iw+\frac{n-4}{2n}\delta V w\)-V_i\pa_i\Delta^2w-\frac{n+4}{2n}\delta V\Delta^2 w\\
&= \Delta^2\(V_i\pa_iw+\frac{n-4}{2n}\delta V w\)-\tmfc_4(n)w^{\frac{8}{n-4}}\(V_i\pa_iw+\frac{n-4}{2n}\delta V w\),
\end{align*}
which is \eqref{eq:geoexp}.
\end{proof}

\noindent \textbf{The sixth-order case.} Let $w=(1+r^2)^{-\frac{n-6}{2}}$ and
\begin{align*}
\mfc_6(n)&:=(n-6)(n-4)(n-2)n(n+2)(n+4);\\
\tmfc_6(n)&:=(n-4)(n-2)n(n+2)(n+4)(n+6).
\end{align*}

\begin{prop}\label{prop:geoexpQ6}
Let $S=\mathscr{D} V$ where $\msd$ is the conformal Killing operator defined in \eqref{eq:cko} and
$$
\Psi=V_i\pa_iw+\frac{n-6}{2n}\delta V w.
$$
Then
\begin{equation}\label{eq:geoexpQ6}
\Delta^3 \Psi+\tmfc_6(n) w^{\frac{12}{n-6}} \Psi=-L_1[S]w+\Delta^2\pa_i(S_{ij}\pa_j w)+\Delta\pa_i(S_{ij}\pa_j \Delta w)+\pa_i(S_{ij}\pa_j \Delta^2 w),
\end{equation}
where $L_1[S]$ is defined by \eqref{Q6 L1 formula}.
\end{prop}
\begin{proof}
Because $S=\mathscr{D} V$ and \eqref{Q6 L1 formula}, we have
\begin{align*}
L_1[S] &= \frac{16}{n}\pa^2_{ij}\delta V\pa^2_{ij}\Delta-\frac{3n-6}{2n}\Delta\delta V\Delta^2 + \frac{16}{n}\pa^3_{ijk}\delta V\pa^3_{ijk}-\frac{3n-22}{n}\pa_j \Delta \delta V\pa_j \Delta \\
&\ - \frac{2(n-14)}{n}\pa^2_{ij}\Delta\delta V\pa^2_{ij}-\frac{3n-14}{2n}\Delta \Dot{R}\Delta - \frac{2(n-8)}{n}\pa_j\Delta^2\delta V\pa_j-\frac{n-6}{2n}\Delta^3 \delta V.
\end{align*}
Then, following the proof of \eqref{eq:geoexp}, we can obtain \eqref{eq:geoexpQ6}.
\end{proof}

\begin{rmk}
\

\noindent 1. The roles of Propositions \ref{prop:Psi} and \ref{prop:PsiQ6} are indispensable and cannot be replaced by Propositions \ref{prop:geoexp}--\ref{prop:geoexpQ6}:
In our proofs of Theorems \ref{thm:main4} and \ref{thm:main6}, we rely not only on the estimates for $\Psi$, but also on its explicit expression to compute the Pohozaev quadratic form.

\noindent 2. Nonetheless, these two propositions are valuable in themselves, because they illuminate the special relationship between the linearized equations at the standard bubble and the expansions of both the Paneitz operator and the sixth-order GJMS operator.

\noindent 3. The idea of using the conformal Killing operator to solve the linearized equation and to study the energy expansion is robust.
Analogous propositions may emerge in the higher-order $Q^{(2\msfk)}$-curvature problems, where $\msfk \ge 4$. \hfill $\diamond$
\end{rmk}

\medskip \noindent \textbf{Acknowledgement.}
This paper supersedes our earlier work \textit{Compactness of the Q-curvature problem} (arXiv:2502.14237) that treated only the compactness of the fourth-order constant $Q$-curvature problem.

S. Kim is supported by Basic Science Research Program through the National Research Foundation of Korea (NRF) funded by the Ministry of Science and ICT (2020R1C1C1A01010133, RS-2025-00558417).
Also, he expresses gratitude to Korea Institute for Advanced Study (KIAS) for its support through the visiting faculty program during his sabbatical year.
Part of this work was carried out during his visit to the Chinese University of Hong Kong, and he acknowledges the institute's hospitality.

The research of J. Wei is partially supported by GRF grant entitled ``New frontiers in singular limits of elliptic and parabolic equations".

L. Gong thanks YanYan Li for his encouragement and interesting discussions related to this topic. Part of this work was carried out during Gong's visit to Hanyang University; he acknowledges their hospitality with gratitude. The authors are grateful Mingxiang Li for numerous valuable discussions and also bringing the paper \cite{ALL} to our attention.

\end{document}